\newtheorem{theorem}{Theorem}
\newtheorem{corollary}[theorem]{Corollary}
\newtheorem{lemma}[theorem]{Lemma}
\providecommand{\customgenericname}{}
\newcommand{\newcustomtheorem}[2]{%
  \newenvironment{#1}[1]
  {%
   \renewcommand\customgenericname{#2}%
   \renewcommand\theinnercustomgeneric{##1}%
   \innercustomgeneric
  }
  {\endinnercustomgeneric}
}
\colorlet{mylinkcolor}{violet}
\colorlet{mycitecolor}{YellowOrange}
\colorlet{myurlcolor}{Aquamarine}
\newcommand{\RR}{\mathbb{R}}
\newcommand{\NN}{\mathbb{N}}
\newcommand{\np}{\textsf{NP}}
\DeclareMathOperator{\delete}{\setminus}
\newcommand{\ks}{\mathsf{S}} 
\newcommand{\kf}{\mathsf{F}} 
\newcommand{\kp}{\mathsf{P}} 
\newcommand{\kn}{\mathsf{N}} 
\newcommand{\kall}{\mathcal{U}_\infty}
\newcommand{\kl}{\mathsf{L}} 
\newcommand{\ddir}[3]{\langle #1, #2; #3 \rangle} 
\newcommand{\ip}[2]{\left\langle #1, #2 \right\rangle}
\newcommand{\tw}{\mathop{\mathrm{tw}}}
\newcommand{\polylog}{\mathop{\mathrm{poly log}}}
\begin{document}

\title[Unavoidable minors for graphs with large $\ell_p$-dimension] {Unavoidable minors for graphs with large $\ell_p$-dimension}

\author[S.~Fiorini]{Samuel Fiorini}
\address[S.~Fiorini]{\newline D\'epartement de Math\'ematique
\newline Universit\'e Libre de Bruxelles
\newline Brussels, Belgium}
\email{sfiorini@ulb.ac.be}

\author[T.~Huynh]{Tony Huynh}
\address[T.~Huynh]{\newline D\'epartement de Math\'ematique
\newline Universit\'e Libre de Bruxelles
\newline Brussels, Belgium}
\email{tony.bourbaki@gmail.com}

\author[G.~Joret]{Gwena\"el Joret}
\address[G.~Joret]{\newline D\'epartement d'Informatique
\newline Universit\'e Libre de Bruxelles
\newline Brussels, Belgium}
\email{gjoret@ulb.ac.be}

\author[C.~Muller]{Carole Muller}
\address[C.~Muller]{\newline D\'epartement de Math\'ematique
\newline Universit\'e Libre de Bruxelles
\newline Brussels, Belgium}
\email{camuller@ulb.ac.be}

\thanks{S. Fiorini and T. Huynh are supported by ERC Consolidator Grant 615640-ForEFront. G. Joret is supported by an ARC grant from the Wallonia-Brussels Federation of Belgium. C. Muller is supported by the Luxembourg National Research Fund (FNR) Grant Nr. 11628910.}

\maketitle 

\begin{abstract}
    A \emph{metric graph} is a pair $(G,d)$, where $G$ is a graph and $d:E(G) \to\mathbb{R}_{\geq0}$ is a distance function. Let $p \in [1,\infty]$ be fixed. An \emph{isometric embedding} of the metric graph $(G,d)$ in $\ell_p^k = (\mathbb{R}^k, d_p)$ is a map $\phi : V(G) \to \mathbb{R}^k$ such that $d_p(\phi(v), \phi(w)) = d(vw)$ for all edges $vw\in E(G)$. The \emph{$\ell_p$-dimension} of $G$ is the least integer $k$ such that there exists an isometric embedding of $(G,d)$ in $\ell_p^k$ for all distance functions $d$ such that $(G,d)$ has an isometric embedding in $\ell_p^K$ for some $K$.
    
    It is easy to show that $\ell_p$-dimension is a minor-monotone property.  
    In this paper, we characterize the minor-closed graph classes $\mathcal{C}$ with bounded $\ell_p$-dimension, for $p \in \{2,\infty\}$. 
    For $p=2$, we give a simple proof that $\mathcal{C}$ has bounded $\ell_2$-dimension if and only if $\mathcal{C}$ has bounded treewidth. 
    In this sense, the $\ell_2$-dimension of a graph is `tied' to its treewidth. 
    
    For $p=\infty$, the situation is completely different. 
    Our main result states that a minor-closed class $\mathcal{C}$ has bounded $\ell_\infty$-dimension if and only if $\mathcal{C}$ excludes a graph obtained by joining copies of $K_4$ using the $2$-sum operation, or excludes a M\"obius ladder with one `horizontal edge' removed.  
\end{abstract}

\section{Introduction}
    
    In this paper, we consider isometric embeddings of metric graphs in metric spaces. Recall that a \emph{metric space $(X,d)$} consists of a \emph{set of points} $X$ and a \emph{metric} $d: X \times X \to \RR_{\geq 0}$.  That is, for all $x,y,z\in X$, (i)~$d(x,y) = d(y,x)$, (ii)~$d(x,y) = 0$ if and only if $x = y$, and (iii)~$d(x,y)\leq d(x,z)+d(z,y)$. 
    Here, we only consider the metric spaces ${\ell_p^k = (\RR^k, d_p)}$, focusing mainly on the cases $p \in \{2, \infty\}$. We let $\NN$ denote the set of positive integers, and for $k \in \NN$, $[k] = \{1, \dots, k\}$. Recall that $\lVert x \rVert_p = (\sum_{i = 1}^k \lvert x \rvert^p)^{1/p}$ if $p\in [1, \infty)$ and $\lVert x \rVert_\infty = \max_{i \in [k]}{\lvert x_i \rvert}$. We set $d_p(x,y) = \lVert x-y \rVert_p$ for all $p\in [1, \infty]$.
    
    Comparing different metric spaces is a ubiquitous theme throughout mathematics. One way to do so is by means of \emph{isometric embeddings}, which are functions $\phi: X\to X'$ such that $d(x,y) = d'(\phi(x), \phi(y))$ for all $x,y\in X$. As these are quite restrictive, other approaches have been developed. For instance, Bourgain \cite{Bourgain1985} has shown that every $n$-point metric space can be embedded into an $\ell_p^{O(\log^2 n)}$ space with $O(\log n)$ distortion. (The upper bound on the dimension was subsequently reduced to $O(\log n)$, see~\cite{ABN11}.)
    
    Another point of view is to require only a subset of distances to be preserved, which is the perspective we take in this paper.  
    Our methods are mostly graph theoretical, although similar problems have been studied using techniques from rigidity theory~\cite{KNS18, Schulze19, SW15}.  
    
    All graphs in this paper are finite and do not contain loops or parallel edges, unless otherwise stated. 
    A graph $H$ is a \emph{minor} of a graph $G$ if $H$ can be obtained from a subgraph of $G$ by contracting some edges. When taking minors we remove parallel edges and loops resulting from edge contractions.
    
    A \emph{metric graph} $(G,d)$ is a pair consisting of a graph $G$ and a function ${d:E(G)\to\RR_{\geq 0}}$ satisfying $d(vw)\leq d(P) = \sum_{i = 1}^{r}d(v_{i-1}v_i)$ for all edges $vw\in E(G)$ and all paths $P = v_0v_1 \cdots v_r$ with $v_0 = v$ and $v_r = w$.  Such a function $d$ is called a \emph{distance function} on $G$. An \emph{isometric embedding} of a metric graph $(G, d)$ in $\ell_p^k$ is a map $\phi : V(G) \to \RR^k$ such that $d_p(\phi(v), \phi(w)) = d(vw)$ for all edges $vw\in E(G)$.
    
    For each $p\in [1,\infty]$ and graph $G$, a distance function ${d:E(G)\to\RR_{\geq 0}}$ is \emph{$\ell_p$-realizable} if it has an isometric embedding in $\ell_p^K$ for some $K$. If $d$ is $\ell_p$-realizable, we define the parameter $f_p(G,d)$ to be the least integer $k$ such that $(G,d)$ can be isometrically embedded in $\ell_p^k$.   The \emph{$\ell_p$-dimension} of $G$ is defined to be $f_p(G) = \sup_d f_p(G,d)$, where the supremum is over all $\ell_p$-realizable distance functions $d$ on $G$. 
    We remark that in the special case $p=\infty$, the supremum is taken over all distance functions on $G$, since it is well-known that every $n$-point metric space can be isometrically embedded into $\ell_\infty^{n-1}$.
    It is known that $\ell_p$-dimension is always at most $|V(G)| \choose 2$, see \cite{B90} and \cite[Proposition 11.2.3]{DL97}. The $\ell_2$-dimension is also referred to as \emph{Euclidean dimension}.
    
    It is easy to see that every minor $H$ of $G$ satisfies $f_p(H)\leq f_p(G)$ for all $p\in [1,\infty]$. Hence the property \emph{$f_p(G)\leq k$} is closed under taking minors. By the Graph Minor Theorem of Robertson and Seymour \cite{RS04}, for each $k$, there are only a finite number of minor-minimal graphs satisfying $f_p(G) > k$. Formally, an \emph{excluded minor} for $f_p(G)\leq k$ is a graph $H$ such that $f_p(H)> k$ and every proper minor $H'$ of $H$ satisfies $f_p(H')\leq k$. 
    
    The complete sets of excluded minors are known in the Euclidean case $p = 2$ for dimensions $k = 1,2,3$. Belk and Connelly \cite{Belk2007, BelkConnelly2007} have shown that $\{K_3\}$, $\{K_4\}$, $\{K_5, K_{2,2,2}\}$ are the respective sets of excluded minors. Furthermore, note that $\ell_p^1 = \ell_q^1$ for all $p,q \in [1,\infty]$.  Therefore, for all $p \in [1, \infty]$, $K_3$ is the only excluded minor for $f_p(G) \leq 1$. Fiorini, Huynh, Joret, and Varvitsiotis \cite{FHJV17} determined that $W_4$, the wheel on $5$ vertices, and the graph $K_4+_eK_4$ (see Figure~\ref{fig:excluded_minors_plane}) are the only excluded minors for $f_\infty(G) \leq 2$ and for $f_1(G) \leq 2$.  As far as we know, the complete set of excluded minors for $f_p(G) \leq k$ is unknown for all other values of $p$ and $k$.
    
    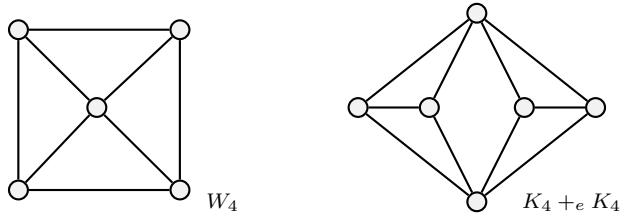
\begin{figure}
        \centering
        \begin{tikzpicture}[scale=1.25,inner sep=2.5pt]
            \tikzstyle{vtx} = [circle,draw,thick,fill=black!5]
            \begin{scriptsize}
            \node[vtx] (0) at (0,0) {};
            \node[vtx] (1) at (0,2) {};
            \node[vtx] (2) at (-0.5,1) {};
            \node[vtx] (3) at (-1.25,1) {};
            \node[vtx] (4) at (0.5,1) {};
            \node[vtx] (5) at (1.25,1) {};
            \node at (1,0) {$K_4 +_e K_4$};
            \node[vtx] (a) at (-3.125,0.125) {};
            \node[vtx] (b) at (-4.825,0.125) {};
            \node[vtx] (c) at (-4.825,1.825) {};
            \node[vtx] (d) at (-3.125,1.825) {};
            \node[vtx] (e) at (-4,1) {};
            \node at (-2.675,0) {$W_4$};
            \draw[thick] (0)--(2) (1)--(2) (0)--(3) (1)--(3) (2)--(3) (0)--(4) (1)--(4) (0)--(5) (1)--(5) (4)--(5) (a)--(b)--(c)--(d)--(a) (e)--(a) (e)--(b) (e)--(c) (e)--(d);
            \end{scriptsize}
        \end{tikzpicture}
        \caption{The excluded minors for $f_{\infty}(G) \leq 2$.}
        \label{fig:excluded_minors_plane}
    \end{figure}
    
    It is plausible that determining any further set of excluded minors will require significant effort, especially in dimension $3$ or higher (see~\cite{Muller2017}). Therefore, instead of obtaining exact characterizations of the graphs with $f_p(G) \leq k$, we take a different approach and seek collections of \emph{unavoidable minors}. That is, for each $k \in \mathbb N$, we look for a finite collection of graphs $\mathcal{U}_p^k$ and an integer $c_p(k)$, such that every graph $H \in \mathcal{U}_p^k$ has $f_p(H) > k$, and every graph $G$ with $f_p(G) > c_p(k)$ has a minor in $\mathcal{U}_p^k$. 
    
    For the case $p = 2$, we show that grids are unavoidable minors, see Theorem~\ref{thm:main2} in Section~\ref{sec:euclidean}. Most of the paper is devoted to the case $p = \infty$, which turns out to be much more challenging. Our main result is Theorem~\ref{thm:main} that gives unavoidable minors for $p = \infty$. 
    
    Now, we introduce the four graphs $\ks_k$, $\kp_k$, $\kf_k$ and $\kn_k$ that form $\kall^k$ for each $k \in \mathbb N$. Examples of all four graphs are given in Figure~\ref{fig:families}. The first three graphs are obtained by gluing together $k$ copies of $K_4$ in a certain way, and then deleting each edge that is common to at least two copies. The graph
    $\ks_k$ is obtained by gluing the $k$ copies of $K_4$ along one common edge. The graph $\kp_k$ 
    is obtained by picking a perfect matching $\{e_i, f_i\}$ in each copy of $K_4$, and identifying $f_i$ 
    and $e_{i+1}$ for all $i \in [k-1]$. The graph $\kf_k$ is constructed in a similar way, except that we 
    take $e_i$ and $f_i$ to be incident edges. Edges are identified in such a way that the common end of $e_i$ and
    $f_i$ is identified to the common end of $e_{i+1}$ and $f_{i+1}$ for all $i \in [k-1]$. The notation for these first three
    families reflect the fact that the corresponding copies of $K_4$ are arranged as a star, path, and fan, respectively. Notice that $\ks_2 = \kp_2 = \kf_2 = K_4+_eK_4$, which is one of the excluded minors for 
    $f_\infty(G)\leq 2$. Next, we define our final family of graphs. The graph $\kn_k$ is the graph with $V(\kn_k) = \{v_0, \dots, v_k\} \cup \{w_0, \dots, w_k\}$ and 
        \[
        E(\kn_k) = \{v_{i-1}v_i,v_{i}w_{i},v_{i-1}w_i,w_{i-1}w_i \mid i \in [k]\} \cup \{v_0w_0, w_0v_k\}.
        \]
    
    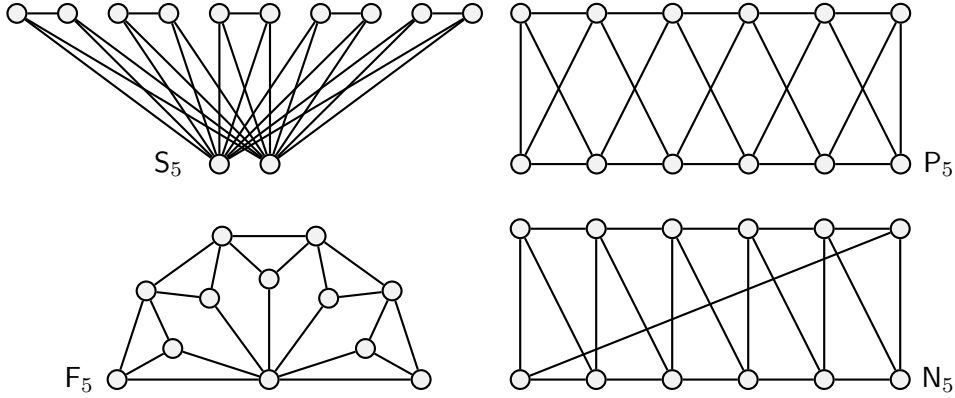
\begin{figure}
        \centering
        \begin{tabular}{c c}
		    \begin{tikzpicture}[scale=.333, inner sep=2.5pt]
		        \tikzstyle{vtx}=[circle,draw,thick,fill=black!5]
		        \begin{scriptsize}
		        \node[vtx](p0) at (1,-6){};
		        \node[vtx](p1) at (3,-6){};
		        \node[vtx](p2) at (-7,0){};
		        \node[vtx](p3) at (-5,0){};
		        \node[vtx](p4) at (-3,0){};
		        \node[vtx](p5) at (-1,0){};
		        \node[vtx](p6) at (1, 0){};
		        \node[vtx](p7) at (3,0){};
		        \node[vtx](p8) at (5, 0){};
		        \node[vtx](p9) at (7,0){};
		        \node[vtx](p10) at (9,0){};
		        \node[vtx](p11) at (11,0){};
		        \end{scriptsize}
		        \node at (-1,-6) {$\ks_5$};
		        \draw[thick] (p0)--(p2);
		        \draw[thick] (p0)--(p3);
		        \draw[thick] (p0)--(p4) ;
		        \draw[thick] (p0)--(p5) ;
		        \draw[thick] (p0)--(p6) ;
		        \draw[thick] (p0)--(p7) ;
		        \draw[thick] (p0)--(p8) ;
		        \draw[thick] (p0)--(p9);
		        \draw[thick] (p0)--(p10);
		        \draw[thick] (p0)--(p11);
		        \draw[thick] (p1)--(p2) ;
		        \draw[thick] (p1)--(p3) ;
		        \draw[thick] (p1)--(p4) ;
		        \draw[thick] (p1)--(p5) ;
		        \draw[thick] (p1)--(p6) ;
		        \draw[thick] (p1)--(p7);
		        \draw[thick] (p1)--(p8) ;
		        \draw[thick] (p1)--(p9) ;
		        \draw[thick] (p1)--(p10) ;
		        \draw[thick] (p1)--(p11) ;
		        \draw[thick] (p2)--(p3) ;
		        \draw[thick] (p4)--(p5) ;
		        \draw[thick] (p6)--(p7) ;
		        \draw[thick] (p8)--(p9) ;
		        \draw[thick] (p10)--(p11) ;
		    \end{tikzpicture}
		    &
		    \begin{tikzpicture}[x=0.5cm, y=1cm, inner sep=2.5pt]
                \tikzstyle{vtx}=[circle,draw,thick,fill=black!5]
                \begin{scriptsize}
                \node[vtx] (v0) at (0,0){};
                \node[vtx] (v1) at (0,2){};
                \node[vtx] (v2) at (2,0){};
                \node[vtx] (v3) at (2,2){};
                \node[vtx] (v4) at (4,0){};
                \node[vtx] (v5) at (4,2){};
                \node[vtx] (v6) at (6,0){};
                \node[vtx] (v7) at (6,2){};
                \node[vtx] (v8) at (8,0){};
                \node[vtx] (v9) at (8,2){};
                \node[vtx] (v10) at (10,0){};
                \node[vtx] (v11) at (10,2){};
                \end{scriptsize}
                
                \node at (11,0) {$\kp_5$};
   
                \draw[thick] (v0)--(v1) ;
                \draw[thick] (v0)--(v2) ;
                \draw[thick] (v0)--(v3);
                \draw[thick] (v1)--(v2);
                \draw[thick] (v1)--(v3) ;
                \draw[thick] (v2)--(v4) ;
                \draw[thick] (v2)--(v5);
                \draw[thick] (v3)--(v4);
                \draw[thick] (v3)--(v5) ;
                \draw[thick] (v4)--(v6) ;
                \draw[thick] (v4)--(v7);
                \draw[thick] (v5)--(v6);
                \draw[thick] (v5)--(v7) ;
                \draw[thick] (v6)--(v8) ;
                \draw[thick] (v6)--(v9) ;
                \draw[thick] (v7)--(v8) ;
                \draw[thick] (v7)--(v9) ;
                \draw[thick] (v8)--(v10) ;
                \draw[thick] (v8)--(v11);
                \draw[thick] (v9)--(v10);
                \draw[thick] (v9)--(v11) ;
                \draw[thick] (v10)--(v11);
            \end{tikzpicture}
            \\[2ex]
            \begin{tikzpicture}[inner sep=2.5pt]
		        \tikzstyle{vtx}=[circle,draw,thick,fill=black!5]
		        \begin{scriptsize}
                \node[vtx] (v0) at (0,0){};
                \node[vtx] (v1) at (180:2cm){};
                \node[vtx] (v2) at (162:1.333cm){};
                \node[vtx] (v3) at (144:2cm){};
                \node[vtx] (v4) at (126:1.333cm){};
                \node[vtx] (v5) at (108:2cm){};
                \node[vtx] (v6) at (90:1.333cm){};
                \node[vtx] (v7) at (72:2cm){};
                \node[vtx] (v8) at (54:1.333cm){};
                \node[vtx] (v9) at (36:2cm){};
                \node[vtx] (v10) at (18:1.333cm){};
                \node[vtx] (v11) at (0:2cm){};
                \end{scriptsize}
                \node at (-2.5,0) {$\kf_5$};
                \draw[thick] (v0)--(v1);
                \draw[thick] (v0)--(v2);
                \draw[thick] (v0)--(v4);
                \draw[thick] (v0)--(v6);
                \draw[thick] (v0)--(v8);
                \draw[thick] (v0)--(v10) ;
                \draw[thick] (v0)--(v11);
                \draw[thick] (v2)--(v1) ;
                \draw[thick] (v3)--(v1) ;
                \draw[thick] (v2)--(v3) ;
                \draw[thick] (v4)--(v3) ;
                \draw[thick] (v4)--(v5) ;
                \draw[thick] (v3)--(v5) ;
                \draw[thick] (v5)--(v6);
                \draw[thick] (v7)--(v6) ;
                \draw[thick] (v5)--(v7);
                \draw[thick] (v7)--(v8);
                \draw[thick] (v8)--(v9) ;
                \draw[thick] (v7)--(v9);
                \draw[thick] (v9)--(v11);
                \draw[thick] (v10)--(v11) ;
                \draw[thick] (v9)--(v10);
            \end{tikzpicture}
            &
            \begin{tikzpicture}[x=0.5cm, y=1.cm, inner sep=2.5pt]
                \tikzstyle{vtx}=[circle,draw,thick,fill=black!5]
                \begin{scriptsize}
                \node[vtx] (v0) at (0,0){};
                \node[vtx] (v1) at (0,2){};
                \node[vtx] (v2) at (2,0){};
                \node[vtx] (v3) at (2,2){};
                \node[vtx] (v4) at (4,0){};
                \node[vtx] (v5) at (4,2){};
                \node[vtx] (v6) at (6,0){};
                \node[vtx] (v7) at (6,2){};
                \node[vtx] (v8) at (8,0){};
                \node[vtx] (v9) at (8,2){};
                \node[vtx] (v10) at (10,0){};
                \node[vtx] (v11) at (10,2){};
                \end{scriptsize}
            
                \node at (11,0) {$\kn_5$};
   
                \draw[thick] (v0)--(v1);
                \draw[thick] (v2)--(v3);
                \draw[thick] (v4)--(v5);
                \draw[thick] (v6)--(v7);
                \draw[thick] (v8)--(v9);
                \draw[thick] (v10)--(v11);
            
                \draw[thick] (v0)--(v2) ;
                \draw[thick] (v1)--(v3) ;
                \draw[thick] (v2)--(v4) ;
                \draw[thick] (v3)--(v5) ;
                \draw[thick] (v4)--(v6) ;
                \draw[thick] (v5)--(v7) ;
                \draw[thick] (v6)--(v8) ;
                \draw[thick] (v7)--(v9) ;
                \draw[thick] (v8)--(v10) ;
                \draw[thick] (v9)--(v11) ;
            
                \draw[thick] (v1)--(v2) ;
                \draw[thick] (v3)--(v4) ;
                \draw[thick] (v5)--(v6) ;
                \draw[thick] (v7)--(v8) ;
                \draw[thick] (v9)--(v10) ;
                \draw[thick] (v11)--(v0) ;
            \end{tikzpicture}
        \end{tabular}
        
        \caption{The graphs $\ks_5$, $\kp_5$, $\kf_5$ and $\kn_5$.\label{fig:families}}
    \end{figure}
    
    For each $k \in \mathbb N$, we let $\kall^k = \{\ks_k, \kp_k, \kf_k, \kn_k\}$. We say that a graph $G$ \emph{contains a $\kall^k$ minor} if it contains $\ks_k, \kf_k, \kp_k$ or $\kn_k$ as a minor.  Our main theorem shows that if $f_\infty(G)$ is large, then $G$ necessarily contains a $\kall^k$ minor.

      \begin{theorem} \label{thm:main}
            There exists a computable function $g_{\ref{thm:main}}: \NN \to \RR$ such that for every $k\in \NN$, every
            graph $G$ with $f_\infty(G) > g_{\ref{thm:main}}(k)$ contains a $\kall^k$ minor. Moreover, every graph $G$ that contains a $\kall^k$ minor has $f_\infty(G) > k$.
        \end{theorem}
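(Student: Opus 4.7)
The theorem splits into two halves; I treat the easier ``moreover'' part first. Since $f_\infty$ is minor-monotone, it suffices to exhibit, for each $H \in \kall^k$, a distance function $d$ with $f_\infty(H, d) > k$. On the three $K_4$-built graphs $\ks_k$, $\kp_k$, $\kf_k$ I would use unit edge weights. An isometric embedding of a unit-weight $K_4$ in $\ell_\infty^2$ is essentially unique up to the symmetries of $\ell_\infty^2$; any attempt to attach a second $K_4$ at its $2$-separator while reusing only the existing two coordinates violates the unit-distance constraint on one of the new edges. Inducting along the star/path/fan skeleton, each additional $K_4$-block contributes a fresh coordinate, giving $f_\infty > k$. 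For $\kn_k$ I would use a symmetric weighting along the ladder so that each rung forces a new axis direction, with the twist edge $w_0 v_k$ supplying the parity obstruction that produces the final coordinate.

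For the main direction I argue the contrapositive: assuming $G$ excludes all four graphs of $\kall^k$ as minors, I aim for a computable bound $f_\infty(G) \leq g(k)$. The plan is structural.
\begin{enumerate}
\item[(i)] Decompose $G$ along its $1$- and $2$-separators into $3$-connected torsos. Embeddings can be combined across separators of size at most two spending only $O(1)$ fresh coordinates per interface, so it suffices to bound $f_\infty$ on each torso and to bound the number of torsos in the decomposition tree.
\item[(ii)] On each $3$-connected torso $T$, analyse the auxiliary ``nerve'' recording which $K_4$-minors of $T$ share a $2$-separator, motivated by the appearance of $K_4 +_e K_4$ in the excluded-minor list of~\cite{FHJV17} for $k = 2$. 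A Ramsey-style dichotomy on this nerve shows that once it becomes large, one of four canonical arrangements is forced: a star, a path, or a fan of $K_4$-blocks (yielding $\ks_k$, $\kp_k$, or $\kf_k$), or a cyclic closure of ladder-blocks (yielding $\kn_k$).
\item[(iii)] Bound $f_\infty(T)$ for each torso from the resulting bound on its ``$K_4$-complexity'', by induction from the base case $f_\infty(K_4) = 2$.
\end{enumerate}

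The main obstacle is step (ii): pinning down the right Ramsey statement so that exactly these four families cover all large configurations, with no fifth obstruction. The cyclic case producing $\kn_k$ is particularly subtle because $\kn_k$ is not built from $K_4$'s in the same uniform way as the other three graphs, so one must show that in the absence of $\ks_k$, $\kp_k$, and $\kf_k$, the only way a long cyclic chain of $K_4$-like blocks can close up is the twisted ladder $\kn_k$. A secondary difficulty is the gluing step (i): $\ell_\infty$-embeddings do not compose freely along small separators, and care is needed to ensure the combined embeddings grow additively rather than multiplicatively in dimension. These two points will also govern the growth rate of $g_{\ref{thm:main}}(k)$, which I expect to be at least a tower-of-exponentials in $k$.
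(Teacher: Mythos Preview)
Your ``moreover'' argument has a concrete gap: unit weights do not work. Take $\ks_2 = K_4 +_e K_4$ with all edges of length $1$, with shared vertices $v,w$ and side vertices $v_1,w_1,v_2,w_2$. The map $v\mapsto(0,0)$, $w\mapsto(1,1)$, $v_1\mapsto(1,0)$, $w_1\mapsto(0,1)$, $v_2\mapsto(0,1)$, $w_2\mapsto(1,0)$ is an isometric embedding in $\ell_\infty^2$ (the definition only constrains distances along edges, so non-adjacent vertices may coincide). Hence $f_\infty(\ks_2,\text{unit})\le 2$, and your induction never gets started. The paper instead builds, for each $H\in\kall^k$, a bespoke distance function and exhibits $k+1$ pairwise \emph{incompatible} edges (no single flat set can cover any two of them), using the combinatorial reformulation of $f_\infty$ via flat coverings; see Lemma~\ref{lem:incompatible} and Theorems~\ref{thm:K4Star}--\ref{thm:necklace}. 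The weights are highly asymmetric (e.g.\ growing linearly or geometrically along the structure), precisely to defeat the kind of reuse-of-coordinates your embedding above exploits.

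For the main direction, your outline has the right coarse shape (reduce to $3$-connected pieces, then bound those), but step~(ii) is where the real work lies and your ``Ramsey on the $K_4$-nerve'' sketch is not close to a proof. The paper's route is quite different and worth knowing: on a $3$-connected piece it performs a \emph{fan-reduction}, then shows successively that such a graph with no $\kall^k$ minor has no long ladder minor (else $\kn_k$, $\kp_k$, or $\kf_k$ appears), hence no large fan subdivision, hence no long path, and finally---via a DFS-tree counting argument---bounded vertex cover number $\tau(G)$. The metric bound then comes almost for free from $f_\infty(G)\le\tau(G)$. For step~(i), your worry that embeddings do not compose freely across $2$-separators is justified; the paper handles it with the notion of \emph{frames} (flattenable sets with designated ``compressible'' edges) and a contracted SPQR tree whose diameter is bounded because a long chain of $3$-connected pieces already yields $\kp_k$ or $\kf_k$. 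Without something like these two ingredients---the vertex-cover bound on $3$-connected pieces and the frame machinery for gluing---your plan does not yet yield a function $g_{\ref{thm:main}}$.
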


    Let $\mathcal{S}=\bigcup_k \{\ks_k\}, \mathcal{F} = \bigcup_k \{\kf_k\}, \mathcal{P} = \bigcup_k \{\kp_k\}$, and $\mathcal{N} = \bigcup_k \{\kn_k\}$.  For a class of graphs $\mathcal{C}$ and $p \in [1, \infty]$, we let $f_p(\mathcal C) = \max \{f_p(G) \mid G \in \mathcal{C}\}$, if this number is finite, and $f_p (\mathcal C) = \infty$, otherwise. 
    As an immediate corollary, our main theorem gives an exact characterization of all minor-closed classes $\mathcal{C}$ with $f_\infty (\mathcal C)=\infty$.  
    
    \begin{corollary}
    For all minor-closed classes of graphs $\mathcal{C}$, $f_\infty(\mathcal C) = \infty$ if and only if $\mathcal{S} \subseteq \mathcal{C}$ or  $\mathcal{F} \subseteq \mathcal{C}$ or  $\mathcal{P} \subseteq \mathcal{C}$ or  $\mathcal{N} \subseteq \mathcal{C}$.
    \end{corollary}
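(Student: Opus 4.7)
The plan is to derive the corollary directly from Theorem~\ref{thm:main} via an infinite pigeonhole argument. For the easy direction, if one of the four families, say $\mathcal{S}$, is contained in $\mathcal{C}$, then $\ks_k \in \mathcal{C}$ for every $k$, and the ``moreover'' clause of Theorem~\ref{thm:main} gives $f_\infty(\ks_k) > k$, so $f_\infty(\mathcal{C}) = \infty$. The same reasoning handles $\mathcal{F}$, $\mathcal{P}$, and $\mathcal{N}$.

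For the converse, I assume $f_\infty(\mathcal{C}) = \infty$. Then for every $k \in \NN$ there is some $G_k \in \mathcal{C}$ with $f_\infty(G_k) > g_{\ref{thm:main}}(k)$, and Theorem~\ref{thm:main} provides a graph in $\kall^k$ that is a minor of $G_k$; since $\mathcal{C}$ is minor-closed, this minor also belongs to $\mathcal{C}$. Hence at least one of $\ks_k, \kp_k, \kf_k, \kn_k$ lies in $\mathcal{C}$ for every $k$. By the pigeonhole principle, there is a symbol $X \in \{\ks, \kp, \kf, \kn\}$ and an infinite set $K \subseteq \NN$ such that $X_k \in \mathcal{C}$ for all $k \in K$.

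The final step is to promote this to $X_k \in \mathcal{C}$ for every $k \in \NN$, and this reduces to verifying that each of the four sequences is nested under the minor order, in the sense that $X_j$ is a minor of $X_k$ whenever $j \leq k$. I expect this to be the main (and essentially only) point where one must look at the actual constructions in Figure~\ref{fig:families}. For $\ks$ and $\kf$ the nesting is immediate: deleting one $K_4$ block yields the previous member as a subgraph. For $\kp$ one contracts a suitable pair of edges in the first $K_4$ block, which merges two consecutive columns of the ladder-like structure and produces $\kp_{k-1}$. For $\kn$ one contracts a rung of the ladder to obtain $\kn_{k-1}$. Each of these is a short check from the definitions. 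Combined with minor-closedness of $\mathcal{C}$, the nesting upgrades the infinite set $K$ to all of $\NN$ and yields $\{X_k \mid k \in \NN\} \subseteq \mathcal{C}$, completing the proof.
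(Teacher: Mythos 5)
Your proposal is correct and is exactly the derivation the paper has in mind when it calls the corollary ``immediate'': easy direction from the moreover clause of Theorem~\ref{thm:main}, converse by extracting a $\kall^k$ minor for every $k$, pigeonholing onto one of the four families, and using that each family is nested under the minor order. The only quibble is with two of your nesting verifications: deleting the last $K_4$ block of $\kf_k$ does \emph{not} yield $\kf_{k-1}$ as a subgraph, since the boundary edge $v_0v_{2k-1}$ was removed in the construction of $\kf_k$ (one must instead contract, e.g., $v_0v_{2k}$ and then delete the resulting degree-$2$ vertex $v_{2k+1}$); and for $\kn_k$ contracting a rung $v_iw_i$ gives a graph on $2k+1$ vertices, not $\kn_{k-1}$ --- the correct operation is to contract one edge on each of the two long paths, say $v_{k-1}v_k$ and $w_{k-1}w_k$, which one checks yields exactly $\kn_{k-1}$. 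With those two fixes the argument is complete.
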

    
    The rest of the paper is organized as follows.  In Section~\ref{sec:euclidean}, we establish that grids are unavoidable minors for large $\ell_2$-dimension. 
    In Section~\ref{sec:alternative}, we give a more combinatorial definition of $\ell_\infty$-dimension.  In Section~\ref{sec:metrictools}, we establish some lemmas on $\ell_\infty$-dimension to be used later. 
    
    We establish the second part of our main result, Theorem~\ref{thm:main}, in Section~\ref{sec:certificates}, by constructing on each graph $G \in \kall^k$ a distance function $d$ that allows us to show $f_\infty(G,d) > k$ in a simple, combinatorial way. 
    
   In order to prove the first part of Theorem~\ref{thm:main}, we consider a graph $G$ without a $\kall^k$ minor and set out to prove that we can upper bound $f_\infty(G)$ by some integer $g_{\ref{thm:main}}(k)$. 
    
    It is straightforward to show that the $\ell_\infty$-dimension of a graph is the maximum $\ell_\infty$-dimension of one of its blocks (see Lemma~\ref{lem:gluing}). Therefore, we may assume that $G$ is $2$-connected. In Section~\ref{sec:reduction}, we prove that we can essentially assume that $G$ is $3$-connected. This part relies on SPQR trees. 
    
    The $3$-connected case is the part of the proof requiring most of the work. 
    The proof techniques here are mostly graph-theoretic, and may be of independent interest. 
    This is done in Section~\ref{sec:3connected} and Section~\ref{sec:finish}.

    \section{The Euclidean case} \label{sec:euclidean}
    
    The goal of this section is to establish that grids are a collection of unavoidable minors for large Euclidean dimension, which is the analogue of Theorem~\ref{thm:main} for $\ell_2$-dimension. 
    
     Let $r \in \NN$.  Recall that the \emph{square grid} graph $\Box_r$ is the graph with vertex set $[r] \times [r]$, where $(i,j)$ is adjacent to $(i',j')$ if and only if $|i-i'|+|j-j'|=1$. The \emph{triangular grid} graph $\triangle_r$ has vertex set $V(\triangle_r) = \{v_{i,j} \mid i, j \in [r],\  i \leq j\}$ and edge set $E(\triangle_r) = \{v_{i,j}v_{k,\ell} \mid v_{i,j}, v_{k,\ell} \in V(\triangle_r),\ (i-k,j-\ell) \in \{\pm(1,0),\pm(0,1),\pm (1,1)\}\}$. 
    
     Let $G$ and $H$ be graphs such that $H$ is a minor of $G$. Then $G$ contains an \emph{$H$-model}, that is, a collection $\{X_v \mid v \in V(H)\}$ of disjoint subsets $X_v \subseteq V(G)$ each inducing a connected subgraph of $G$ such that for every edge $vw \in E(H)$ there is an edge of $G$ with one end in $X_v$ and the other in $X_w$. The sets $X_v$ are called the \emph{vertex images}. 
    The following is the main result of this section.  
    
    \begin{theorem} \label{thm:main2}
        There exists a function $g_{\ref{thm:main2}}(k)=O(k^{9}\polylog(k))$ such that every
        graph $G$ with $f_2(G) > g_{\ref{thm:main2}}(k)$ contains a $\triangle_{k+2}$ minor.
        Moreover, every graph $G$ that contains a $\triangle_{k+2}$ minor has $f_2(G) > k$.
    \end{theorem}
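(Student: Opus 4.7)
The plan is to split the proof into the ``moreover'' direction and the main implication.

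For the moreover direction, I reduce via the minor-monotonicity of $f_2$ to exhibiting an $\ell_2$-realizable distance function $d$ on $\triangle_{k+2}$ with $f_2(\triangle_{k+2}, d) \geq k+1$. My plan is to construct such $d$ inductively on $k$: start with a non-degenerate triangle on $\triangle_2 = K_3$ realized in $\mathbb{R}^2$, and at each step extend the embedding from $\triangle_{k+2}$ to $\triangle_{k+3}$ by placing the new row of vertices with a small non-zero displacement in a fresh coordinate direction of $\mathbb{R}^{k+2}$. A rigidity-theoretic argument, exploiting the triangulated structure that couples the new row to the previous one via pairs of adjacent triangles sharing edges, should then certify that the resulting distance function admits no isometric realization in $\mathbb{R}^{k+1}$.

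For the main direction, the plan is to combine three ingredients. First, I would prove the upper bound $f_2(G) \leq \tw(G) + 1$ by a tree-decomposition gluing argument: take a tree decomposition of $G$ of width $t$, root it, and process bags in BFS order; each bag has at most $t+1$ vertices and therefore embeds locally in $\mathbb{R}^t$, and a new bag $B_x$ with parent $B_y$ is glued into the growing global embedding in $\mathbb{R}^{t+1}$ by applying an isometry of $\mathbb{R}^{t+1}$ that aligns the two congruent isometric embeddings of $S := B_x \cap B_y$. Second, I would invoke the polynomial Grid Minor Theorem of Chuzhoy and Tan: there exists a function $t^*(r) = O(r^9 \polylog(r))$ such that $\tw(G) \geq t^*(r)$ forces a $\Box_r$ minor in $G$. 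Third, I would show that $\Box_{cr}$ contains $\triangle_r$ as a minor for some absolute constant $c$, by placing the vertex $v_{i,j}$ of $\triangle_r$ at the grid vertex $(ci, cj)$ and routing the horizontal, vertical, and diagonal edges of $\triangle_r$ through pairwise-disjoint short paths in the grid. Combining the three steps yields $g_{\ref{thm:main2}}(k) = O(k^9 \polylog(k))$: if $f_2(G) > g_{\ref{thm:main2}}(k) := t^*(c(k+2))$, then $\tw(G) \geq t^*(c(k+2))$, so $G$ has $\Box_{c(k+2)}$ and therefore $\triangle_{k+2}$ as a minor.

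The main obstacle is the moreover direction. For $k \leq 2$ it is easily handled: $\triangle_2 = K_3$ gives $f_2 > 0$, $\triangle_3 \supseteq K_3$ gives $f_2 > 1$, and a direct check shows that $\triangle_4$ contains a $K_4$ minor, so by Belk--Connelly $f_2(\triangle_4) > 2$. However, for $k \geq 3$ the planarity of $\triangle_{k+2}$ excludes $K_5$ and other non-planar excluded minors, so one cannot reduce to known excluded-minor characterizations. Showing $f_2(\triangle_{k+2}) > k$ in general therefore requires a genuine rigidity-theoretic construction, possibly via super-stable frameworks or Cayley--Menger determinants, certifying that the inductively-constructed embedding in $\mathbb{R}^{k+1}$ has no isometric realization in $\mathbb{R}^k$; this is the most delicate step of the proof.
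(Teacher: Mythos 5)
Your argument for the first implication is sound and essentially the paper's: bound $f_2$ by treewidth, apply the polynomial grid theorem, and observe that a large square grid contains $\triangle_{k+2}$ as a minor. (The paper simply cites Belk--Connelly for $f_2(G)\leq \tw(G)$ rather than re-deriving it, but your tree-decomposition gluing argument is the standard proof of that fact and works.)

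The ``moreover'' direction, however, has a genuine gap, and you have in effect flagged it yourself: the entire content of the claim $f_2(\triangle_{k+2})>k$ is delegated to ``a rigidity-theoretic argument [that] should then certify'' non-realizability in $\RR^{k}$, which is never supplied. The issue is not merely one of missing details. What must be shown is that \emph{no} isometric realization of $(\triangle_{k+2},d)$ exists in $\RR^{k}$ --- not that the given framework is locally or even globally rigid in $\RR^{k+1}$. For a ``small generic displacement into a fresh coordinate,'' this amounts to a universal/dimensional rigidity statement, and such statements are false for many graphs (a generic realization of a path or cycle in high dimension folds down into $\RR^1$); for the planar graph $\triangle_{k+2}$ it is far from obvious and would itself require a substantial proof. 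The paper avoids this entirely by choosing a \emph{degenerate}, non-generic distance function: it sets $\phi(v_{1,j})=e_j$ and $\phi(v_{i,j})=\tfrac12\phi(v_{i-1,j-1})+\tfrac12\phi(v_{i-1,j})$, so that in every triangle $v_{i,j}v_{i-1,j-1}v_{i-1,j}$ the triangle inequality is tight and any realization $\psi$ is forced to place $\psi(v_{i,j})$ at the midpoint of its two upper neighbours. Hence every realization is determined by the images of the first row, and an induction (using that the two sub-grids obtained by deleting a boundary row are copies of $\triangle_{r-1}$, plus an explicit coordinate computation for the one remaining pair) shows those $r$ images are pairwise at distance $\sqrt 2$, i.e.\ form a regular simplex spanning an affine space of dimension $r-1$. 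Some argument of this kind --- an explicit distance function together with a proof that every realization spans $k+1$ dimensions --- is what your proposal is missing.
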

    
    In order to prove the first part of Theorem~\ref{thm:main2}, we use the by now standard notion of \emph{treewidth} (see \cite{Diestel} for the definition). We let $\tw(G)$ denote the treewidth of a graph $G$.  As observed by Belk and Connelly \cite{BelkConnelly2007}, $f_2(G) \leq \tw(G)$ holds for all graphs $G$. Thus if $f_2(G) > c$, then $\tw(G) > c$. 
    
    By the grid theorem \cite{RS86}, there is a function $\gamma(k)$ such  that every graph $G$ with $\tw(G) \geq \gamma(k)$ contains $\Box_k$ as a minor. In fact, one can take $\gamma(k) = O(k^{9}\polylog(k))$ by very recent results \cite{CT19} (see  \cite{ChekuriChuzhoi16} for the original polynomial grid theorem). Furthermore, it is easy to check that $\Box_{2k+2}$ has a  $\triangle_{k+2}$ minor, for all $k \in \NN$. Figure~\ref{fig:triangular_grid} illustrates this for $k = 4$. Therefore, in Theorem~\ref{thm:main2}, we may take $g_{\ref{thm:main2}}(k)=\gamma(2k+2)$. This proves the first part of the theorem. Notice that for all $r \in \NN$, $\triangle_r$ has $\Box_m$ as a subgraph, where $m = \lfloor \frac{r-1}{2} \rfloor$. Thus, excluding triangular grids is equivalent to excluding rectangular grids within a factor of $2$. 
    
    \begin{figure}[ht]
        \centering
        \begin{tikzpicture}[x=0.9cm, y=-0.9cm]
            \draw (5,0)--(0,0)--(0,5);
            \foreach \n in {1,...,5}{
            \draw[blue] (\n, 0)--(0, \n);
            \draw (\n, 5-\n)--(\n, 0);
            \draw (5-\n, \n)--(0, \n);
            }
            \foreach \n in {0,...,5}{
            \foreach \m in {0,...,\n}{
            \draw[red, fill=red] (\m, 5-\n) circle (2pt);
            }
            }
        \end{tikzpicture}
        \hspace{5mm}
        \begin{tikzpicture}[x=0.5cm, y=-0.5cm]
            \foreach \n in {0,...,9}{
            \draw[lightgray] (0, \n)--(9, \n);
            \draw[lightgray] (\n, 0)--(\n, 9);
            }
            \foreach \n in {1,...,4}{
            \draw[red, very thick] (2*\n-1, 0)--(2*\n, 0);
            \draw[red, very thick] (0, 2*\n-1)--(0, 2*\n);
            \foreach \m in {1,...,\n}{
            \draw[red, very thick] (2*\m-1, 10- 2*\n)--(2*\m, 10- 2*\n);
            \draw[red, very thick] (2*\m, 9-2*\n)--(2*\m, 10-2*\n);
            \draw[thick] (2*\m-2, 10-2*\n)--(2*\m-1, 10-2*\n);
            \draw[thick] (2*\m, 8-2*\n)--(2*\m, 9-2*\n);
            }
            }
            \foreach \n in {0,...,4}{
            \draw[thick] (2*\n, 0)--(2*\n+1, 0);
            \draw[thick] (0, 2*\n)--(0, 2*\n+1);
            \foreach \m in {0,...,\n}{
            \draw[blue, thick] (2*\m, 9-2*\n)--(2*\m+1, 9-2*\n);
            \draw[red, very thick] (2*\m+1, 8-2*\n)--(2*\m+1, 9-2*\n);
            }
            }
            \draw[red, fill=red] (0,0) circle (2pt);
            \draw[red, fill=red] (0,9) circle (2pt);
        \end{tikzpicture}
        \caption{On the left is $\triangle_6$. On the right is a $\triangle_6$-model in $\Box_{10}$. Vertex images are displayed in red, and edges between the vertex images in black or blue. \label{fig:triangular_grid}}
    \end{figure}
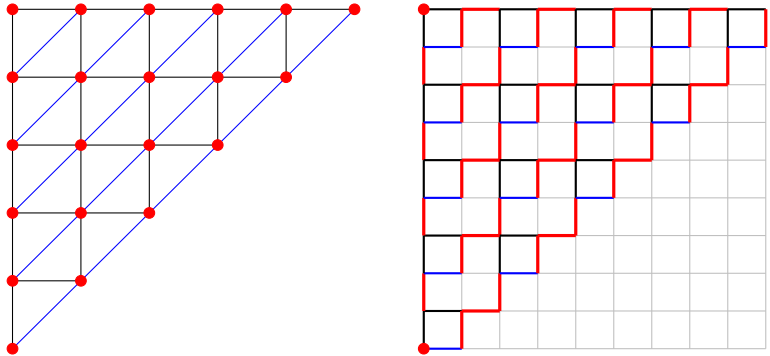  
    
    We now prove the second part of Theorem~\ref{thm:main2}, see Lemma~\ref{lem:triangular_grid} below. We remark that Eisenberg-Nagy, Laurent and Varvitsiotis~\cite{Eisenberg-NagyLV14} prove a similar result for a related invariant called \emph{extreme Gram dimension}. This is a variant of the \emph{Gram dimension} of a graph, that is studied and compared to the Euclidean dimension in Laurent and Varvitsiotis~\cite{LaurentV14}. The idea of considering a triangular grid instead of a rectangular one comes from~\cite{Eisenberg-NagyLV14}, and our induction-based proof is inspired by their proof. However, to our knowledge, the results of \cite{LaurentV14} and \cite{Eisenberg-NagyLV14} do not imply our next lemma.
    
\begin{lemma} \label{lem:triangular_grid}
    For all $r \in \NN$, $f_2(\triangle_r) \geq r-1$. 
\end{lemma}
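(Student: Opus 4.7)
The plan is to prove the lemma by induction on $r$. The base case $r=2$ is immediate: $\triangle_2\cong K_3$, and any non-degenerate $3$-point metric is not realizable in $\ell_2^0$. For the inductive step I would carry a strengthened inductive hypothesis: there exists a distance function $d_{r-1}$ on $\triangle_{r-1}$ that is realizable in $\ell_2^{r-2}$ and such that every isometric embedding of $(\triangle_{r-1},d_{r-1})$ into $\ell_2^{r-2}$ has image affinely spanning $\RR^{r-2}$. This can be arranged by picking any embedding $\phi_{r-1}\colon V(\triangle_{r-1})\to \RR^{r-2}$ whose image is affinely spanning and defining $d_{r-1}(uv)=\lVert \phi_{r-1}(u)-\phi_{r-1}(v)\rVert_2$ on each edge $uv$.

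Viewing $\triangle_{r-1}$ as the subgraph of $\triangle_r$ obtained by removing the last row $R=\{v_{i,r}\colon 1\le i\le r\}$, I would extend $\phi_{r-1}$ to a map $\phi_r\colon V(\triangle_r)\to\RR^{r-1}$ by placing each new vertex $v_{i,r}$ with a nonzero coordinate in the new direction, chosen generically, and set $d_r(uv)=\lVert \phi_r(u)-\phi_r(v)\rVert_2$ on every edge of $\triangle_r$. Then $d_r$ is realizable in $\ell_2^{r-1}$ and its restriction to $\triangle_{r-1}$ equals $d_{r-1}$. Given any isometric embedding $\psi\colon V(\triangle_r)\to\RR^k$ of $(\triangle_r,d_r)$, restriction to $V(\triangle_{r-1})$ yields an isometric embedding of $(\triangle_{r-1},d_{r-1})$, so $k\ge r-2$ by the inductive hypothesis.

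The core step is to rule out $k=r-2$. Assuming $k=r-2$, the strengthened hypothesis forces $\psi|_{V(\triangle_{r-1})}$ to fill $\RR^{r-2}$ affinely and to coincide with $\phi_{r-1}$ up to a global isometry of $\RR^{r-2}$. The question becomes whether the $r$ new vertex images $\psi(v_{1,r}),\ldots,\psi(v_{r,r})$ can simultaneously live in this $\RR^{r-2}$ while satisfying both the cross-edges to $v_{i,r-1}$ and $v_{i-1,r-1}$ and the path edges $v_{i,r}v_{i+1,r}$. Each new vertex has only $1$ or $2$ neighbors inside $\triangle_{r-1}$, so its position is locally under-determined; the contradiction must come from the joint system of cross- and path-constraints being globally infeasible in $\RR^{r-2}$.

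I expect the main obstacle to lie precisely in this global feasibility step, the analogue of the crux in the proof of Eisenberg-Nagy, Laurent and Varvitsiotis~\cite{Eisenberg-NagyLV14}. The cleanest formulation is via the dual Gram matrix / Cayley--Menger viewpoint: encode the partial distance data $d_r$ as a partial Gram matrix and exhibit an explicit certificate (a vanishing Cayley--Menger-type determinant of order $r-1$) showing no PSD completion has rank $\le r-2$, with $\phi_r$ witnessing that rank $r-1$ is achieved. Designing the last-coordinate values of $\phi_r$ so that rigidity is propagated along the new row $R$ via the interaction of cross- and path-constraints is the technical heart of the argument.
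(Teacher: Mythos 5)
Your proposal is a plan rather than a proof: the two load-bearing steps are left open, and one of them rests on an unjustified (and in general false) assertion. First, you claim the strengthened inductive hypothesis ``can be arranged by picking any embedding $\phi_{r-1}$ whose image is affinely spanning and defining $d_{r-1}(uv)=\lVert\phi_{r-1}(u)-\phi_{r-1}(v)\rVert_2$.'' This does not follow. A distance function on $\triangle_{r-1}$ constrains only the edge lengths, not all pairwise distances, so a generic spanning embedding gives no control over \emph{other} isometric embeddings of the same metric graph; a priori these may collapse into a lower-dimensional affine subspace. Proving that every realization spans is exactly the rigidity statement that must be established, and it requires a carefully designed $d$, not a generic one. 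Second, you explicitly defer the step that rules out $k=r-2$ (``the technical heart of the argument''), offering only the suggestion of a Cayley--Menger certificate without constructing it. So the proposal correctly locates the difficulty but does not resolve it.

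For comparison, the paper's proof chooses the distances so that rigidity is forced edge-locally: it sets $\phi(v_{1,j})=e_j$ and $\phi(v_{i,j})=\frac{1}{2}\phi(v_{i-1,j-1})+\frac{1}{2}\phi(v_{i-1,j})$, so that in the resulting metric the triangle inequality is tight on each small triangle and hence \emph{every} isometric embedding $\psi$ must place $\psi(v_{i,j})$ at the midpoint of $\psi(v_{i-1,j-1})$ and $\psi(v_{i-1,j})$. Consequently $\psi$ is determined by the images $q_1,\dots,q_r$ of the first row, and an induction (using the two copies of $\triangle_{r-1}$ obtained by deleting the last column or the diagonal, plus one explicit computation for the remaining pair $q_1,q_r$ via the edge $v_{r-1,r-1}v_{r-1,r}$) shows the $q_j$ are pairwise at distance $\sqrt{2}$, i.e.\ they form a regular $(r-1)$-simplex, which forces dimension at least $r-1$. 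To salvage your outline you would need an analogous explicit mechanism by which the edge constraints alone propagate affine dependence along the new row; without it the inductive step has no content.
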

\begin{proof}
Let $e_1, \ldots, e_r$ be the $r$ standard basis vectors in $\mathbb{R}^r$. We recursively define an embedding $\phi : V(\triangle_r) \to \mathbb{R}^r$ by $\phi(v_{1,j}) = e_j$ for all $j \in [r]$ and
$
\phi(v_{i,j}) = \frac{1}{2} \phi(v_{i-1,j-1}) + \frac{1}{2} \phi(v_{i-1,j})
$ 
for all $2 \leq i \leq j$. We define an $\ell_2$-realizable distance function $d : E(\triangle_r) \to \RR_+$ from the embedding $\phi$, by letting $d(vv') = ||\phi(v)-\phi(v')||_2$ for each $vv' \in E(\triangle_r)$. 

Now consider an arbitrary isometric embedding $\psi$ of $(\triangle_r,d)$ in some Euclidean space $\mathbb E$. By our choice of the distance function, $\psi(v_{i,j})$ is the midpoint of $\psi(v_{i-1,j-1})$ and $\psi(v_{i-1,j})$ for every $i \geq 2$. Hence, the whole embedding $\psi$ is entirely determined by the $r$ points $q_j = \psi(v_{1,j})$, and lies in the affine hull of $q_1$, \ldots, $q_r$. By applying an appropriate isometry, we may assume that $\mathbb{E} = \{x \in \RR^r \mid \sum_{i} x_i = 1\}$.
We claim that $||q_i-q_j||_2= \sqrt{2}$ for all distinct $i,j \in [r]$. Hence, these $r$ points are the vertices of a regular simplex, which implies $f_2(G,d) \geq r-1$.  

The proof is by induction on $r$. Since the statement is clear for $r = 2$, we may assume that $r \geq 3$. Observe that the induced subgraphs $\triangle_r - \{v_{i,r} \mid i \in [r]\}$ and $\triangle_r - \{v_{i,i} \mid i \in [r]\}$ are both isomorphic to $\triangle_{r-1}$. By the inductive hypothesis, this implies that $q_1$, \ldots, $q_{r-1}$ are equidistant, and $q_2$, \ldots, $q_r$ are equidistant. Thus, it remains to show $||q_1-q_r||_2=\sqrt{2}$.

Since $||q_i-q_j||_2=\sqrt{2}$ for all distinct $i,j \in [r-1]$, by applying an appropriate isometry we may assume that $q_k = e_k$ for all $k \in [r-1]$. 


 Let $x_1, \ldots, x_r \in \RR$ denote the coordinates of $q_r$ in $\RR^r$. The following constraints hold:

\begin{align}
\sum_{i} x_i &= 1\,,\\
\label {eq:squarenorm} \sum_{i} x^2_i &= 1 + 2 x_k \quad \forall 2 \leq k \leq r-1\,. 
\end{align}

The first constraint is due to the fact that $q_r \in \mathbb{E}$, and the second is equivalent to $||\psi(v_{1,r}) - \psi(v_{1,k})||_2^2 = ||\phi(v_{1,r}) - \phi(v_{1,k})||_2^2$ (for $2 \leq k \leq r-1$), which holds by induction. Notice that $x_2 = x_3 = \cdots = x_{r-1}$ follows from \eqref{eq:squarenorm}. Since $v_{r-1,r-1}v_{r-1,r}$ is an edge of $\triangle_r$, 
\begin{equation}
\label{eq:lastdist}
||\psi(v_{r-1,r-1})-\psi(v_{r-1,r})||_2^2 = ||\phi(v_{r-1,r-1})-\phi(v_{r-1,r})||_2^2\,.
\end{equation}

Since $\psi(v_{1,j}) = \phi(v_{1,j})$ for all $j \in [r-1]$,  $\psi(v_{i,j}) = \phi(v_{i,j})$ for all $i \leq j \leq r-1$. Hence, we can rewrite the left-hand side of \eqref{eq:lastdist} as
\begin{align*}
||\psi(v_{r-1,r-1})-\psi(v_{r-1,r})||_2^2
&= ||\phi(v_{r-1,r-1})-\psi(v_{r-1,r})||_2^2\\
&= ||(\phi(v_{r-1,r-1})-\phi(v_{r-1,r}))-(\psi(v_{r-1,r})-\phi(v_{r-1,r}))||_2^2\\
\end{align*}

Thus, \eqref{eq:lastdist} holds if and only if 
\begin{equation}
\label{eq:lastdist_bis}
||\psi(v_{r-1,r})-\phi(v_{r-1,r})||_2^2 = 2 \ip{\phi(v_{r-1,r})-\phi(v_{r-1,r})}{\psi(v_{r-1,r})-\phi(v_{r-1,r})}\,.
\end{equation}

By induction, we see that, for all $i \in [r-1]$,
$$
\psi(v_{i,r}) - \phi(v_{i,r}) = \frac{1}{2^{i-1}} (\psi(v_{1,r}) - \phi(v_{1,r}))
= \frac{1}{2^{i-1}} (q_r - e_r)\,.
$$
Using this, we can rewrite the left-hand side of \eqref{eq:lastdist_bis}:
\begin{align*}
||\psi(v_{r-1,r})-\phi(v_{r-1,r})||_2^2 
&= \left(\frac{1}{2^{r-2}}\right)^2 ||q_r - e_r||_2^2\\
&= \frac{1}{2^{2r-4}} (||q_r||_2^2 + ||e_r||_2^2 - 2 \ip{q_r}{e_r})\\
&= \frac{1}{2^{2r-4}} (1 - 2x_2 + 1 - 2 x_r)\,.
\end{align*}

Notice that, since $x_2 = x_3 = \ldots = x_{r-1}$, 
$$
q_r - e_r = x_2 \mathbf{1} + (x_1-x_2) e_1 + (x_r-x_2-1) e_r\,,
$$
where $\mathbf{1}$ is the all-ones vector. Also, an easy induction on $i$ shows that
$$
\ip{\phi(v_{i,i})}{e_1} = \frac{1}{2^{i-1}} = \ip{\phi(v_{i,r})}{e_r}\,,
$$
and thus
\begin{align*}
\ip{\phi(v_{i,i}) - \phi(v_{i,r})}{e_1} &= \frac{1}{2^{i-1}},\text{ and}\\
\ip{\phi(v_{i,i}) - \phi(v_{i,r})}{e_r} &= - \frac{1}{2^{i-1}}\,. 
\end{align*}

Now, we can rewrite the right-hand side of \eqref{eq:lastdist_bis} as
\begin{align*}
\frac{1}{2^{r-3}} &\ip{\phi(v_{r-1,r})-\phi(v_{r-1,r})}{q_r-e_r}\\
&= \frac{1}{2^{r-3}} \ip{\phi(v_{r-1,r})-\phi(v_{r-1,r})}{x_2 \mathbf{1} + (x_1-x_2) e_1 + (x_r-x_2-1) e_r}\\
&= \frac{1}{2^{r-3}} \left(0 + \frac{1}{2^{r-2}} (x_1-x_2) - \frac{1}{2^{r-2}} (x_r-x_2-1) \right).
\end{align*}

Hence, \eqref{eq:lastdist_bis} can be rewritten 
$$
\frac{1}{2^{2r-4}} (1-2x_2+1-2x_r) = \frac{1}{2^{r-3}} \left(\frac{1}{2^{r-2}} (x_1-x_2) - \frac{1}{2^{r-2}} (x_r-x_2-1) \right)
\iff x_2 = -x_1\,.
$$
Now, 
$$
||q_r - q_1||_2^2 = ||q_r - e_1||_2^2 = \sum_i x_i^2 + 1 - 2x_1 
= (1 - 2x_2) + 1 - 2x_1 = (1 + 2x_1) + 1 - 2x_1 = 2 \,. \qedhere
$$
\end{proof}

It is easy to check that $\tw(\triangle_r) \leq r-1$ for all $r \geq 3$.  Thus, Lemma~\ref{lem:triangular_grid} implies that $f_2(\triangle_r)=r-1$ for all $r \geq 3$.  Moreover, since every planar graph is a minor of a sufficiently large triangular grid, Theorem~\ref{thm:main2} immediately yields the following corollary. 

 \begin{corollary}
    For all minor-closed classes of graphs $\mathcal{C}$, $f_2(\mathcal C) = \infty$ if and only if $\mathcal{C}$ contains all planar graphs.  
    \end{corollary}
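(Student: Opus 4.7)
The plan is to derive both implications from the pair of statements already in hand: Theorem~\ref{thm:main2} (which gives unavoidable triangular-grid minors when $f_2$ is large, and the matching lower bound $f_2(\triangle_{k+2}) > k$ via Lemma~\ref{lem:triangular_grid}) and the observation, also recorded above, that every planar graph occurs as a minor of some sufficiently large triangular grid $\triangle_r$.

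For the backward direction, suppose $\mathcal{C}$ contains every planar graph. Since each $\triangle_r$ is planar, we have $\triangle_r \in \mathcal{C}$ for all $r \in \NN$. Lemma~\ref{lem:triangular_grid} then gives $f_2(\triangle_r) \geq r-1$, so the supremum $f_2(\mathcal{C}) = \sup\{f_2(G) : G \in \mathcal{C}\}$ is infinite, and $f_2(\mathcal{C}) = \infty$ follows directly from the definition of $f_2(\mathcal{C})$.

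For the forward direction, assume $f_2(\mathcal{C}) = \infty$. Then, for every $k \in \NN$, there exists $G \in \mathcal{C}$ with $f_2(G) > g_{\ref{thm:main2}}(k)$. By Theorem~\ref{thm:main2}, such a $G$ contains $\triangle_{k+2}$ as a minor. Because $\mathcal{C}$ is minor-closed, this yields $\triangle_{k+2} \in \mathcal{C}$ for every $k$, i.e., $\triangle_r \in \mathcal{C}$ for all $r \in \NN$. Now let $H$ be an arbitrary planar graph; by the cited fact, there is some $r$ with $H$ a minor of $\triangle_r$, and minor-closedness of $\mathcal{C}$ gives $H \in \mathcal{C}$. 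Hence $\mathcal{C}$ contains every planar graph.

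The only non-routine ingredient is the statement that every planar graph embeds as a minor of some $\triangle_r$; I would justify this briefly by noting that any planar graph can be drawn inside a large enough triangular-grid region with vertices placed at grid points and edges routed along disjoint grid paths (equivalently, $\triangle_r$ contains $\Box_{\lfloor (r-1)/2 \rfloor}$ as a subgraph, as already noted in the paper, and it is standard that every planar graph is a minor of a sufficiently large rectangular grid). No additional machinery beyond the already-established Theorem~\ref{thm:main2} and Lemma~\ref{lem:triangular_grid} is needed, so I do not expect any real obstacle in writing up the argument.
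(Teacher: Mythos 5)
Your proposal is correct and follows exactly the paper's argument: Theorem~\ref{thm:main2} (with Lemma~\ref{lem:triangular_grid}) for both directions, together with the fact that every planar graph is a minor of a sufficiently large triangular grid. The paper derives the corollary "immediately" from these same ingredients, so there is nothing to add.
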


    \section{Alternative view of $\ell_\infty$-dimension} \label{sec:alternative}
    
    In this section, we provide a more combinatorial definition of $\ell_\infty$-dimension.  The equivalence follows by considering potentials on a weighted auxilliary digraph.  
    
    Let $D$ be a digraph with edge weights $l: A(D)\to \RR$. A \emph{potential on $(D,l)$} is a function $p:V(D) \to \RR$ such that $p(w)-p(v)\leq l(v,w)$ for all arcs $(v,w)\in A(D)$.
    
    Now consider a metric graph $(G,d)$. Let $(D,l)$ be the (edge)-weighted digraph obtained from $(G,d)$ by bidirecting all edges and setting $l(v,w) = l(w,v) = d(vw)$ for all edges $vw\in E(G)$. Note that $p:V(D)\to \RR$ is a potential on $(D,l)$ if and only if $\lvert p(w)-p(v)\rvert \leq d(vw)$ for all edges $vw\in E(G)$.
    
    For convenience, we let $D(G)$ and $l(d)$ denote the digraph and edge weights defined above, respectively. Thus the weighted digraph $(D,l)$ we are considering can also be denoted $(D(G),l(d))$ when more precision is required.
    
    Recall that distances in $\ell_\infty^k$ are given by $d_\infty(x,y) = \max_{i \in [k]} \lvert x_i-y_i \rvert$. Hence $d_\infty(x,y) = \delta$ if and only if $|x_i - y_i| \leq \delta$ for all $i \in [k]$ and there exists some index $j \in [k]$ for which $\lvert x_j - y_j\rvert = \delta$. Therefore, $(G,d)$ has an isometric embedding $\phi$ in $\ell_\infty^k$ if and only if there exist $k$ potentials $p_i : V(G) \to \RR$ on $(D,l)$ such that for each edge $vw$ there is at least one index $j \in [k]$ with $\lvert p_j(w)-p_j(v)\rvert = d(vw)$. This can be seen by taking $p_i(v)$ to be the $i$-th coordinate of $\phi(v)$, for all $i \in [k]$ and $v \in V(G)$.
    
    We say that a set of arcs $F\subseteq A(D)$ is a \emph{flat set} of $(G,d)$ if there exists a potential $p : V \to \RR$ on $(D,l)$ such that $p(w) - p(v) = -d(vw) \iff p(v) - p(w) = d(vw)$ for all arcs $(v,w) \in F$. Given a set $F \subseteq A(D)$, consider the modified edge weights $l_F : A(D) \to \RR$ such that 
    $$
    l_F(v,w) = \begin{cases}
         d(vw) & \text{if } (v,w) \notin F\\
        -d(vw) & \text{if } (v,w) \in F\,.
    \end{cases}
    $$
    When necessary, we denote these edge weights by $l_F(d)$. Then $F \subseteq A(D)$ is a flat set of $(G,d)$ if and only if $(D,l_F) = (D(G),l_F(d))$ admits a potential. By the well-known characterization of the existence of potentials, this is equivalent to the non-existence of a negative weight directed cycle in $(D,l_F)$. That is, $F \subseteq A(D)$ is a flat set if and only if $(D,l_F)$ does not contain a negative directed cycle. In proofs, we will often use the notation $\ddir{G}{d}{F}$ to denote $(D(G),l_F(d))$. Notice that $F$ is a flat set if and only if $F' = \{(w,v) \mid (v,w) \in F\}$ is a flat set. 
    
    We say that a flat set $F \subseteq A(D)$ \emph{covers} an edge $vw \in E(G)$ if $F$ contains $(v,w)$ or $(w,v)$. A \emph{flat covering} of $(G,d)$ is a collection $\mathcal{F} = \{F_1, \ldots, F_k\}$ of flat sets such that every edge $vw \in E(G)$ is covered by at least one $F_i$. Then, $(G,d)$ has an isometric embedding into $\ell^k_\infty$ if and only if $(G,d)$ has a flat covering of size at most $k$. To construct an embedding given a flat covering, we pick a potential $p_i$ on $\ddir{G}{d}{F_i}$ for each flat set $F_i$, and use these potentials to define the embedding coordinatewise. That is, each potential $p_i$ associated to $F_i$ gives us the $i$-th coordinate of the vertices in the embedding. Notice that the potentials respect the maximum differences given by the distance function $d$. Furthermore, because each edge is covered by some potential, the vertices of this edge are at exact distance in the corresponding coordinate. Hence we get an embedding of $(G,d)$.
    For the other direction, it is sufficient to realize that each coordinate of an embedding defines a potential. Furthermore, for each edge at least one of the potentials defined by the coordinates is such that the distance between the vertices is attained with equality, that is the edge is covered by this potential. Thus, the coordinates define a flat covering of size $k$.
    
    In our terminology, the $\ell_\infty$-dimension $f_\infty(G)$ is the least integer $k$ such that for each distance function $d$, the metric graph $(G,d)$ has a flat covering of size at most $k$.

    \section{Metric tools} \label{sec:metrictools}
    
    In this section, we present several general results related to distance functions and flat coverings. 
    
    Given a vertex $v$ of a graph $G$, we let $N(v) = \{w \in V(G) \mid vw \in E(G)\}$ denote the 
    neighborhood of $v$ in $G$. 
        
    \begin{lemma}  \label{lem:flatstar} 
        Let $(G,d)$ be a metric graph and let $v \in V(G)$. The set $F = \{(v,w) \mid w \in N(v)\}$ is a flat set of $(G,d)$. 
    \end{lemma}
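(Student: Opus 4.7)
The plan is to construct an explicit potential $p$ on $\ddir{G}{d}{F}$, which by the equivalence stated in Section~\ref{sec:alternative} immediately certifies that $F$ is a flat set. The natural candidate is to let $p(x) = -d_G(v,x)$ for every $x \in V(G)$, where $d_G(v,x)$ denotes the shortest-path distance from $v$ to $x$ in $G$ with respect to the edge weights $d$. In particular $p(v) = 0$ and, for $w \in N(v)$, one has $p(w) = -d(vw)$ thanks to the definition of a metric graph, which forces $d_G(v,w) = d(vw)$ (the path $P=vw$ gives $d_G(v,w) \leq d(vw)$, and the metric graph condition $d(vw) \leq d(P)$ for all $v$--$w$ paths $P$ gives the reverse inequality).

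Once the potential is fixed, I verify $p(y) - p(x) \leq l_F(x,y)$ for every arc $(x,y) \in A(D(G))$ by splitting into three cases. First, for an arc $(x,y)$ with neither endpoint equal to $v$, we have $l_F(x,y) = d(xy)$ and $p(y)-p(x) = d_G(v,x) - d_G(v,y) \leq d(xy)$ is just the triangle inequality for shortest paths. Second, for an arc $(v,w)$ with $w \in N(v)$ we have $l_F(v,w) = -d(vw)$, and $p(w)-p(v) = -d_G(v,w) = -d(vw)$ by the observation above, so the inequality holds with equality. Third, for the reverse arc $(w,v)$ with $w \in N(v)$ we have $l_F(w,v) = d(vw)$, and $p(v)-p(w) = d_G(v,w) = d(vw)$, again with equality.

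There is no real obstacle here: the only subtle point is recognizing that the metric graph axiom is exactly what ensures $d_G(v,w) = d(vw)$ on edges incident to $v$, which is the key identity making both of the constraints associated with the bidirected edge $vw$ tight. Alternatively, one could phrase the proof by showing that $\ddir{G}{d}{F}$ has no negative directed cycle: any putative such cycle would have to use some arc of $F$, hence contain a subpath from some $w \in N(v)$ through $v$ to some $w' \in N(v)$ of weight $d(wv) - d(vw') = d(vw) - d(vw')$, which the metric graph axiom prevents from being beaten by the outside portion of the cycle. I prefer the potential approach since it is cleaner and avoids a case analysis on where the cycle enters and leaves $v$.
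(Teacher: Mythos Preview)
Your proof is correct but follows a different route from the paper. You construct an explicit potential $p(x) = -d_G(v,x)$ and verify the potential inequalities arc by arc. The paper instead uses the equivalent characterization via negative cycles: since every arc of $F$ leaves $v$, a directed cycle $C$ in $\ddir{G}{d}{F}$ contains at most one arc of $F$, say $(v,w)$; the remainder of $C$ is a directed $w$--$v$ path whose weight equals the $d$-length of the underlying $w$--$v$ path, which is at least $d(vw)$ by the metric graph axiom, so $C$ has non-negative total weight. The paper's argument is a touch shorter because it sidesteps the three-case verification, while your approach has the merit of exhibiting the witnessing potential explicitly (and, as you note, the key identity $d_G(v,w)=d(vw)$ for $w\in N(v)$ is precisely the metric graph condition). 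Your closing paragraph in fact sketches the paper's argument, so you were aware of both options.
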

    \begin{proof}
        Let $C$ be an arbitrary directed cycle in $\ddir{G}{d}{F}$.
        The cycle $C$ uses at most one arc of $F$. Thus at most one arc of $C$ has negative weight in $\ddir{G}{d}{F}$, and all other arcs of $C$ have non-negative weight.  Since $d$ is a distance function, it follows that $C$ has non-negative weight in $\ddir{G}{d}{F}$.  Thus, $F$ is a flat set of $(G,d)$, as required.
    \end{proof}
    
    A \emph{vertex cover} of a graph $G$ is a set of vertices $X \subseteq V(G)$ such that every edge of $G$ is incident with some vertex in $X$.  The \emph{vertex cover number} of $G$, denoted $\tau(G)$, is the size of a smallest vertex cover of $G$.  
    By Lemma~\ref{lem:flatstar}, $f_\infty(G)$ is at most the vertex cover number of $G$.
    
    \begin{lemma}[\cite{FHJV17}, Lemma 9] \label{lem:vertexcover}
    For every graph $G$, $f_\infty(G) \leq \tau(G)$.
    \end{lemma}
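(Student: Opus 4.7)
The plan is to produce, for any distance function $d$ on $G$, a flat covering of $(G,d)$ of size $\tau(G)$, and then invoke the characterization of $\ell_\infty$-dimension via flat coverings from Section~\ref{sec:alternative}.

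First I would fix a minimum vertex cover $X \subseteq V(G)$ with $|X| = \tau(G)$, and for each vertex $x \in X$, define the outgoing star
\[
F_x = \{(x,w) \mid w \in N(x)\} \subseteq A(D(G)).
\]
By Lemma~\ref{lem:flatstar}, each $F_x$ is a flat set of $(G,d)$, independently of which distance function $d$ is placed on $G$.

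Next I would verify that $\mathcal{F} = \{F_x \mid x \in X\}$ is a flat covering of $(G,d)$. This is where the vertex cover property is used: for every edge $vw \in E(G)$, at least one of its endpoints lies in $X$; say $v \in X$. Then the arc $(v,w)$ belongs to $F_v$, so the edge $vw$ is covered by $F_v \in \mathcal{F}$. Hence $\mathcal{F}$ is a flat covering of size $|X| = \tau(G)$.

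Since this argument produces a flat covering of size at most $\tau(G)$ for every distance function $d$, the equivalence recalled in Section~\ref{sec:alternative} gives $f_\infty(G,d) \leq \tau(G)$ for every $\ell_\infty$-realizable $d$, and therefore $f_\infty(G) \leq \tau(G)$. There is no substantive obstacle here: the entire proof is a direct combination of Lemma~\ref{lem:flatstar} with the defining property of a vertex cover, and only takes a few lines once the flat-covering viewpoint is in place.
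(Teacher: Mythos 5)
Your proof is correct and is exactly the argument the paper intends: the sentence preceding the lemma states that it follows from Lemma~\ref{lem:flatstar}, and your construction (one flat star per vertex of a minimum vertex cover, giving a flat covering of size $\tau(G)$ for every distance function) is precisely that derivation combined with the flat-covering characterization of $f_\infty$.
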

    
    Clearly, if $d$ is a distance function on $G$, and $H$ is a subgraph of $G$, then the restriction of $d$ to $E(H)$ is a distance function on $H$. We denote it by $d|_H$. Conversely, sometimes we can define a distance function on a graph from distance functions on certain subgraphs, see Lemma~\ref{lem:distancefunction} below.
    
    A \emph{$k$-sum} is a graph $G$ obtained by gluing two graphs $G_1$ and $G_2$ along a common clique $K$ of size $k$ and then possibly deleting some edges of $K$. We use the following notation for $1$-sums and $2$-sums. We write $G = G_1 +_v G_2$ if $G = G_1 \cup G_2$ with $V(G_1) \cap V(G_2) = \{v\}$. Now let $e = vw$ be an edge. We write $G = G_1\oplus_{e} G_2$ if $G = G_1 \cup G_2$ with $V(G_1) \cap V(G_2) = \{v,w\}$ and $e \in E(G_1) \cap E(G_2)$. Also, we denote by $G_1 +_{e} G_2$ the graph $G_1 \oplus_{e} G_2$ minus the edge $e$. 
    
    \begin{lemma}\label{lem:distancefunction}
        Let $G = G_1 \oplus_f G_2$.
        For $i \in [2]$, let $d_i$ be a distance function on $G_i$. If $d_1(f) = d_2(f)$, then the function $d : E(G) \to \RR_{\ge 0}$ defined by $d(e) = d_i(e)$ if $e\in E(G_i)$ is a distance function on $G$. 
    \end{lemma}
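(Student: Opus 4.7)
The plan is first to check that $d$ is well-defined. Since $f$ is the only edge of $G$ that lies in both $E(G_1)$ and $E(G_2)$, the hypothesis $d_1(f) = d_2(f)$ guarantees that the piecewise definition produces a single value on every edge; non-negativity of $d$ is immediate from that of $d_1$ and $d_2$.

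To verify the triangle-like inequality defining a distance function, I fix an edge $e \in E(G)$ with endpoints $x, y$ and an arbitrary $x$-$y$ path $P$ in $G$, and aim to show $d(e) \le d(P)$. By symmetry I may assume $e \in E(G_1)$. The key step is to transform $P$ into an $x$-$y$ walk $W$ in $G_1$ with $d_1(W) \le d(P)$. To do so, I partition the edges of $P$ into maximal subpaths $Q_1, \ldots, Q_m$, each lying entirely in $G_1$ or entirely in $G_2$. Because $V(G_1) \cap V(G_2) = \{v, w\}$, any vertex where two consecutive subpaths meet must belong to $\{v, w\}$; moreover, if the first or last subpath lies in $G_2$ then its outer endpoint ($x$ or $y$) lies in $V(G_1) \cap V(G_2) = \{v, w\}$ too, since $x, y \in V(G_1)$. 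Hence every $Q_i \subseteq G_2$ has both endpoints in $\{v, w\}$, and since $P$ is simple these endpoints are distinct, so $Q_i$ is a $v$-$w$ path in $G_2$. Using that $d_2$ is a distance function and $f = vw \in E(G_2)$, this yields $d_2(f) \le d_2(Q_i)$. Replacing each such $Q_i$ by the single edge $f \in E(G_1)$ therefore produces the desired walk $W$ in $G_1$ with $d_1(W) \le d(P)$.

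To finish, I extract a simple $x$-$y$ subpath $P'$ of $W$ by removing closed subwalks; non-negativity of $d_1$ gives $d_1(P') \le d_1(W)$, and applying the distance-function property of $d_1$ to $e \in E(G_1)$ and the $x$-$y$ path $P' \subseteq G_1$ yields $d(e) = d_1(e) \le d_1(P') \le d_1(W) \le d(P)$, as required. The only mildly delicate point, which I expect to be the main potential obstacle, is confirming that each $G_2$-subpath is genuinely a $v$-$w$ path rather than a degenerate closed subwalk at $v$ or at $w$; this is exactly where simplicity of $P$ is invoked. Everything else reduces to the distance-function properties of $d_1$ and $d_2$ together with non-negativity of edge weights.
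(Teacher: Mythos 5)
Your proof is correct and follows essentially the same route as the paper's: split the given path into its $G_1$- and $G_2$-portions, replace each $G_2$-excursion by the edge $f$ using that $d_2$ is a distance function, and then apply the distance-function property of $d_1$. The paper simply observes that simplicity forces at most one excursion into $G_2$, whereas you handle a general decomposition via a walk and then extract a simple subpath; both are valid.
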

    \begin{proof}
        Let $vw$ be any edge of $G$. Without loss of generality, we may suppose $vw \in E(G_1)$. Let $P$ be a $v$--$w$ path in $G$. 
        If $P$ is contained in $G_1$ then $d(P) = d_1(P) \geq d_1(vw) =  d(vw)$. Otherwise, $P$ uses both ends of $f$ and we may decompose $P$ into a path $P_1$ from $v$ to an end of $f$ with $E(P_1) \subseteq E(G_1)$, a path $P_2$ between the two ends of $f$ with $E(P_2) \subseteq E(G_2)$ and a path $P'_1$ from the other end of $f$ to $w$ with $E(P'_1) \subseteq E(G_1)$. Then we get $d(P) = d(P_1) + d(P_2) + d(P'_1) \geq d(P_1) + d(f) + d(P'_1) \geq d(vw)$, where the first inequality uses that $d_2$ is a distance function, and the second inequality uses that $d_1$ is a distance function.
    \end{proof}
    
    Similarly, every subset of a flat set is flat, and if $F$ is a flat set of $(G,d)$, then $F$ is also a flat set of $(H, d|_H)$, for all subgraphs $H$ of $G$ with $F \subseteq A(D(H))$. The following lemma gives conditions under which a flat set of a subgraph is a flat set of the entire graph.  
    
    \begin{lemma}\label{lem:flatset}
        Let $G$ be a graph obtained by gluing two graphs $G_1$ and $G_2$ along a common clique $K$. Let $d$ be a distance function on $G$ and $d_i = d|_{G_i}$ its restriction to $G_i$, where $i \in [2]$. If $F$ is a flat set of $(G_j, d_j)$ for some $j \in [2]$, then $F$ is also a flat set of $(G,d)$. Conversely, if $F$ is a flat set of $(G,d)$ then $F_i = F \cap A(D(G_i))$ is a flat set of $(G_i,d_i)$ for all $i \in [2]$. 
    \end{lemma}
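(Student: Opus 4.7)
The plan is to use the characterization implicit in the preceding discussion: $F$ is a flat set of a metric graph $(H,d_H)$ if and only if $\ddir{H}{d_H}{F}$ contains no negative directed cycle. Both implications will then reduce to tracing directed cycles between the digraphs $\ddir{G}{d}{F}$ and $\ddir{G_i}{d_i}{F_i}$, exploiting that $V(G_1)\cap V(G_2)=V(K)$ is a clique in each $G_i$.

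For the converse direction, fix $i\in [2]$ and set $F_i := F\cap A(D(G_i))$. Since no edges are deleted in the gluing, $E(G_i)\subseteq E(G)$, so any directed cycle of $D(G_i)$ is also a directed cycle of $D(G)$. The arc weights match in the two auxiliary digraphs (using $d_i=d|_{G_i}$ and the definition of $F_i$), hence a negative directed cycle of $\ddir{G_i}{d_i}{F_i}$ would at once contradict the flatness of $F$ in $(G,d)$; so $F_i$ is flat.

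For the forward direction, I would assume WLOG that $F$ is flat in $(G_1,d_1)$, so in particular $F\subseteq A(D(G_1))$. Given an arbitrary directed cycle $C$ in $\ddir{G}{d}{F}$, the plan is to build a closed walk $C'$ lying entirely in $D(G_1)$ whose weight in $\ddir{G_1}{d_1}{F}$ is at most the weight of $C$ in $\ddir{G}{d}{F}$; then decomposing $C'$ into directed cycles of $D(G_1)$---each nonnegative by flatness of $F$ in $(G_1,d_1)$---forces $C$ to have nonnegative weight. To construct $C'$, I partition the arcs of $C$ into maximal segments whose arcs lie entirely in $D(G_1)$ or in $D(G_2)$; since the two digraphs meet only in $D(K)$, the endpoints of every segment lie in $V(K)$. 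A $G_2$-segment from $u$ to $v$ uses no arc of $F$, so its weight equals $d(P)$ for its underlying walk $P$; since $K$ is a clique in $G_2$, either $u=v$ (the segment has nonnegative weight and I discard it), or $uv\in E(G_2)$ and the triangle inequality for $d_2$ gives $d(P)\geq d(uv)$, so I replace the segment by the arc $(u,v)\in A(D(G_1))$.

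The one mildly delicate point, and the main obstacle I anticipate, is that the replacement arc $(u,v)$ may itself belong to $F$, in which case its weight in $\ddir{G_1}{d_1}{F}$ is $-d(uv)$ rather than $d(uv)$; but the estimate $-d(uv)\leq d(uv)\leq d(P)$ still furnishes the needed bound. Everything else is routine: $C'$ is a closed walk in $D(G_1)$ of weight at most that of $C$, and every closed walk decomposes into directed cycles whose weights sum to the walk's weight.
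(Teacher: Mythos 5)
Your proposal is correct and follows essentially the same route as the paper: both directions hinge on shortcutting each excursion into $G_2$ between two clique vertices $u,v\in V(K)$ by the arc $(u,v)$, using the triangle inequality for $d_2$ and noting that the bound survives even if $(u,v)\in F$. The only difference is cosmetic — the paper phrases this as a minimal-counterexample argument (a minimum-weight cycle with inclusion-minimal vertex set must lie in $D(G_j)$), whereas you transform an arbitrary cycle into a closed walk in $D(G_1)$ and decompose it into directed cycles; both work.
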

    \begin{proof}
        For the first part, it suffices to show that $\ddir{G}{d}{F}$ does not contain a negative weight directed cycle. 
        Let $C$ be a minimum weight directed cycle in $\ddir{G}{d}{F}$ such that $V(C)$ is inclusion-wise minimal. We may assume that $C$ contains some arc of $F$, since otherwise $C$ is disjoint from $F$ and has non-negative weight. Thus $C$ intersects $A(D(G_j))$.
        
        We claim that $C$ must be fully contained in $D(G_j)$. Otherwise, $C$ contains a directed path $P$ from $v$ to $w$, where $v, w \in K$, that is internally disjoint from $D(G_j)$. By replacing $P$ with the arc $(v,w)$ we obtain a new directed cycle $C'$ in $\ddir{G}{d}{F}$ whose weight is at most that of $C$ and such that $V(C') \subsetneq V(C)$, a contradiction.
        
        Since $C$ is contained in $D(G_j)$ and $F$ is a flat set of $(G_j,d_j)$, $C$ has non-negative weight in $\ddir{G_j}{d_j}{F}$ and thus in $\ddir{G}{d}{F}$. 
        
        For the second part, notice that $F_i$ is a flat set of $(G,d)$ because $F_i \subseteq F$ and $F$ is a flat set of $(G,d)$. Since $G_i$ is a subgraph of $G$, $F_i$ is also clearly a flat set of $(G_i, d_i)$. 
    \end{proof}

    \begin{lemma}\label{lem:negPaths2directions}
        Let $F$ be a flat set of a metric graph $(G, d)$ and $u$ and $v$ be vertices of $G$. Let $P_1$ be a directed path from $u$ to $v$ and let $P_2$ be a directed path from $v$ to $u$. Then at least one of $P_1$ and $P_2$ has non-negative weight in $\ddir{G}{d}{F}$.
    \end{lemma}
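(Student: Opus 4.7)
The idea is to view the concatenation of $P_1$ and $P_2$ as a closed directed walk in $\ddir{G}{d}{F}$, decompose it into simple directed cycles, and apply the flat set hypothesis.

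First, I would form the closed directed walk $W$ obtained by traversing $P_1$ from $u$ to $v$ and then $P_2$ from $v$ back to $u$. Viewed as an arc multiset in $D(G)$, $W$ is balanced at every vertex (every intermediate vertex has equal in-degree and out-degree contributed by $W$, and the same holds at $u$ since $W$ starts and ends there). This is the standard Eulerian condition.

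Next, I would invoke the classical fact that any closed directed walk in a digraph decomposes, as an arc multiset, into arc-disjoint simple directed cycles $C_1, \dots, C_r$ of $D(G)$. (A short inductive argument: by the balance condition one can peel off a simple directed cycle from $W$; the remainder is still balanced, so iterate.) Each $C_j$ is a directed cycle in $\ddir{G}{d}{F}$, so by the assumption that $F$ is a flat set of $(G,d)$ — equivalently, that $\ddir{G}{d}{F}$ has no negative directed cycle — each $C_j$ has non-negative weight.

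Since the weight of $W$ in $\ddir{G}{d}{F}$ equals the sum of the weights of the $C_j$'s, we obtain
\[
l_F(P_1) + l_F(P_2) \;=\; \sum_{j=1}^r l_F(C_j) \;\ge\; 0,
\]
and therefore at least one of $l_F(P_1)$ and $l_F(P_2)$ is non-negative, as required. The only step that needs any care is the cycle decomposition of a closed directed walk, but this is a routine and well-known fact, so I do not anticipate any real obstacle.
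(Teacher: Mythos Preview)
Your proof is correct and is essentially the same as the paper's: concatenate $P_1$ and $P_2$ into a closed directed walk, decompose it into directed cycles, and use that each cycle has non-negative weight since $F$ is flat. The only cosmetic difference is that the paper phrases the final step as a contradiction (if both paths were negative, some cycle would be negative), whereas you argue directly that $l_F(P_1)+l_F(P_2)\ge 0$.
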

    \begin{proof}
        Consider the directed closed walk obtained by concatenating $P_1$ and $P_2$.  This directed closed walk decomposes into directed cycles. If $P_1$ and $P_2$ both have negative weight in $\ddir{G}{d}{F}$, then at least one of these directed cycles has negative weight in $\ddir{G}{d}{F}$. But this contradicts the fact that $F$ is a flat set. 
    \end{proof}
    
    In \cite{FHJV17}, the following result is proved. 
    
    \begin{lemma}[\cite{FHJV17}]\label{lem:suppressDeg2}
    For every graph $G$ with $f_\infty(G) \geq 2$ and every edge $e\in E(G)$,  
    \[
    f_\infty(G) = f_\infty(G +_e K_3) = f_\infty(G \oplus_e K_3).
    \]
    \end{lemma}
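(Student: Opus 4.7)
The plan is to prove the chain of equalities by two separate inequalities. The easy direction, $f_\infty(G) \le f_\infty(G +_e K_3) \le f_\infty(G \oplus_e K_3)$, follows from minor-monotonicity of $f_\infty$: the graph $G$ is a subgraph of $G \oplus_e K_3$ and a minor of $G +_e K_3$ (contract the newly introduced edge at the subdivision vertex $x$), and $G +_e K_3$ is itself a subgraph of $G \oplus_e K_3$.

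For the reverse direction, it suffices to show $f_\infty(G \oplus_e K_3) \le f_\infty(G)$. Let $x$ denote the new vertex of $G' := G \oplus_e K_3$, adjacent to the endpoints $v,w$ of $e$, and let $d$ be any distance function on $G'$. The restriction $d_G := d|_G$ is a distance function on $G$, so it admits a flat covering $\{F_1, \ldots, F_m\}$ of $(G, d_G)$ with $m \le f_\infty(G)$. Because $G'$ is obtained by gluing $G$ and $K_3$ along the clique $\{v,w\}$, Lemma~\ref{lem:flatset} guarantees that each $F_i$ is already a flat set of $(G', d)$. The task reduces to extending a few of the $F_i$'s so that the two new edges $vx$ and $wx$ become covered, without enlarging the collection.

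The core construction goes as follows. By relabeling $v,w$ and reversing the arcs of one flat set if needed, I may assume WLOG that $d(vx) \ge d(wx)$ and that $F_1$ covers $e$ via a potential $p_1$ satisfying $p_1(v) - p_1(w) = d(vw)$. I extend $F_1$ to $F_1 \cup \{(v,x)\}$ by declaring $p_1(x) := p_1(v) - d(vx)$; the triangle inequality $|d(vw) - d(vx)| \le d(wx)$ in $G'$ guarantees that $p_1$ remains a valid potential on $(G', d)$, and the new arc covers $vx$. For the edge $wx$, when $m \ge 2$ I pick any other flat set $F_2$ with potential $p_2$ and extend it by either $(w,x)$ or $(x,w)$, choosing the orientation according to whether $p_2(v) - p_2(w) \le \Delta$ or $p_2(v) - p_2(w) \ge -\Delta$, where $\Delta := d(vx) - d(wx) \ge 0$. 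Since $-\Delta \le \Delta$, at least one inequality holds, and the triangle inequality once again validates the extension. When $m = 1$, I instead append the star flat set $\{(x,v),(x,w)\}$ supplied by Lemma~\ref{lem:flatstar} as a second flat set, producing a covering of size $2 \le f_\infty(G)$.

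The main obstacle is the second extension, and the role of the condition $d(vx) \ge d(wx)$ in it. Without this WLOG, there is a genuine open interval of potential differences $p_2(v) - p_2(w)$ for which neither orientation of the new arc incident to $x$ yields a valid flat set, so the extension really can fail; the $v$-$w$ symmetry is what closes that gap. The hypothesis $f_\infty(G) \ge 2$ is similarly unavoidable, since when $f_\infty(G) = 1$ the graph $G$ is a forest but $G \oplus_e K_3$ contains a triangle, forcing $f_\infty(G \oplus_e K_3) \ge 2 > f_\infty(G)$.
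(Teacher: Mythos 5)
The paper does not prove this lemma itself --- it is quoted from \cite{FHJV17} --- so there is no internal proof to compare against. Your argument is correct and self-contained. The easy chain $f_\infty(G)\le f_\infty(G+_eK_3)\le f_\infty(G\oplus_eK_3)$ via minors is fine, and the reverse direction works as you describe: restricting $d$ to $G$, lifting a minimum flat covering via Lemma~\ref{lem:flatset}, and then assigning a value $p(x)$ in two of the flat sets. The two verifications that matter both check out: extending the set covering $e$ (oriented as $(v,w)$) by $(v,x)$ is validated by $|d(vw)-d(vx)|\le d(wx)$, which follows from $d$ being a distance function on $G\oplus_e K_3$ applied to the paths $vxw$ and $vwx$; and for the second set, with $\delta=p_2(v)-p_2(w)$ and $\Delta=d(vx)-d(wx)\ge 0$, the feasible ranges $(-\infty,\Delta]$ and $[-\Delta,\infty)$ for the two orientations of the arc at $x$ together cover all of $\RR$ precisely because of your normalization $d(vx)\ge d(wx)$ --- you are right that this is the one step that genuinely needs the relabelling. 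The $m=1$ fallback via Lemma~\ref{lem:flatstar} correctly uses the hypothesis $f_\infty(G)\ge 2$. This is essentially the argument in \cite{FHJV17}.
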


    Hence, deleting a degree-$2$ vertex $v$ and adding a new edge between the neighbors of $v$ (if there was none) does not change $f_\infty(G)$, provided the resulting graph is not a forest. We will refer to this operation as \emph{suppressing a degree-$2$ vertex}. It follows that for all $k \geq 2$, the excluded minors for $f_\infty(G) \leq k$ have minimum degree at least $3$. 

    We will use the following bounds on $f_\infty(G)$ when $G$ is a $k$-sum.
    
    \begin{lemma}\label{lem:gluing} 
        For all graphs $G_1$ and $G_2$ (for which the $k$-sums below exist),
        \begin{equation}
        \label{eq:1-sum}
        f_\infty(G_1+_v G_2) = \max\{f_\infty(G_1), f_\infty(G_2)\}
        \end{equation}
        and
        \begin{equation}
        \label{eq:2-sum}
            f_\infty(G_1+_{vw}G_2)\leq f_\infty(G_1\oplus_{vw}G_2) \leq f_\infty(G_1) + f_\infty(G_2)-1\,.
        \end{equation}
        Moreover,
        \begin{equation}
        \label{eq:k-sum}
        f_\infty(G)\leq f_\infty(G_1) + f_\infty(G_2)
        \end{equation}
        whenever $G$ is a $k$-sum of $G_1$ and $G_2$.
    \end{lemma}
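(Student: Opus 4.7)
The plan is to prove the three inequalities separately. The easy directions—$\max\{f_\infty(G_1), f_\infty(G_2)\} \leq f_\infty(G_1 +_v G_2)$ in \eqref{eq:1-sum} and the first inequality in \eqref{eq:2-sum}—will follow directly from the minor-monotonicity of $f_\infty$ established in the introduction, since $G_1$ and $G_2$ are subgraphs of the $1$-sum, and $G_1 +_{vw} G_2$ is obtained from $G_1 \oplus_{vw} G_2$ by deleting the edge $vw$. The remaining upper bounds will be proved via the flat covering viewpoint. The common setup is: fix a distance function $d$ on the ambient graph, let $d_i := d|_{E(G_i)}$ (which is trivially a distance function on $G_i$, since every $G_i$-path is also a $G$-path), and take a minimum flat covering $\mathcal{F}_i = \{F_i^1, \ldots, F_i^{k_i}\}$ of $(G_i, d_i)$ of size $k_i \leq f_\infty(G_i)$. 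Each $F_i^j$ is already flat in $(G,d)$ by Lemma~\ref{lem:flatset}; the task is to assemble these flat sets into a flat covering of $(G,d)$ of the required size.

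For \eqref{eq:1-sum}, I would assume $k_1 \geq k_2$ and consider $\mathcal{F} = \{F_1^i \cup F_2^i \mid i \in [k_2]\} \cup \{F_1^i \mid k_2 < i \leq k_1\}$, of size $\max\{k_1, k_2\}$. The key observation is that, since $V(G_1) \cap V(G_2) = \{v\}$, every directed cycle of $D(G)$ is contained entirely in $D(G_1)$ or in $D(G_2)$. Hence the weight of such a cycle in $\ddir{G}{d}{F_1^i \cup F_2^i}$ equals its weight in $\ddir{G_j}{d_j}{F_j^i}$ for the appropriate $j \in [2]$, which is non-negative by flatness. So each set in $\mathcal{F}$ is flat in $(G, d)$, completing the bound.

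For \eqref{eq:2-sum}, the approach is to combine exactly one pair of flat sets along the common edge $vw$. I would pick $F_1^1 \in \mathcal{F}_1$ and $F_2^1 \in \mathcal{F}_2$ each covering $vw$; then by replacing $F_i^1$ by its arc-reversal if necessary (which preserves flatness and the covered edges) and by removing any extra arc (subsets of flat sets are flat), I can arrange that $(v,w) \in F_1^1 \cap F_2^1$ and $(w,v) \notin F_1^1 \cup F_2^1$. The collection $\{F_1^1 \cup F_2^1\} \cup \{F_1^i \mid 2 \leq i \leq k_1\} \cup \{F_2^j \mid 2 \leq j \leq k_2\}$ then has size $k_1 + k_2 - 1$ and covers every edge. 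The main obstacle will be showing that $F_1^1 \cup F_2^1$ is flat. Because $\{v,w\}$ separates $G_1$ from $G_2$, a directed cycle $C$ of $\ddir{G}{d}{F_1^1 \cup F_2^1}$ that does not already lie inside one side must decompose as $P_1 \cup P_2$, where $P_i$ is a directed path in $D(G_i)$ between $v$ and $w$. Closing each $P_i$ with the appropriate arc among $(v,w),(w,v)$ produces a directed cycle in $\ddir{G_i}{d_i}{F_i^1}$; its non-negativity yields a lower bound of either $+d(vw)$ or $-d(vw)$ on the weight of $P_i$, depending on which arc was used. The crucial point is that since $(v,w)$ lies in both $F_1^1$ and $F_2^1$, the two bounds necessarily carry opposite signs and cancel, so $C$ has non-negative weight in $\ddir{G}{d}{F_1^1 \cup F_2^1}$.

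Finally, for \eqref{eq:k-sum}, the plan is to first reduce to the case without edge deletion. Since $G$ is obtained from $G^+ := G_1 \cup G_2$ by deleting some edges of the shared clique $K$, $G$ is a minor of $G^+$, so by minor-monotonicity $f_\infty(G) \leq f_\infty(G^+)$. To bound $f_\infty(G^+)$, the same flat covering setup applies: for any distance function $d^+$ on $G^+$, with $d^+_i := d^+|_{E(G_i)}$ and minimum flat coverings $\mathcal{F}_i$ of $(G_i, d^+_i)$ of size at most $f_\infty(G_i)$, Lemma~\ref{lem:flatset} shows that each flat set in $\mathcal{F}_1 \cup \mathcal{F}_2$ is flat in $(G^+, d^+)$, and every edge of $G^+$ lies in some $E(G_i)$ and is therefore covered. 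This yields a flat covering of size at most $f_\infty(G_1) + f_\infty(G_2)$. In contrast to the $2$-sum case, no unit can be saved by pairing here: for $k \geq 3$ a directed cycle can enter and leave the clique $K$ via several distinct pairs of vertices, and the union of two arbitrary flat sets of $G_1$ and $G_2$ need not be flat.
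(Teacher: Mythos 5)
Your proof is correct, and for \eqref{eq:2-sum} and \eqref{eq:k-sum} it is essentially the paper's argument: the same decomposition of a directed cycle across the $2$-cut $\{v,w\}$, closed off on each side by an arc of $vw$, with the two closing arcs carrying weights $-d(vw)$ and $+d(vw)$ that cancel. You differ from the paper in two places. First, for the upper bound in \eqref{eq:1-sum} the paper argues geometrically: it takes isometric embeddings of $(G_1,d_1)$ and $(G_2,d_2)$ into $\ell_\infty^k$, translates one so the images of $v$ agree, and glues them. You instead stay in the flat-covering language and pair up $F_1^i\cup F_2^i$, using that a directed cycle cannot pass through the cut vertex twice; this is equally valid, keeps the whole lemma in one formalism, and makes \eqref{eq:1-sum} look like a degenerate case of \eqref{eq:2-sum}, at the cost of losing the geometric intuition the paper's translation argument provides. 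Second, in \eqref{eq:2-sum} the paper disposes of the degenerate case where a flat set contains both $(v,w)$ and $(w,v)$ by observing that then $d(vw)=0$, contracting the edge, and falling back on \eqref{eq:1-sum}; you simply delete the arc $(w,v)$, which is legitimate (subsets of flat sets are flat and the edge $vw$ remains covered by $(v,w)$) and slightly cleaner. One small point worth making explicit if you write this up: in the cycle decomposition for \eqref{eq:2-sum}, the internal arcs of $P_i$ lie in $A(D(G_i))\setminus A(D(G_{3-i}))$, so the weight of $P_i$ under $l_{F_1^1\cup F_2^1}$ really does coincide with its weight under $l_{F_i^1}$; this is what licenses passing between the two weighted digraphs.
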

    \begin{proof}
        Observe that \eqref{eq:k-sum} follows from Lemma~\ref{lem:flatset}.
        Next, we prove \eqref{eq:1-sum}. Let $k = \max \{f_\infty(G_1), f_\infty(G_2)\}$. Since $f_\infty$ is minor-monotone, it is clear that $f_\infty(G_1+_v G_2)$ is at least $k$. The next paragraph proves that it is at most $k$.  
        
        Let $d$ be a distance function on $G_1+_vG_2$. For $i \in [2]$, let $d_i = d|_{G_i}$. Then $d_i$ is a distance function on $G_i$. For $i \in [2]$, let $\phi_i$ be any isometric embedding of $(G_i,d_i)$ into $\ell_\infty^k$. After translating one of the embeddings if necessary, we may assume that $\phi_1(v) = \phi_2(v)$. It is easy to see that the function $\phi : V(G_1 +_v G_2) \to \RR^{k}$ obtained by setting $\phi(w) = \phi_i(w)$ if $w \in V(G_i)$ for $i \in [2]$ is an isometric embedding of $(G_1+_vG_2,d)$ into $\ell_\infty^{k}$. 
            
        Finally, we prove \eqref{eq:2-sum}. The first inequality in \eqref{eq:2-sum} is trivial since $G_1+_{vw}G_2$ is a minor of $G_1\oplus_{vw}G_2$. To prove the second inequality, consider a distance function $d$ on $G$. For $i \in [2]$, let $d_i = d|_{G_i}$ be the corresponding distance function of $G_i$.
        
        Let $\mathcal{F}_i$ be a minimum size flat covering of $(G_i,d_i)$. By Lemma~\ref{lem:flatset}, each set in $\mathcal{F}_1 \cup \mathcal{F}_2$ is flat in $(G,d)$. For $i \in [2]$, let $F_i$ be a flat set in $\mathcal{F}_i$ covering $vw$. By reversing arcs if necessary, we may assume both $F_1$ and $F_2$ contain $(v,w)$. We may also 
        assume that neither $F_1$ nor $F_2$ contains $(w,v)$, since otherwise we get $d(vw) = 0$. In this case, we can contract the edge $vw$ and use \eqref{eq:1-sum}.
    
        We claim that $F_1 \cup F_2$ is a flat set of $(G,d)$. Let $C$ be an arbitrary directed cycle in $\ddir{G}{d}{F_1\cup F_2}$. For $i \in [2]$, let $C_i$ be the directed cycle obtained by restricting $C$ to $D(G_i)$ and possibly adding $(v,w)$ or $(w,v)$ (possibly $C_i = \emptyset$). Let $l = l_{F_1 \cup F_2}(d)$ be the edge weights on $\ddir{G}{d}{F_1\cup F_2}$ and $l_i = l_{F_i}(d_i)$ be the edge weights on $\ddir{G_i}{d_i}{F_i}$. Notice that $l(v,w) = -d(vw)$ and $l(w,v) = d(vw)$. Then $l(C) = l(C_1)+l(C_2) = l_1(C_1)+l_2(C_2)\geq 0+0 = 0$ since $l_i$ is the restriction of $l$ to $A(D(G_i))$ and $F_i$ is flat in $(G_i, d_i)$. Thus, $C$ has non-negative weight and $F_1 \cup F_2$ is a flat set of $(G, d)$, as claimed. 
    
        Now $\mathcal{F} = \{F_1 \cup F_2\} \cup (\mathcal{F}_1 \cup \mathcal{F}_2) \delete \{F_1,F_2\}$ is a flat covering of $(G,d)$ of size at most $|\mathcal{F}_1| + |\mathcal{F}_2| - 1 \le f_\infty(G_1) + f_\infty(G_2) -1$. 
    \end{proof}
    
    Let $(G, d)$ be a metric graph.  We say that two edges $e$ and $f$ of $G$ are \emph{incompatible}, if there is no flat set of $(G, d)$ that covers both of them. 
    Note that two such edges are necessarily independent, by Lemma~\ref{lem:flatstar}. 
    A simple but crucial observation is that if $(G,d)$ contains $k$ pairwise incompatible edges, then $f_\infty(G) \ge k$. The following lemma provides sufficient conditions under which two edges are incompatible.  
    
    \begin{lemma}\label{lem:incompatible}
        Let $(G,d)$ be a metric graph and let $v_1v_2, w_1w_2$ be two independent edges of $G$. 
        If for all $i,j \in [2]$, there exist paths $P_{i,j}$ between $v_i$ and $w_j$ such that $d(P_{1,1})+d(P_{2,2})<d(v_1v_2)+d(w_1w_2)$ and $d(P_{1,2})+d(P_{2,1})< d(v_1v_2)+d(w_1w_2)$, then 
        $v_1v_2$ and $w_1w_2$ are incompatible. 
    \end{lemma}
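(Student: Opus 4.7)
The plan is to argue by contradiction: suppose $F$ is a flat set of $(G,d)$ that covers both $v_1v_2$ and $w_1w_2$. By relabeling $v_1$ and $v_2$ if needed, I may assume $(v_1,v_2)\in F$, so that the arc $(v_1,v_2)$ has weight $-d(v_1v_2)$ in $\ddir{G}{d}{F}$. It then remains to handle the two cases $(w_1,w_2)\in F$ and $(w_2,w_1)\in F$; in either case the chosen arc on $w_1w_2$ contributes weight $-d(w_1w_2)$.

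In the first case, I concatenate the arc $(v_1,v_2)$, the path $P_{2,1}$ oriented from $v_2$ to $w_1$, the arc $(w_1,w_2)$, and the path $P_{1,2}$ oriented from $w_2$ to $v_1$, producing a closed directed walk in $\ddir{G}{d}{F}$. Every arc of $\ddir{G}{d}{F}$ arising from an edge $e$ has weight either $d(e)$ or $-d(e)$, hence at most $d(e)$, so any oriented path $P$ contributes weight at most $d(P)$. Consequently the walk has total weight at most
\[
-d(v_1v_2)-d(w_1w_2)+d(P_{1,2})+d(P_{2,1}),
\]
which is strictly negative by hypothesis. In the second case, the analogous walk concatenates $(v_1,v_2)$, the path $P_{2,2}$ oriented from $v_2$ to $w_2$, the arc $(w_2,w_1)$, and the path $P_{1,1}$ oriented from $w_1$ to $v_1$; the same estimate gives total weight at most $-d(v_1v_2)-d(w_1w_2)+d(P_{1,1})+d(P_{2,2})<0$.

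In either case, $\ddir{G}{d}{F}$ contains a closed directed walk of negative weight. Since any such walk decomposes into directed cycles whose weights sum to the walk's weight, at least one of these cycles has negative weight, contradicting the flatness of $F$. I do not foresee any real obstacle: the two hypothesized strict inequalities correspond precisely to the two possible orientations of $w_1w_2$ inside $F$, so the argument aligns exactly with the lemma's assumptions and uses nothing beyond the characterization of flat sets by absence of negative directed cycles.
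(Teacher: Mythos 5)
Your proof is correct and follows essentially the same route as the paper: assume a flat set $F$ covers both edges, split into the two orientation cases for $w_1w_2$, build the corresponding closed directed walk through $P_{2,1},P_{1,2}$ or $P_{2,2},P_{1,1}$, bound its weight by the hypothesized strict inequality, and extract a negative directed cycle to contradict flatness. No gaps.
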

    \begin{proof} 
        Suppose $F$ is a flat set covering $v_1v_2$ and $w_1w_2$. 
        Suppose first $(v_1,v_2), (w_1, w_2)\in F$. 
        Consider the closed directed walk $W$ that starts at $v_1$, takes $(v_1,v_2)$, follows $P_{2,1}$ to $w_1$, takes $(w_1,w_2)$ and then follows $P_{1,2}$ back to $v_1$. 
        The weight of $W$ in $\ddir{G}{d}{F}$ is at most $d(P_{1,2})+d(P_{2,1}) - d(v_1v_2)-d(w_1w_2) < 0$. Thus, $W$ contains a negative weight directed cycle, which contradicts that $F$ is flat.  
        
        By symmetry the remaining case is $(v_1,v_2), (w_2,w_1)\in F$. 
        Again it is easy to find a negative weight directed walk $W$ in $\ddir{G}{d}{F}$ using the fact that $d(P_{1,1})+d(P_{2,2})<d(v_1v_2)+d(w_1w_2)$.
        Hence, $F$ cannot simultaneously cover the edges $v_1v_2$ and $w_1w_2$, as claimed.
    \end{proof}
    
    Finally, we also need the fact that $f_\infty(K_4) = 2$. 
    
    \begin{lemma}[\cite{WITSENHAUSEN1986}, 4.2] \label{lem:K4}
        $f_\infty(K_4) = 2$. 
    \end{lemma}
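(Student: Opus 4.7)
The plan is to prove both $f_\infty(K_4) \geq 2$ and $f_\infty(K_4) \leq 2$. The lower bound is immediate by minor-monotonicity: $K_3$ is a subgraph of $K_4$, and $f_\infty(K_3) \geq 2$ as noted in the introduction (where $K_3$ appears as the unique excluded minor for $f_p(G) \leq 1$). Concretely, the uniform distance function on $K_3$ assigning length $1$ to every edge is not realizable in $\ell_\infty^1 = \mathbb{R}$, since $\mathbb{R}$ contains no equilateral triple.

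For the upper bound, given an arbitrary distance function $d$ on $K_4$, I will exhibit a flat covering of size $2$. The key preparatory step is a relabeling: I choose labels $\{1,2,3,4\}$ for the vertices of $K_4$ so that the perfect matching $\{14,\,23\}$ attains the maximum of the three perfect-matching sums, i.e.,
\[
d(14) + d(23) \;\geq\; \max\bigl(d(12) + d(34),\; d(13) + d(24)\bigr).
\]
Then I take
\[
F_1 = \{(1,2),\, (3,2),\, (3,4)\}, \qquad F_2 = \{(1,3),\, (1,4),\, (2,4)\},
\]
whose underlying undirected edges partition $E(K_4)$. Hence $F_1 \cup F_2$ covers every edge of $K_4$, and if both $F_i$ are flat we obtain $f_\infty(K_4) \leq 2$.

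The main task is thus to verify that each $F_i$ is a flat set of $(K_4,d)$, equivalently, that $\ddir{K_4}{d}{F_i}$ has no negative directed cycle (using the characterization from Section~\ref{sec:alternative}). Every directed triangle in $D(K_4)$ uses at most one arc of $F_i$, so its weight in $\ddir{K_4}{d}{F_i}$ is non-negative by the triangle inequality for $d$. This reduces the question to directed $4$-cycles using at least two flat arcs. Since the arcs of $F_1$ all go from $\{1,3\}$ to $\{2,4\}$ and those of $F_2$ all go from $\{1,2\}$ to $\{3,4\}$, a direct enumeration shows that there is exactly one such $4$-cycle for each $F_i$: namely, $1 \to 2 \to 3 \to 4 \to 1$ for $F_1$, whose weight equals $(d(14) + d(23)) - (d(12) + d(34))$, and $1 \to 3 \to 2 \to 4 \to 1$ for $F_2$, whose weight equals $(d(14) + d(23)) - (d(13) + d(24))$. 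Both are non-negative by the labeling assumption, so $F_1$ and $F_2$ are flat. The main obstacle is purely organizational, the enumeration of directed cycles in $D(K_4)$; the conceptual insight is the choice of oriented partition $\{F_1,F_2\}$ of $E(K_4)$ whose only potentially-negative cycles are both controlled by a single dominance inequality that can always be arranged through relabeling.
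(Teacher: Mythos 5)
Your proof is correct. Note that the paper does not actually prove this lemma --- it is quoted from Witsenhausen --- so there is no internal proof to compare against; your argument is a legitimate self-contained verification carried out entirely within the paper's flat-set framework of Section~\ref{sec:alternative}. The two checks that carry the argument both hold up: (i) in each $F_i$ any two arcs either share a head or share a tail except for one disjoint pair ($(1,2),(3,4)$ for $F_1$ and $(1,3),(2,4)$ for $F_2$), so every directed triangle uses at most one arc of $F_i$ and is handled by the path inequality $d(vw)\le d(P)$ in the definition of a distance function, while the only directed $4$-cycles using two arcs of $F_i$ are the two you list; and (ii) their weights are exactly $(d(14)+d(23))-(d(12)+d(34))$ and $(d(14)+d(23))-(d(13)+d(24))$, both non-negative once you relabel so that $\{14,23\}$ is a heaviest perfect matching. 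Since the underlying edges of $F_1$ and $F_2$ partition $E(K_4)$, this yields a flat covering of size $2$ for every distance function, and the lower bound via $K_3$ is immediate. One could shorten the upper bound slightly by invoking Lemma~\ref{lem:vertexcover} ($f_\infty(K_4)\le\tau(K_4)$ would only give $2$ if $\tau(K_4)$ were $2$, which it is not --- $\tau(K_4)=3$), so your explicit matching-based covering is genuinely needed and is essentially the classical argument.
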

    
    In order to illustrate the concepts introduced in the last two sections, we briefly describe a polynomial reduction from computing the chromatic number of a graph $H$ to computing $f_\infty(G,d)$ given a metric graph $(G,d)$. 
    This proves that the latter problem is \np-hard. We remark that there is a different reduction using the \textsc{Partition} problem which shows that the problem of deciding if $f_\infty(G,d) \leq 1$ given a metric graph $(G, d)$ is \np-complete (see~\cite{Saxe79}).
    
    Let $H$ be a graph. We construct a metric graph $(G,d)$ by replacing each vertex $v \in V(H)$ by two adjacent vertices $v_1, v_2 \in V(G)$, and each edge $vw \in E(H)$ by a $K_{2,2}$ in $G$ with edge set $\{v_i w_j \mid i \in [2],\ j \in [2]\}$. 
    The distance function $d$ is defined by $d(v_1v_2) = 2$ for all $v \in V(H)$ and $d(v_iw_j) = 1$ for all $vw \in E(H)$, $i \in [2]$ and $j \in [2]$. We claim that $f_\infty(G,d) = \chi(H)$.
    
    To see that $f_\infty(G,d) \ge \chi(H)$, notice that edges $v_1v_2$ and $w_1w_2$ are incompatible whenever $vw \in E(H)$. Thus every size-$k$ flat covering of $(G,d)$ gives a $k$-coloring of $H$.
    
    Finally, $f_\infty(G,d) \le \chi(H)$, since for every stable set $S$ in $G$, $\{(v_1,v_2) \mid v \in S\} \cup \{(u_i,v_1) \mid i \in [2],\ uv \in E(H),\ v \in S\} \cup \{(v_2,w_j) \mid j \in [2],\ vw \in E(H),\ v \in S\}$ is a flat set of $(G,d)$. Hence, every
    $k$-coloring of $H$ gives a size-$k$ flat covering of $(G,d)$.

    \section{Certificates of large $\ell_\infty$-dimension} \label{sec:certificates}

    In this section, we show that if $H \in \kall^k = \{\ks_k, \kp_k, \kf_k, \kn_k\}$, then $f_\infty(H) > k$.  It follows that if a graph $G$ contains a $\kall^k$ minor, then $f_\infty(G) > k$.  Therefore, the existence of one of these four minors is a certificate that $f_\infty (G) > k $. Conversely, our main theorem shows that if $f_\infty(G) \geq g_{\ref{thm:main}}(k)$, then $G$ necessarily contains one of these four minors.  
    We also prove that $\ks_k, \kp_k$, and $\kf_k$ are excluded minors for the property $f_\infty(G) \leq k$, that is, all their proper minors have $\ell_\infty$-dimension at most $k$.  
    
    We begin by proving that for each $H \in \{\ks_k, \kp_k,\kf_k\}$, $f_\infty(H) = k+1$. We first prove the upper bound.   
    
    \begin{lemma} \label{lem:upperbound}
        For all $k \in \NN$ and all $H \in \{\ks_k, \kp_k, \kf_k\}$, $f_\infty(H) \leq k+1$.
    \end{lemma}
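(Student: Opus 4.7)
The plan is to induct on $k$, using the recursive $2$-sum structure of each family, applying inequality~\eqref{eq:2-sum} of Lemma~\ref{lem:gluing} and the base fact $f_\infty(K_4) = 2$ from Lemma~\ref{lem:K4}. The base case $k=1$ is immediate since $\ks_1 = \kp_1 = \kf_1 = K_4$, for which $f_\infty = 2 \leq 1 + 1$.

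For the inductive step with $k \geq 2$, the path and fan cases are the most direct. In $\kp_{k-1}$ (respectively $\kf_{k-1}$), the edge $f_{k-1}$ of the last copy of $K_4$ is still present, since it has not yet been shared with a further copy and hence has not been deleted. Therefore
\[
\kp_k = \kp_{k-1} +_{f_{k-1}} K_4 \qquad \text{and} \qquad \kf_k = \kf_{k-1} +_{f_{k-1}} K_4.
\]
Applying~\eqref{eq:2-sum}, Lemma~\ref{lem:K4}, and the inductive hypothesis $f_\infty(\kp_{k-1}), f_\infty(\kf_{k-1}) \leq k$, I obtain
\[
f_\infty(\kp_k),\ f_\infty(\kf_k) \leq k + 2 - 1 = k+1.
\]

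The star case requires a small twist because the central edge $e = vw$ is deleted in $\ks_k$ whenever $k \geq 2$, so for $k \geq 3$ one cannot directly express $\ks_k$ as $\ks_{k-1} +_e K_4$ (the $2$-sum operation requires $e$ to belong to both summands). To handle this, I introduce the auxiliary graph $\ks_k^+$ obtained from $\ks_k$ by adding $e$ back; equivalently, $\ks_k^+$ is the graph formed by gluing $k$ copies of $K_4$ along the common edge $e$ without deleting it. Then $\ks_1^+ = K_4$ and $\ks_k^+ = \ks_{k-1}^+ \oplus_e K_4$ for $k \geq 2$, so~\eqref{eq:2-sum} and Lemma~\ref{lem:K4} yield $f_\infty(\ks_k^+) \leq k+1$ by induction. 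Since $\ks_k$ is a minor of $\ks_k^+$ (delete $e$), minor-monotonicity of $f_\infty$ gives $f_\infty(\ks_k) \leq f_\infty(\ks_k^+) \leq k+1$.

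There is no substantive obstacle in this argument: every step is a direct invocation of the tools already established (the $2$-sum bound, the value $f_\infty(K_4) = 2$, and minor-monotonicity of $f_\infty$), together with the purely structural observation that each of the three families is built from copies of $K_4$ by iterated $2$-sums along edges. The only mild subtlety is the star case, where the shared edge is absent at every intermediate stage and one must pass through the completed graph $\ks_k^+$ to obtain a clean inductive decomposition.
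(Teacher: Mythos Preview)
Your proof is correct and follows essentially the same approach as the paper: induction on $k$ with base case $K_4$, using the $2$-sum bound from Lemma~\ref{lem:gluing} and $f_\infty(K_4)=2$. The paper simply writes $\ks_k = \ks_{k-1} +_e K_4$ without further comment; your observation that this is technically ill-formed for $k\geq 3$ (since $e\notin E(\ks_{k-1})$) and your fix via the auxiliary graph $\ks_k^+$ make the argument more precise, but the underlying idea is identical.
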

    \begin{proof}
        We proceed by induction on $k$.  The base case follows by Lemma~\ref{lem:K4}, since $\ks_1 = \kp_1 = \kf_1 = K_4$. Next note that $\ks_k = \ks_{k-1}+_e K_4,\kp_k = \kp_{k-1}+_e K_4$, and $\kf_k = \kf_{k-1}+_e K_4$.  Therefore, we are done by induction and Lemmas~\ref{lem:gluing} and \ref{lem:K4}. 
    \end{proof}
		
	\begin{theorem}\label{thm:K4Star}
		For all $k \in \NN$, $f_\infty(\ks_k) = k+1$.
	\end{theorem}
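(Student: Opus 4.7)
The upper bound $f_\infty(\ks_k) \leq k+1$ is already in hand from Lemma~\ref{lem:upperbound}, so the task reduces to exhibiting a distance function $d$ on $\ks_k$ with $f_\infty(\ks_k, d) \geq k+1$. My plan is to construct such a $d$ and then produce $k+1$ pairwise incompatible edges, since no flat set can cover two incompatible edges and thus any flat covering must use at least $k+1$ flat sets.

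Let $v_0, v_1$ be the two vertices shared by every $K_4$ copy and let $u_i, w_i$ denote the remaining two vertices of the $i$-th copy. Pick distinct slants $0 < a_1 < a_2 < \cdots < a_{k-1} < 1/2$ (for instance $a_i = i/(2k)$), and on each of the first $k-1$ copies use the rotated pattern
\[
d(u_i w_i) = 2, \quad d(v_0 u_i) = d(v_1 w_i) = 1+a_i, \quad d(v_0 w_i) = d(v_1 u_i) = 1-a_i.
\]
On the $k$-th copy, use the diagonal pattern $d(v_0 u_k) = d(v_1 w_k) = 2$, $d(v_0 w_k) = d(v_1 u_k) = 1$, $d(u_k w_k) = 1$. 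A routine verification confirms that $d$ is a valid distance function: every triangle inside a single copy satisfies the triangle inequality, and the shortest path between $v_0$ and $v_1$ through any copy $i \le k-1$ has length exactly $2$, so no longer cross-copy detour tightens any edge.

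The $k+1$ edges I would declare pairwise incompatible are $u_1 w_1, u_2 w_2, \ldots, u_{k-1} w_{k-1}, v_0 u_k, v_1 w_k$. For each pair I invoke Lemma~\ref{lem:incompatible} using the four natural paths routed through $v_0$ or $v_1$, and compute the two crossed sums in closed form. Two top edges $u_i w_i, u_j w_j$ with $i \neq j$ yield sums $4 - 2(a_i + a_j)$ and $4 - 2|a_i - a_j|$, both strictly below $4 = d(u_i w_i) + d(u_j w_j)$ since the $a_i$ are distinct and positive. A top edge $u_i w_i$ paired with $v_0 u_k$ or with $v_1 w_k$ yields sums of the form $3 \pm 2 a_i$, both strictly below $4$ because $a_i < 1/2$. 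The last pair $v_0 u_k, v_1 w_k$ yields sums $3$ and $2$, both strictly below $4$. Hence all $\binom{k+1}{2}$ pairs are incompatible and $f_\infty(\ks_k, d) \geq k+1$, finishing the proof. The most delicate step is choosing the slants $a_i$ distinct and in $(0, 1/2)$ so that every one of these strict inequalities holds simultaneously; distinctness is essential for the top-top case, and the upper bound $1/2$ is essential for the top-$v_0 u_k$ and top-$v_1 w_k$ cases.
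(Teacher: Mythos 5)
Your proposal is correct and follows essentially the same strategy as the paper: fix an explicit distance function on $\ks_k$ and exhibit $k+1$ pairwise incompatible edges via Lemma~\ref{lem:incompatible}; your specific weights (slanted parallelograms on $k-1$ copies plus one diagonal copy) differ from the paper's, but all the stated inequalities check out for $k\geq 2$. The only caveat is $k=1$, where your construction degenerates (the length-$2$ detour from $v_0$ to $v_1$ through another copy is unavailable, so the two remaining edges are not incompatible under your $d$); this boundary case should be dispatched separately via $\ks_1=K_4$ and Lemma~\ref{lem:K4}, exactly as the paper does.
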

	\begin{proof}
		By Lemma~\ref{lem:upperbound}, it suffices to show $f_\infty(\ks_k)\geq k+1$. Since $S_1 = K_4$, by Lemma~\ref{lem:K4}, we may assume $k \geq 2$. We now give a distance function $d$ on $\ks_k$, which is illustrated in Figure~\ref{fig:K4Star}, such that there are $k+1$ incompatible edges in $(\ks_k, d)$. 
		    
		Let $V(\ks_k) = \{v,w\}  \cup \{v_1, w_1, \dots, v_k, w_k\}$ where $v,w,v_i,w_i$ are the vertices of the $i$th copy of $K_4$. We define $d$ as follows:
		    
		\begin{align*}
		    d(vv_1) = d(ww_1)& = 4k\,,\\
			d(vv_i) = d(ww_i) &= 2(k+i-1) &\text{ for all } i \in [k],\ i \neq 1 \,,\\
			d(wv_i) = d(vw_i) &= k+i-1 &\text{ for all } i \in [k]\,,\\
			d(v_iw_i) &= 3(k+i-1) &\text{ for all } i \in [k]\,.\\
		\end{align*}

        	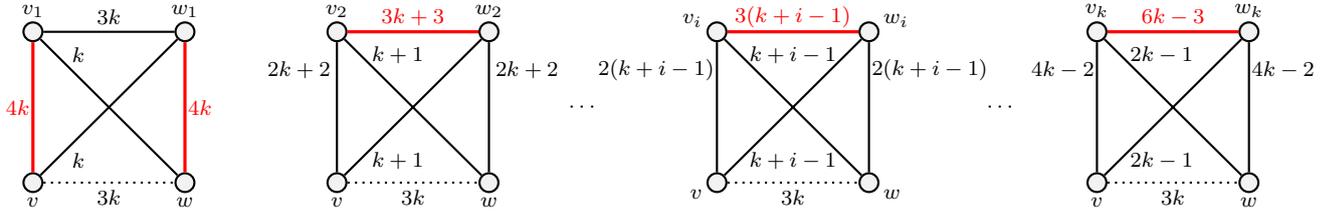
\begin{figure}[hb]
			\centering
			\begin{tikzpicture}[scale=1, inner sep=2.5pt]
                \begin{scriptsize}
                    \tikzstyle{vtx}=[circle,draw,thick,fill=black!5]
                    \node[vtx] (a1) at (0,0) {};
                    \node[vtx] (b1) at (2,0) {};
                    \node[vtx] (v1) at (0,2) {};
                    \node[vtx] (w1) at (2,2) {};
			        \node[below] at (0,-.1) {$v$};
			        \node[below] at (2,-.1) {$w$};
			        \node[above] at (0,2.1) {$v_1$};
			        \node[above] at (2,2.1) {$w_1$};
			        
			        \node[vtx] (a2) at (4,0) {};
			        \node[vtx] (b2) at (6,0) {};
			        \node[vtx] (v2) at (4,2) {};
			        \node[vtx] (w2) at (6,2) {};
			        \node[below] at (4,-.1) {$v$};
			        \node[below] at (6,-.1) {$w$};
			        \node[above] at (4,2.1) {$v_2$};
			        \node[above] at (6,2.1) {$w_2$};
			        
			        \node[vtx] (a3) at (9,0) {};
			        \node[vtx] (b3) at (11,0) {};
			        \node[vtx] (v3) at (9,2) {};
			        \node[vtx] (w3) at (11,2) {};
			        \node[below,left] at (8.9,-.15) {$v$};
			        \node[below,right] at (11.1,-.15) {$w$};
			        \node[above,left] at (8.9,2.15) {$v_i$};
			        \node[above,right] at (11.1,2.15) {$w_i$};
			        
			        \node[vtx] (a4) at (14,0) {};
			        \node[vtx] (b4) at (16,0) {};
			        \node[vtx] (v4) at (14,2) {};
			        \node[vtx] (w4) at (16,2) {};
			        \node[below] at (14,-.1) {$v$};
			        \node[below] at (16,-.1) {$w$};
			        \node[above] at (14,2.1) {$v_k$};
			        \node[above] at (16,2.1) {$w_k$};
			        
			        \node at (7.25,1) {$\cdots$};
			        \node at (12.75,1) {$\cdots$};
			        
			        \node at (1,-.2) {$3k$};
			        \node at (1,2.2) {$3k$};
			        \node[red] at (-0.2,1) {$4k$};
			        \node[red] at (2.2,1) {$4k$};
			        \node at (0.6,1.7) {$k$};
			        \node at (0.6,0.3) {$k$};
			        
			        \node at (5,-.2) {$3k$};
			        \node[red] at (5,2.2) {$3k+3$};
			        \node at (3.5,1.5) {$2k+2$};
			        \node at (6.5,1.5) {$2k+2$};
			        \node at (4.8,1.7) {$k+1$};
			        \node at (4.8,0.3) {$k+1$};
			        
			        \node at (10,-.2) {$3k$};
			        \node[red] at (10,2.2) {$3(k+i-1)$};
			        \node at (8.2,1.5) {$2(k+i-1)$};
			        \node at (11.8,1.5) {$2(k+i-1)$};
			        \node at (10.0,1.7) {$k+i-1$};
			        \node at (10.0,0.3) {$k+i-1$};
			        
			        \node at (15,-.2) {$3k$};
			        \node[red] at (15,2.2) {$6k-3$};
			        \node at (13.55,1.5) {$4k-2$};
			        \node at (16.45,1.5) {$4k-2$};
			        \node at (14.85,1.7) {$2k-1$};
			        \node at (14.85,0.3) {$2k-1$};
			    \end{scriptsize}
			    
			    \draw[thick] (v1)--(w1) (a1)--(w1) (b1)--(v1);
			    \draw[very thick, red] (a1)--(v1) (b1)--(w1);
			    \draw[thick, dotted] (a1)--(b1);
			    
			    \draw[thick] (a2)--(v2) (a2)--(w2) (b2)--(v2) (b2)--(w2);
			    \draw[very thick, red] (v2)--(w2);
			    \draw[thick, dotted] (a2)--(b2);
			    
			    \draw[thick] (a3)--(v3) (a3)--(w3) (b3)--(v3) (b3)--(w3);
			    \draw[very thick, red] (v3)--(w3);
			    \draw[thick, dotted] (a3)--(b3);
			    
			    \draw[thick] (a4)--(v4) (a4)--(w4) (b4)--(v4) (b4)--(w4);
			    \draw[very thick, red] (v4)--(w4);
			    \draw[thick, dotted] (a4)--(b4);
			\end{tikzpicture}
			\caption{\label{fig:K4Star}$(\ks_k,d)$ as in the proof of Theorem~\ref{thm:K4Star}. The red edges are pairwise incompatible. Vertices with the same label are identified.}
		\end{figure}

		First, we show that $d$ is  a distance function. For this, let $(G,d')$ be obtained from $(\ks_k,d)$ by adding the edge $vw$ of length $d'(vw) = 3k$. Observe that 
		\[
		G = K_4 \oplus_{vw} K_4 \oplus_{vw} \dots \oplus_{vw} K_4,
		\] 
		where $K_4$ appears $k$ times in the righthand side.  It is easy to see that the restriction of $d'$ to each $K_4$ subgraph of $G$ is a distance function. Therefore, by Lemma~\ref{lem:distancefunction}, $d'$ is a distance function on $G$. Since $d$ is a restriction of $d'$ to $\ks_k$ it follows that $d$ is a distance function on $\ks_k$.

		We now show that the $k+1$ edges $vv_1, ww_1, v_2w_2, v_3w_3, \dots, v_kw_k$ are pairwise incompatible. For this, we make repeated use of Lemma~\ref{lem:incompatible}.
			
		First, consider $vv_1$ and $ww_1$.  Observe that $d(vv_1)+d(ww_1) = 8k$. However, $d(vw_1)+d(wv_1) = 2k < 8k$ and $d(v_1w_1)+d(vv_2w) = 6k+3<8k$, since $k \geq 2$. By Lemma~\ref{lem:incompatible}, $vv_1$ and $ww_1$ are incompatible. 
			
		Next, consider $vv_1$ and $v_iw_i$ with $i\in \{2,\ldots, k\}$. Observe that $d(vv_1)+d(v_iw_i) =  7k+3i-3$.  However, 		$d(vv_i)+d(w_iwv_1) = 5k+2i-2 < 7k + 3i-3$ and $d(vw_i)+d(v_iwv_1) = 3k+2i-2 < 7k+3i-3$. Hence, by Lemma~\ref{lem:incompatible}, $vv_1$ and $v_iw_i$ are incompatible. 
		
		By symmetry, $ww_1$ and $v_iw_i$ are also incompatible for each $i \in \{2,\ldots, k\}$.
			
		Finally, consider $v_iw_i$ and $v_jw_j$ for $2 \leq i< j \leq k$.  Observe that $d(v_iw_i)+d(v_jw_j) = 6k+3i+3j-6$.  However, $d(v_iwv_j)+d(w_ivw_j) = 4k+2i+2j-4 < 6k+3i+3j-6$, and $d(v_ivw_j)+d(w_iwv_j) = 6k+4i+2j-6 < 6k+3i+3j-6$ since $i < j$.  Hence, by Lemma~\ref{lem:incompatible}, $v_iw_i$ and $v_jw_j$ are incompatible, which completes the proof. 
	\end{proof}

	\begin{theorem}\label{thm:K4Path}
		For all $k \in \NN$, $f_\infty(\kp_k) =  k+1$. 
	\end{theorem}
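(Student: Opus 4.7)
By Lemma~\ref{lem:upperbound} we already have $f_\infty(\kp_k)\le k+1$, so it suffices to exhibit a distance function $d$ on $\kp_k$ admitting $k+1$ pairwise incompatible edges (in the sense of Lemma~\ref{lem:incompatible}). Label the vertices of $\kp_k$ by $a_i,b_i$ for $i=0,\dots,k$, so that the $i$-th $K_4$-block spans $\{a_{i-1},b_{i-1},a_i,b_i\}$; the edges of $\kp_k$ are then the end rungs $a_0b_0$ and $a_kb_k$ together with, for each $i\in[k]$, the four ``diagonal'' edges $a_{i-1}a_i$, $a_{i-1}b_i$, $b_{i-1}a_i$, $b_{i-1}b_i$.

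My plan is to mirror the strategy of Theorem~\ref{thm:K4Star}. I would take as candidate incompatible family the perfect matching $M$ of $\kp_k$ of size $k+1$ defined as follows: in each odd-indexed block $2j+1$ include the pair of crossing diagonals $\{a_{2j}b_{2j+1},\,b_{2j}a_{2j+1}\}$, and additionally include the end rung $a_kb_k$ when $k$ is even. A straightforward alternating-pattern count shows $|M|=k+1$ in both parities, and by construction the edges of $M$ are pairwise independent. For the distance function, I would proceed block by block: to each $K_4$-block I would assign a scale $\sigma_i$ and, within it, a degenerate-triangle pattern of edge lengths in ratios akin to $1{:}2{:}3$ (as in Theorem~\ref{thm:K4Star}), so that the matching edges lying in the block are among the longest edges of the block and certain triangles collapse into equalities along designated short paths. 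Applying Lemma~\ref{lem:distancefunction} iteratively to the $2$-sum decomposition $\kp_k^+=K_4\oplus_{a_1b_1}K_4\oplus\cdots\oplus_{a_{k-1}b_{k-1}}K_4$ of the ``completed'' graph (with the middle rungs reinstated) produces a valid distance function on $\kp_k^+$, whose restriction to $\kp_k$ is the desired~$d$.

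The final step is to verify via Lemma~\ref{lem:incompatible} that every pair of edges of $M$ is incompatible. Within-block pairs reduce to the analysis showing $f_\infty(K_4)=2$ (Lemma~\ref{lem:K4}), so the interesting case is a cross-block pair $e,f$ lying in distinct blocks. For these one has to construct paths between the four endpoints of $e$ and $f$ whose total length in both pairings is strictly less than $d(e)+d(f)$; a natural choice is paths that traverse the intermediate blocks via parallel edges, crossing from one side of the ladder to the other at a single ``cheapest'' intermediate block. The main obstacle is that, in contrast with $\ks_k$ where all copies of $K_4$ share a common edge and paths between distant blocks only involve two ``active'' contributions, in $\kp_k$ the blocks are chained linearly and a path between distant matching edges accumulates a contribution from every intermediate block. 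The scales $\sigma_i$ must therefore be tuned so that this cumulative contribution is strictly dominated by $d(e)+d(f)$ simultaneously for all pairs; I expect this to require a carefully chosen growth rate for $\sigma_i$ on the active blocks, combined with suitably small scales on the passive blocks to keep shortcut paths short, and this balancing is where the computations will be most delicate.
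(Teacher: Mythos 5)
Your overall strategy is exactly the paper's: cite Lemma~\ref{lem:upperbound} for the upper bound, pick a perfect matching of size $k+1$ consisting of one pair of long diagonals in each odd-indexed block plus the end rung when $k$ is even, build the distance function via Lemma~\ref{lem:distancefunction} on the $2$-sum decomposition with the middle rungs reinstated, and invoke Lemma~\ref{lem:incompatible} pairwise. However, there is a genuine gap: you never actually produce the distance function, and you explicitly defer the one step that carries all the difficulty (``this balancing is where the computations will be most delicate''). The existence of scales $\sigma_i$ making \emph{all} $\binom{k+1}{2}$ pairs simultaneously incompatible is precisely what has to be proved, and your proposed scheme --- a per-block scale $\sigma_i$ with a $1{:}2{:}3$-type degenerate triangle in each block --- does not obviously admit such a tuning, for the reason you yourself identify: a path joining matching edges in distant blocks accumulates a contribution from every intermediate block, so making some blocks ``large'' to certify one incompatibility inflates the connecting paths needed for other pairs.

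The paper resolves this not by scaling blocks but by a telescoping construction: every matching edge is given length $2^k$ or $2^k+1$ (so any two sum to more than $2\cdot 2^k$), the parallel edges in blocks $i\equiv 2\pmod 4$ get length $2^k-1$, and in blocks $i\equiv 0\pmod 4$ the crossing edges grow geometrically as $2^{1+i/2}$ while the parallel edges shrink to $2^k-2^{1+i/2}$ to compensate. The effect is that a path between endpoints of two distant matching edges can cross from one side of the ladder to the other at a single well-chosen block, and the accumulated cost $3s+3+2^3+2^5+\cdots+2^{2s-1}-2^{2s+1}+2^k$ telescopes to at most $2^k$; hence the two relevant path-pairs each total at most $2\cdot 2^k$, and Lemma~\ref{lem:incompatible} applies. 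Until you exhibit a concrete distance function with a verified bound of this kind, the lower bound $f_\infty(\kp_k)\ge k+1$ is not established.
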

	\begin{proof}
        Again, $f_\infty(\kp_k)\leq k+1$ follows from Lemma~\ref{lem:upperbound}.  We label the vertices of the topmost path of $\kp_k$ as $v_0, v_1, \dots , v_{k}$ and the vertices of the bottommost path of $\kp_k$ as $w_0, w_1, \dots , w_{k}$. Thus $V(\kp_k) = \{v_0,v_1,\ldots,v_{k}\} \cup \{w_0,w_1,\ldots,w_{k}\}$ and $E(\kp_k) = \{v_0w_0,v_{k}w_{k}\} \cup \{v_{i-1}v_{i},v_{i-1}w_{i},w_{i-1}v_{i},w_{i-1}w_{i} \mid i \in [k]\}$.
		For the lower bound, consider the following distance function $d$, which is illustrated in Figure~\ref{fig:nK4Path} (we take $i \in [k]$):
		\begin{align*}
		    d(v_0w_0) = d(v_{k}w_{k}) &= 2^k\,, \\
		    d(v_{i-1}v_{i}) = d(w_{i-1}w_{i}) &= 2^k+1 &\text{ if $ i \equiv 1 \pmod 2$}\,,\\
		    d(v_{i-1}v_{i}) = d(w_{i-1}w_{i}) &= 2^k-1 &\text{ if $ i \equiv 2 \pmod 4$}\,,\\
            d(v_{i-1}v_{i}) = d(w_{i-1}w_{i}) &= 2^k - 2^{1+i/2} &\text{ if $i \equiv 0 \pmod 4$}\,,\\
		    d(v_{i-1}w_{i}) = d(w_{i-1}v_{i}) &= 2^{1+i/2} &\text{ if $i \equiv 0 \pmod 4$}\,,\\
            d(v_{i-1}w_{i}) = d(w_{i-1}v_{i}) &= 1 \, &\text{ if $i \not\equiv 0 \pmod 4$}\,.
        \end{align*}
		    
		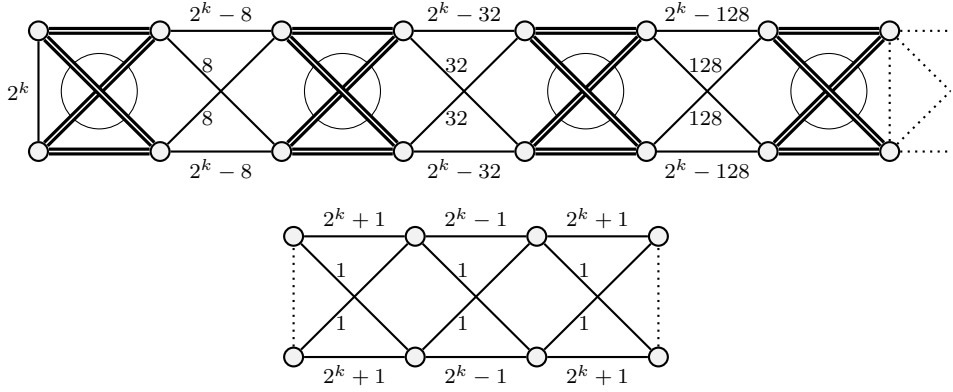
\begin{figure}[ht]
		    \centering
		    \begin{tikzpicture}[x=0.8cm, y=0.8cm, inner sep=2.5pt]
		        \tikzstyle{vtx}=[circle,draw,thick,fill=black!5]
		        \begin{scriptsize}
		            \node[vtx] (v0) at (0,0){};
		            \node[vtx] (v1) at (0,2){};
		            \node[vtx] (v2) at (2,0){};
		            \node[vtx] (v3) at (2,2){};
		            \node[vtx] (v4) at (4,0){};
		            \node[vtx] (v5) at (4,2){};
		            \node[vtx] (v6) at (6,0){};
		            \node[vtx] (v7) at (6,2){};
		            \node[vtx] (v8) at (8,0){};
		            \node[vtx] (v9) at (8,2){};
		            \node[vtx] (v10) at (10,0){};
		            \node[vtx] (v11) at (10,2){};
		            \node[vtx] (v12) at (12,0){};
		            \node[vtx] (v13) at (12,2){};
		            \node[vtx] (v14) at (14,0){};
		            \node[vtx] (v15) at (14,2){};
		          
		            \draw (1,1) circle(0.5cm);
		            \draw (5,1) circle(0.5cm);
		            \draw (9,1) circle(0.5cm);
		            \draw (13,1) circle(0.5cm);
		   
		            \draw[thick] (v0)--(v1) node[midway, left]{$2^k$};
		            \draw[very thick, double] (v0)--(v2);
		            \draw[very thick, double] (v0)--(v3);
		            \draw[very thick, double] (v1)--(v2);
		            \draw[very thick, double] (v1)--(v3);
		            \draw[thick] (v2)--(v4) node[midway, below]{$2^k- 8$};
		            \draw[thick] (v2)--(v5) node[near start, right]{$8$};
		            \draw[thick] (v3)--(v4) node[near start, right]{$8$};
		            \draw[thick] (v3)--(v5) node[midway, above]{$2^k- 8$};
		            \draw[very thick, double] (v4)--(v6);
		            \draw[very thick, double] (v4)--(v7);
		            \draw[very thick, double] (v5)--(v6);
		            \draw[very thick, double] (v5)--(v7);
		            \draw[thick] (v6)--(v8) node[midway, below]{$2^k- 32$};
		            \draw[thick] (v6)--(v9) node[near start, right]{$32$};
		            \draw[thick] (v7)--(v8) node[near start, right]{$32$};
		            \draw[thick] (v7)--(v9) node[midway, above]{$2^k- 32$};
		            \draw[very thick, double] (v8)--(v10);
		            \draw[very thick, double] (v8)--(v11);
		            \draw[very thick, double] (v9)--(v10);
		            \draw[very thick, double] (v9)--(v11);
		            \draw[thick] (v10)--(v12) node[midway, left, below]{$2^k- 128$};
		            \draw[thick] (v10)--(v13) node[near start, right]{$128$};
		            \draw[thick] (v11)--(v12) node[near start, right]{$128$};
		            \draw[thick] (v11)--(v13) node[midway, above]{$2^k- 128$};
		            \draw[very thick, double] (v12)--(v14);
		            \draw[very thick, double] (v12)--(v15);
		            \draw[very thick, double] (v13)--(v14);
		            \draw[very thick, double] (v13)--(v15);
		            \draw[thick, dotted] (v14)--(v15);
		            \draw[thick, dotted] (v14)--(15,0);
		            \draw[thick, dotted] (v14)--(15,1);
		            \draw[thick, dotted] (v15)--(15,1);
		            \draw[thick, dotted] (v15)--(15,2);
		        \end{scriptsize}
		  \end{tikzpicture}\medskip
		        
		  \begin{tikzpicture}[x=0.8cm, y=0.8cm, inner sep=2.5pt]
		        \tikzstyle{vtx}=[circle,draw,thick,fill=black!5]
		        \begin{scriptsize}
		            \node[vtx] (v0) at (0,0){};
		            \node[vtx] (v1) at (0,2){};
		            \node[vtx] (v2) at (2,0){};
		            \node[vtx] (v3) at (2,2){};
		            \node[vtx] (v4) at (4,0){};
		            \node[vtx] (v5) at (4,2){};
		            \node[vtx] (v6) at (6,0){};
		            \node[vtx] (v7) at (6,2){};
		            
		            \draw[thick] (v0)--(v2) node[midway, below]{$2^k+1$};
		            \draw[thick] (v0)--(v3) node[near start, right]{$1$};
		            \draw[thick] (v1)--(v2) node[near start, right]{$1$};
		            \draw[thick] (v1)--(v3)node[midway, above]{$2^k+1$};
		            \draw[thick] (v2)--(v4) node[midway, below]{$2^k-1$};
		            \draw[thick] (v2)--(v5) node[near start, right]{$1$};
		            \draw[thick] (v3)--(v4) node[near start, right]{$1$};
		            \draw[thick] (v3)--(v5) node[midway, above]{$2^k-1$};
		            \draw[thick] (v4)--(v6) node[midway, below]{$2^k+1$};
		            \draw[thick] (v4)--(v7) node[near start, right]{$1$};
		            \draw[thick] (v5)--(v6) node[near start, right]{$1$};
		            \draw[thick] (v5)--(v7)node[midway, above]{$2^k+1$};
		            \draw[thick, dotted] (v6)--(v7);
		            \draw[thick, dotted] (v0)--(v1);
            \end{scriptsize}
        \end{tikzpicture}
		  
		\caption{The top half of the figure depicts the distance function on $\kp_k$ used in the proof of Theorem~\ref{thm:K4Path}. The thick double crosses with a circle are each to be replaced with the metric graph shown in the bottom half of the figure.  
		\label{fig:nK4Path}}
        \end{figure}
		    
        Let $(G, d')$ be obtained from $(\kp_k, d)$ by adding edges $v_{i}w_{i}$ with $d'(v_{i}w_{i}) = 2^k$ for all $i \in [k-1]$. Notice that for all $i$, the length of a shortest path between $v_{i}$ and $w_{i}$ in $(\kp_k, d)$ is $2^k$. 
        Therefore, $(\kp_k, d)$ is a metric graph if and only if $(G, d')$ is a metric graph. Observe that the restriction of $d'$ to every $K_4$ subgraph of $G$ is a distance function. Therefore, $(G, d')$ and hence also $(\kp_k, d)$ is a metric graph by Lemma~\ref{lem:distancefunction}.
  
        Consider the matching $M = \{v_{i-1}v_{i}, w_{i-1}w_{i} \mid i \equiv 1 \pmod{2} \}$. If $k$ is even, then we also add the edge $v_{k}w_{k}$ to $M$. Thus $|M| = k+1$ always. We claim that the edges of $M$ are pairwise incompatible.
        To see this, let $e = xx'$ and $f = yy'$ be distinct edges of $M$. Let $P$ be a shortest $x$--$y$ path, and $P'$ be a shortest $x'$--$y'$ path. We claim that $d(P) + d(P') \leq 2 \cdot 2^{k}$ (see next paragraph for a proof). However, $d(e) + d(f) > 2 \cdot 2^{k}$ because $e, f \in M$. Therefore, by Lemma~\ref{lem:incompatible}, $e$ and $f$ are incompatible. Since $|M| = k+1$, $f_\infty(\kp_k) \geq k+1$, as required. 
        
        To prove the claim, we split the discussion into two cases. A \emph{segment} in $\kp_k$ is any subgraph induced by $\{v_{i}, w_{i} \mid i = 4q + r,\ r \in \{0,1,2,3\},\ i \leq k\}$ for some $q$. If $e$ and $f$ belong to the same segment, then it is easy to see that $d(P) + d(P') \leq 2 \cdot 2^k$. (Notice that sometimes $d(P) = 2^k+1$ and $d(P') = 2^k-1$.) Now if $a$ and $b$ are any two vertices in distinct segments (indexed by $q$ and $s$, with $q < s$), then there is a $a$--$b$ path $Q$ such that 
        \begin{align*}
        d(Q) &\leq 1 + 1 + 1 + 2^{2q+3} + 1 + 1 + 1 + \cdots + 2^{2s-1} + 1 + 1 + 1 + (2^k - 2^{2s+1}) + 1 + 1 + 1\\
        &\leq \underbrace{(3s + 3)}_{\leq 1+2+4+2^{2s}} + 2^{3} + 2^{5} + \cdots + 2^{2s-1} - 2^{2s+1} + 2^k
        \leq \sum_{i = 0}^{2s} 2^i - 2^{2s+1} + 2^k
        \leq 2^k\,.
        \end{align*}
        It follows that $d(P) + d(P') \leq 2 \cdot 2^k$ in this case too.
    \end{proof}

	\begin{theorem}\label{thm:K4fan}
		For all $k \in \NN$, $f_\infty(\kf_k) =  k +1$.
	\end{theorem}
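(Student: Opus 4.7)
The plan is to imitate the strategy of Theorems~\ref{thm:K4Star} and~\ref{thm:K4Path}. The upper bound $f_\infty(\kf_k) \leq k+1$ is immediate from Lemma~\ref{lem:upperbound}, so the task reduces to exhibiting a distance function $d$ on $\kf_k$ together with a set $M$ of $k+1$ pairwise incompatible edges; this gives $f_\infty(\kf_k) \geq |M| = k+1$ by the remarks preceding Lemma~\ref{lem:incompatible}.

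First I would fix convenient notation: let $v_0$ be the hub of the fan (the common end of all the identified edge pairs $e_i, f_i$), and for each $i \in [k]$ write the $i$th copy of $K_4$ on vertex set $\{v_0, a_i, a_{i+1}, c_i\}$, so that $a_{i+1}$ is the vertex shared with the $(i+1)$th copy. The edges of $\kf_k$ are then $v_0c_i$, $a_ic_i$, $a_{i+1}c_i$, $a_ia_{i+1}$ for $i \in [k]$, together with the two remaining hub-spokes $v_0a_1$ and $v_0a_{k+1}$ (the spokes $v_0a_i$ for $2 \leq i \leq k$ being deleted in the gluing). To verify that a chosen $d$ is a distance function on $\kf_k$, I would extend $d$ to a function $d'$ on the larger graph $G$ obtained by re-adding the missing spokes $v_0a_i$ with $d'(v_0a_i)$ set equal to the shortest $v_0$--$a_i$ path length in $(\kf_k, d)$; since $G$ is a sequence of complete $K_4$'s $2$-summed along the successive spokes, repeated application of Lemma~\ref{lem:distancefunction} reduces the check to verifying the triangle inequality on each individual $K_4^{(i)}$, after which restricting back to $\kf_k$ yields $d$ as a distance function.

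For the set $M$, a natural choice is one edge per $K_4$ copy (say $a_ic_i$ for each $i \in [k]$) together with one extra edge taken from the first or last copy (such as $v_0a_1$ or $a_{k+1}c_k$), giving $|M| = k+1$. Pairwise incompatibility within $M$ is then verified via Lemma~\ref{lem:incompatible}: for each pair $\{e,f\}\subseteq M$ I would produce two path-pairs whose total lengths are both strictly less than $d(e) + d(f)$. The detours between edges of $M$ in distinct $K_4$ copies naturally route either through the hub $v_0$ (two spoke-hops) or along the outer rim $a_1a_2\cdots a_{k+1}$, while detours within a single copy are internal to a $K_4^{(i)}$.

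The main obstacle, as in Theorem~\ref{thm:K4Path}, will be entirely numerical. The edge lengths must be chosen so that (a) the triangle inequality holds in each $K_4^{(i)}$; (b) every edge of $M$ is long compared to any path in $\kf_k$ avoiding it; and (c) for every pair of $M$-edges, both detour options combined give a total length strictly below the sum of the corresponding two $d$-values. In analogy with the doubling scheme of Theorem~\ref{thm:K4Path}, I expect to take the $M$-edge lengths growing geometrically in $i$ (roughly $2^i$), while keeping the hub-spokes $v_0c_i$ and rim edges $a_ia_{i+1}$ short enough that any hub-detour between two copies costs only a bounded geometric sum. No new technique beyond Lemmas~\ref{lem:distancefunction} and~\ref{lem:incompatible} is required; the challenge is purely in choosing constants so that every one of the $\binom{k+1}{2}$ required strict inequalities holds at once.
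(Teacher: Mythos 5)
Your overall strategy is exactly the paper's: the upper bound from Lemma~\ref{lem:upperbound}, the lower bound by exhibiting a distance function with $k+1$ pairwise incompatible edges via Lemma~\ref{lem:incompatible}, and the metric check by re-adding the deleted spokes and invoking Lemma~\ref{lem:distancefunction}. Even your choice of matching (one diagonal per copy plus one boundary spoke) is the one the paper uses. The genuine gap is that you never actually produce the distance function: you list the desiderata and then say the challenge is ``purely in choosing constants.'' For this theorem, choosing those constants and verifying the $\binom{k+1}{2}$ strict inequalities \emph{is} the proof of the lower bound, so as written the statement remains unproved.

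Your numerical guess is also off in an instructive way. The doubling scheme of Theorem~\ref{thm:K4Path} is forced for $\kp_k$ because the copies of $K_4$ are strung along a path and detour lengths accumulate; in $\kf_k$ every copy contains the hub $v_0$, so any two copies are joined through the hub at bounded cost, and \emph{linear} growth suffices. The paper takes all spokes $v_0c_i$ and all rim edges $a_ia_{i+1}$ of length $1$, sets $d(a_ic_i)=i+1$ and $d(a_{i+1}c_i)=i$, $d(v_0a_1)=1$, $d(v_0a_{k+1})=k+1$, and uses the matching $\{v_0a_{k+1}\}\cup\{a_ic_i \mid i\in[k]\}$. Even with this in hand the verification needs care: for diagonals $a_ic_i$ and $a_jc_j$ with $i<j$, the ``crossed'' pairing must route $a_i$ to $c_j$ as $a_i\to c_{i-1}\to v_0\to c_j$ (cost $i+1$) and $c_i$ to $a_j$ as $c_i\to a_{i+1}\to\cdots\to a_j$ (cost $j-1$), giving total $i+j$, strictly below $d(a_ic_i)+d(a_jc_j)=i+j+2$; a naive hub route from $a_i$ through the edge $a_ic_i$ itself gives only $i+j+2$, which is not strict. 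So either carry out your geometric construction in full (checking it is a metric and that \emph{both} pairings are strictly short for every pair), or adopt the linear one and do the case analysis; without one of these the argument is a plan, not a proof.
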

	\begin{proof}
	    For all $i \in [k]$, we label the vertices of the $i$th copy of $K_4$ in $\kf_k$ as $v_0, v_{2i-1}, v_{2i}, v_{2i+1}$. Remember that in order to obtain $\kf_k$ we form the $2$-sum of these $k$ copies of $K_4$ and delete every edge that is in two consecutive copies. Thus $V(\kf_k) = \{v_j \mid j \in \{0,\ldots,2k+1\}\}$ and $E(\kf_k) = \{v_0v_1,v_0v_{2k+1}\} \cup \{v_0v_{2i},v_{2i-1}v_{2i},v_{2i-1}v_{2i+1},v_{2i}v_{2i+1}\}$.
	
        By Lemma~\ref{lem:upperbound}, it suffices to show $f_\infty(\kf_k) \geq k+1$. Consider the following distance function $d$ on $\kf_k$: 
        \begin{align*}
	        d(v_0v_1) &= 1\,, \\
		    d(v_0v_{2i}) &= 1 &\text{for $i \in [k]$}\,,\\
		    d(v_{2i-1}v_{2i+1}) &= 1 &\text{for $i \in [k]$}\,,\\
		    d(v_{2i}v_{2i+1}) &= i &\text{for $i \in [k]$}\,,\\
		    d(v_{2i}v_{2i-1}) &= i +1&\text{for $i \in [k]$}\,,\\
		    d(v_0v_{2k+1}) &= k+1\,. \\
	    \end{align*}
		    
        As before, by Lemma~\ref{lem:distancefunction}, we can prove that $d$ is a distance function. Notice that $v_0$ is at distance $i+1$ from $v_{2i+1}$ for each $i \in [k-1]$.
    
        Consider the matching $M = \{v_0v_{2k+1}\}\cup\{v_{2i}v_{2i-1} \mid i\in [k] \}$  in $(\kf_k, d)$.  See Figure~\ref{fig:K4fan} for an illustration of the distance function $d$ and the matching $M$ in $\kf_5$.  We let the reader verify, with the help of Lemma~\ref{lem:incompatible}, that all edges of $M$ are pairwise incompatible. Since $|M| = k+1, f_\infty(\kf_k) \geq k+1$ as required.
    \end{proof}
		        
 
    \begin{figure}[ht]
	    \centering
	    
		\begin{tikzpicture}[scale=2, inner sep=2.5pt]
		    \tikzstyle{vtx}=[circle,draw,thick,fill=black!5]
		    \clip (-1.7, -1.6) rectangle (1.7,1.6);
		    \begin{scriptsize}
		            \node[vtx] (v0) at (0,0){};
		            \node[vtx] (v1) at (180:1.5){};
		            \node[vtx] (v2) at (155:0.8){};
		            \node[vtx] (v3) at (130:1.5){};
		            \node[vtx] (v4) at (105:0.8){};
		            \node[vtx] (v5) at (80:1.5){};
		            \node[vtx] (v6) at (50:1.5){};
		            \node[vtx] (v7) at (25:0.8){};
		            \node[vtx] (v8) at (0:1.5){};
		            \node[vtx] (v9) at (-30 :1.5){};
		            \node[vtx] (v10) at (-55:0.8){};
		            \node[vtx] (v11) at (-80:1.5){};
		            
		            \draw[very thick, red] (v0)--(v1) node[midway,below]{$k+1$};
		            \draw[thick](v0)--(v2) node[midway, above]{$1$};
		            \draw[thick](v0)--(v4) node[midway,right]{$1$};
		            \draw[thick](v0)--(v7) node[midway, below]{$1$};
		            \draw[thick](v0)--(v10) node[midway,right]{$1$};
		            \draw[thick](v0)--(v11) node[midway,left]{$1$};
		            \draw[thick, loosely dotted] (v5)--(v6);
		            \draw[thick, loosely dotted] (v8)--(v9);
		            
		            \draw[thick](v2)--(v1) node[midway,above]{$k$};
		            \draw[thick](v3)--(v1) node[midway,left]{$1$};
		            \draw[very thick, red] (v2)--(v3) node[midway, right]{$k+1$};
		            
		            \draw[thick](v4)--(v3) node[midway, right]{$k-1$};
		            \draw[very thick, red] (v4)--(v5) node[midway, right]{$k$};
		            \draw[thick](v3)--(v5) node[midway,above]{$1$};
		            
		            \draw[thick](v7)--(v6) node[midway, left]{$i$};
		            \draw[very thick, red] (v7)--(v8) node[midway, below left]{$i+1$};
		            \draw[thick](v6)--(v8) node[midway,right]{$1$};
		            
		            \draw[thick](v10)--(v9) node[midway,below]{$1$};
		            \draw[very thick, red] (v10)--(v11) node[midway,right]{$2$};
		            \draw[thick](v9)--(v11) node[midway,below]{$1$};            
		            
		        \end{scriptsize}
		        \end{tikzpicture}
		        \hspace{5mm}
		\begin{tikzpicture}[scale=2, inner sep=2.5pt]
		    \clip (-1.7, -1.6) rectangle (1.7,1.6);
		    \tikzstyle{vtx}=[circle,draw,thick,fill=black!5]
		    \begin{scriptsize}
            \node[vtx] (v0) at (0,0){};
            \node[vtx] (v1) at (180:1.5){};
            \node[vtx] (v2) at (155:0.8 ){};
            \node[vtx] (v3) at (130:1.5){};
            \node[vtx] (v4) at (105:0.8){};
            \node[vtx] (v5) at (80:1.5){};
            \node[vtx] (v6) at (55:0.8){};
            \node[vtx] (v7) at (30:1.5){};
            \node[vtx] (v8) at (5:0.8){};
            \node[vtx] (v9) at (-20 :1.5){};
            \node[vtx] (v10) at (-45:0.8 ){};
            \node[vtx] (v11) at (-70:1.5){};
            
                \draw[very thick, red ] (v0)--(v1) node[midway, below]{$6$};
                \draw[thick](v0)--(v2)node[midway, above]{$1$};
                \draw[thick](v0)--(v4)node[midway, right]{$1$};
                \draw[thick](v0)--(v6)node[midway, above]{$1$};
                \draw[thick](v0)--(v8)node[midway, above]{$1$};
                \draw[thick](v0)--(v10) node[midway, right]{$1$};
                \draw[thick](v0)--(v11)node[midway, left]{$1$};
            
                \draw[thick](v2)--(v1) node[midway, above]{$5$};
                \draw[thick](v3)--(v1) node[midway, left]{$1$};
                \draw[very thick, red ] (v2)--(v3) node[midway, right]{$6$};
            
                \draw[thick](v4)--(v3) node[midway, below]{$4$};
                \draw[very thick, red ] (v4)--(v5) node[midway, right]{$5$};
                \draw[thick](v3)--(v5) node[midway, above]{$1$};
            
                \draw[thick](v5)--(v6)node[midway, left]{$3$};
                \draw[very thick, red ] (v7)--(v6) node[midway, below]{$4$};
                \draw[thick](v5)--(v7)node[midway, right]{$1$};
            
                \draw[thick](v7)--(v8)node[midway, left]{$2$};
                \draw[very thick, red ] (v8)--(v9) node[midway, left]{$3$};
                \draw[thick](v7)--(v9)node[midway, right]{$1$};
            
                \draw[thick](v9)--(v11)node[midway, right]{$1$};
                \draw[very thick, red ] (v10)--(v11) node[midway, left]{$2$};
                \draw[thick](v9)--(v10)node[midway, above]{$1$};
           \end{scriptsize}
        \end{tikzpicture}
        \caption{$(\kf_k, d)$  as in the proof of Theorem~\ref{thm:K4fan} and $(\kf_5,d)$. The red edges are pairwise incompatible. \label{fig:K4fan}}		        
    \end{figure}
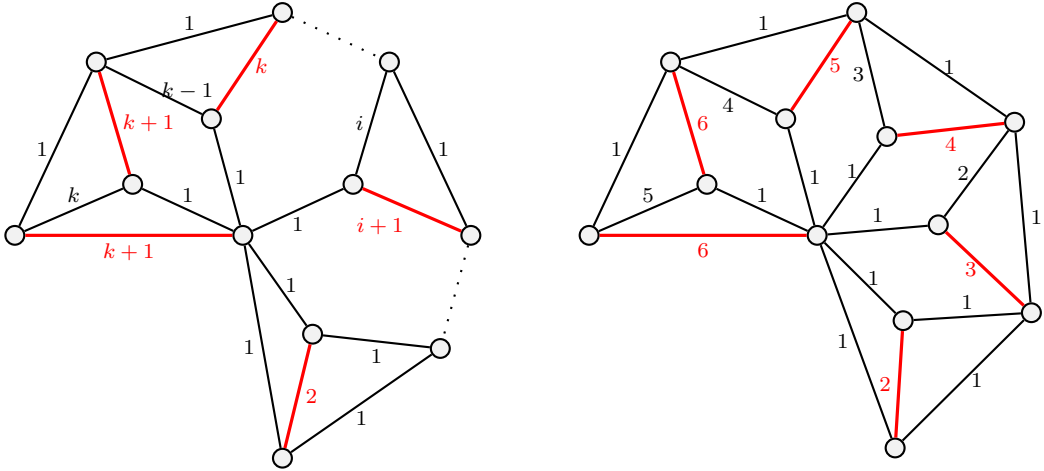

    \begin{theorem} \label{thm:excludedminors} 
    For all $k \geq 2$, $\ks_k, \kp_k, \kf_k$ are excluded minors for the property $f_\infty(G)\leq k$. 
    \end{theorem}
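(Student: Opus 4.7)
The plan is to verify, for each $H \in \{\ks_k, \kp_k, \kf_k\}$ and each edge $e \in E(H)$, that both $f_\infty(H \setminus e) \leq k$ and $f_\infty(H / e) \leq k$; by minor-monotonicity of $f_\infty$, this suffices. The key tools are Lemma~\ref{lem:gluing} (in particular the $2$-sum bound $f_\infty(G_1 \oplus_e G_2) \leq f_\infty(G_1) + f_\infty(G_2) - 1$), Lemma~\ref{lem:suppressDeg2} (suppressing degree-$2$ vertices preserves $f_\infty$), Lemma~\ref{lem:K4} ($f_\infty(K_4) = 2$), and Theorems~\ref{thm:K4Star}--\ref{thm:K4fan} ($f_\infty(\ks_{k-1}) = f_\infty(\kp_{k-1}) = f_\infty(\kf_{k-1}) = k$). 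The proof is a case analysis on the location of $e$ in which we transform $H \setminus e$ or $H / e$ into a $2$-sum of two graphs whose $f_\infty$-values sum to at most $k + 1$.

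For $\ks_k$, every edge lies in some copy $K_4^{(i)}$. Whether $e$ is incident to the hub $\{v, w\}$ or is the outer edge $v_i w_i$, iterating suppressions of the degree-$2$ vertices produced reduces the affected page to a single new edge $vw$, yielding the book $B_{k-1}$ consisting of $k-1$ copies of $K_4$ sharing an edge $vw$. Iterating $f_\infty(A \oplus_{vw} K_4) \leq f_\infty(A) + 1$ starting from $f_\infty(K_4) = 2$ gives $f_\infty(B_{k-1}) \leq k$.

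For $\kp_k$ (and analogously $\kf_k$), deletion or contraction of an end edge or of an inner edge of $K_4^{(1)}$ or $K_4^{(k)}$ reduces via suppressions directly to $\kp_{k-1}$ (resp. $\kf_{k-1}$). The trickier cases are the interior $K_4^{(i)}$-edges with $1 < i < k$; we sketch the representative case $e = v_{i-1} v_i$ in $\kp_k$ under deletion. The other three edges of $K_4^{(i)}$ reduce to this case via the automorphisms $v_j \leftrightarrow w_j$ (globally, or restricted to $j \geq i$), and contractions are treated by the analogous decomposition with the merged vertex in the separator. We split $\kp_k \setminus e$ at the separator $\{w_{i-1}, w_i\}$, which carries the shared edge $w_{i-1} w_i$. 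The remaining $K_4^{(i)}$-edges $v_{i-1} w_i$ and $w_{i-1} v_i$ fall on opposite sides, making $w_i$ (left) and $w_{i-1}$ (right) each of degree $2$ in their side. Suppressing these vertices restores precisely the matching edges $v_{i-1} w_{i-1}$ and $v_i w_i$ that $\kp_k$ lacks, turning the two sides into $\kp_{i-1}$ and $\kp_{k-i}$. The $2$-sum bound then yields $f_\infty(\kp_k \setminus e) \leq i + (k - i + 1) - 1 = k$. The interior case for $\kf_k$ is handled by the parallel decomposition with the hub $v_0$ in the separator, producing sides isomorphic to $\kf_{i-1}$ and $\kf_{k-i}$ after one suppression on each.

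The main technical obstacle is choosing, for each interior edge, the correct $2$-separator so that each side produces a single degree-$2$ vertex whose suppression precisely restores the matching edge (in the $\kp_k$ case) or hub edge (in the $\kf_k$ case) missing in $H$. This requires careful bookkeeping of which edges of the constituent $K_4$s were deleted when forming $H$ and which are introduced by the suppressions.
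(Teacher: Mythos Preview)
Your proposal is correct and follows essentially the same approach as the paper. Both arguments show that every single-edge deletion or contraction of $H\in\{\ks_k,\kp_k,\kf_k\}$ can be written as a $2$-sum of two smaller pieces of the same type (after suppressing degree-$2$ vertices), and then apply the bound $f_\infty(G_1\oplus_e G_2)\leq f_\infty(G_1)+f_\infty(G_2)-1$ together with Lemma~\ref{lem:upperbound}. The only cosmetic difference is that the paper chooses the separator so that one side is already $\kp_\ell$ (resp.\ $\kf_\ell$, $\ks_\ell$) with no suppression needed and all suppressions occur on the other side, whereas you split symmetrically and suppress one vertex on each side; the arithmetic $i+(k-i+1)-1=k$ comes out the same either way.
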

    \begin{proof}
        Let $H$ be one of $\ks_k, \kp_k, \kf_k$. By Theorems~\ref{thm:K4Star},~\ref{thm:K4Path}, and~\ref{thm:K4fan}, we know $f_\infty(H) >k$.
        
        When deleting or contracting an edge in $H$, we get a minor $H'$ which can be expressed as a $2$-sum of two graphs $H_1$, $H_2$ with the following properties. First, $H_1 \in \{\ks_\ell, \kp_\ell, \kf_\ell\}$ for some $\ell < k$ (and $H_1$ is of the same type as $H$). Second, $H_2$ has a degree-$2$ vertex and recursively suppressing the degree-$2$ vertices from $H_2$ results in a graph $H_2'$ such that $H_2' \in \{\ks_m, \kp_m, \kf_m\}$ for some $m \leq k-l-1$ (again $H_2'$ is of the same type as $H$), or $H_2'$ is a single edge (this corresponds to the case $m = 0$). 
        
        By Lemma~\ref{lem:gluing} and Lemma~\ref{lem:upperbound},
        \[
        f_\infty(H') \leq f_\infty(H_1)+f_\infty(H_2)-1 =  f_\infty(H_1)+f_\infty(H_2')-1 \leq (l+1)+(m+1)-1 \leq k.
        \]
        Thus, $H$ is an excluded minor for $f_\infty(G) \leq k$. 
    \end{proof}
    
    \begin{theorem}\label{thm:necklace}
        For all $k \in \NN$, $f_\infty(\kn_k)\geq k+1$.
    \end{theorem}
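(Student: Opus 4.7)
I will prove $f_\infty(\kn_k) \ge k+1$ by constructing a distance function $d$ on $\kn_k$ and exhibiting a set of $k+1$ pairwise incompatible edges in $(\kn_k, d)$; by the observation preceding Lemma~\ref{lem:incompatible}, this yields the required lower bound. The natural candidate is the matching $M = \{v_iw_i : 0 \leq i \leq k\}$ of all rungs, which has size $k+1$. The twist edge $w_0v_k$, the distinguishing feature of $\kn_k$, must play the central role in the incompatibility argument.

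Concretely, I would set $d(v_iw_i) = L$ for each rung, $d(v_{i-1}v_i) = d(w_{i-1}w_i) = 1$ for each path edge, $d(v_{i-1}w_i) = L-1$ for each diagonal, and $d(w_0v_k) = L-1$ for the twist, with $L$ sufficiently large (say $L > k$). I would verify that $d$ is a valid distance function by applying Lemma~\ref{lem:distancefunction} to the decomposition of $\kn_k$ along the triangles $v_{i-1}v_iw_i$ together with the extra edges $w_{i-1}w_i$, $v_0w_0$, and $w_0v_k$.

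For any two rungs $v_iw_i, v_jw_j$ with $i<j$, the pairing in Lemma~\ref{lem:incompatible} using the top- and bottom-row paths gives sum $2(j-i) < 2L$ since $L > k \ge j-i$, while for the crossing pairing I would use short diagonal-based paths when they yield a strict inequality, and fall back on a direct potential-based argument otherwise: the tight diagonals force an arithmetic-progression structure on the values $p(v_\ell)$ and $p(w_\ell)$ along $\ell = 0,1,\dots,k$, and the twist constraint $|p(w_0)-p(v_k)| \le L-1$ then rules out every consistent sign choice for both rungs, yielding incompatibility.

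The hardest part will be ensuring that this distance function (or a small perturbation of it) yields incompatibility for \emph{every} pair of rungs, not only those for which Lemma~\ref{lem:incompatible} applies directly. If under uniform lengths some ``nearby'' pairs remain compatible, I would break the symmetry either by varying the rung lengths across blocks (in the spirit of Theorem~\ref{thm:K4Path}) or by slightly shortening the twist. The key mechanism is that any potential simultaneously covering two rungs forces propagation along the diagonals around the entire M\"obius cycle, eventually colliding with the twist-length bound; this is the feature the distance function must exploit to obstruct compatibility for every pair.
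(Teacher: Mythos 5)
Your high-level strategy is the same as the paper's (choose a distance function and show the $k+1$ rungs are pairwise incompatible via Lemma~\ref{lem:incompatible}), but the concrete distance function you propose does not work, and the gap is exactly in the place you flagged as hard. With $d(w_0v_k)=L-1$ the twist is too long: the cheap ``wrap-around'' path needed for the crossing pairing disappears, and in fact adjacent rungs become \emph{compatible}. For instance, with your weights and $3\le k<L$, the potential $p(v_0)=L-1$, $p(w_0)=1$, $p(v_1)=L$, $p(w_1)=0$, $p(v_2)=L+1$, $p(w_2)=1$, and $p(v_\ell)=L+3-\ell$, $p(w_\ell)=\ell-1$ for $\ell\ge 3$ satisfies every edge constraint (including $|p(w_0)-p(v_k)|=L+2-k\le L-1$) and makes both $v_1w_1$ and $v_2w_2$ tight with the same sign. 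Your fallback argument also fails: a flat set covering two rungs only forces the diagonals to be tight \emph{locally} at those two rungs, not around the whole M\"obius cycle, so no global arithmetic progression is forced and the twist constraint is never violated.

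The repair is not a small perturbation but a drastic shortening of the twist. The paper takes $d(v_iw_i)=k+1$, $d(v_{i-1}w_i)=k$, and weight $1$ on all horizontal edges \emph{and} on $w_0v_k$. Then for rungs $i>j$ the crossing pairing of Lemma~\ref{lem:incompatible} is witnessed by the path $v_iv_{i+1}\cdots v_kw_0w_1\cdots w_j$ (weight $k-i+j+1$, cheap precisely because the twist costs $1$) together with the path $v_jv_{j+1}\cdots v_{i-1}w_i$ (weight $i-j-1+k$), giving total $2k<2k+2$; the parallel pairing uses the two horizontal rows, total $2|i-j|\le 2k$. So every pair of rungs is handled uniformly by Lemma~\ref{lem:incompatible}, with no potential-based case analysis needed. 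As written, your proof has a genuine gap that the suggested fixes do not close.
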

    \begin{proof}
        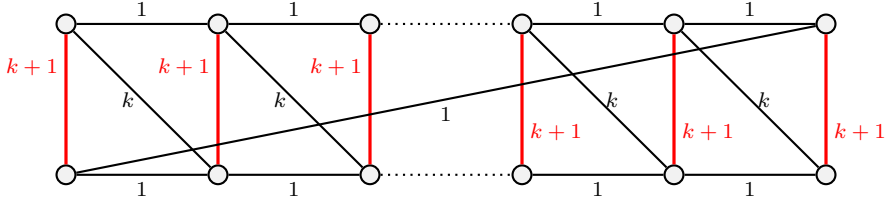
\begin{figure}[ht]
            \centering
        \begin{tikzpicture}[x=1cm, y=1.cm, inner sep=2.5pt]
            \tikzstyle{vtx}=[circle,draw,thick,fill=black!5]
        \begin{scriptsize}
            \node[vtx] (v0) at (0,0){};
            \node[vtx] (v1) at (0,2){};
            \node[vtx] (v2) at (2,0){};
            \node[vtx] (v3) at (2,2){};
            \node[vtx] (v4) at (4,0){};
            \node[vtx] (v5) at (4,2){};
            \node[vtx] (v6) at (6,0){};
            \node[vtx] (v7) at (6,2){};
            \node[vtx] (v8) at (8,0){};
            \node[vtx] (v9) at (8,2){};
            \node[vtx] (v10) at (10,0){};
            \node[vtx] (v11) at (10,2){};
   
            \draw[very thick, red] (v0)--(v1) node[midway, near end, left]{$k+1$};
            \draw[very thick, red] (v2)--(v3) node[midway, near end, left]{$k+1$};
            \draw[very thick, red] (v4)--(v5) node[midway, near end, left]{$k+1$};
            \draw[very thick, red] (v6)--(v7) node[midway, near start, right]{$k+1$};
            \draw[very thick, red] (v8)--(v9) node[midway, near start, right]{$k+1$};
            \draw[very thick, red] (v10)--(v11) node[midway, near start, right]{$k+1$};
            
            \draw[thick] (v0)--(v2) node[midway, below]{$1$};
            \draw[thick] (v1)--(v3) node[midway, above]{$1$};
            \draw[thick] (v2)--(v4) node[midway, below]{$1$};
            \draw[thick] (v3)--(v5) node[midway, above]{$1$};
            \draw[thick,dotted] (v4)--(v6);
            \draw[thick,dotted] (v5)--(v7);
            \draw[thick] (v6)--(v8) node[midway, below]{$1$};
            \draw[thick] (v7)--(v9) node[midway, above]{$1$};
            \draw[thick] (v8)--(v10) node[midway, below]{$1$};
            \draw[thick] (v9)--(v11) node[midway, above]{$1$};
            
            \draw[thick] (v1)--(v2) node[midway, left]{$k$};
            \draw[thick] (v3)--(v4) node[midway, left]{$k$};
            \draw[thick] (v7)--(v8) node[midway, right]{$k$};
            \draw[thick] (v9)--(v10) node[midway, right]{$k$};
            \draw[thick] (v11)--(v0) node[midway, below]{$1$};
        \end{scriptsize}
            \end{tikzpicture}
            \caption{$(\kn_k, d)$ as in the proof of Theorem~\ref{thm:necklace}.
            \label{fig:Nk}}
        \end{figure}
        
        Let $V(\kn_k) = \{v_0, \dots, v_k\} \cup \{w_0, \dots, w_k\}$ and 
        \[
        E(\kn_k) = \{v_{i-1}v_i,v_{i}w_{i},v_{i-1}w_i,w_{i-1}w_i \mid i \in [k]\} \cup \{v_0w_0, w_0v_k\}.
        \]
        
        Consider the distance function $d$ such that $d(w_0v_k) = d(v_{i-1}v_i) = d(w_{i-1}w_i) = 1$, $d(v_{i-1}w_i) = k$ for all $i \in [k]$ and $d(v_iw_i) = k+1$ for all $i=0, \dots, k$.  It is easy to check that $d$ is indeed a distance function.  
        Let $M = \{v_iw_i \mid i = 0, \dots, k\}$.  See Figure~\ref{fig:Nk} for an illustration of $(\kn_k, d)$ and $M$, where $v_0 \cdots v_k$ and $w_0 \cdots w_k$ are the topmost and bottommost paths, respectively. 
        
        We claim that the edges in $M$ are pairwise incompatible. 
        To see this, first observe that the shortest $v_i$--$v_j$ and $w_i$--$w_j$ paths both have weight $|j-i|\leq k$ since all edges in these paths have weight $1$, hence the cumulative weight of these paths is at most $2k$. If $i>j$, then 
        \[
        d(v_iv_{i+1}\cdots v_kw_0w_1 \cdots w_j)+d(v_jv_{j+1}\cdots v_{i-1}w_i) = (k-i+j+1)+ (i-j-1+k) = 2k.
        \]
        This shows that there exist a $v_i$--$w_j$ path and a $v_j$--$w_i$ path of cumulative weight $2k$. 
        Since $d(v_iw_i)+d(v_jw_j) = 2k+2$, the conditions of Lemma~\ref{lem:incompatible} are satisfied and we get that $v_iw_i$ and $v_jw_j$ are incompatible for all $i \neq j$. Hence, $f_\infty(\kn_k)\geq k+1$. 
    \end{proof}

Since $\kn_k$ is $3$-connected, it is difficult to adapt the proof of Theorem~{\ref{thm:excludedminors}} to show that $\kn_k$ is also an excluded minor for the property $f_\infty(G) \leq k$.  However, we conjecture that this is true.

    \section{$2$-connected graphs} \label{sec:reduction}
    
    In this section, we show that it is enough to prove our main theorem, Theorem~\ref{thm:main}, for $3$-connected graphs.  
    To do so, we introduce a variant of SPQR trees in Section~\ref{sec:contractedSPQR}. In section~\ref{sec:extendFlatSet}, we show that in a graph $G_1+_eG_2$ obtained as a $2$-sum of two graphs $G_1$ and $G_2$, we can merge flat sets from $G_1$ and $G_2$ under some conditions. In Section~\ref{sec:glumpkin}, we present several lemmas that show how to bound $f_\infty(H)$, where $H$ is obtained by gluing several $2$-connected graphs on a given graph. At the end of this section, we also show how to complete the proof of Theorem~\ref{thm:main} under some additional assumptions.
    
    \subsection{Contracted SPQR trees}\label{sec:contractedSPQR}
    In this context we need to consider {\em multigraphs} that are minors of a simple $2$-connected graph, that is, parallel edges resulting from edge contractions are kept. 
    (Loops on the other hand are not important for our purposes and thus can safely be discarded.) 
    SPQR trees were introduced in~\cite{DiBattista1996} as a way to decompose a $2$-connected graph across its  $2$-separations. They are defined as follows. 
    
    Let $G$ be a (simple) $2$-connected graph. 
    The \emph{SPQR tree $T_G$ of $G$} is a tree each of whose node $a\in V(T_G)$ is associated with a multigraph $H_a$ which is a minor of $G$. 
    Each vertex $x \in V(H_a)$ is a vertex of $G$, that is, $V(H_a) \subseteq V(G)$. 
    Each edge $e \in E(H_a)$ is classified either as a \emph{real} or \emph{virtual} edge. By the construction of an SPQR tree each edge $e\in E(G)$ appears in exactly one minor $H_a$ as a real edge, and each edge $e\in H_a$ which is classified real is an edge of $G$. 
    The SPQR tree $T_G$ is defined recursively as follows.
    \begin{enumerate}
        \item If $G$ is $3$-connected, then $T_G$ consists of a single \emph{$R$-node} $a$ for which we have $H_a = G$. 
        All edges of $H_a$ are real in this case.
        
        \item If $G$ is a cycle, then $T_G$ consists of a single \emph{$S$-node} for which $H_a = G$. 
        Again, all edges of $H_a$ are real in this case. 
        
        \item Otherwise $G$ has a cutset $\{x,y\}$ such that the vertices $x$ and $y$ have degree at least $3$. In this case we construct $T_G$ inductively. First we add a \emph{$P$-node} $a$ to $T_G$, for which $H_a$ is the graph consisting of the single edge $xy$. 
        The edge $xy$ of $H_a$ is real if $xy$ is an edge of $G$, and virtual otherwise.  
        Next we consider the connected components $C_1,\dots, C_r$ ($r\geq 2$) of $G - \{x,y\}$. Let $G_i$ be the graph $G[V(C_i)\cup \{x,y\}]$ with the additional edge $xy$ if it is not already there. Since we include the edge $xy$, each $G_i$ is $2$-connected and we can construct the corresponding SPQR tree $T_{G_i}$ by induction. Let $a_i$ be the (unique) node in $T_{G_i}$ for which $xy$ is a real edge in $H_{a_i}$. 
        In order to construct $T_G$, we make $xy$ a virtual edge in the node $a_i$,  and connect $a_i$ to $a$ in $T_G$. Finally, we add parallel virtual edges $xy$ to $H_a$ so that it has exactly $r$ virtual edges $xy$.
    \end{enumerate} 
    Notice that minors corresponding to $S$-nodes and $R$-nodes are simple graphs, whereas those corresponding to $P$-nodes are multigraphs consisting of two vertices linked by at least two virtual edges and  possibly a real one. 
    To each edge $ab$ of the SPQR tree $T_G$ corresponds a unique virtual edge $e \in E(H_a) \cap E(H_b)$ with ends $x, y \in V(G)$. Thus we can define a corresponding multigraph $H_{a,b}$ which is the minor of $G$ obtained by taking the $2$-sum of $H_a$ and $H_b$ in which the edge $e$ is deleted. 
    (To be precise, one virtual edge $xy$ from each of $H_a$ and $H_b$ is deleted in the operation, other copies of $xy$, if any, are kept in the resulting graph.) 
    Similarly, we can define a unique minor of $G$ for each \emph{subtree} of $T_G$ by performing one $2$-sum operation as described above for each edge of the subtree. 
    
    Let $G$ be a $2$-connected graph, and let $T_G$ be the SPQR tree of $G$. We define the \emph{contracted SPQR tree $T'_G$} as the tree obtained from $T_G$ by contracting every maximal connected subtree of $T_G$ each of whose nodes is either a $S$-node or a $P$-node, see Figure~\ref{fig:spqrSimplify} for an example. We call the new nodes resulting from the contraction \emph{$O$-nodes}. Each node $a$ of $T'_G$ has a unique corresponding minor $H_a$ of $G$. If $a$ is an $R$-node, then we keep the same minor as in $T_G$. Otherwise, $a$ is an $O$-node and $H_a$ is the minor of $G$ corresponding to the subtree of $T_G$ that was contracted to node $a$ of $T'_G$. 
    
    \begin{figure}
        \centering
        \includegraphics[width=\textwidth]{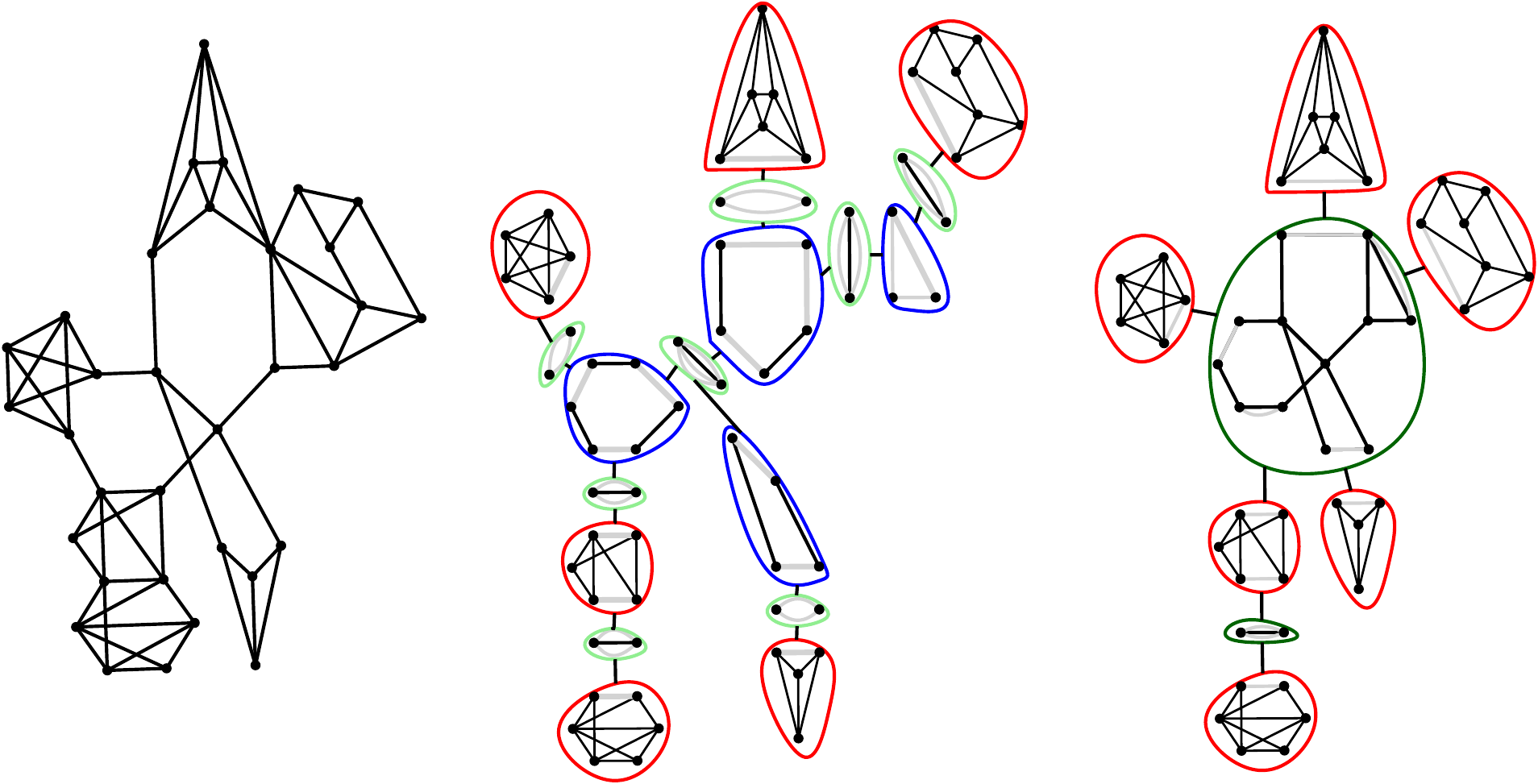}
        \caption{ An example of a $2$-connected graph $G$, its SPQR tree $T_G$, and the contracted SPQR tree $T'_G$.} \label{fig:spqrSimplify}
    \end{figure}
    
    We quickly give some standard terminology before stating our first result of the section. The \emph{length} of a path in $G$ is its number of edges. The \emph{diameter} of a graph $G$ is the maximum length of a shortest path between any two vertices.

    \begin{lemma}\label{lem:spqrSimplify}
        Let $G$ be a $2$-connected graph with minimum degree at least $3$. 
        \begin{enumerate} 
            \item Every $O$-node in $T'_G$ corresponds to a $2$-connected treewidth-$2$ graph.
            \item All leaves of $T'_G$ are $R$-nodes.
            \item If the diameter of $T'_G$ is at least $6k$, then $G$ contains $\kp_k$ or $\kf_k$ as a minor.
        \end{enumerate}
    \end{lemma}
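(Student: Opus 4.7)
For part (1), the plan is to observe that the subtree of $T_G$ collapsed to any $O$-node consists entirely of $S$- and $P$-nodes, whose associated minors are cycles and bonds. Thus $H_a$ is obtained by iterated $2$-sums of cycles and bonds (deleting the shared virtual edge each time). Such $2$-sums produce exactly the class of $2$-connected series-parallel graphs, which coincides with the class of $2$-connected graphs of treewidth at most $2$; $2$-connectedness is preserved by $2$-sums of $2$-connected pieces.

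For part (2), I would first prove that every leaf of $T_G$ is an $R$-node. A $P$-node cannot be a leaf, since by construction it has one tree neighbor per component of $G-\{x,y\}$, hence at least two. An $S$-node can only arise as a leaf when some subgraph $G_i$ in the recursion is itself a cycle; but then the internal vertices of the $x$--$y$ path inside $G_i$ have degree exactly $2$ in $G$, contradicting $\delta(G)\geq 3$. Now suppose for contradiction that a leaf $a$ of $T'_G$ is an $O$-node, and let $\mathcal S\subseteq T_G$ be its defining subtree. Since $a$ has a unique neighbor in $T'_G$, $\mathcal S$ has a unique $R$-neighbor in $T_G$. Every leaf of $\mathcal S$ (viewed as a subtree) is an $S$- or $P$-node, hence not a leaf of $T_G$ by the previous paragraph, so it must have a neighbor outside $\mathcal S$---necessarily an $R$-node by maximality. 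Thus $\mathcal S$ has at most one leaf, so $|\mathcal S|=1$. This single node is then an $S$- or $P$-node whose only neighbor in $T_G$ is its unique $R$-neighbor, making it a leaf of $T_G$, a contradiction.

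For part (3), $R$-nodes are only adjacent to $P$-nodes in $T_G$ (by the construction), so in $T'_G$ they are only adjacent to $O$-nodes; hence every path in $T'_G$ strictly alternates between $R$- and $O$-nodes. A path of length $\geq 6k$ therefore contains at least $3k$ consecutive $R$-nodes $R_1,\dots,R_m$ with intermediate $O$-nodes $O_1,\dots,O_{m-1}$. Each interior $R_i$ has two distinguished boundary virtual edges $e_i,f_i$ corresponding to the tree edges towards $O_{i-1}$ and $O_i$; since $H_{R_i}$ is $3$-connected with at least $4$ vertices, one can extract a $K_4$-minor of $R_i$ whose branch sets realize both $e_i$ and $f_i$ as edges of the $K_4$. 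Classify $R_i$ as \emph{matching-type} if these two edges of the $K_4$ are disjoint, and \emph{incident-type} otherwise. Each $O_i$, being $2$-connected series-parallel, contains internally disjoint paths linking the ends of the two boundary virtual edges to the two boundary virtual edges of $O_i$, and thus contracts to an edge identifying the two boundary cutsets. The chain of chosen $K_4$'s then naturally forms a chain of $K_4$'s glued along edges, of $\kp$-type at matching $R_i$'s and $\kf$-type at incident $R_i$'s.

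By pigeonhole on at least $3k$ classifications, at least one type occurs at least $\lceil 3k/2\rceil \geq k$ times; by contracting every $R_j$ of the ``wrong'' type together with its adjacent $O$-nodes into a single edge of the chain (preserving the $K_4$'s of the surviving R-nodes and their boundary edges), we obtain $\kp_k$ or $\kf_k$ as a minor. The main obstacle is technical rather than conceptual: one must verify rigorously that in a $3$-connected graph with two prescribed virtual edges we can always find a $K_4$-minor in which both appear as edges, and that the ``skipping'' operation genuinely preserves the $\kp_k$/$\kf_k$ minor structure (in particular that for $\kf_k$ the common vertex of incident boundary edges aligns consistently across the surviving chain). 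The factor $6$ in $6k$ is exactly what pigeonhole over the two type classes applied to the $3k$ interior $R$-nodes demands.
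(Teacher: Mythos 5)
Parts (1) and (2) of your proposal are essentially the paper's own arguments and are fine: the $O$-node minors are iterated $2$-sums of cycles and bonds, and a leaf $O$-node would force its defining subtree to contain an $S$-node that is a leaf of $T_G$, whose cycle then yields a degree-$2$ vertex of $G$.

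For part (3) your high-level architecture is the same as the paper's (alternation of $R$- and $O$-nodes, a $K_4$-model in each interior $R$-node anchored at its two boundary virtual edges, classification by type, pigeonhole, then contraction of the skipped nodes), but two steps you defer as ``technical rather than conceptual'' are in fact where the real work lies, and as stated your argument does not close. First, the existence of a $K_4$-model in the $3$-connected graph $H_{a_i}$ that is compatible with both boundary virtual edges is not automatic and is not merely ``realize both as edges of the $K_4$'': the endpoints $x_{i-1},x_i,y_{i-1},y_i$ need not be distinct (e.g.\ $x_{i-1}=x_i$), and the paper has to define five admissible intersection patterns and prove one always occurs via a case analysis on the cycle $C_i=P_i+Q_i+x_{i-1}y_{i-1}+x_iy_i$ and $3$-connectivity (either $V(C_i)=V(H_{a_i})$ and one adds two chords, or some component of $H_{a_i}-V(C_i)$ attaches to three suitable vertices of $C_i$, the alternative contradicting $3$-connectivity). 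Second, your two-class pigeonhole is insufficient. An ``incident-type'' $K_4$-model can have its shared vertex on the $x$-side or on the $y$-side, and only models agreeing on this side can be chained into $\kf_k$; the paper therefore uses \emph{three} types (type~$0$, type~$1$ with $x_{i-1},x_i$ in one branch set, type~$2$ with $y_{i-1},y_i$ in one branch set) and applies pigeonhole to roughly $3k$ interior $R$-nodes over three classes to extract $k$ aligned models. With your two classes, landing on the incident class gives about $3k/2$ models which may split $3k/4$ versus $3k/4$ by side, and $3k/4<k$; so your claim that ``the factor $6$ is exactly what pigeonhole over the two type classes demands'' is not correct — the factor $6$ is what the \emph{three}-class pigeonhole demands. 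Both gaps are repairable by exactly the paper's arguments, but neither is supplied in your write-up.
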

    \begin{proof}
        (1) Let $o$ be an $O$-node of $T'_G$. Its corresponding minor $H_o$ is obtained by $2$-sums from cycles corresponding to $S$-nodes, and parallel edges corresponding to $P$-nodes. Hence $H_o$ is $2$-connected and has treewidth $2$.\medskip 
        
        (2) Suppose for a contradiction that some leaf $o$ of $T'_G$ is an $O$-node. Since a $P$-node cannot be a leaf in $T_G$,  the subtree corresponding to $o$ in $T_G$ has at least one leaf $s$ which is an $S$-node. Because $s$ is a leaf, $H_s$ contains exactly one virtual edge. Since $H_s$ is a cycle of length at least $3$, there is at least one degree-$2$ vertex in $G$, a contradiction. 
        \medskip
        
        (3) Let $P = a_0  \cdots a_{m}$ be a maximum length path in $T'_G$. By maximality, $P$ is a leaf-to-leaf path in $T'_G$, $a_i$ is an $R$-node for even $i$ and an $O$-node for odd $i$, and $m$ is even.  
        
        For $i \in [m-1]$, we let $x_i$ and $y_i$ be the ends of the virtual edge in $E(H_{a_i}) \cap E(H_{a_{i+1}})$. Since $H_{a_i}$ is $2$-connected, exchanging $x_i$ and $y_i$ if necessary we may assume that for each $i \in [m-1]$, $H_{a_i}$ contains an $x_{i-1}$--$x_i$ path $P_i$ and a $y_{i-1}$--$y_i$ path $Q_i$ such that $P_i$ and $Q_i$ are vertex-disjoint. 
        
        Let $i \in [m-1]$ with $i$ even. 
        Let us emphasize that the vertices $x_{i-1},x_i, y_{i-1},y_i$ are not necessarily all distinct. 
        We call a $K_4$-model in $H_{a_i}$ \emph{good} if the intersections of the four vertex images with these vertices 
        fall in one of the following cases: 
        \begin{itemize}
            \item $\{x_{i-1}\}, \{x_i\}, \{y_{i-1}\} , \{y_i\}$, or
            \item $\{x_{i-1}, x_i\}, \{y_{i-1}\} , \{y_i\}, \emptyset$ with $x_{i-1} \neq x_i$,  or
            \item $\{x_{i}\}, \{y_{i-1}\} , \{y_i\}, \emptyset$ with $x_{i-1} = x_i$,  or
            \item $\{x_{i-1}\}, \{x_i\}, \{y_{i-1}, y_i\}, \emptyset$ with $y_{i-1} \neq y_i$, or  
            \item $\{x_{i-1}\}, \{x_i\}, \{y_i\}, \emptyset$ with $y_{i-1} = y_i$. 
        \end{itemize}
        
        We claim that $H_{a_i}$ has a good $K_4$-model for each even $i \in [m-1]$. To see this, let $C_i = P_i+Q_i+x_{i-1}y_{i-1}+x_{i}y_{i}$. First suppose $V(C_i) = V(H_{a_i})$. Since $H_{a_i}$ is $3$-connected, there is an edge $e \in E(H_{a_i})$ distinct from $x_{i-1}y_{i-1}$ and $x_iy_i$ between $V(P_i)$ and $V(Q_i)$, and another edge $f$ such that $C_i \cup \{e,f\}$ is a subdivision of $K_4$. Then $C_i + e + f$ contains a good $K_4$-model. Assume now that $V(C_i) \subsetneq V(H_{a_i})$. It follows that there is a component of $H_{a_i} - V(C_i)$ that sends edges to three vertices of $C_i$ which are neither all in $V(P_i)$ nor all in $V(Q_i)$; otherwise $H_{a_i} - \{x_{i-1}, x_i\}$ or $H_{a_i} - \{y_{i-1}, y_i\}$ would be disconnected.  
        Thus, $H_{a_i}$ has a good $K_4$-model whose vertex images are a single component of $H_{a_i} - V(C_i)$ and three disjoint connected subgraphs of $C_i$.
        
        We say that a good $K_4$-model in $H_{a_i}$ is \emph{type-$0$} if $x_{i-1}, x_i, y_{i-1}$, and $y_i$ are in distinct vertex images, \emph{type-$1$} if $x_{i-1}$ and $x_i$ are in the same vertex image, and \emph{type-$2$} if $y_{i-1}$ and $y_i$ are in the same vertex image.  
        We pick a good $K_4$-model in each even $i \in [m-1]$.  Since $m \geq 6k$, at least $k$ of these good $K_4$-models are of the same type, say type-$t$ for some $t \in \{0,1,2\}$.
        
        We obtain the required minor of $G$ as follows. First, for each even $i \in [m-1]$ such that $H_{a_i}$ contains a type-$t$ good $K_4$-model, we contract the vertex images of the $K_4$-model and delete the vertices not belonging to any vertex image. Second, for each index $i \in [m-1]$ not yet considered,  we contract the edges in $E(P_i) \cup E(Q_i)$ and delete all other vertices of $H_{a_i}$. Note that this second step has the effect of $2$-summing the type-$t$ good $K_4$-models. Therefore, we obtain a $\kp_k$ minor in $G$, if $t = 0$, and a $\kf_k$ minor in $G$ in the other two cases.
    \end{proof}
    
    \subsection{Extending flat sets in $2$-connected graphs}\label{sec:extendFlatSet}    
    We now develop some more tools to handle $2$-separations in graphs. Assume that $G = G_1\oplus_eG_2$ with $e = vw$.
    The goal is to improve the bounds for $f_\infty(G)$ given in Lemma~\ref{lem:gluing}. 
    Recall that the proof of Lemma~\ref{lem:gluing} relies on the fact that it is possible to merge a flat set $F_1$ of $(G_1,d_1)$ and a flat set $F_2$ of $(G_2,d_2)$ into one flat set $F_1\cup F_2$ of $(G,d)$ whenever $(v,w) \in F_1 \cap F_2$.
    
    Here is another proof of this fact. Let $(D,l)$, $(D_1,l_1)$ and $(D_2,l_2)$ denote the weighted digraphs obtained by bidirecting $(G,d)$, $(G_1,d_1)$ and $(G_2,d_2)$ respectively. For $i \in [2]$, consider a potential $p_i$ on $(D_i,l_i)$ such that $p_i(x) - p_i(y) = d(xy)$ for all $(x,y) \in F_i$. Since $(v,w) \in F_1 \cap F_2$, we have $p_1(v)-p_1(w) = p_2(v)-p_2(w) = d(vw)$. Hence, it is possible to shift one of the potentials in order to satisfy $p_1(v) = p_2(v)$ and $p_1(w) = p_2(w)$. The potential $p_1\cup p_2 :V(G)\to \RR$ on $(D,l)$ such that $(p_1 \cup p_2)(u) = p_i(u)$ if $u \in V(G_i)$ for $i\in[2]$ witnesses that $F_1\cup F_2$ is a flat set.
     
    Suppose now that the flat sets $F_1$, $F_2$ of $(G_1,d_1)$ and $(G_2,d_2)$ are such that $(v,w)\in F_1$ but $(v,w), (w,v) \notin F_2$. The previous idea does not work anymore since we could have $|p_2(v)-p_2(w)| < d(vw)$. Hence, we can no longer combine the potentials $p_1$ and $p_2$. However, there possibly exists a potential $p_1'$ for $F_1\delete\{(v,w)\}$  such that $p'_1(v)-p'_1(w) = p_2(v)-p_2(w)$. In that case, $p'_1\cup p_2$ is a potential for $(F_1\cup F_2)\delete \{(v,w)\}$ on $(D,l)$. It follows that in this case $(F_1\cup F_2) \delete \{(v,w)\}$ is a flat set. 
    
    We now introduce the notion of \emph{compressible edges}, which are edges for which we can apply the idea of the previous paragraph. In this context, it is helpful to switch from directed notions to undirected notions. We call a set $F$ of edges of $G$ \emph{flattenable} (in $(G,d)$) if some orientation of $F$ is a flat set in $(G,d)$, that is, if there exists a potential $p$ on $(D,l)$ such that $|p(v) - p(w)| = d(vw)$ for all $vw \in F$. Let $F \subseteq E(G)$ be flattenable in $(G,d)$. An edge subset $\Gamma \subseteq F$ is said to be \emph{compressible} in $F$ if for all $\lambda \in [0,1]^\Gamma$ there exists a potential $p$ on $(D,l)$ such that $|p(v) - p(w)| = \lambda(vw) \cdot d(vw)$ for all $vw \in \Gamma$ and $|p(v) - p(w)| = d(vw)$ for all $vw \in F \delete \Gamma$. We define a \emph{frame} in $(G,d)$ as a pair $(\Gamma,F)$ where $\Gamma \subseteq F \subseteq E(G)$, $F$ is flattenable in $(G,d)$ and $\Gamma$ is compressible in $F$.  
    
    Notice that subsets of flattenable sets are flattenable, and that $f_\infty(G)$ is the least integer $k$ such that for every distance function $d$ the edges of the metric graph $(G,d)$ can be partitioned into $k$ flattenable sets.
    
    The next lemma follows directly from the formal definition of compressible edges.
    
    \begin{lemma}\label{lem:special2sum}
        Let $G = G_1\oplus_{vw}G_2$, and let $d$ be a distance function on $G$. For $i \in [2]$, let $d_i$ be the restriction of $d$ to $G_i$ and let $(\Gamma_i,F_i)$ be a frame in $(G_i,d_i)$.
        
        \begin{enumerate}[(i)]
            \item If $vw \in (F_1 \delete \Gamma_1) \cap (F_2 \delete \Gamma_2)$ then $(\Gamma_1 \cup \Gamma_2, F_1 \cup F_2)$ is a frame in $(G,d)$.
            \item If $vw \in \Gamma_1 \cup \Gamma_2$ then $((\Gamma_1 \cup \Gamma_2) \delete \{vw\},(F_1 \cup F_2) \delete \{vw\})$ is a frame in $(G,d)$.
        \end{enumerate}
    \end{lemma}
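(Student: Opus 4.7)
The plan is to establish both parts directly from the definitions by constructing, for each relevant $\lambda$, a potential on $(D(G),l(d))$ obtained by pasting together suitably chosen potentials on $(D(G_i),l(d_i))$ for $i\in[2]$. Throughout, I will exploit two elementary observations: (a) if $p$ is a potential, then so is $-p$ (negation preserves the constraint $|p(x)-p(y)|\le l(xy)$), and (b) adding a constant to $p$ preserves the potential property. Together, these let me match any two potentials on $G_1$ and $G_2$ at the common pair $\{v,w\}$ provided that $|p_1(v)-p_1(w)|=|p_2(v)-p_2(w)|$: reflect $p_1$ if the two signed differences disagree in sign, then shift so that $p_1(v)=p_2(v)$ (which forces $p_1(w)=p_2(w)$).

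For part (i), fix any $\lambda\in[0,1]^{\Gamma_1\cup\Gamma_2}$ and let $\lambda_i=\lambda|_{\Gamma_i}$. Since $\Gamma_i$ is compressible in $F_i$, there exist potentials $p_i$ on $(D(G_i),l(d_i))$ that realize $\lambda_i$ on $\Gamma_i$ and equal $d$ on $F_i\setminus\Gamma_i$. Because $vw\in F_i\setminus\Gamma_i$ for both $i$, both potentials satisfy $|p_i(v)-p_i(w)|=d(vw)$, so observation (a)+(b) applies and we may assume $p_1$ and $p_2$ agree on $\{v,w\}$. Their common extension to $V(G)$ is a potential on $(D(G),l(d))$ witnessing the required condition for $\lambda$, which simultaneously gives flattenability of $F_1\cup F_2$ (taking $\lambda\equiv 1$) and compressibility of $\Gamma_1\cup\Gamma_2$ in it.

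For part (ii), assume without loss of generality that $vw\in\Gamma_1$. Fix any $\lambda\in[0,1]^{(\Gamma_1\cup\Gamma_2)\setminus\{vw\}}$. First pick a potential $p_2$ on $(D(G_2),l(d_2))$ realizing $\lambda|_{\Gamma_2\setminus\{vw\}}$ and equal to $d$ on $F_2\setminus\Gamma_2$; this exists either by flattenability of $F_2$ (if $vw\notin\Gamma_2$) or by compressibility of $\Gamma_2$ in $F_2$ with arbitrary choice on $vw$ (if $vw\in\Gamma_2$). Set $\delta=|p_2(v)-p_2(w)|$, so $\delta\le d(vw)$. Now invoke compressibility of $\Gamma_1$ in $F_1$ with value $\delta/d(vw)$ on $vw$ and $\lambda$ on the remaining edges of $\Gamma_1$ to obtain $p_1$ with $|p_1(v)-p_1(w)|=\delta=|p_2(v)-p_2(w)|$. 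Match $p_1$ to $p_2$ on $\{v,w\}$ as above, and paste; since $vw$ is removed from both the flattenable set and the compressible set, no constraint on it is required at $vw$ in the combined frame, and the construction works for every choice of $\lambda$.

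I expect no significant obstacle: the proof is essentially an exercise in unwinding the definition of a frame and applying the shift/reflect trick. The only point requiring a little care is to ensure that the compressibility parameter chosen on the $vw$ edge of $\Gamma_1$ in part (ii) lies in $[0,1]$, which is automatic from $|p_2(v)-p_2(w)|\le d(vw)$.
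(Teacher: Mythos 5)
Your proof is correct and follows essentially the same route the paper intends: the paper states that the lemma ``follows directly from the formal definition of compressible edges,'' and the mechanism it sketches in the preceding paragraphs is exactly your reflect-and-shift pasting of potentials at $\{v,w\}$. The only nitpick is that in part (ii), when $vw\notin\Gamma_2$, the potential $p_2$ realizing $\lambda|_{\Gamma_2}$ comes from \emph{compressibility} of $\Gamma_2$ in $F_2$ rather than mere flattenability of $F_2$, but this is immaterial since the frame hypothesis supplies it.
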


        We will now use this lemma to improve some bounds given by Lemma~\ref{lem:gluing}. 
        For simplicity, we call {\em gluing} the $2$-sum operation where the edge involved in the $2$-sum is kept. 
        Let $H$ be a graph obtained by gluing graphs $G_1,\dots, G_m$ on distinct edges of a graph $G$. 
        That is, there are distinct edges $e_1, \ldots, e_m$ such that $H = G \oplus_{e_1} G_1 \cdots \oplus_{e_m} G_m$. The bound obtained by applying Lemma~\ref{lem:gluing} is $f_\infty(H)\leq f_\infty(G)+\sum_{i \in [m]}{(f_\infty(G_i)-1)}$. We provide better bounds in the following cases. First, when $G$ is a $2$-connected outerplanar graph  and all $G_i$ are glued on edges of its outer cycle. 
        Second, when $G$ is a $2$-connected treewidth-$2$ graph and $H$ has no $\ks_k$ minor. 
        
    \begin{figure}
        \centering
        \begin{tikzpicture} [x=0.6cm, y=0.6cm]
            \tikzstyle{snake}=[decorate, decoration={snake}]
            \tikzstyle{vtx}=[circle,draw,thick,fill=black!5]
            \begin{scriptsize}
            
            \node[vtx] (a) at (0,0){};
            \node[vtx] (c) at (0,3){};
            
            \node[] (a') at (0.25, -0.5){};
            \node[] (c'') at (0.25, 3.25){};
            
            \node[vtx] (d) at (-2,3){};
            \node[vtx] (e) at (-3,0){};
            
            \draw[thick, green] (e)--(a);
            \draw[thick, blue] (c)--(a) (d)--(a) (d)--(e);
            \draw[thick, red] (d)--(c);
            
            \draw[thick, snake, red] (-0.25,-0.5)--(-3,-0.5) (-3.4, 0.15)--(-2.5,2.85);
            \draw[thick, snake, green] (0,3.5)--(-2,3.5) (0.5,0)--(0.5,3);
            
            \end{scriptsize}
        \end{tikzpicture} \vspace{5mm}
        \begin{tikzpicture} [x=0.6cm, y=0.6cm]
            \tikzstyle{snake}=[decorate, decoration={snake}]
            \tikzstyle{vtx}=[circle,draw,thick,fill=black!5]
            \begin{scriptsize}
            
            \node[vtx] (a) at (0,0){};
            \node[vtx] (b) at (4,0){};
            \node[vtx] (c) at (0,3){};
            
            \node[] (a') at (-0., -0.5){};
            \node[] (a'') at (-0.5, -0.){};
            
            \node[] (b'') at (4.25, 0.25){};
            \node[] (b') at (4, -0.5){};
            
            \node[] (c') at (-0.5, 3){};
            \node[] (c'') at (0.25, 3.25){};
            \draw[thick, olive] (a)--(b)--(c);
            \draw[thick, cyan] (c)--(a);
            \draw[thick, snake, magenta] (a')--(b') (c')--(a'');
            \draw[thick, snake, cyan](b'')--(c'');
            
            \end{scriptsize}
        \end{tikzpicture}
        
        \hspace{5mm}
        \begin{tikzpicture} [x=0.6cm, y=0.6cm]
            \tikzstyle{snake}=[decorate, decoration={snake}]
            \tikzstyle{vtx}=[circle,draw,thick,fill=black!5]
            \begin{scriptsize}
            
            \node[vtx] (a) at (0,0){};
            \node[vtx] (b) at (4,0){};
            \node[vtx] (c) at (0,3){};
            
            \node[] (a') at (0.25, -0.5){};
            
            \node[] (b'') at (4.25, 0.25){};
            \node[] (b') at (4, -0.5){};
            \node[] (c'') at (0.25, 3.25){};
            
            \node[vtx] (d) at (-2,3){};
            \node[vtx] (e) at (-3,0){};
            
            \draw[thick, green] (a)--(b)--(c) (e)--(a);
            \draw[thick, blue] (c)--(a) (d)--(a) (d)--(e);
            \draw[thick, red] (d)--(c);
            
            \draw[thick, snake, red] (0.25,-0.5)--(4,-0.5) (-0.25,-0.5)--(-3,-0.5) (-3.4, 0.15)--(-2.5,2.85);
            \draw[thick, snake, blue] (b'')--(c'');
            \draw[thick, snake, green] (0,3.5)--(-2,3.5);
            
            \end{scriptsize}
        \end{tikzpicture}
        \caption{Illustration of the proof of Lemma~\ref{lem:outerplanarBound}: $G$ is a $2$-sum of $G' = K_4 - e$ and $K_3$. Each color defines a frame $(\Gamma,F)$ in the corresponding graph. Edges of $F \delete \Gamma$ are straight and edges of $\Gamma$ are wavy. The distance function is defined by taking the corresponding Euclidean distance in the figure.\label{fig:fan_2-sum_K_3}}
    \end{figure}
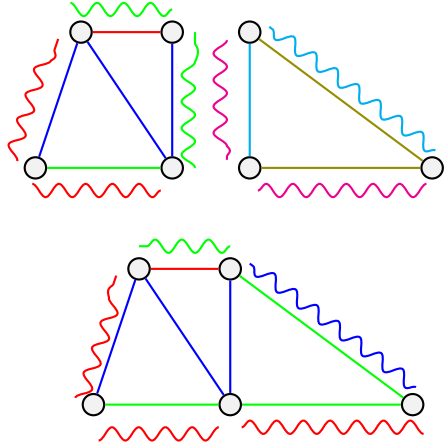
    
    \begin{lemma} \label{lem:outerplanarBound}
        Let $G$ be a $2$-connected outerplanar graph drawn in the plane with outer cycle $C$. Let $H$ be obtained from $G$ by gluing graphs $G_1,\dots, G_m$ on distinct edges of $C$. Let $M = \max_{i \in [m]}f_\infty(G_i)$. Then $f_\infty(H) \leq 3M$.
    \end{lemma}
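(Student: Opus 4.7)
The strategy is to produce a flat covering of $(H,d)$ of size at most $3M$ by merging flat coverings on the $G_i$'s with three carefully chosen ``master'' frames on $(G,d)$ via Lemma~\ref{lem:special2sum}. For each $i \in [m]$, fix a flat covering $\mathcal{F}_i = \{F^i_1, \dots, F^i_M\}$ of $(G_i, d|_{E(G_i)})$ such that $F^i_1$ covers $e_i$; by removing $e_i$ from any other flat set in $\mathcal{F}_i$ that contains it (a flat set remains flat after arc removal), we may further assume that $F^i_k$ does not cover $e_i$ for any $k \geq 2$.

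The key structural input is a claim about $(G,d)$ itself: there exist three frames $(\Gamma^j, F^j)$ for $j \in \{1,2,3\}$ in $(G,d)$ such that $F^1 \cup F^2 \cup F^3 = E(G)$, and for every outer edge $e \in E(C)$ there exist distinct indices $j, j' \in \{1,2,3\}$ with $e \in F^j \setminus \Gamma^j$ (rigid in the $j$-th frame) and $e \in \Gamma^{j'}$ (compressible in the $j'$-th frame). This claim is proved by induction on the number of inner faces of $G$, using the fact that the weak dual of a 2-connected outerplanar graph is a tree: the base case is $G = K_3$, handled directly by placing each edge as a singleton into its own frame and padding with the other two edges to obtain the required rigid/compressible duality. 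For the inductive step, pick a leaf face $f$ of the weak dual, attached to the rest of $G$ via a single chord $c$; split $G$ along $c$ into two smaller 2-connected outerplanar graphs, apply induction to each, and combine the resulting three frames across the $2$-sum via Lemma~\ref{lem:special2sum}, taking care that the compressibility/rigidity status of each outer edge is preserved.

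Given these three frames, for each $i$ pick distinct colors $j(i), j'(i) \in \{1,2,3\}$ with $e_i \in F^{j(i)} \setminus \Gamma^{j(i)}$ and $e_i \in \Gamma^{j'(i)}$. For each $j \in \{1,2,3\}$ and $k \in [M]$, construct a flat set $B^j_k$ of $(H,d)$ by starting from the frame $(\Gamma^j, F^j)$ on $(G,d)$ and performing the following merges, one per $i \in [m]$: if $k = 1$ and $j(i) = j$, merge with the trivial frame $(\emptyset, F^i_1)$ on $G_i$ via Lemma~\ref{lem:special2sum}(i), so that $e_i$ remains in the merged flat set; if $k \geq 2$ and $j'(i) = j$, merge with $(\emptyset, F^i_k)$ via Lemma~\ref{lem:special2sum}(ii), which discards $e_i$ from the merged frame but is harmless because $e_i$ is already covered by $B^{j(i)}_1$; in all other cases, no merge is performed at $e_i$. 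One then verifies that $\{B^j_k : j \in \{1,2,3\},\ k \in [M]\}$ is a flat covering of $(H,d)$ of size $3M$: every edge of $G$ lies in some $F^j$ and is covered (using $B^j_1$ when $B^j_k$ with $k \ge 2$ drops it), and every edge of $G_i$ lies in some $F^i_k$ and is included in the corresponding $B^{j(i)}_1$ or $B^{j'(i)}_k$.

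The main obstacle is establishing the structural claim. Compressibility is a global property of the frame tied to the metric $d$, so it is not preserved under arbitrary operations; one must track carefully, across each inductive splitting along a chord $c$, how the three frames on the two pieces recombine so that the rigid/compressible duality on $E(C)$ is maintained. In particular, when $c$ itself lies in $\Gamma^j$ on one side and in $F^j \setminus \Gamma^j$ on the other, Lemma~\ref{lem:special2sum}(ii) must be invoked with the correct side supplying compressibility, and the three frames on the two sides must be matched so that the outer edges of the original $G$ inherit the required properties.
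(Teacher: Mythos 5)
Your proposal is correct and follows essentially the same route as the paper: the paper's proof establishes exactly your structural claim (its property $(\star)$: three frames on $(G,d)$ covering $E(G)$ with each outer edge rigid in one frame and compressible in another), proves it by induction by peeling off a degree-$2$ vertex (equivalently, a leaf face of the weak dual) and recombining via Lemma~\ref{lem:special2sum}, and then merges the flat coverings of the $G_i$'s into the three frames to get $3M$ flattenable sets. Your bookkeeping of which copy of which frame absorbs which $F^i_k$ differs superficially from the paper's (which merges $F^i_k$ into frame $j$ exactly when $e_i\in\Gamma_j$, relying on the rigid frame to cover $e_i$), but the two schemes are equivalent.
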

    \begin{proof}
        We will show that $G$ satisfies the following property:
        \begin{quote} \em $(\star)$ For every distance function $d$ on $G$, there exist three frames $(\Gamma_j,F_j)$, $j \in [3]$, in $(G,d)$ such that each edge of $G$ is in at least one flattenable set $F_j$, and each edge of its outer cycle $C$ is in exactly two flattenable sets $F_j$ and in exactly one compressible set $\Gamma_j$.
        \end{quote}
        For $i \in [m]$, let $\{v_i,w_i\} = V(G_i) \cap V(G)$. Thus, $v_iw_i$ is an edge of $C$. Without loss of generality, we may assume that $v_iw_i$ is an edge of $H$. 
        
        Now let $d$ be some distance function on $H$. We will slightly abuse notation and let $d$ also denote the restriction of this distance function to $G$. For $i \in [m]$, let $d_i$ denote the restriction of $d$ to $G_i$.
        
        Assuming $(\star)$, we can find three frames $(\Gamma_j,F_j)$, $j \in [3]$, in $(G,d)$ as above. 
        For each $i \in [m]$, let $F^i_1$, \ldots, $F^i_M$ be a partition of the edges of $(G_i,d_i)$ into flattenable set. 
        By Lemma~\ref{lem:special2sum}, for every $j \in [3]$ and $k \in [M]$, 
        $$
        \left(F_{j} \cup \bigcup_{i \in I_j} F^i_k\right) \delete \{v_iw_i \mid i \in I_j\}
        $$
        is a flattenable set in $(H,d)$, where $I_j = \{i \in [m] \mid v_iw_i \in \Gamma_j\}$. These $3M$ flattenable sets cover the edges of $(H,d)$, which implies $f_\infty(G) \leq 3M$.
        
        To prove the lemma, it remains to show that the claimed frames $(F_j,\Gamma_j)$, $j \in [3]$ exist in $(G,d)$. We can assume that all inner faces of the drawing of $G$ are triangular faces (if not, add extra edges). We show the result by induction on the number of vertices.
        
        The base case is given by $G = K_3$. Let $V(K_3) = \{v_1,v_2,v_3\}$. Without loss of generality, we can assume $d(v_1v_2) \leq d(v_1v_3) \leq d(v_2v_3)$. It is easy to show that $(\Gamma_1,F_1) = (\{v_1v_2, v_1v_3\}, \{v_1v_2, v_1v_3\})$, $(\Gamma_2,F_2) = (\{v_2v_3\},\{v_2v_1, v_2v_3\})$, and $(\Gamma_3,F_3)  = (\emptyset,\{v_3v_1, v_3v_2\})$ are frames in $(G,d)$. For instance, one can use Lemma~\ref{lem:flatstar} to see that each $F_j$ is flattenable, and a direct verification to see that each $\Gamma_j$ is compressible in $F_j$. Thus $K_3$ satisfies $(\star)$.
        
        Now for the inductive case, suppose that $G$ has at least four vertices. Let $v$ be a degree-$2$ vertex of $G$ (which exists since $G$ is outerplanar and $2$-connected), and consider the graph $G' = G - v$. Let $v_1, v_2$ be the two neighbors of $v$ in $G$, with $d(vv_1)\geq d(vv_2)$. Let $C'$ be the cycle obtained from the outer cycle $C$ in $G$ by shortcutting the path $v_1vv_2$ to $v_1v_2$. 
        
        By induction, $(\star)$ holds for $G'$. Let $(\Gamma'_j,F'_j)$, $j \in [3]$ denote the corresponding frames. Consider three frames $(\Gamma''_j,F''_j)$, $j \in [3]$ for the triangle $vv_1v_2v$, as described in the base case of the induction.
        
         By permuting the indices if necessary, we may assume that $v_1v_2$ is in $(F'_1 \delete \Gamma'_1) \cap (F''_1 \delete \Gamma''_1)$, $\Gamma'_2$ and $\Gamma''_3$. By Lemma~\ref{lem:special2sum}, $(\Gamma_1,F_1) =  (\Gamma'_1 \cup \Gamma''_1, F'_1 \cup F''_1)$ and, for $j \in \{2,3\}$, $(\Gamma_j,F_j) = ((\Gamma'_j \cup \Gamma''_j) \delete \{v_1v_2\},(F'_j \cup F''_j) \delete \{v_1v_2\})$ are all frames in $(G,d)$. See Figure~\ref{fig:fan_2-sum_K_3} for an illustration. It is straightforward to check that these frames satisfy the required condition for $G$.
    \end{proof}

    \subsection{Handling several $2$-cutsets simultaneously}\label{sec:glumpkin}
    Before proceeding, we require the following easy lemma. Let $K_4-e$ be the graph obtained from $K_4$ by deleting an edge.  
    
    \begin{lemma}[\cite{FHJV17}] \label{lem:k4-e}
Let $G$ be a  $2$-connected graph with distinct vertices $u$ and $v$ such that $\deg_G (w) \geq 3$ for all $w \in V(G) \delete \{u,v\}$.  Then $G$ has a $K_4-e$ minor where $u$ and $v$ are contracted to the ends  of $e$.  
\end{lemma}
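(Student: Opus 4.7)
The plan is to argue by induction on $|V(G)|$. For the base case $|V(G)|=4$, the degree hypothesis forces each of the two vertices in $V(G)\setminus\{u,v\}$ to be adjacent to every other vertex, so $G$ contains $K_4 - uv$ as a spanning subgraph, and this is a $K_4-e$ with $u,v$ as the ends of $e$.

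For the inductive step, first consider the case where $\{u,v\}$ is a $2$-cut of $G$. Pick any component $H$ of $G - \{u,v\}$ and set $G' = G[V(H)\cup\{u,v\}] + uv$, adding the edge $uv$ if it is not already present. Then $G'$ is $2$-connected, inherits the degree hypothesis, and has strictly fewer vertices than $G$. By induction $G'$ contains a $K_4-e$ minor with $u,v$ at the ends of $e$. Since in $K_4-e$ the two ends of $e$ are non-adjacent, the added edge $uv$ is never used by this minor, which is therefore already a minor of $G[V(H)\cup\{u,v\}]\subseteq G$.

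Next assume $G-\{u,v\}$ is connected. By Menger's theorem there are two internally disjoint $u$-$v$ paths $P_1,P_2$ with interiors $I_i=V(P_i)\setminus\{u,v\}$. If both $I_1,I_2$ are nonempty, connectedness of $G-\{u,v\}$ lets us choose a shortest path $Q$ in $G-\{u,v\}$ from $I_1$ to $I_2$, whose interior avoids $V(P_1)\cup V(P_2)$ by minimality. Writing $a$ for the endpoint of $Q$ in $I_1$, the branch sets $X_u=\{u\}$, $X_v=\{v\}$, $X_a=I_1$, and $X_b=I_2\cup(V(Q)\setminus\{a\})$ are pairwise disjoint, each connected, and yield the five edges required for the $K_4-e$ minor with $u,v$ as the ends of $e$.

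The hard part will be the remaining case, where one of $P_1,P_2$, say $P_2$, has empty interior, so $uv\in E(G)$. If $G-uv$ is $2$-connected, Menger applied to $G-uv$ gives two internally disjoint $u$-$v$ paths both with interior and we return to the previous case. Otherwise $G-uv$ has a cut vertex $c\notin\{u,v\}$ separating $u$ from $v$, and the $2$-connectedness of $G$ then forces $V(G)=A\cup\{c\}\cup B$, with $A,B$ the components of $G-uv-c$ containing $u$ and $v$. The plan is to apply induction to the smaller graph $G[B\cup\{c\}] + cv$ (adding $cv$ if missing) viewed with terminals $c,v$; any rooted $K_4-e$ minor produced there lifts to a rooted $K_4-e$ minor in $G$ by absorbing all of $A$ together with the branch set containing $c$ into the branch set of $u$. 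The main technical obstacle is verifying that this smaller instance satisfies the inductive hypotheses, in particular $2$-connectedness, and dispatching the small exceptional configurations where it does not --- either by recursing symmetrically into $G[A\cup\{c\}]$ with terminals $u,c$, or by a direct argument exploiting the degree hypothesis on $c$ to either contradict the hypotheses or produce a further cut vertex along which to iterate.
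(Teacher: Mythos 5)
The paper does not actually prove this lemma --- it is imported from \cite{FHJV17} as a black box --- so there is no in-paper argument to compare against; I will assess your proposal on its own. Your base case, your reduction across a $2$-cut $\{u,v\}$ (correctly observing that the added edge $uv$ could only ever play the role of the one edge of $K_4-e$ that is not required), and your direct construction when both internally disjoint $u$--$v$ paths have nonempty interior are all sound. In the last case there is first a small imprecision: ``if $G-uv$ is $2$-connected\dots\ otherwise $G-uv$ has a cut vertex separating $u$ from $v$'' is not a valid dichotomy, since $G-uv$ can fail to be $2$-connected without any cut vertex separating $u$ from $v$. You should apply Menger directly to the pair $u,v$ in $G-uv$ (noting that $u$ and $v$ cannot lie in different components of $G-uv$, because each has a neighbour in the connected graph $G-\{u,v\}$); this yields exactly the alternative you want.

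The substantive problem is the step you explicitly leave open: verifying the inductive hypotheses for $G[B\cup\{c\}]+cv$. As written this is a genuine gap, though it does close, and it is worth recording how, since the exceptional configuration is quite specific. If $|B|\geq 2$, the instance is always $2$-connected: for any $x\in B\cup\{c\}$, a path in $G-x$ from a vertex of $B$ to $v$ (or, when $x=v$, to $c$) can only leave $B$ through $c$ or via the edge $uv$, so its initial segment already witnesses connectivity inside $B\cup\{c\}$ (with the added edge $cv$ joining $c$ to $v$ when needed); degrees in $B\setminus\{v\}$ are unchanged from $G$, and the degree hypothesis rules out $|B|=2$, so the instance has at least $4$ vertices. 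The only true exception is $|B|=1$, i.e.\ the neighbourhood of $v$ in $G$ is exactly $\{u,c\}$. Then $|A|\geq 2$ (otherwise $\deg_G(c)\leq 2$), and the symmetric recursion into $G[A\cup\{c\}]+uc$ with terminals $u,c$ succeeds: that graph is precisely $G$ with the degree-$2$ vertex $v$ suppressed, hence $2$-connected, and the resulting rooted minor lifts by adding $v$ to the branch set of $c$, which then contains $v$ and serves as the $v$-end of $e$. With these verifications written out your induction is complete; in particular, the ``further cut vertex along which to iterate'' you allude to never actually arises.
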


   Let $G$ be a graph together with a subset of $E(G)$ called \emph{glued edges}.   We say that $G$ has a \emph{$k$-glumpkin} minor if $G$ contains $k$ glued edges in parallel as a minor, that is, if there is a way of choosing a connected subgraph $H$ of $G$ containing at least $k$ glued edges, and of contracting all but $k$ edges of $H$ in such a way that the resulting minor consists of $k$ parallel glued edges. A $k$-glumpkin minor is \emph{rooted} at a glued edge $r$ if it contains $r$.   If $H$ is obtained by gluing graphs $G_1,\dots, G_m$ on distinct edges of $G$, an edge $e \in E(G)$ is a \emph{glued edge} if $e \in E(G) \cap E(G_i)$ for some $i \in [m]$.  The parameter we are really interested in is the largest $\ks_k$ minor in $H$.  However, the next lemma relates $\ks_k$ minors in $H$ to  $k$-glumpkin minors in $G$.  
   
   \begin{lemma} \label{lem:glumstar}
     Let $H$ be obtained by gluing $2$-connected graphs $G_1,\dots, G_m$ on distinct edges of a graph $G$ such that $H$ has minimum degree at least $3$.  If $G$ has a $k$-glumpkin minor, then $H$ has an $\ks_k$-minor.
   \end{lemma}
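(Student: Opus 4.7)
The plan is to exploit the $k$-glumpkin minor of $G$ to expose $k$ parallel ``slots'' between two connected regions of $G$, and then to fatten each slot into a copy of $K_4 - vw$ by reaching inside the $2$-connected graph $G_{j_i}$ attached on the corresponding glued edge. Since $\ks_k$ is precisely the graph obtained by gluing $k$ copies of $K_4 - vw$ along a common pair of vertices $\{v,w\}$, this is exactly the structure we need to construct.

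First, unfolding the $k$-glumpkin minor produces two disjoint connected vertex sets $V_1, V_2 \subseteq V(G)$ together with $k$ distinct glued edges $e_1, \ldots, e_k$, each having one endpoint $u_i \in V_1$ and the other $w_i \in V_2$. Because $G_1, \ldots, G_m$ are glued on pairwise distinct edges, each $e_i$ corresponds to a uniquely determined $G_{j_i}$, and these $k$ graphs are pairwise distinct. Next I would verify that Lemma~\ref{lem:k4-e} applies to each $G_{j_i}$ with the two distinguished vertices $u_i, w_i$: any $z \in V(G_{j_i}) \setminus \{u_i, w_i\}$ is internal to the $2$-sum, so $\deg_{G_{j_i}}(z) = \deg_H(z) \geq 3$; in particular $|V(G_{j_i})| \geq 4$ (a $G_{j_i}$ isomorphic to a single edge $e_i$ adds nothing and may be discarded beforehand, while $|V(G_{j_i})| = 3$ would force an internal degree-$2$ vertex). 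Lemma~\ref{lem:k4-e} then supplies a $K_4 - e$ model in $G_{j_i}$ with vertex images $X^i_x, X^i_y, X^i_a, X^i_b$ such that $u_i \in X^i_x$, $w_i \in X^i_y$, and $X^i_a \cup X^i_b \subseteq V(G_{j_i}) \setminus \{u_i, w_i\}$, the missing edge linking $X^i_x$ to $X^i_y$.

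To assemble the $\ks_k$-model in $H$, set
\[
Y_v := V_1 \cup \bigcup_{i=1}^{k} X^i_x, \qquad Y_w := V_2 \cup \bigcup_{i=1}^{k} X^i_y,
\]
and keep $X^i_a, X^i_b$ for $i \in [k]$. Connectedness of $Y_v$ follows because $V_1$ and each $X^i_x$ are connected and share the vertex $u_i$; $Y_w$ is connected for the symmetric reason. For pairwise disjointness, the decisive observation is that $V(G_{j_i}) \cap V(G) = \{u_i, w_i\}$ and $V(G_{j_i}) \cap V(G_{j_{i'}}) \subseteq V(G)$ for $i \neq i'$; combined with $V_1 \cap V_2 = \emptyset$, $u_i \in V_1 \setminus V_2$, $w_i \in V_2 \setminus V_1$, and $X^i_a \cup X^i_b \subseteq V(G_{j_i}) \setminus \{u_i, w_i\}$, this forces the $2k+2$ vertex images to be pairwise disjoint. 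Each $G_{j_i}$ then contributes the five edges of its $K_4 - e$ model between $Y_v, Y_w, X^i_a, X^i_b$, matching exactly the five edges of the $i$-th $K_4 - vw$ copy in $\ks_k$; the missing $vw$-edge is precisely $e_i$, which $\ks_k$ does not contain anyway.

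The only real obstacle is the bookkeeping: several glued edges may share an endpoint in $V_1$ or $V_2$, and one must ensure that $a$- and $b$-images from different $G_{j_i}$'s never accidentally merge with each other or bleed into $Y_v$ or $Y_w$. The constraints $u_i \in X^i_x$, $w_i \in X^i_y$ together with $X^i_a \cup X^i_b \subseteq V(G_{j_i}) \setminus \{u_i, w_i\}$, coupled with the fact that internal vertices of distinct $G_{j_i}$'s live in disjoint universes, eliminate every such potential collision.
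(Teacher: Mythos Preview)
Your proposal is correct and follows essentially the same approach as the paper: apply Lemma~\ref{lem:k4-e} to each $G_{j_i}$ (using that internal vertices have degree~$\geq 3$ in $H$) to get a $K_4 - e$ model rooted at the ends of the glued edge, and then assemble these with the two branch sets of the $k$-glumpkin minor into an $\ks_k$-model. The paper's proof is a three-line sketch of exactly this idea, whereas you spell out the disjointness and connectivity verifications in full.
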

   \begin{proof}
         Let $u_iv_i$ be the glued edge of $G_i$.  Since $H$ has minimum degree at least $3$, $\deg_{G_i}(w) \geq 3$ for all $w \in V(G_i) \delete \{u_i,v_i\}$. By Lemma~\ref{lem:k4-e}, $G_i$ has a $K_4$ minor containing the glued edge $u_iv_i$, for all $i \in [m]$.  Therefore, since $G$ has a $k$-glumpkin minor, $H$ has an $\ks_k$-minor.  
   \end{proof}
   
   \begin{lemma} \label{lem:outerplanarBound2}
        For all $k, M \in \NN$, let $g_{\ref{lem:outerplanarBound2}}(k,M)=3^kM$.  
        Let $H$ be a graph obtained from a $2$-connected outerplanar graph $G$ by gluing $2$-connected graphs $G_1,\dots, G_m$ on distinct edges of $G$.  
        Let $C$ be the outercycle of $G$ and let $M = \max_{i \in [m]}f_\infty(G_i)$. If there exists a glued edge $r \in E(C)$ such that $G$ does not contain a $k$-glumpkin minor rooted at $r$,
        then $f_\infty(H)\leq g_{\ref{lem:outerplanarBound2}}(k,M)$.
    \end{lemma}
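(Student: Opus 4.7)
We proceed by induction on $k$. The case $k=1$ is vacuous: since $r$ is glued, $r$ itself provides a $1$-glumpkin minor rooted at $r$, contradicting the hypothesis. Now let $k\geq 2$ and assume the claim at level $k-1$. Fix the outerplanar embedding of $G$ and let $F$ be the unique inner face of $G$ whose boundary contains $r$; by $2$-connectivity this boundary is a cycle, namely $r$ together with a path $P$ joining the endpoints of $r$. Partition $E(P)$ into outer edges of $G$ (those on $C$) and chord edges $e_1,\dots,e_s$ of $G$, and for each chord $e_j$ let $G^{*}_j$ be the subgraph of $G$ on the side of $e_j$ opposite $F$, so that $F$ and $G^{*}_j$ share only $e_j$ and
\[
G \;=\; F \;\oplus_{e_1}\; G^{*}_1 \;\oplus_{e_2}\; \cdots \;\oplus_{e_s}\; G^{*}_s.
\]

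Let $\tilde G_j$ denote $G^{*}_j$ together with every $G_i$ whose attaching edge lies in $G^{*}_j$ (assigning $G_i$'s glued on $e_j$ itself to $\tilde G_j$), and let $\mathcal I$ be the $G_i$'s glued on non-chord edges of $F$. Then $H$ is precisely $F$ with the $\tilde G_j$'s glued on the chord edges $e_j$ and the pieces of $\mathcal I$ glued on the remaining edges of $F$. Since $F$ is a cycle (and hence its own outer cycle), Lemma~\ref{lem:outerplanarBound} applied to $F$ with these gluings gives
\[
f_\infty(H)\;\leq\;3\cdot\max\!\Bigl(M,\;\max_{1\leq j\leq s}f_\infty(\tilde G_j)\Bigr),
\]
so it suffices to prove $f_\infty(\tilde G_j)\leq 3^{k-1}M$ for every $j$, which combined with the above yields $f_\infty(H)\leq 3\cdot 3^{k-1}M = 3^kM$.

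To apply the inductive hypothesis to $\tilde G_j$ at level $k-1$ we need a glued edge $r^{*}_j\in E(C^{*}_j)$ such that $G^{*}_j$ has no $(k-1)$-glumpkin rooted at $r^{*}_j$, where $C^{*}_j$ is the outer cycle of $G^{*}_j$. Choose $r^{*}_j$ as follows: if $e_j$ is glued in $G$, set $r^{*}_j=e_j$; otherwise, if the arc of $C$ contained in $G^{*}_j$ carries a glued edge, let $r^{*}_j$ be such an edge; if neither option applies and $\tilde G_j$ nevertheless has gluings (necessarily on interior chords of $G^{*}_j$), descend further into $G^{*}_j$ by repeating the face decomposition at an inner face carrying a glued chord, while if $\tilde G_j$ has no gluings at all then $f_\infty(\tilde G_j)=f_\infty(G^{*}_j)\leq 3$ already by Lemma~\ref{lem:outerplanarBound}. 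The lifting principle is the same throughout: any $\alpha$-glumpkin of $G^{*}_j$ rooted at $r^{*}_j$ extends to an $(\alpha+1)$-glumpkin of $G$ rooted at $r$—we contract $F$ in the first case, the outer cycle $C$ in the second, or a nested chain of inner faces in the third, so that $r$ becomes parallel to $r^{*}_j$ in the minor. The hypothesis that $G$ has no $k$-glumpkin rooted at $r$ then forces $\alpha+1 \leq k-1$, i.e.\ $\alpha\leq k-2$; hence $G^{*}_j$ has no $(k-1)$-glumpkin rooted at $r^{*}_j$ and the inductive hypothesis yields $f_\infty(\tilde G_j)\leq 3^{k-1}M$.

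The most delicate point is the third subcase of the root selection, when all gluings of $\tilde G_j$ lie strictly on interior chords of $G^{*}_j$. Here one must descend into the weak dual tree of $G^{*}_j$ while carefully tracking how each descent step consumes a unit of the remaining glumpkin budget at $r$; formalising this requires a secondary induction—for example, on the nesting depth of the innermost glued chord—and relies crucially on the same lifting principle to guarantee that the descent terminates within the allotted budget.
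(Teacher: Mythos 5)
Your face-by-face decomposition differs from the paper's, which instead decomposes $C-r$ into \emph{maximal good subpaths} (subpaths whose two ends are joined by a glued edge), and the difference is exactly where your argument breaks. The ``lifting principle'' you invoke --- that an $\alpha$-glumpkin of $G^{*}_j$ rooted at $r^{*}_j$ extends to an $(\alpha+1)$-glumpkin of $G$ rooted at $r$ --- is false in your second subcase, where $r^{*}_j$ is an arbitrary glued edge on the arc of $C$ inside $G^{*}_j$. The only exits from $G^{*}_j$ are the two ends of the chord $e_j$, and the glumpkin model can separate the ends of $r^{*}_j$ from them in a way that makes it impossible to attach the ends of $r$ to the two branch images by disjoint connections. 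Concretely: let $C=pqbu_1u_2u_3u_4u_5u_6ap$ with chords $ab$ and $u_2u_5$, and let the glued edges be $r=pq$, $u_1u_2$, and $u_5u_6$. Then $F$ is bounded by $pq,qb,ba,ap$, the chord $e_j=ab$ is not glued, and $G^{*}_j$ (the side of $ab$ containing the $u_i$) has a $2$-glumpkin rooted at $u_1u_2$ (contract $u_2u_5$ and the path $u_6abu_1$), yet an easy case check shows $G$ has \emph{no} $3$-glumpkin rooted at $r$. So with $k=3$ the hypothesis of the lemma holds but your inductive hypothesis at level $k-1=2$ is unavailable for $G^{*}_j$: both candidate roots $u_1u_2$ and $u_5u_6$ carry a $2$-glumpkin, so no admissible root exists at all. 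Your third subcase (glued edges only on interior chords of $G^{*}_j$) is, by your own admission, not proved, and it suffers from the same obstruction.

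The paper avoids this entirely: the piece sent to the recursive call is the subgraph induced by a maximal good subpath $P_i$ of $C-r$, rooted at the glued edge $e_i$ joining the \emph{ends} of $P_i$. Any glumpkin rooted at $e_i$ then has the two attachment vertices of the piece in its two branch images, and the rest of $C$ --- which is disjoint from the interior of the piece --- links them to the ends of $r$ without any interference, so the lifting is immediate. (In the example above the maximal good subpaths are the single edges $u_1u_2$ and $u_5u_6$, the recursive calls are trivial, and the concluding application of Lemma~\ref{lem:outerplanarBound} is made to all of $G$ with outer cycle $C$.) To repair your proof you would essentially have to replace the face decomposition by this choice of pieces and roots.
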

    \begin{proof}
    We proceed by induction on $k$.  The case $k=1$ is vacuous. If $k=2$, then by $2$-connectivity, $r$ is the only glued edge of $G$.  
    Since $G$ is outerplanar, $f_\infty(G) \leq 2$ and so by Lemma~\ref{lem:gluing}, $f_\infty(H) \leq M+1 \leq g_{\ref{lem:outerplanarBound2}}(2,M)$.  Therefore, we may assume $k \geq 3$.  A subpath of $C - r$ is \emph{good} if its ends are connected by a glued edge.   
    Let $P_1, \dots P_p$ be the maximal (under inclusion) good subpaths of $C - r$. Since $G$ is outerplanar, $P_i$ and $P_j$ are internally-disjoint for $i \neq j$. By maximality, every glued edge has both of its ends on some $P_i$.
    
    Let $G_i'$ be the subgraph of $G$ induced by $V(P_i)$.  Let $e_i$ be the glued edge connecting the ends of $P_i$.  Since $G$ does not contain a $k$-glumpkin minor rooted at $r$, $G_i'$ does not contain a $(k-1)$-glumpkin minor rooted at $e_i$. 
    Let $H_i$ be the subgraph of $H$ induced by $G'_i$ and all the graphs $G_j$ that are glued to some edge of $G'_i$. By induction,  $f_\infty(H_i) \leq 3^{k-1}M$ for all $i \in [p]$. Let $C'$ be the cycle obtained from $C$ by replacing $P_i$ with $e_i$ for each $i \in [p]$.  
    Let $G'$ be the subgraph of $G$ induced by the vertices of $C'$.  Notice that $G'$ is a $2$-connected outerplanar graph with outer cycle $C'$, 
    and $H$ can be obtained from $G'$ by gluing the graphs $H_i$ on edges of $C'$. By Lemma~\ref{lem:outerplanarBound},
    \[
    f_\infty(H) \leq 3\cdot \max_{i \in [p]} f_\infty(H_i) \leq 3 \cdot 3^{k-1}M=g_{\ref{lem:outerplanarBound2}}(k,M).  \qedhere
    \]
    \end{proof}  

    We now generalize Lemma~\ref{lem:outerplanarBound2} to $2$-connected treewidth-$2$ graphs.  

  \begin{lemma} \label{lem:gluength}
         For all $k, M \in \NN$, let $g_{\ref{lem:gluength}}(k,M)=3^{k^2}M$.  
        Let $G$ be a $2$-connected treewidth-$2$ graph and let $H$ be obtained by gluing $2$-connected graphs $G_1,\dots, G_m$ on distinct edges of $G$.  Let $M = \max_{i \in [m]}f_\infty(G_i)$.  If for some glued edge $r$, $G$ does not contain a $k$-glumpkin minor rooted at $r$, then $f_\infty(H)\leq g_{\ref{lem:gluength}}(k,M)$.
    \end{lemma}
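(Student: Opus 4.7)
The plan is to prove the lemma by induction on $|V(G)|$, using the SPQR tree of $G$; since $G$ has treewidth~$2$, this tree contains only $S$-nodes (cycles) and $P$-nodes (bonds). The base case is when $G$ itself is a cycle, in which case the claim follows immediately from Lemma~\ref{lem:outerplanarBound2} applied with outer cycle $G$, giving $f_\infty(H)\le 3^k M\le 3^{k^2}M$.

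For the inductive step, pick an $S$-node $s$ of the SPQR tree containing $r$ as a real edge and root the tree at $s$. Write $C=H_s$ for the corresponding cycle. The edges of $C$ split into real edges of $G$ (possibly in the glued set) and virtual edges, each corresponding to a child $P$-node together with the subgraph attached at its two poles. For each virtual edge $uv$ of $C$, let $G_{uv}$ be the subgraph of $G$ associated with the subtree rooted at that $P$-node, and set $\tilde G_{uv}:=G_{uv}+uv$. This is a $2$-connected treewidth-$2$ graph on strictly fewer vertices than $G$. Let $\tilde H_{uv}$ be $\tilde G_{uv}$ equipped with the original $G_i$-gluings on those glued edges that lie inside $G_{uv}$, plus a trivial single-edge gluing at $uv$ so that $uv$ counts as a glued edge of $\tilde G_{uv}$.

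The central step is to transfer the glumpkin hypothesis from $(G,r)$ to $(\tilde G_{uv},uv)$. Standard properties of the SPQR tree give that $G\setminus G_{uv}^{\text{int}}$ together with the edge $uv$ is $2$-connected, so since $r\in E(C)$ there is a $u$-$v$ path in $G\setminus G_{uv}^{\text{int}}$ using~$r$. Given $k-1$ internally disjoint $u$-$v$ paths with glued edges in $\tilde G_{uv}$ (one of them being $uv$), replacing $uv$ by such a path yields $k-1$ internally disjoint $u$-$v$ paths with glued edges in $G$, one containing~$r$. If additionally the ``above'' side of the $2$-cut $\{u,v\}$ contains a \emph{second} internally disjoint $u$-$v$ path with a glued edge, combining it with the above gives $k$ such paths and hence a $k$-glumpkin at $r$, contradicting the hypothesis; so in this favorable case $\tilde G_{uv}$ has no $(k-1)$-glumpkin at $uv$ and induction gives $f_\infty(\tilde H_{uv})\le 3^{(k-1)^2}M$. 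In the remaining case only $f_\infty(\tilde H_{uv})\le 3^{k^2}M$ is immediate, and handling it cleanly requires either choosing the $2$-cut more carefully or strengthening the inductive hypothesis with a parameter tracking the number of internally disjoint glued $u$-$v$ paths on the ``above'' side.

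Finally, $H$ is a minor of the graph $H^+$ obtained from $C$ by $\oplus$-gluing each $\tilde H_{uv}$ on the corresponding virtual edge and each original $G_i$ on the corresponding real glued edge of $C$. Since $C$ is a cycle, it contains no $3$-glumpkin at $r$, so Lemma~\ref{lem:outerplanarBound2} applied to $H^+$ with parameter~$3$ yields $f_\infty(H)\le f_\infty(H^+)\le 3^3\cdot 3^{(k-1)^2}M=3^{(k-1)^2+3}M\le 3^{k^2}M$ for $k\ge 2$, which is the desired bound. The main obstacle is precisely the unfavorable glumpkin case: to ensure the parameter drops from $k$ to $k-1$ at every $2$-cut used in the recursion, one must either consistently choose $2$-cuts whose above side contains at least two internally disjoint glued $u$-$v$ paths, or carry a stronger inductive invariant through the decomposition to handle the case where only a single glued $u$-$v$ path (necessarily through $r$) is available above.
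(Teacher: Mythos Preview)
You have correctly identified the genuine gap in your own argument: at a virtual edge $uv$, you cannot in general guarantee that the glumpkin parameter drops from $k$ to $k-1$ when passing to $\tilde G_{uv}$, and without that drop the final application of Lemma~\ref{lem:outerplanarBound2} overshoots the target by a factor of $3^3$. Your proposal is therefore incomplete as it stands.

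The paper resolves this exactly along the lines of your first suggested fix, namely by choosing the cycle more carefully, but the specific choice is not visible from the SPQR tree alone. Two ingredients are used that your sketch omits. First, before decomposing, one reduces to the case where $H$ has minimum degree at least $3$ by suppressing degree-$2$ vertices (Lemma~\ref{lem:suppressDeg2}); this costs nothing in $f_\infty$ but is essential for the next step. Second, instead of taking the cycle of an $S$-node containing $r$, one takes a \emph{longest} cycle $C$ of $G$ through $r$ and builds an ear decomposition from $C$. For each $C$-ear $P_i$ with ends $x_i,y_i$, let $S_i$ be the $x_i$--$y_i$ subpath of $C$ that avoids $r$. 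Maximality of $C$ forces $|E(S_i)|\ge |E(P_i)|$, hence $|E(S_i)|\ge 2$, and then a short minimality argument using the minimum-degree-$3$ hypothesis shows that every $V(S_i)$ contains the ends of some original glued edge. This is precisely the ``second glued $u$--$v$ path on the above side'' you were missing: a $(k-1)$-glumpkin rooted at the virtual edge $x_iy_i$ inside the ear subtree, combined with the glued edge in $S_i$ and with $r$ on the complementary subpath $R_i$, would yield a $k$-glumpkin rooted at $r$ in $G$. Hence the parameter does drop to $k-1$ on every piece, and the proof closes with Lemma~\ref{lem:outerplanarBound2} applied with parameter $k$ to the outerplanar graph on $V(C)$, giving $3^k\cdot 3^{(k-1)^2}M\le 3^{k^2}M$.

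So your overall architecture (decompose at a cycle through $r$, recurse on the attached pieces, finish with Lemma~\ref{lem:outerplanarBound2}) matches the paper's, but the paper's choice of the \emph{longest} such cycle together with the degree-$3$ reduction is what makes the glumpkin parameter drop uniformly.
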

\begin{proof}
      We proceed by lexicographic induction on $(k, |V(H)|)$.    Let $r$ be a glued edge such that $G$ does not contain a $k$-glumpkin minor rooted at $r$.  
      
      The case $k=1$ is vacuous.  Suppose $k=2$.  Since $G$ is $2$-connected and does not have a $2$-glumpkin minor rooted at $r$, edge $r$ must be the only glued edge of $G$.  Since $G$ is $2$-connected and has treewidth $2$, $f_\infty(G) \leq 2$. By Lemma~\ref{lem:gluing}, $f_\infty(H) \leq M+1 \leq g_{\ref{lem:gluength}}(2,M) $.  Therefore, we may assume $k \geq 3$. 
    If $\deg_H(w)=2$ for some vertex $w \in V(H)$, then we can suppress $w$ by Lemma~\ref{lem:suppressDeg2} and apply induction.  Therefore, we may assume $H$ has minimum degree at least $3$. 
    
      Since $G$ is $2$-connected, there is a cycle in $G$ containing $r$.  
      Let $C$ be a longest cycle in $G$ such that $r \in E(C)$.
      Let $\mathcal{E}$ be an ear decomposition of $G$ beginning with $C$. (See for instance~\cite{Diestel} for background about ear decompositions.)
      The \emph{ear-decomposition tree} $T(\mathcal{E})$ of $\mathcal{E}$ is the rooted tree, whose vertices are the ears in $\mathcal{E}$, defined recursively as follows. The root of $T(\mathcal{E})$ is $C$.   The parent of an ear $P$ is the closest ear $Q$ to $C$ (in $T(\mathcal E)$) such that both ends of $P$ are on $Q$. (Such an ear $Q$ is guaranteed to exist since $G$ has treewidth $2$ and is $2$-connected.) 
      
      Let $P_1, \dots, P_\ell$ be the set of $C$-ears of $\mathcal E$.  Let $T_1, \dots, T_\ell$ be the subtrees of $T(\mathcal E)$ rooted at $P_1, \dots, P_\ell$, respectively.  For each $i \in [\ell]$, let $x_i$ and $y_i$ be the ends of $P_i$ on $C$.  Let $R_i$ be the $x_i$--$y_i$ path in $C$ containing $r$ and let $S_i$ be the other $x_i$--$y_i$ path in $C$. Notice that $|E(S_i)| \geq |E(P_i)|$, by maximality of $C$. If $P_i$ is an edge, then since $G$ is simple, $|E(S_i)| \geq 2$. 
      Otherwise, $|E(S_i)| \geq |E(P_i)| \geq 2$. Therefore, for all $i \in [\ell]$,  $|E(S_i)| \geq 2$.  
      
      We claim that for all $i \in [\ell]$, $V(S_i)$ contains the ends of a glued edge.  Suppose not.  Among all $S_i$ such that $V(S_i)$ does not contain the ends of a glued edge, choose $S_j$ so that $S_j$ is inclusion-wise minimal. 
      Since $G$ has treewidth $2$ and is $2$-connected, for all $i \neq j$, $S_i \subseteq S_j$, $S_j \subseteq S_i$, or $S_i$ and $S_j$ are internally-disjoint.  By the minimality of $S_j$, each internal vertex of $S_j$ has degree $2$ in $H$. 
      However, this contradicts that $H$ has minimum degree at least $3$.  
      
      For each $i \in [\ell]$, let $G_i'$ be the union of all ears in $T_i$ together with the edge $e_i=x_iy_i$, which we declare to be glued.  Since $V(S_i)$ contains the ends of a glued edge and $R_i$ contains $r$, 
      the graph $G_i'$ does not contain a $(k-1)$-glumpkin minor rooted at $e_i$; otherwise, $G$ contains a $k$-glumpkin minor rooted at $r$.  Note that each $G_i'$ contains at least one glued edge other than $e_i$ since $H$ has minimum degree at least $3$.  Let $H_i$ be the graph obtained from $G_i'$  by gluing all $G_j$ such that the glued edge of $G_j$ belongs to $G_i'$.  By induction, $f_\infty(H_i) \leq g_{\ref{lem:gluength}}(k-1,M)$, for all $i \in [\ell]$.  Let $e_{i+1}, \dots, e_{L}$ be the glued edges in $E(C)$.
      
      Observe that $H$ is obtained by gluing graphs $H_1, \dots H_L$ onto  edges of an outerplanar graph $G'$ with outercycle $C$, where $M'=\max_{i \in [L]} f_\infty(H_i)= \max\{M, g_{\ref{lem:gluength}}(k-1,M)\}= g_{\ref{lem:gluength}}(k-1,M)$.  Since $G$ does not contain a $k$-glumpkin minor rooted at $r$, neither does $G'$. Applying Lemma~\ref{lem:outerplanarBound2} to $G'$ gives 
     \[
     f_\infty(H) \leq g_{\ref{lem:outerplanarBound2}}(k, g_{\ref{lem:gluength}}(k-1,M))=3^k(3^{{(k-1)}^2}M) \leq g(k, M). \qedhere
     \] 
     \end{proof}

Lemma~\ref{lem:gluength} yields the following corollary.

 \begin{lemma} \label{lem:gluength2}
         For all $k, M \in \NN$, let $g_{\ref{lem:gluength2}}(k,M)=3^{k^2}M$.  
        Let $G$ be a $2$-connected treewidth-$2$ graph and let $H$ be obtained by gluing $2$-connected graphs $G_1,\dots, G_m$ on distinct edges of $G$. If $H$ does not contain an $\ks_k$ minor and $M = \max_{i \in [m]}f_\infty(G_i)$, then $f_\infty(H)\leq g_{\ref{lem:gluength2}}(k,M)$.
    \end{lemma}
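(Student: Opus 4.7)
The plan is to derive Lemma \ref{lem:gluength2} from Lemma \ref{lem:gluength}. If $m = 0$, then $H = G$ has treewidth $2$, hence $f_\infty(H) \leq 2 \leq 3^{k^2}M$. So assume $m \geq 1$ and fix any glued edge $r$ of $G$. It then suffices to prove that $G$ has no $k$-glumpkin minor rooted at $r$, after which Lemma \ref{lem:gluength} yields the desired bound.

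I would in fact prove the stronger statement that $G$ has no $k$-glumpkin minor at all, arguing by contradiction through Lemma \ref{lem:glumstar}: a hypothetical $k$-glumpkin minor of $G$ together with minimum degree at least $3$ in $H$ would yield a $\ks_k$ minor of $H$, contradicting the hypothesis. The main obstacle is that $H$ is not assumed to have minimum degree $3$. I plan to remove this obstruction by a preprocessing step. First, we may assume $f_\infty(H) \geq 3$, since otherwise the bound holds trivially. Then we iteratively suppress degree-$2$ vertices of $H$ using Lemma \ref{lem:suppressDeg2}. The key observation is that every endpoint of a glued edge has degree at least $3$ in $H$: it has degree at least $2$ in $G$ and at least $2$ in the relevant $G_i$ by $2$-connectivity, while the glued edge itself is counted only once. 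Consequently any degree-$2$ vertex of $H$ lies either in $V(G) \delete \bigcup_i V(G_i)$ or in $V(G_i) \delete V(G)$ for some $i$, and suppressing it amounts to suppressing it within that single constituent. The updated gluing presentation still satisfies all hypotheses ($2$-connectedness and treewidth $\leq 2$ of the base, the parts $2$-connected with $f_\infty$ at most $M$, and the new $H$ is a minor of the original so still lacks a $\ks_k$ minor).

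The remaining step is to check that this preprocessing does not destroy the hypothetical $k$-glumpkin minor in $G$. A suppression inside some $G_i$ leaves $G$ untouched, so any $k$-glumpkin minor of $G$ survives verbatim. A suppression inside $G$ of a degree-$2$ vertex $v$ amounts, up to deleting a parallel edge, to the contraction of one of $v$'s two incident edges, and any witness of a $k$-glumpkin minor in $G$ descends through such a contraction: if $v$ appears in the connected subgraph underlying the glumpkin, it is absorbed into its neighbour during the contraction phase. After these reductions $H$ has minimum degree at least $3$, and Lemma \ref{lem:glumstar} applies to yield the required contradiction, completing the reduction to Lemma \ref{lem:gluength}.
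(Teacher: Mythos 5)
Your proof is correct and follows essentially the same route as the paper: suppress degree-$2$ vertices to reach minimum degree $3$, use Lemma~\ref{lem:glumstar} to rule out a $k$-glumpkin minor in $G$, and then invoke Lemma~\ref{lem:gluength}. The paper packages the degree-$2$ suppression as an induction on $|V(H)|$ rather than as a preprocessing step, but the substance is identical (and you are in fact more careful than the paper about why suppression preserves the gluing structure).
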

    \begin{proof}
    We proceed by induction on $|V(H)|$.  If $\deg_H(w)=2$ for some $w \in V(H)$, then by Lemma~\ref{lem:suppressDeg2}, we can suppress $w$ and apply induction.  Since $H$ does not contain an $\ks_k$ minor, $G$ does not contain a $k$-glumpkin minor, by Lemma~\ref{lem:glumstar}.  In particular, for each glued edge $r$, $G$ does not contain a $k$-glumpkin minor rooted at $r$.  By Lemma~\ref{lem:gluength}, $f_\infty(H) \leq g_{\ref{lem:gluength}}(k,M)=g_{\ref{lem:gluength2}}(k,M)$.  
    \end{proof}
    
    The following is the main result of this section. 
    
    \begin{lemma}\label{lem:main2conn} 
        Suppose there exist computable functions $g_{\ref{lem:main3con}} : \NN \to \RR$ and $g_{\ref{lem:mainthmaux}} :\NN \times \NN \to \RR$ satisfying the two following conditions.
        \begin{enumerate}
            \item $f_\infty(G)\leq g_{\ref{lem:main3con}}(k)$ for every $3$-connected graph $G$ not containing a $\kall^k$ minor.
            
            \item $f_\infty(H)\leq g_{\ref{lem:mainthmaux}}(k, M)$ for every graph $H$ containing no $\kall^k$ minor, obtained by gluing $2$-connected graphs $G_1,\dots, G_m$ on distinct edges of a $3$-connected graph $G_0$, where $M = \max_{i\in [m]}f_\infty(G_i)$.
        \end{enumerate}
        Then there exists a computable function $g_{\ref{thm:main}}:\NN\to \RR$ such that $f_\infty(G)\leq g_{\ref{thm:main}}(k)$ for all graphs $G$ without a $\kall^k$ minor. 
    \end{lemma}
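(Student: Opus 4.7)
The plan is to induct on the diameter of the contracted SPQR tree $T'_G$, using Lemma~\ref{lem:spqrSimplify} to bound this diameter by $6k-1$ (since $G$ has no $\kp_k$ or $\kf_k$ minor), and applying condition~(2) to decompose $G$ around a centrally chosen $R$-node at each step.

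First, by Lemma~\ref{lem:gluing}~\eqref{eq:1-sum}, $f_\infty(G)$ equals the maximum of $f_\infty$ over the blocks of $G$, and each block inherits the property of being $\kall^k$-minor-free; thus I may assume $G$ is $2$-connected. Using Lemma~\ref{lem:suppressDeg2} I iteratively suppress degree-$2$ vertices, which does not alter $f_\infty(G)$ (provided $f_\infty(G)\geq 2$, the other case being trivial), so I may further assume $G$ has minimum degree at least $3$. If $G$ is $3$-connected, condition~(1) immediately yields $f_\infty(G) \leq g_{\ref{lem:main3con}}(k)$.

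Otherwise I consider $T'_G$. By Lemma~\ref{lem:spqrSimplify}(2),(3), every leaf of $T'_G$ is an $R$-node and the diameter of $T'_G$ is at most $6k-1$. I pick an $R$-node $a^*$ of (nearly) minimum eccentricity; its eccentricity is at most $3k$ plus a small additive constant needed to correct for the case when the center is an $O$-node. For each neighbor $b$ of $a^*$ in $T'_G$, let $G_b$ denote the $2$-connected minor of $G$ obtained by performing the $2$-sums along the subtree of $T'_G$ hanging at $b$ after removing the edge $a^*b$, with the virtual edge $x_by_b$ corresponding to $a^*b$ retained. Then $G_b$ inherits $\kall^k$-minor-freeness from $G$, and the diameter of its own contracted SPQR tree is strictly smaller than that of $T'_G$, by at least $2$.

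Now $G$ is obtained from the $3$-connected graph $H_{a^*}$ by $2$-summing with each $G_b$ along $x_by_b$, where some of these edges may be deleted in the final graph. Let $G^+$ be the graph obtained by the same $2$-sums but \emph{keeping} every edge $x_by_b$; then $G\subseteq G^+$, hence $f_\infty(G) \leq f_\infty(G^+)$, in accordance with Lemma~\ref{lem:gluing}~\eqref{eq:2-sum}. Provided $G^+$ remains $\kall^k$-minor-free, condition~(2) applied to the gluing $G^+ = H_{a^*} \oplus_{x_{b_1}y_{b_1}} G_{b_1}\cdots\oplus_{x_{b_m}y_{b_m}}G_{b_m}$ yields $f_\infty(G^+) \leq g_{\ref{lem:mainthmaux}}(k, M)$, where $M = \max_b f_\infty(G_b)$. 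Inductively bounding $M$ via the same construction applied to each $G_b$, the recursion terminates after $O(k)$ nested levels (bounded by the diameter of $T'_G$), producing a computable function $g_{\ref{thm:main}}$ obtained by iterating $g_{\ref{lem:mainthmaux}}(k,\cdot)$ starting from $g_{\ref{lem:main3con}}(k)$.

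The main obstacle I anticipate is showing that $G^+$ remains $\kall^k$-minor-free: adding a virtual edge $x_by_b$ that is absent in $G$ could a priori create a new $\kall^k$ minor by shortcutting the $2$-separation $\{x_b,y_b\}$. The expected resolution is a rerouting argument: any $\kall^k$ minor in $G^+$ using such an added edge can be transferred back to $G$ by extending the relevant vertex image along one of the two internally disjoint $x_b$--$y_b$ paths in $G_b$ (guaranteed by its $2$-connectivity) and using the resulting adjacency; at worst, this incurs a bounded constant loss in $k$ that is absorbed into the final bound.
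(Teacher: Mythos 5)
Your overall strategy—reduce to the $2$-connected case, decompose along the contracted SPQR tree, and feed the pieces into condition~(2)—is the right one, but the recursion you set up does not terminate in a number of rounds bounded by $k$, and this is a genuine gap rather than a technicality. The problem is an $O$-node of high degree in $T'_G$. Consider a ``spider'': an $O$-node $c$ whose treewidth-$2$ skeleton carries $L$ virtual edges, each leading to a chain of $R$- and $O$-nodes of length about $3k$. The diameter of $T'_G$ is about $6k$, consistent with Lemma~\ref{lem:spqrSimplify}, but $L$ is not bounded by any function of $k$. Every $R$-node here has eccentricity at least $3k+1$, so your central $a^*$ is adjacent to $c$ on one of the legs; the piece $G_b$ hanging off $a^*$ in the direction of $c$ still contains $c$ together with $L-1$ full legs, and its contracted SPQR tree has the \emph{same} diameter as $T'_G$. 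So the claimed decrease ``by at least $2$'' fails. More fundamentally, condition~(2) only lets you peel the pieces off \emph{one} $3$-connected skeleton per application, so exhausting the neighbourhood of $c$ costs at least $L$ nested applications of $g_{\ref{lem:mainthmaux}}(k,\cdot)$, and the resulting bound depends on $L$, not only on $k$. This is exactly why the paper roots $T'_G$ once, inducts on the height of subtrees (bounded by the diameter, hence by $6k$), and—crucially—invokes Lemma~\ref{lem:gluength2} to handle an $O$-node in a single step: that lemma bounds $f_\infty$ of a graph obtained by gluing \emph{arbitrarily many} $2$-connected pieces onto a $2$-connected treewidth-$2$ skeleton, using the absence of an $\ks_k$ minor. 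Your proposal has no substitute for this lemma, and without it the argument does not close.

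Two smaller points. First, your worry about $G^+$ acquiring a new $\kall^k$ minor is resolved more cleanly than by rerouting: each virtual edge $x_by_b$ corresponds to a $2$-cut of $G$ whose far side is connected, so $G^+$ (and each piece with its virtual edge) is literally a minor of $G$ obtained by contracting that far side onto $x_b$; no loss in $k$ is incurred. Second, even after fixing the termination measure (e.g.\ by inducting on subtree height from a fixed root, as the paper does), you would still need the treewidth-$2$ gluing lemma for the $O$-node levels, since condition~(2) applies only when the skeleton is $3$-connected.
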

    \begin{proof}
        We define $g_{\ref{thm:main}}(k)$ as follows.  For all $k,M \in \NN$, let $\alpha(k, M)$ be the maximum of  $g_{\ref{lem:gluength2}}(k,M)$ and $g_{\ref{lem:mainthmaux}}(k,M)$.  Define $\gamma_0(k)=g_{\ref{lem:main3con}}(k)$.  For all $i,k \in \NN$ recursively define $\gamma_i(k)=\alpha(k, \gamma_{i-1}(k))$.  Finally, let $g_{\ref{thm:main}}(k)=\gamma_{6k}(k)$.  
    
        Let $G$ be a graph without a $\kall^k$ minor.  By Lemma~\ref{lem:gluing}, we may assume that $G$ is $2$-connected. By Lemma~\ref{lem:suppressDeg2}, we can assume that $G$ has no degree-$2$ vertices. Let $T_G$ be the SPQR tree of $G$ and let $T = T'_G$ be the contracted SPQR tree, see Lemma~\ref{lem:spqrSimplify}. 
        
        Pick an arbitrary root node $r$ in $T$.  For each node $b$ of $T$, we denote by $T_{b}$ the subtree of $T$ rooted at $b$ and by $H_{b}$ the minor of $G$ corresponding to that subtree. Note that $G=H_r$.  By Lemma~\ref{lem:spqrSimplify}, every leaf of $T$ is an $R$-node. Hence, each leaf $u$ of $T$ corresponds to a $3$-connected minor $H_u$ of $G$.  By our first assumption, $f_\infty(H_u) \leq g_{\ref{lem:main3con}}(k)=\gamma_0(k)$.
        Let $a$ be some inner node of $T$ and let $a_1, \ldots, a_\ell$ denote its children.  Let $M_a = \max_{j\in [\ell]}{f_\infty(H_{a_j})}$. If $a$ is an $O$-node, then by Lemma~\ref{lem:gluength2}, $f_\infty(H_a) \leq g_{\ref{lem:gluength2}}(k,M_a)$. If $a$ is a $R$-node, then $f_\infty(H_a)\leq g_{\ref{lem:mainthmaux}}(k,M_a)$ by our second assumption. 
        In either case, $f_\infty(H_a) \leq \alpha(k, M_a)$.  It follows that if $i$ is the maximum length of an $a$ to leaf path of $T$, then $f_\infty(H_a) \leq \gamma_{i}(k)$.  By Lemma~\ref{lem:spqrSimplify}, the height of $T$ is at most $6k$.  Therefore, $f_\infty(G) = f_\infty(H_r) \leq \gamma_{6k}(k)=g_{\ref{thm:main}}(k)$.  
        \end{proof}
    
    We will establish the existence of $g_{\ref{lem:main3con}}$ and $g_{\ref{lem:mainthmaux}}$ in Lemmas~\ref{lem:main3con} and~\ref{lem:mainthmaux}, respectively. 
    Lemmas~\ref{lem:main2conn}, \ref{lem:main3con}, and~\ref{lem:mainthmaux} and the results from Section~\ref{sec:certificates} together establish  Theorem~\ref{thm:main}, which we now restate: 

    \begin{customthm}{1} 
            There exists a computable function $g_{\ref{thm:main}}: \NN \to \RR$ such that for every $k\in \NN$, every
            graph $G$ with $f_\infty(G) > g_{\ref{thm:main}}(k)$ contains a $\kall^k$ minor. Moreover, every graph $G$ that contains a $\kall^k$ minor has $f_\infty(G) > k$.
    \end{customthm}

    \begin{proof}
    For the first part of the theorem, by Lemmas~\ref{lem:main2conn}, \ref{lem:main3con}, and~\ref{lem:mainthmaux}, there exists a computable function $g_{\ref{thm:main}}:\NN\to \RR$ such that $f_\infty(G)\leq g_{\ref{thm:main}}(k)$ for all graphs $G$ without a $\kall^k$ minor. Thus, every graph $G$ satisfying  $f_\infty(G) > g_{\ref{thm:main}}(k)$ contains a $\kall^k$ minor. 

    For the second part of the theorem, it is shown in Section~\ref{sec:certificates} that each of the four graphs $G$ in $\kall^k$ satisfies  $f_\infty(G) > k$. Since $f_\infty(G)$ is monotone w.r.t.\ minors, it follows that $f_\infty(G) > k$ for every graph $G$ containing a $\kall^k$ minor. 
    \end{proof}

    \section{$3$-connected graphs} \label{sec:3connected}
    
    The results in this section are purely graph theoretical and may be of independent interest.  In particular, we prove several lemmas which give sufficient conditions under which a graph contains some specific graphs as minors.  We also introduce a reduction operation, called \emph{fan-reduction}.  The main result of the section is that if $G$ is a $3$-connected, fan-reduced graph having no $\kall^k$ minor, then the vertex cover number of $G$, $\tau(G)$, is bounded by a function of $k$.  
    
    Before proceeding, we quickly review some graph theoretical terminology.  Let $A, B$ be subsets of vertices of a graph $G$.  An \emph{$A$--$B$ path} is a path $P$ in $G$ such that the ends of $P$ are in $A$ and $B$ respectively, and no internal vertex of $P$ is in $A\cup B$. 
    If $H$ is a subgraph of $G$ then an {\em $H$-path} is a path $P$ in $G$ such that the ends of $P$ are in $H$ but no other vertex nor edge of $P$ is in $H$.
    
    The \emph{$n$-ladder $\kl_n$} is the graph on $2n$ vertices with vertex set $V = \{v_i \mid i \in [n]\} \cup \{w_i \mid i \in [n]\}$ and edge set $E = \{v_iw_i \mid i \in [n]\} \cup \{v_iv_{i+1}, w_iw_{i+1} \mid i \in [n-1]\}$ (see Figure~\ref{fig:ladder}). By repeatedly suppressing degree-$2$ vertices, we can reduce $\kl_n$ to the graph $K_3$. This implies that $f_\infty(\kl_n) = 2$ for all $n\geq 2$ by Lemma~\ref{lem:suppressDeg2}.  
    
      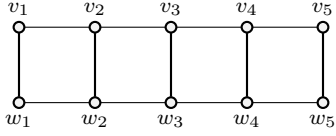
\begin{figure}
        \centering
        \begin{tikzpicture}[x=1cm, y=0.5cm]
            \tikzstyle{vtx}=[circle,draw,thick,fill=black!5]
            \begin{scriptsize}
            \node[vtx, scale=0.5] (x1) at (0,1){};
            \node[vtx, scale=0.5] (x2) at (1,1){};
            \node[vtx, scale=0.5] (x3) at (2,1){};
            \node[vtx, scale=0.5] (x4) at (3,1){};
            \node[vtx, scale=0.5] (x5) at (4,1){};
            
            \node[above] at (0,1.1) {$v_1$};
            \node[above] at (1,1.1) {$v_2$};
            \node[above] at (2,1.1) {$v_3$};
            \node[above] at (3,1.1) {$v_4$};
            \node[above] at (4,1.1) {$v_5$};
            
            \node[vtx, scale=0.5] (y1) at (0,-1){};
            \node[vtx, scale=0.5] (y2) at (1,-1){};
            \node[vtx, scale=0.5] (y3) at (2,-1){};
            \node[vtx, scale=0.5] (y4) at (3,-1){};
            \node[vtx, scale=0.5] (y5) at (4,-1){};
            
            \node[below] at (0,-1.1) {$w_1$};
            \node[below] at (1,-1.1) {$w_2$};
            \node[below] at (2,-1.1) {$w_3$};
            \node[below] at (3,-1.1) {$w_4$};
            \node[below] at (4,-1.1) {$w_5$};
            
            \end{scriptsize}
            
            \draw[thick] (x1)--(y1);
            \draw[thick] (x2)--(y2);
            \draw[thick] (x3)--(y3);
            \draw[thick] (x4)--(y4);
            \draw[thick] (x5)--(y5);

            \draw (x1)--(x2);
            \draw (x2)--(x3);
            \draw (x3)--(x4);
            \draw (x4)--(x5);
            
            \draw (y1)--(y2);
            \draw (y2)--(y3);
            \draw (y3)--(y4);
            \draw (y4)--(y5);
            
        \end{tikzpicture}
        \caption{The ladder $\kl_5$.
        \label{fig:ladder}}
    \end{figure}

    \begin{lemma} \label{lem:ladder}  
        For all $k \in \mathbb{N}$, let $g_{\ref{lem:ladder}}(k) = 12k^2+7k$. If $G$ is a $3$-connected graph containing a $g_{\ref{lem:ladder}}(k)$-ladder as a minor, then $G$ contains $\kn_k$, $\kp_k$, or $\kf_k$ as a minor.
    \end{lemma}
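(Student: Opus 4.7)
The plan is to exploit the $3$-connectivity of $G$ together with the long ladder minor $\kl_n \subseteq G$ (where $n = 12k^2+7k$) to find additional crossing paths attached to the ladder, and then perform a case analysis to extract one of $\kn_k, \kp_k, \kf_k$. I first pass to a subdivision $H$ of $\kl_n$ inside $G$ (which may be assumed to exist after suppressing degree-$2$ vertices via Lemma~\ref{lem:suppressDeg2}), writing $v_i, w_i$ for the branch vertices of $H$ on the $i$-th rung. For every internal column $i$ with $1 < i < n$, the pair $\{v_i, w_i\}$ separates $H$ into a left and a right half, and by $3$-connectivity of $G$ this pair cannot be a cut of $G$, so there exists an $H$-path $P_i$ in $G - \{v_i, w_i\}$ with one endpoint on each side. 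I collect such crossing paths into a family $\mathcal{P}$ and assign each $P \in \mathcal{P}$ its \emph{span} $[a_P, b_P]$, the extreme columns of $H$ where its endpoints attach (subdividing paths of $H$ as needed to place endpoints at branch vertices).

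The argument then splits into three cases according to the structure of $\mathcal{P}$. First, if some $P \in \mathcal{P}$ has very large span, say at least $3k$, then the ladder segment $H[a_P,b_P]$ contracts into a sub-ladder of length $k+1$, and together with $P$ acting as a ``wrap-around'' chord plus slash-diagonals harvested from short crossing paths inside the segment (or from rerouted ladder pieces), this yields $\kn_k$. Second, if at least $k$ paths in $\mathcal{P}$ attach within a common small region of $H$, their far endpoints emanate from this region in a fan; contracting the intermediate ladder-segments between their attachments produces $\kf_k$, with the common region as the fan's central vertex. Third, if neither of the above holds, then every crossing path has small span (less than $3k$) and no small region concentrates $k$ endpoints; an interval-packing/pigeonhole argument on the $n \geq 12k^2+7k$ internal cuts yields $k$ pairwise vertex-disjoint crossing paths lying in $k$ disjoint column-blocks of width $O(k)$, with buffer zones between them. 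Each block, together with its crossing path, contracts to a $K_4$, and consecutive blocks are glued along shared rung-edges of the ladder into a chain of $K_4$'s, which gives $\kp_k$.

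The main obstacle will be the rigorous execution of the case analysis, in particular verifying that each contraction genuinely realizes the claimed minor. This requires careful tracking of where crossing-path endpoints land (branch vertices versus interior subdivision vertices, top versus bottom path), and ensuring that the required diagonal and rung edges of the target minor are all represented in the contracted graph; attachments on interior subdivision vertices can be absorbed into the branch vertices via a standard local rerouting argument. The constant $12k^2+7k$ reflects the pigeonhole bookkeeping: $k$ disjoint blocks of width roughly $3k$, together with buffer zones of comparable size between them, force a ladder of length $\Theta(k^2)$, and the small leading factor of $12$ accommodates the simultaneous margins required by all three cases.
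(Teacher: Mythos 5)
Your high-level strategy (pass to a subdivision of the ladder, use $3$-connectivity to attach extra paths, then split into a long-crossing case, a fan case, and a spread-out case) is the same as the paper's, but the execution has two genuine gaps. First, your starting claim that for every internal column $i$ there is an $H$-path with one endpoint on each side of $\{v_i,w_i\}$ is false when the $i$-th rung is subdivided: $H-\{v_i,w_i\}$ then has a third component (the interior of the rung), and the path guaranteed by $3$-connectivity may simply attach to that interior, so no crossing $H$-path need exist at that column. The paper must treat this ``uncrossed rung'' situation as a separate case, and it is not a nuisance case --- it is one of the two sources of the $\kf_k$ outcome (many uncrossed rungs force $S$-paths out of rung interiors, which assemble into $\kf_k$). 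Your proposal has no mechanism covering it.

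Second, your trichotomy does not cover all configurations of crossing paths, and your $K_4$-block claim in Case 3 is wrong for some of them. A crossing path whose two endpoints both lie on the top path (or both on the bottom path) can have small span and occur in $k$ pairwise far-apart blocks, so it falls into your Case 3 but not your Case 2; yet such a path supplies neither of the two diagonals $v_{i-1}w_i$, $w_{i-1}v_i$ needed to turn a ladder block into a $K_4$ glued along rungs, so the chain you build is not $\kp_k$. The paper classifies crossing paths into types (both ends top, both ends bottom, or mixed) and sends the same-side types to $\kf_k$ instead, using the opposite path of the ladder (contracted) as the fan center and each chord as the outer edge of a $K_4$ sector. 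Relatedly, your Case 2 appears to conflate the $k$-fan with $\kf_k$: contracting a common attachment region to a hub and linking the far endpoints along the ladder yields only a fan, whereas $\kf_k$ needs every sector to carry a full $K_4$, including the edge joining the two boundary vertices of the sector that bypasses its middle vertex; you do not say where these edges come from. Until the uncrossed-rung case and the same-side crossing case are handled, the proof does not go through.
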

    \begin{proof} 
        Since $\kl_n$ has maximum degree $3$, every graph with an $\kl_n$ minor also contains an $\kl_n$ subdivision.   
        Let $S$ be a subgraph of $G$ isomorphic to a subdivision of $\kl_n$  with $n = g_{\ref{lem:ladder}}(k)$. We say that the vertices of $S$ that do not correspond to internal vertices of a subdivided edge are {\em branch vertices}.  We name these branch vertices $\{v_i \mid i \in [n]\} \cup \{w_i \mid i \in [n]\}$ as in the definition of $\kl_n$ given above.  
        A \emph{rung} is a path in $S$ corresponding to an edge of $\kl_n$ of the form $v_iw_i$, for some $i\in [n]$. 
        We say that an $S$-path $P$ \emph{crosses a rung} $R$, if the ends of $P$ are in different components of $S - V(R)$. A rung is \emph{crossed} if it is crossed by some $S$-path, and is \emph{uncrossed} otherwise. 
        
        If there exists an $S$-path in $G$ that crosses at least $2k+1$ rungs, then $G$ contains an $\kn_{k}$ minor, and we are done. Hence, we may assume that each $S$-path crosses at most $2k$ rungs of $S$.
        
         We say that the path in $S$ from $v_1$ to $v_n$ avoiding all $w_i$ for $i\in [n]$ is the \emph{upper path} of $S$. Similarly the \emph{lower path} is the path in $S$ from $w_1$ to $w_n$ avoiding all vertices $v_i$ for $i\in [n]$.
        For each $i \in \{2, \dots, n-1\}$, let $S_\ell^i$ and $S_r^i$ be the components of $S -\{v_i, w_i\}$ that contain $v_1$ and $v_n$, respectively.  
        
        Suppose there are $8k+1$ uncrossed rungs $R_1,\dots, R_{8k+1}$. For each $i \in [8k+1]$, let $v_{i'}$ and $w_{i'}$ be the ends of $R_i$. We may assume that $i'< j'$ for all $i < j$.  
        Since $G$ is $3$-connected, $G - \{v_{i'}, w_{i'}\}$ is connected.  Therefore, there is a path $P$ in $G - \{v_{i'}, w_{i'}\}$ from $V(S_\ell^{i'})$ to $V(S_r^{i'})$. Since $R_i$ is uncrossed, $P$ must use an internal vertex of $R_i$.  Thus, there exists a vertex $y_i \in V(R_i)\delete \{v_{i'}, w_{i'}\}$ that is connected by an $S$-path $P_i$ to some vertex $z_i \notin V(R_i)$. 
        
        By symmetry and pigeonhole, there is a subset $I$ of size $k$ of $\{2, 4, \dots, 8k\}$ such that $z_i \in V(S_r^{i'})$ and $z_i$ is not on the lower path of $S$, for all $i \in I$. Since $R_i$ is uncrossed for all $i \in [8k+1]$ it follows that $z_i  \in V(S_\ell^{(i+1)'}) \cup V(R_{i+1})$.  For the same reason, $P_i$ and $P_j$ are vertex-disjoint for all distinct $i,j \in I$.  Therefore, $S \cup \bigcup_{i \in I} P_i$ contains an $\kf_k$ minor.  
        
        We may hence assume that $S$ contains at most $8k$ uncrossed rungs.  Thus, $S$ contains at least $n-8k=12k^2-k$ crossed rungs.  Since $12k^2-k=1+(4k+1)(3k-1)$, there is a subset $J$ of $[n]$ of size $3k$ such that for all distinct $i,j \in J$, $|i-j| \geq 4k+1$ and $R_i$ is crossed. For each $i \in J$, let $P_i$ be an $S$-path crossing $R_i$.  Let $\ell_i$ and $r_i$ be the ends of $P_i$ in $S_\ell^i$ and $S_r^i$, respectively.  
        
        We say that $P_i$ is of \emph{type $v$}  if $\ell_i$ and $r_i$ are both on the upper path, \emph{type $w$} if $\ell_i$ and $r_i$ are both on the lower path, and \emph{type $p$} otherwise.  Since $|J|=3k$, there is a subset $J'$ of $J$ of size $k$ such that $P_i$ is of the same type $\mathsf{T}$ for all $i \in J'$.  Recall that each $S$-path crosses at most $2k$ rungs and $|i-j| \geq 4k+1$ for all distinct $i,j \in J'$.  Therefore, if $i,j \in J'$ and $i<j$,
        then $r_i$ is to the left of $\ell_{j}$.  Moreover, for the same reason, $P_i$ and $P_j$ are vertex-disjoint for all distinct $i,j \in J'$.  Therefore, $S \cup \bigcup_{i \in J'} P_i$ contains an $\kf_k$ minor if $\mathsf{T} \in \{v,w\}$ and $S \cup \bigcup_{i \in J'} P_i$ contains a $\kp_k$ minor if $\mathsf{T}=p$.  
        \end{proof}
    
    For each $k \in \NN$, the \emph{$k$-fan} is the graph consisting of a $k$-vertex path called its \emph{outer path},  plus a universal vertex called its {\em center}.  The edges connecting the center to the ends of the $k$-vertex path are called the {\em boundary edges} of the $k$-fan. 
    A {\em fan} is a graph isomorphic to a $k$-fan for some $k$. 
  
    Let $H$ be a fan, and assume that $G$ has an $H$-model. We say that the $H$-model is {\em rooted at} $x,y$ if $x$ and $y$ are contained in the vertex images of vertices $a$ and $b$ of $H$, respectively, and $ab$ is a boundary edge of the fan.  
    
    \begin{lemma} \label{claim}
    For all $k,q \in \NN$, let $g_{\ref{claim}}(k, q) = 3(8k^3)^q$. 
    Let $G$ be a graph and let $P=p_1 \cdots p_r$ be a path in $G$ of length at least $g_{\ref{claim}}(k, q)$ such that $V(G) \delete V(P)$ is a stable set. 
    Then at least one of the following holds: 
    \begin{enumerate}
    \item $G$ has a $k$-fan minor;
    \item there is a model of the $q$-fan
    in $G$ rooted at $p_2, p_{r-1}$ and avoiding $p_1, p_r$; 
    \item there are non-consecutive indices $s, t$ with $1 < s < t < r$ such that $\{p_s,p_t\}$ separates in $G$ the $p_s$--$p_t$ subpath of $P$ from the other vertices of $P$. 
    \end{enumerate}
    \end{lemma}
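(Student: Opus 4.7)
The plan is to proceed by induction on $q$. For the base case $q=1$, set $V_1 = \{p_2\}$ and $C = \{p_3, p_4, \ldots, p_{r-1}\}$: both are connected and disjoint subsets of $V(G)$ avoiding $p_1$ and $p_r$, and the path edge $p_2 p_3$ joins them, giving a $1$-fan minor of $G$ rooted at $p_2, p_{r-1}$ that avoids $p_1, p_r$, so (2) holds. Conditions (1) and (3) are not used here; only $r \geq 4$ is needed, which is guaranteed since $g_{\ref{claim}}(k,1) = 24k^3 \geq 3$.

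For the inductive step from $q$ to $q+1$, set $L = 8k^3$, so $P$ has length at least $3L^{q+1} = L \cdot 3L^q$. Assume neither (1) nor (3) holds; the goal is (2). The failure of (1) implies that every external vertex of $G$ has at most $k-1$ neighbours on $P$ (otherwise contracting an initial segment of $P$ between two neighbours and keeping the external as centre already gives a $k$-fan minor). The failure of (3) implies that for every non-consecutive pair $1 < s < t < r$ there is a \emph{jump} from the $(s,t)$-interior of $P$ to its exterior: either a chord $p_ip_j$ or a two-edge path $p_i\,x\,p_j$ through an external vertex $x$, with $s < i < t$ and $j \notin [s,t]$. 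The strategy is to build the desired $(q+1)$-fan minor explicitly by choosing a threshold index $\ell$, taking $V_1, \ldots, V_{q+1}$ to be $q+1$ consecutive subpaths of $P[p_2, p_\ell]$ with $V_1 \ni p_2$, and setting the centre image $C = \{p_{\ell+1}, \ldots, p_{r-1}\} \cup E^\ast$ for a suitable set $E^\ast$ of external vertices. The path edges of $P$ supply the $V_j$--$V_{j+1}$ adjacencies automatically, while each required $C$--$V_j$ edge must come either from a chord from $V_j$ to $\{p_{\ell+1},\ldots,p_{r-1}\}$ or from an external $x$ adjacent to both $V_j$ and the suffix (in which case $x$ is placed in $E^\ast$). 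The problem reduces to locating $q+1$ such jumps whose small-side endpoints lie in distinct $V_j$s.

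To produce these $q+1$ jumps, I would partition $P$ into $L$ consecutive blocks of length $3L^q$ and apply the failure of (3) to pairs $(s,t)$ straddling block boundaries, producing many jumps across an intended threshold $\ell$. A pigeonhole argument then uses the degree bound from the failure of (1): each external can ``cover'' at most $k-1$ blocks while a direct chord covers only one, so the abundance of jumps from (3) forces at least $q+1$ of them to have small-side endpoints in distinct blocks, and these determine the partition $V_1, \ldots, V_{q+1}$ and $E^\ast$. The main obstacle is ensuring that the fan model truly avoids $p_1, p_r$ and that externals added to $E^\ast$ do not accidentally fall into some $V_j$; the first is handled by taking $\ell$ well inside $P$ and restricting to ``inner'' pairs $(s,t)$ when invoking the failure of (3), and the second by first contracting externals with a single $P$-neighbour so that every external relevant to the construction has at least two path neighbours. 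The three factors of $k$ in $L = 8k^3$ absorb the $(k-1)$-type losses arising at three points in this counting (external path-degree, per-external block-coverage, and conflict resolution among the chosen jumps), while the constant $8$ accounts for the fixed overhead of positioning $\ell$ and avoiding the endpoints $p_1, p_r$.
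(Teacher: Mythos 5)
Your base case and your two preliminary observations (externals have at most $k-1$ neighbours on $P$ if (1) fails; every non-consecutive inner pair admits a ``jump'' out of its interval if (3) fails) are fine and match the paper. The problem is the inductive step, which as written is not actually an induction: you never invoke the hypothesis for $q$, but instead attempt a one-shot construction of the $(q+1)$-fan with a fixed template --- centre $=$ a suffix of $P$ plus some externals, outer path $=$ $q+1$ consecutive blocks of the prefix. The critical unproved step is the claim that the failure of (3) yields, after pigeonholing, $q+1$ jumps ``across the threshold $\ell$''. The failure of (3) for a pair $(s,t)$ only gives \emph{some} edge or $2$-path from the interior of $(s,t)$ to the exterior of $[s,t]$; that jump can be arbitrarily short (e.g.\ from $p_{s+1}$ to $p_{s-1}$) and can exit on either side. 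Nothing forces any jump to reach from a prefix block past $\ell$ into the suffix, which is exactly what your template needs in order to realize the centre--$V_j$ adjacencies. When all jumps are short, connections to a putative centre can only be assembled from \emph{chains} of overlapping jumps, and the fan model one extracts from such a chain has the centre and the outer path interleaved along $P$, not your prefix/suffix shape. When instead there is one long inner jump $(a,b)$, the natural model is \emph{nested}: one recurses into $P[p_a,p_b]$, absorbs the left part $P[p_2,p_a]$ into the centre returned by the recursion, and uses the right part $P[p_b,p_{r-1}]$ together with the jump as one additional outer-path vertex. This is precisely the paper's trichotomy (long inner jump $\Rightarrow$ recurse; long ``jump sequence'' of short jumps $\Rightarrow$ a $k$-fan directly, i.e.\ outcome (1); neither $\Rightarrow$ outcome (3)), and it is why the bound has the form $3(8k^3)^q$ rather than the $q\cdot\mathrm{poly}(k)$ that a one-shot pigeonhole would suggest. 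Your template handles neither of the first two cases, so there is a genuine gap at the heart of the argument.

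A secondary issue: even if you only invoke the failure of (3) for pairs $(s,t)$ well inside $P$, the witnessing jump may still land on $p_1$ or $p_r$, so ensuring the model avoids $p_1,p_r$ requires separate bookkeeping of these outer jumps (the paper bounds their number by $2k$ via outcome (1)); choosing $\ell$ ``well inside $P$'' does not by itself address this.
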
 
    \begin{proof}
    The proof is by induction on $q$. 
    For the base case $q = 1$, observe $g_{\ref{claim}}(k, 1) \geq 24$, for all $k \in \NN$. 
    Thus, it suffices to take $p_2$ and the $p_3$--$p_{r-1}$ subpath of $P$ as the two vertex images to obtain a model of the $1$-fan rooted at $p_2, p_{r-1}$ and avoiding $p_1, p_r$. 
    
    For the inductive step, assume $q > 1$. Let $S=V(G) \delete V(P)$.
    We may assume that every vertex in $S$ has degree at most $k-1$ in $G$, since otherwise there is a $k$-fan minor in $G$. 
    Note that $g_{\ref{claim}}(k, q) = 8k^3 \cdot g_{\ref{claim}}(k, q-1)$. 
    A {\em jump} is a pair $(a, b)$ of indices $a,b\in [r]$ with $b \geq a+2$ such that either $p_{a}p_{b} \in E(G)$ ({\em type~1}) or $p_{a}$ and $p_{b}$ have a common neighbor in $S$ ({\em type~2}). 
    For definiteness, if both conditions are satisfied then $(a, b)$ is considered to be of type 1. 
    To each jump $(a, b)$ of type~2 we associate a corresponding {\em middle vertex} $w\in S$ adjacent to both $a$ and $b$, that is chosen arbitrarily.
    A jump $(a, b)$ is called an {\em outer jump} if $a = 1$ or $b = r$; otherwise, $(a,b)$ is an {\em inner jump}. 
    In what follows we will be mostly interested in inner jumps. \\

    {\bf Case~1: There exists an inner jump $(a, b)$ with $b-a \geq k \cdot g_{\ref{claim}}(k, q-1)$.} 
    Let $(a, b)$ be such a jump. 
    If $(a, b)$ is of type~2, we first modify it as follows. 
    Let $w$ be the middle vertex of $(a, b)$.  
    Since $w$ has degree at most $k-1$, it follows that there exists a jump $(a', b')$ with $b'-a' \geq k \cdot g_{\ref{claim}}(k, q-1) / (k-2) \geq g_{\ref{claim}}(k, q-1)$ such that $w$ is adjacent to $p_{a'}$ and $p_{b'}$ but to no vertex lying strictly in between them on $P$. 
    We rename $(a',b')$ to $(a, b)$. 

    Let $G'$ be the minor of $G$ obtained by contracting the $p_1$--$p_a$ subpath of $P$ into $p_a$ and the $p_b$--$p_r$ subpath of $P$ into $p_b$. 
    Let $P'$ be the path obtained from $P$ by performing these contractions. We regard $p_a$ and $p_b$ as the ends of $P'$.   
    Note that $V(G') \delete V(P')$ is a stable set in $G'$. 
    Since $P'$ has length $b-a \geq g_{\ref{claim}}(k, q-1)$, by induction at least one of the following holds: 
    \begin{enumerate}
    \item $G'$ has a $k$-fan minor; 
    \item there is a model $\mathcal M'$ of the $(q-1)$-fan in $G'$
    rooted at $p_{a+1}, p_{b-1}$ and avoiding $p_a, p_b$;
    \item there are non-consecutive indices $s, t$ with $a < s < t < b$ such that $\{p_s,p_t\}$ separates in $G'$ the $p_s$--$p_t$ subpath of $P'$ from the other vertices of $P'$.  
    \end{enumerate}
    
    In the first case, we are done since $G'$ is a minor of $G$. 
    In the second case, $\mathcal M'$ is also such a model in $G$ since the two subpaths that were contracted in the definition of $G'$ resulted in vertices $p_a, p_b$. By symmetry, we may assume that the vertex image $V_0$ corresponding to the center of the fan contains $p_{a+1}$.

    Recall that $2 \leq a < b \leq r-1$, since $(a,b)$ is an inner jump. Let $L$ and $R$ be the 
    $p_2$--$p_a$ and $p_b$--$p_{r-1}$ subpaths of $P$, respectively.  Let $w$ be the middle vertex of $(a,b)$ if $(a,b)$ is type~2. Let $R'=R$ if $R$ is type~1, and $R'=R \cup \{w\}$ if $(a,b)$ is type~2. In either case, observe that $L$ and $R'$ are connected by an edge. By construction, $V(L) \cup V(R)$ is disjoint from all vertex images of $\mathcal M'$.  Since $w$ is not adjacent to any internal vertex of $P'$, $\{w\}$ is also disjoint from all vertex images of $\mathcal M'$. 
    Finally, the edges $p_ap_{a+1}$ and $p_{b-1}p_{b}$ connect $V(L)$ and $V(R)$ to the vertex images of $\mathcal M'$ containing $p_{a+1}$ and $p_{b-1}$, respectively.  Therefore, $(\mathcal M' \delete \{V_0\}) \cup \{V_0 \cup L, R'\}$ is a model of the $q$-fan in $G$ rooted at $p_2, p_{r-1}$ and avoiding $p_1, p_r$, as desired. 
    
    It remains to consider the third case. Suppose $s, t$ are non-consecutive indices with $a < s < t < b$ such that $\{p_s,p_t\}$ separates in $G'$ the $p_s$--$p_t$ subpath of $P'$ from the other vertices of $P'$.  
    Given how $G'$ was obtained from $G$, this is also true in $G$. That is, $\{p_s,p_t\}$ separates in $G$ the $p_s$--$p_t$ subpath of $P$ from the other vertices of $P$, as desired.   \\
    
    {\bf Case~2: $b-a < k \cdot g_{\ref{claim}}(k, q-1)$ for all inner jumps $(a, b)$.} 
    Let us introduce one more definition. 
    A {\em jump sequence} is a sequence $(a_{1}, b_{1}), \dots, (a_{\ell}, b_{\ell})$ of inner jumps 
    with $\ell \geq 1$ satisfying $a_{i} < a_{i+1} < b_{i} < b_{i+1}$ for each $i\in [\ell-1]$, and $b_{i} \leq a_{i+2}$ for each $i\in [\ell-2]$. 
    Its {\em length} is $\ell$ and its {\em spread} is $b_{\ell} - a_{1}$. \\
    
    {\bf Case~2.1: There exists a jump sequence of spread at least $2k^2 \cdot g_{\ref{claim}}(k, q-1)$.} 
    Let $(a_{1}, b_{1}), \dots, (a_{\ell}, b_{\ell})$ be a jump sequence of spread at least $2k^2 \cdot g_{\ref{claim}}(k, q-1)$ and with $\ell$ {\em minimum}. 
    For each $i\in [\ell]$, if $(a_{i}, b_{i})$ is of type~2 let $w_{i} \in S$ be the middle vertex of $(a_{i}, b_{i})$. 
    
    We claim that all middle vertices $w_{i}$ defined above are distinct. 
    Indeed, assume $w_{i} = w_{j}$ for some $i, j \in [\ell]$ with $i < j$. Then $(a_{i}, b_{j})$ is also an inner jump, and $(a_{1}, b_{1}), \dots, (a_{i-1}, b_{i-1}), (a_{i}, b_{j}), (a_{j+1}, b_{j+1}), \dots, (a_{\ell}, b_{\ell})$ is a jump sequence, as the reader can easily check. 
    But the latter jump sequence has length at most $\ell-1$ and yet its spread is also $b_{\ell}-a_{1}$, contradicting our choice of the original jump sequence. 
    
    Since $b_{i} - a_{i} \leq k \cdot g_{\ref{claim}}(k, q-1)$ for each $i\in [\ell]$, we have 
    \[
    2k^2 \cdot g_{\ref{claim}}(k, q-1) \leq b_{\ell} - a_{1} \leq \sum_{i\in [\ell]} (b_{i} - a_{i}) \leq \ell k \cdot g_{\ref{claim}}(k, q-1), 
    \]
    implying $\ell \geq 2k$. 
    Now, one can obtain a $k$-fan-model using the jump sequence $(a_{1}, b_{1}), \dots, (a_{2k}, b_{2k})$ as illustrated in Figure~\ref{fig:jumps}. 
    
    \begin{figure}
        \centering
        \includegraphics[width=\textwidth]{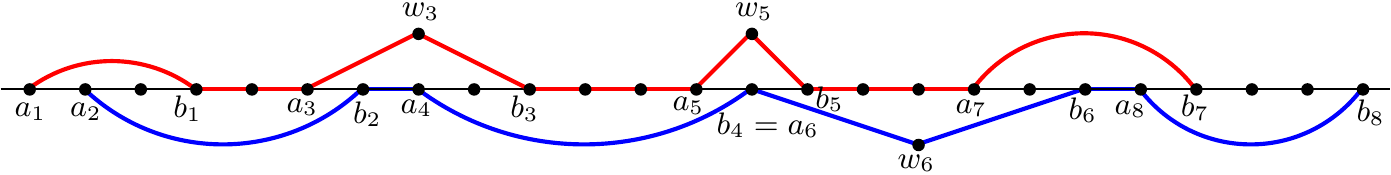}
        \caption{Illustration of a $k$-fan-model obtained from a jump sequence $(a_{1}, b_{1}), \dots, (a_{2k}, b_{2k})$ for $k = 4$. The blue path is the vertex image for the center of the fan, and the red path corresponds to the outer path. Edges incident to the center of the fan map to the first edge of the subpath of $P$ from $a_{2i}$ to $b_{2i-1}$.\label{fig:jumps}}
    \end{figure}
    
    {\bf Case~2.2: All jump sequences have spread less than $2k^2 \cdot g_{\ref{claim}}(k, q-1)$.} 
    Let 
    \[
    M = \{2, r-1\} \cup \{i\in [r]\mid (1,i) \textrm{ is an outer jump}\} \cup \{i\in [r]\mid (i,r) \textrm{ is an outer jump}\}. 
    \]
    If there are $k$ outer jumps of the form $(1,i)$ then $G$ has a $k$-fan minor, and the same is true for those of the form $(i,r)$. 
    Thus we may assume that $|M| \leq 2k$. 
    By the pigeonhole principle, there are two indices $i,j\in M$ with $i < j$ and $M \cap [i+1, j-1] = \emptyset$ such that 
    \[
    j-i \geq \frac{r - 1}{|M|-1} 
    \geq \frac{g_{\ref{claim}}(k, q)}{2k} 
    = 4k^2 \cdot g_{\ref{claim}}(k, q-1).
    \] 
    If there exists an inner jump $(a, b)$ with $a < i < b$, let $(a_{1}, b_{1}), \dots, (a_{\ell}, b_{\ell})$ be a jump sequence such that $a_{1} < i < b_{1}$ and maximizing its spread, and let $s = b_{\ell}$. 
    If no such jump exists, simply let $s = i$. 
    
    We claim that there is no inner jump $(a, b)$ with $a < s < b$. 
    This is obviously true if $s = i$, so assume $s \neq i$, and consider the corresponding jump sequence $(a_{1}, b_{1}), \dots, (a_{\ell}, b_{\ell})$ defined above. 
    Arguing by contradiction, suppose that there is an inner jump $(a, b)$ with $a < s < b$. 
    If $a \leq a_{1}$ then $(a, b)$ is a jump sequence with $a < i < b$ and spread $b - a > b_{\ell} - a_{1}$, contradicting our choice of the jump sequence. 
    If $a_{1} < a$ then letting $\ell' \in [\ell]$ be the smallest index such that $a < b_{\ell'}$ (which is well defined since $a < b_{\ell}$), we deduce that $(a_{1}, b_{1}), \dots,  (a_{\ell'}, b_{\ell'}), (a,b)$ is a jump sequence with $a_{1} < i < b_{1}$ and of spread $b - a_{1} > b_{\ell} - a_{1}$, again a contradiction. 
    Hence, no inner jump $(a, b)$ with $a < s < b$ exists, as claimed. 
    
    Next, if there exists an inner jump $(a, b)$ with $a < j < b$, let $(a'_{1}, b'_{1}), \dots, (a'_{\ell'}, b'_{\ell'})$ be a jump sequence such that $a'_{\ell'} < j < b'_{\ell'}$ and maximizing its spread, and let $t = a'_{1}$. If no such jump exists, simply let $t = j$. 
    By a symmetric argument, there is no inner jump $(a, b)$ with $a < t < b$. 
    
    Recall that every jump sequence has spread strictly less than $2k^2 \cdot g_{\ref{claim}}(k, q-1)$. 
    Thus, $s - i \leq 2k^2 \cdot g_{\ref{claim}}(k, q-1) - 1$ and $j - t \leq 2k^2 \cdot g_{\ref{claim}}(k, q-1) - 1$. 
    It follows that 
    \[ 
    t - s \geq j-i - 4k^2 \cdot g_{\ref{claim}}(k, q-1) + 2 \geq 2. 
    \]
    In other words, $[s+1, t-1]$ is not empty. 
    Since $[s+1, t-1] \subseteq [i+1, j-1]$ and $M \cap [i+1, j-1] = \emptyset$, there is no outer jump $(1, b)$ with $b\in [s+1, t-1]$ and  there is no outer jump $(a, r)$ with $a\in [s+1, t-1]$. 
    Since we already established that there is no inner jump $(a, b)$ with $a < s < b$ or $a < t < b$, we deduce that the two indices $s, t$ satisfy the third outcome of the claim.  That is,  $s$ and  $t$ are non-consecutive indices with $1 < s < t < r$ such that $\{p_s,p_t\}$ separates in $G$ the $p_s$--$p_t$ subpath of $P$ from the other vertices of $P$.
    \end{proof}

      As an easy corollary of Lemma~\ref{claim}, we obtain the following strengthening of Lemma~4.7 in~\cite{JPSST14}.\footnote{The latter lemma works under the assumption that $G$ does not have the graph consisting of two vertices linked by $k$ parallel edges as a minor, which is more restrictive than just forbidding a $k$-fan minor.
    Nevertheless, the two proofs are based on a similar strategy. } 

    \begin{lemma} \label{lem:boundeddegladder}
    For all $k \in \NN$, let $g_{\ref{lem:boundeddegladder}}(k) = 3(8k^3)^{k}$.
    Let $G$ be a graph with no $k$-fan minor.  Let $P$ be a path in $G$ of length at least $g_{\ref{lem:boundeddegladder}}(k)$ such that $V(G) \delete V(P)$ is a stable set. 
    Then there exist two non-consecutive internal vertices $u,v$ of $P$ such that $\{u,v\}$ separates in $G$ the $u$--$v$ subpath of $P$ from the other vertices of $P$. 
    \end{lemma}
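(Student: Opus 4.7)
The plan is to derive this directly from Lemma~\ref{claim} by specializing the parameter $q$ to $k$. Note that $g_{\ref{lem:boundeddegladder}}(k) = 3(8k^3)^k = g_{\ref{claim}}(k,k)$, so the length hypothesis on $P$ is exactly what is required to invoke Lemma~\ref{claim} with $q=k$. This gives us a trichotomy, and the task reduces to ruling out the first two outcomes.

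More precisely, applying Lemma~\ref{claim} to $G$ and $P$ with $q=k$ produces one of three conclusions. Outcome~(1) — that $G$ has a $k$-fan minor — is immediately ruled out by the hypothesis of the lemma. Outcome~(2) — a model of the $q$-fan in $G$ rooted at $p_2,p_{r-1}$ — is in particular a model of a $k$-fan (since $q=k$), so it too contradicts the assumption that $G$ has no $k$-fan minor. Consequently, outcome~(3) must hold: there exist non-consecutive indices $s,t$ with $1 < s < t < r$ such that $\{p_s,p_t\}$ separates in $G$ the $p_s$--$p_t$ subpath of $P$ from the other vertices of $P$. Taking $u=p_s$ and $v=p_t$ then gives precisely the two non-consecutive internal vertices whose existence is asserted by the lemma.

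There is no real obstacle in this argument, since all the combinatorial content has been absorbed into Lemma~\ref{claim}; the only subtlety is matching the constants and observing that a $q$-fan model in $G$ is, by definition, a $q$-fan minor, so taking $q=k$ converts outcome~(2) into the forbidden minor and forces the separation outcome.
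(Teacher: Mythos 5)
Your proposal is correct and coincides with the paper's own proof: both set $q=k$ in Lemma~\ref{claim}, note that $g_{\ref{lem:boundeddegladder}}(k)=g_{\ref{claim}}(k,k)$, and rule out the first two outcomes using the absence of a $k$-fan minor, leaving the separation outcome. Nothing is missing.
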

    \begin{proof}
    Note that $g_{\ref{lem:boundeddegladder}}(k)=g_{\ref{claim}}(k,k)$.  The lemma follows by applying Lemma~\ref{claim} to $G$ and $P$, and noting that the first two outcomes of Lemma~\ref{claim} are impossible since $G$ has no $k$-fan minor.  
    \end{proof}
    
    Next, we introduce two lemmas about $3$-connected graphs containing subdivisions of large fans as subgraphs. 
    Given a graph $G$, we say that $F$ is a {\em fan subdivision in $G$} if $F$ is a subgraph of $G$ isomorphic to a subdivision of a fan. 
    Moreover, we say that $F$ is a \emph{maximal} fan subdivision in $G$ if $F$ is maximal with respect to subgraph inclusion. That is, for every fan subdivision $F'$ in $G$ such that $F \subseteq F' \subseteq G$, we have $F = F'$.
    
    \begin{lemma}\label{lem:subdivided_fan}
        For all $k \in \mathbb N$, let
       $g_{\ref{lem:subdivided_fan}}(k) = 8k^4+4k^3+10k$.
        If $G$ is a $3$-connected graph and $F$ is a maximal fan subdivision in $G$ such that at least $g_{\ref{lem:subdivided_fan}}(k)$ of the edges of the fan are subdivided, then 
        $G$ has an  $\kl_k$, $\ks_k$ or $\kf_k$ minor. 
    \end{lemma}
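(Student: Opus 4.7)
The approach is to exploit the large number of subdivided edges of $F$ via two key observations. First, $3$-connectivity of $G$ forces each internal vertex of a subdivided path of $F$ to have at least one extra neighbor outside its subdivision path, and hence the start of an $F$-path in $G$. Second, the maximality of $F$ rules out $F$-paths whose endpoints would let us enlarge $F$ into a strictly bigger fan subdivision. The plan is to combine these observations with pigeonhole and a case analysis on the endpoints of such $F$-paths to produce one of the desired minors $\kl_k$, $\ks_k$, or $\kf_k$.

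Concretely, let $F$ have center $c$, outer branch path $p_1, \ldots, p_m$, spokes $Q_1, \ldots, Q_m$ (where $Q_i$ is the subdivision of $cp_i$), and outer-path pieces $T_1, \ldots, T_{m-1}$ (where $T_j$ is the subdivision of $p_jp_{j+1}$). Since $g_{\ref{lem:subdivided_fan}}(k) = 8k^4 + 4k^3 + 10k$, pigeonhole yields that either at least $\tfrac{1}{2} g_{\ref{lem:subdivided_fan}}(k)$ spokes or at least $\tfrac{1}{2} g_{\ref{lem:subdivided_fan}}(k)$ outer pieces are subdivided; by a symmetric argument I would treat the spoke case. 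For each subdivided spoke $Q_i$ I would pick an internal vertex $v_i$ and follow one of its extra edges to form an $F$-path $R_i$ beginning at $v_i$. Then I would pigeonhole on the \emph{type} of the other endpoint of $R_i$: on the same spoke $Q_i$ (chord), on a different spoke $Q_j$, on an outer-path piece $T_j$, or at a branch vertex $p_j$ or the center $c$.

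Maximality of $F$ rules out certain endpoints in the last two cases; in particular it rules out $R_i$ ending at $c$, since then $F$ could be enlarged with a new spoke routed via $R_i$, and it restricts how $R_i$ may land inside the outer-path pieces. In the surviving configurations I would build the target minors as follows. If many $R_i$'s are chords of a single spoke $Q_i$, then applying Lemma~\ref{lem:boundeddegladder} or Lemma~\ref{claim} to $Q_i$ together with the chords yields either a $k$-fan minor, which combined with the center $c$ gives $\kf_k$, or a $\kl_k$ minor using $Q_i$ as a rail. If many $R_i$'s are cross-paths between two fixed spokes, these two spokes serve as the rails and the cross-paths as rungs, giving $\kl_k$. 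If the cross-paths are spread across many different spokes, I would construct $\kf_k$ with $c$ as the hub and one $K_4$-petal per pair of consecutive involved spokes, the cross-path supplying the fourth vertex of each petal. Finally, $\ks_k$ arises when many chord-paths along a single spoke all share a common underlying edge after contraction, yielding $k$ copies of $K_4$ glued along this edge.

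The main obstacle is the careful bookkeeping in the nested case analysis. I would need to trace through each subcase to verify that the cascaded pigeonhole thresholds compose to at most $g_{\ref{lem:subdivided_fan}}(k) = 8k^4 + 4k^3 + 10k$, and that the final minor is built from pairwise vertex-disjoint pieces in each case. A further subtlety is the precise use of maximality: some $F$-paths that appear to enlarge $F$ may not actually do so, for instance because the resulting subgraph fails to be a subdivision of a fan. I would therefore need to verify, configuration by configuration, that every case ruled out by maximality truly produces a strictly larger fan subdivision, so that the remaining configurations really do force one of $\kl_k$, $\ks_k$, $\kf_k$.
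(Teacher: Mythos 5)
Your high-level ingredients (3-connectivity, maximality, pigeonhole on attachment types) are the right ones, but two of your key steps do not hold up, and they are exactly the points where the paper's proof takes a structurally different route.

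First, your opening observation is not correct as stated: in a $3$-connected graph an internal vertex of a subdivision path has a third neighbour, but that neighbour may be a chord of the \emph{same} subdivision path, or may lead only back to the two branch vertices of the subdivided edge, so it need not start a useful $F$-path. The statement one actually needs (and which the paper isolates as property $(\star)$) is that the subdivision path \emph{as a whole} has a short escape to a vertex of $F$ other than the two ends of the subdivided edge; this follows from $3$-connectivity because otherwise those two ends form a $2$-cut. Relatedly, maximality does \emph{not} rule out an $F$-path from a spoke-internal vertex to the center $c$ (rerouting the spoke through such a path gives the same fan, not a larger one); it only rules out paths from outer-path-internal vertices to $c$. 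So your per-vertex escape argument and your blanket use of maximality both need repair.

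Second, your case of many chords on a single spoke does not close. A $k$-fan minor is not among the allowed outcomes $\kl_k,\ks_k,\kf_k$, and it does not ``combine with the center $c$'' to give $\kf_k$: $c$ meets the spoke only at its top end, so it is not joined to the internal vertices that would have to serve as the hub-neighbours of $\kf_k$. Moreover, the Erd\H{o}s--Szekeres trichotomy on a family of chords of a single path has a ``sequential'' outcome ($a_1<b_1<a_2<b_2<\cdots$) which yields a fan of triangles \emph{without} a hub --- an outerplanar graph, hence none of the three targets. The paper avoids all of this by first contracting every subdivision path down to a single representative vertex (and every component of $G-V(F)$ to a single vertex): intra-path chords then vanish, and what remains is an honest fan $F^*$ plus a ``red'' graph $R^*$ on its outer path, which is analysed by a clean trichotomy --- a red vertex of degree $\geq 2k+1$ gives $\ks_k$; a red matching of size $k^3$ gives, via Erd\H{o}s--Szekeres \emph{with the genuine hub $v_0$ available}, either $\kl_k$ or $\kf_k$; otherwise $R^*$ has a small vertex cover, so there are many representative subdivision vertices all of whose forced escapes (from $(\star)$) are local, and these produce $\kf_k$. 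Without some such normalisation, your direct case analysis has a genuine hole in the single-spoke-chord configuration.
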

    \begin{proof}
        Let $F^*$ denote the $m$-fan such that $F$ is a subdivision of $F^*$, where $v_0$ is the center of $F^*$ and $v_1 \cdots v_m$ is the outer path of $F^*$.
        
        In the following we consider the graph $H$ obtained from $G$ by performing the following two operations. First, we contract each component of $G - V(F)$ into a vertex. Second, for each edge $e$ of $F^*$ that is subdivided at least once in $F$, we contract the corresponding path $P$ of $F$ into a $2$-edge path, that is, we leave just one subdivision vertex. 
        We call this subdivision vertex $v_i^1$ if $e=v_0v_i$ for some $i\in [m]$, and $v_i^2$ if $e=v_iv_{i+1}$ for some $i\in [m-1]$.
        
        Hence, each vertex of $H$ is of the form $v_i, v_i^1, v_i^2$, or results from the contraction of a component of $G - V(F)$. We denote by $F'$ the fan subdivision in $H$ that is the image of $F$, that is, which is obtained from $F$ by the above contractions.  Observe that $F'$ is a {\em maximal} fan subdivision in $H$. 
        Indeed, if some fan subdivision in $H$ strictly contained $F'$ then that fan subdivision could be mapped to a fan subdivision in $G$ strictly containing $F$, contradicting the maximality of $F$.
        
        We will establish the following key property of $H$:
       \begin{quote} \em $(\star)$ If $u_i$ is a vertex of $H$ of the form $v_i^1$ or $v_i^2$, then there is an $F'$-path $P_i$ in $H$ of length at most $2$ connecting $u_i$ to another vertex $u'_i$ of $F'$ distinct from its two neighbors in $F'$ and from $v_0$. 
       \end{quote}
        
        Suppose $(\star)$ does not hold for some $v_i^1$.
        Then $\{v_0, v_i\}$ is a size-$2$ cutset of $H$ separating $v_i^1$ from every vertex $v_j$ with $j \notin \{0, i\}$ (here we implicitly use that $m\geq 2$, since $F^*$ has at least $g_{\ref{lem:subdivided_fan}}(k) \geq 2$ edges). 
        By the construction of $H$, the set $\{v_0, v_i\}$ is also a cutset of $G$ separating $v_i^1$ from every vertex $v_j$ with $j \notin \{0, i\}$. 
        However, this contradicts the fact that $G$ is $3$-connected. 
        
        The remaining case is if $(\star)$ does not hold for some $v_i^2$. 
        Here we first observe that $v_i^2$ is not adjacent to $v_0$ in $H$, because otherwise this would contradict the maximality of $F'$ in $H$. 
        For the same reason, there is no length-$2$ path from $v_i^2$ to $v_0$ in $H$ going through a vertex in $V(H)\delete V(F')$. 
        Using these two observations, we can proceed similarly as in the proof for $v_i^1$. 
        This concludes the proof of $(\star)$.

        Now, we color each edge of $F'$ blue, and each remaining edge of $H$ red. 
        Consider the graph $H^*$ obtained from $H$ as follows.  
        Every edge of the form $v_i^1v_i$ is contracted to the vertex $v_i$, every edge of the form $v_i^2v_i$ is contracted to the vertex $v_i$, and finally, for every vertex $w \in V(H)\delete V(F')$, we select a neighbor of $w$ distinct from $v_0$ in the current graph (which exists) and contract the corresponding edge. 
        Finally, we delete all red edges incident to $v_0$. 
        Loops and parallel edges resulting from edge contractions are deleted as always, but if a red edge parallel to a blue edge is created, we keep the blue edge and delete the red edge. 
        Thus, the blue subgraph of $H^*$ is exactly the fan $F^*$. 
        Let $R^*$ denote the red subgraph of $H^*$. We regard $R^*$ as a spanning subgraph of $H^*$, and thus $R^*$ may have isolated vertices.   
        
        If $R^*$ has a vertex of degree at least $2k+1$, then that vertex is not $v_0$ (since $v_0$ is not incident to any red edge), and it is then easily seen that $H^*$ has an $\ks_k$ minor. 
        Thus we may assume that the maximum degree of $R^*$ is at most $2k$.

        If $R^*$ has a matching of size $k^3$, then by Pigeonhole and Erd\H{o}s-Szekeres~\cite{ES35}, $R^*$ has a matching $M=\{v_{a_i}v_{b_i}: i \in [k]\}$ of size $k$ that satisfies one of the following three conditions:
        \begin{enumerate}
            \item $a_1 < a_2 < \cdots < a_k < b_1 < b_2 < \cdots < b_k$, or 
            \item $a_1 < a_2 < \cdots < a_k < b_k < b_{k-1} < \cdots < b_1$, or 
            \item $a_1 < b_1 < a_2 < b_2 < \cdots < a_k < b_k$.
        \end{enumerate}
        In the first two cases, we see that $H^*$ has an $\kl_k$ minor (obtained by combining $M$ with the $v_{a_1}$--$v_{a_k}$ and $v_{b_1}$-- $v_{b_k}$ subpaths of the outer path of $H^*$). 
        In the third case, we see that $H^*$ has an $\kf_k$ minor. 
        Hence we may assume that $R^*$ has no matching of size $k^3$. 
        
        It follows that $R^*$ has a vertex cover of size at most $2k^3$. 
        However, since $R^*$ has maximum degree at most $2k$, it follows in turn that at most $2k^3(2k+1)$ vertices of $R^*$ have non-zero degrees in $R^*$. 
        
        Recall that $v_i^1$ and $v_i^2$ (if they exist) are the only $2$ vertices of $F'$ that are contracted to $v_i$ in $F^*$.  Since $F^*$ has at least $g_{\ref{lem:subdivided_fan}}(k)$ edges that are subdivided in $F'$ and $g_{\ref{lem:subdivided_fan}}(k)/2 - 2k^3(2k+1) = 5k$, there exists $I \subseteq [m]$ with $|I|=k$ such that the following holds: 
        \begin{itemize}
            \item there is a vertex $u_i$ of the form $v_i^1$ or $v_i^2$ in $H$, for each $i\in I$; 
            \item $v_i$ has degree $0$ in $R^*$ for all $i\in I$, and 
            \item $|i-j|\geq 5$ for all $i, j\in I$ with $i\neq j$. 
        \end{itemize}
        Now, consider an index $i\in I$ and its associated subdivision vertex $u_i$ in $H$. 
        By $(\star)$, there is an $F'$-path $P_i$ in $H$ of length at most $2$ connecting $u_i$ to another vertex $u'_i$ of $F'$ distinct from its two neighbors in $F'$ and from $v_0$. 
        The (one or two) edges of $P_i$ are red and are not incident to $v_0$, and they disappeared in the edge contraction operations leading to the graph $H^*$. 
        It follows that $u'_i$ is very close to $u_i$ in $F'- v_0$, namely $u'_i$ must be one of $v_{i-1}, v_{i+1}$, or one of the subdivision vertices $v_{i-1}^1, v_{i+1}^1, v_{i-1}^2, v_{i}^2,  v_{i+1}^2$ (if they exist).  
        
        Since the paths $P_i$ and $P_j$ are vertex disjoint for all $i, j\in I$ with $i\neq j$ (which follows from the fact that $v_i$ and $v_j$ have degree $0$ in $R^*$), and since $|i-j|\geq 5$, combining $F'$ with these $k$ paths we can see that $H$ contains an  $\kf_k$ minor. 
        \end{proof}
    
    Let $F$ be an $m$-fan with center $v_0$ and outer path $v_1 \cdots v_m$.  
    Suppose that $F$ is a subgraph of a graph $G$. 
    We say that $F$ is \emph{reducible in $G$} if $m \geq 5$ and all vertices $v_2,\dots, v_{m-1}$ have degree exactly $3$ in $G$. The \emph{$F$-reduction} of $G$ is the minor of $G$ obtained by contracting the edges of the path $v_3 \cdots v_{m-1}$.  
    Thus, the resulting graph has $m-4$ fewer vertices than $G$.  
    
    A reducible fan subgraph in $G$ is said to be {\em maximal in $G$} if it is not a proper subgraph of any other reducible fan subgraph of $G$. 
    Observe that if $F_1$ and $F_2$ are two distinct maximal reducible fan subgraphs of $G$ then $F_1$ and $F_2$ are almost vertex disjoint in the following sense: $F_2$ contains none of the internal vertices of the outer path of $F_1$, and vice versa.  
    We define the \emph{fan-reduction} of $G$ as the minor of $G$ obtained by simultaneously performing all $F$-reductions for all maximal reducible fan subgraphs $F$ of $G$.  
    By the previous observation, this minor is well-defined. 
    We say that $G$ is \emph{fan-reduced} if $G$ does not contain a reducible fan subgraph. 
    Observe that the fan-reduction of $G$ is fan-reduced.  
    
    \begin{lemma}\label{lem:non_subdivided_fan} 
        For all $k \in \mathbb N$, let $g_{\ref{lem:non_subdivided_fan}}(k) = 20k^5 + 14 k^4 + 2k^3 + 5k$.
        If $G$ is a $3$-connected fan-reduced graph containing a  $g_{\ref{lem:non_subdivided_fan}}(k)$-fan as a subgraph, then $G$ contains an $\ks_k, \kf_k$ or $\kl_k$ minor. 
    \end{lemma}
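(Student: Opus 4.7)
The plan is to mirror the proof of Lemma~\ref{lem:subdivided_fan}, but with the fan-reduced hypothesis replacing the abundance of subdivided edges. Let $F$ be the given $m$-fan subgraph with $m = g_{\ref{lem:non_subdivided_fan}}(k)$, center $v_0$, and outer path $v_1 v_2 \cdots v_m$. The first step is to extract many \emph{special} internal outer-path vertices, meaning vertices $v_i$ with $\deg_G(v_i) \geq 4$. For each $j$ with $2 \leq j \leq m-3$, the subgraph of $F$ induced by $\{v_0, v_{j-1}, v_j, v_{j+1}, v_{j+2}, v_{j+3}\}$ is a $5$-fan subgraph of $G$; if all three of $v_j, v_{j+1}, v_{j+2}$ had degree exactly $3$ in $G$, this $5$-fan would be reducible, contradicting that $G$ is fan-reduced. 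Hence among every three consecutive internal outer-path vertices at least one is special, so at least $\lfloor (m-2)/3 \rfloor$ special vertices exist, each carrying at least one \emph{extra edge} $e_i$ incident to it but not in $F$.

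The second step is to build an auxiliary graph $H^*$ exactly as in the proof of Lemma~\ref{lem:subdivided_fan}: contract each connected component of $G - V(F)$ to a single cloud vertex, and then absorb each cloud vertex into one of its neighbors on the outer path. Here $3$-connectivity guarantees that every cloud has at least three $V(F)$-neighbors, at most one of which is $v_0$, so such an absorption into the outer path is always possible. Colour the edges of $F$ blue and the remaining edges of $H^*$ red, deleting any red edge that becomes parallel to a blue one. The resulting red graph $R^*$ still carries a red edge at a constant fraction of the special vertices: a special $v_i$ loses its red incidence only if \emph{every} cloud to which its extra edges attach can be absorbed only into $v_0$, $v_{i-1}$, or $v_{i+1}$, a situation we control by carefully choosing, for each cloud, a non-parallel absorption target.

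With $R^*$ in hand, the analysis splits exactly as in Lemma~\ref{lem:subdivided_fan}. If some vertex of $R^*$ has degree at least $2k$, then by pairing it with $v_0$ and using $k$ disjoint pairs of consecutive outer-path segments as the $2k$ vertex images of the ``leaves'', we obtain an $\ks_k$ minor. Otherwise $R^*$ has maximum degree at most $2k-1$ and, because it carries $\Omega(m)$ edges, contains a matching of size at least $k^3$. Applying the Erd\H{o}s--Szekeres theorem to the matching ordered by outer-path indices extracts $k$ matching edges falling into one of the three canonical patterns (interleaved, nested, or non-crossing). The first two produce an $\kl_k$ minor with the two outer-path subpaths between the endpoints as rails and the red edges as rungs; the third produces an $\kf_k$ minor using $v_0$ as the centre of the target fan and the triangles $v_0 v_{a_i} v_{b_i}$ (closed by the red edges) as the triangles glued at $v_0$.

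The principal obstacle is the contraction-and-colour step. In Lemma~\ref{lem:subdivided_fan}, every subdivision vertex has only two fan-neighbors, so the failure of property $(\star)$ produces a $2$-cut, ruled out by $3$-connectivity. Here, a special vertex $v_i$ has three fan-neighbors $v_0, v_{i-1}, v_{i+1}$, so the analogous argument yields only a $3$-cut, which is permitted. I circumvent this by bypassing $(\star)$ entirely and instead working directly with the concrete extra edge $e_i$ at each special vertex: after the contractions, $e_i$ contributes a red edge at $v_i$ unless its far endpoint is absorbed into $v_0, v_{i-1}$, or $v_{i+1}$. A pigeonhole over these three ``bad'' absorption targets, combined with $3$-connectivity to guarantee at least one ``good'' absorption target per cloud, preserves a constant fraction of the special vertices. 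The constant $g_{\ref{lem:non_subdivided_fan}}(k) = 20k^5 + 14k^4 + 2k^3 + 5k$ is chosen so that, after the factor-$3$ loss from the density of specials, the further constant-factor loss from this pigeonhole, the factor-$2k$ loss when passing from incidences to a matching, and the factor-$k^3$ loss from Erd\H{o}s--Szekeres, at least $k$ matching edges of a single monotone pattern remain.
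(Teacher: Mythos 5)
Your proposal follows the paper's strategy (build $H^*$ and the red graph $R^*$ as in Lemma~\ref{lem:subdivided_fan}, then run the degree/matching dichotomy), but it has a genuine gap exactly at the step you flag as the ``principal obstacle.'' You claim that $3$-connectivity guarantees, for each cloud (component of $G-V(F)$) attached to a special vertex $v_i$, a ``good'' absorption target so that a red edge at $v_i$ survives into $H^*$. This is false. A cloud $C$ sending an edge to $v_i$ needs only three neighbours in $V(F)$, and nothing prevents its entire neighbourhood from being contained in $\{v_0, v_{i-1}, v_i, v_{i+1}\}$ (e.g.\ exactly $\{v_0, v_{i-1}, v_i\}$). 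Then every admissible absorption target other than $v_0$ is one of $v_{i-1}, v_i, v_{i+1}$, and the resulting red edge at $v_i$ becomes a loop or parallel to a blue fan edge and is deleted. Worse, \emph{every} special vertex could be of this type, in which case $R^*$ is edgeless, your ``$\Omega(m)$ edges'' claim fails, and the degree/matching dichotomy produces nothing. So the argument does not close.

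What is missing is precisely the third case that the paper treats separately: when a long window $v_{i+1},\dots,v_{i+5k}$ of the outer path carries no red edge in $H^*$, the fan-reduced hypothesis forces, in each block of five consecutive vertices of the window, some $v_j$ that \emph{is} incident to a red edge of $H$ (otherwise that block plus $v_0$ is a reducible fan). Since that red edge disappeared in forming $H^*$, its cloud $w_\ell$ has all its neighbours in $\{v_0,v_{j-1},v_j,v_{j+1}\}$ and, by $3$-connectivity, is adjacent to at least three of them. These $k$ well-separated, locally attached vertices $w_1,\dots,w_k$, combined with $F$, directly give an $\kf_k$ minor. In other words, the configuration you try to legislate away is unavoidable, and the correct move is to convert it into one of the target minors rather than to preserve red edges through it. With that case added, the rest of your outline (density of special vertices from fan-reducedness, the $\ks_k$ construction from a high red degree, and Erd\H{o}s--Szekeres on the matching for $\kl_k$/$\kf_k$) matches the paper's proof.
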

    \begin{proof}
        Consider an $m$-fan subgraph $F$ in $G$ with center $v_0$, outer path $v_1 \cdots v_m$, and $m = g_{\ref{lem:non_subdivided_fan}}(k)$. Let $H$ be obtained from $G$ by contracting each component of $G - V(F)$ into a vertex. 
        We color the edges of $F$ blue and the remaining edges of $H$ red as in the proof of Lemma~\ref{lem:subdivided_fan}, and define $H^*$ in exactly the same way. 
        The only difference here is that no edge of $F$ needs to be contracted since $F$ is already a fan. In the notation used in the proof of Lemma~\ref{lem:subdivided_fan}, here we have $F=F'=F^*$.
        Let $R^*$ denote the red spanning subgraph of $H^*$. 
        
        If $R^*$ has a vertex of degree at least $2k+1$ or a matching of size $k^3$, then we find one of our target minors, exactly as in the proof of Lemma~\ref{lem:subdivided_fan}. 
        Thus we may assume that this does not happen, implying that at most $2k^3(2k+1)$ vertices of $R^*$ have non-zero degrees in $R^*$. 
        
        Since $(m-2k^3(2k+1)) / (2k^3(2k+1) + 1) \geq 5k$  there is an index $i\in [m-5k]$ such that none of $v_{i+1}, \dots, v_{i+5k}$ is incident to a red edge in $H^*$. 
        For each $\ell \in [k]$, 
        there must be an index $j \in \{i+5(\ell-1)+2,i+5(\ell-1)+3,i+5(\ell-1)+4\}$ such that $v_j$ is incident to a red edge of $H$.  Otherwise, $v_{i+5(\ell-1)+1}, \dots, v_{i+5(\ell-1)+5}$ together with $v_0$ form a reducible fan in $G$. 
        Since all red edges incident to $v_j$ in $H$ disappeared when constructing $H^*$, it follows that $v_j$ is adjacent in $H$ to a vertex $w_{\ell} \in V(H) \delete V(F)$ such that the neighbors of $w_{\ell}$ in $H$ are a subset of $\{v_0, v_{j-1}, v_j, v_{j+1}\}$. 
        Furthermore, $w_{\ell}$ must be adjacent to at least three of these four vertices, since otherwise $G$ would not be $3$-connected. 
        Now, combining $F$ with the $k$ vertices $w_1, \dots, w_k$ we see that $H$ contains an $\kf_k$ minor. 
        \end{proof}
    
    Combining the two previous lemmas, we obtain the following lemma.
    
     \begin{lemma}
        \label{lem:no_subdivision_of_big_fan}
        For all $k \in \NN$, let $g_{\ref{lem:no_subdivision_of_big_fan}}(k)  = g_{\ref{lem:non_subdivided_fan}}(k)(g_{\ref{lem:subdivided_fan}}(k) + 1) + g_{\ref{lem:subdivided_fan}}(k)$.
        If $G$ is a $3$-connected, fan-reduced graph containing a subdivision of a $g_{\ref{lem:no_subdivision_of_big_fan}}(k)$-fan as a subgraph, then $G$ has an  $\ks_k, \kf_k$ or $\kl_k$ minor.  
    \end{lemma}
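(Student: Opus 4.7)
I would reduce to one of the two preceding lemmas by a dichotomy on how many edges of the ambient fan are subdivided. Start from the given subdivision $F_0$ of a $g_{\ref{lem:no_subdivision_of_big_fan}}(k)$-fan in $G$ and extend it greedily to a maximal fan subdivision $F$ in $G$ with $F \supseteq F_0$ (such a maximum exists since $G$ is finite). Let $F^*$ denote the underlying fan of $F$, with center $v_0^*$ and outer-path vertices $v_1, \ldots, v_{m^*}$. The first task is to verify that $m^* \geq g_{\ref{lem:no_subdivision_of_big_fan}}(k)$. The key observation is that the center of the fan underlying $F_0$ has degree $g_{\ref{lem:no_subdivision_of_big_fan}}(k) \geq 5$ in $F_0 \subseteq F$, while every non-center branch vertex of a fan subdivision has degree at most $3$; hence this vertex must coincide with $v_0^*$, and $m^* = \deg_F(v_0^*) \geq g_{\ref{lem:no_subdivision_of_big_fan}}(k)$.

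\textbf{Case split.} If at least $g_{\ref{lem:subdivided_fan}}(k)$ edges of $F^*$ are subdivided in $F$, then Lemma~\ref{lem:subdivided_fan} applies to the maximal fan subdivision $F$ inside the $3$-connected graph $G$ and directly produces the desired $\ks_k$, $\kf_k$, or $\kl_k$ minor. Otherwise, the number $s$ of subdivided edges satisfies $s < g_{\ref{lem:subdivided_fan}}(k)$. Call an index $i \in [m^*]$ \emph{bad} if the boundary edge $v_0^* v_i$ is subdivided in $F$, and call the gap between $i$ and $i+1$ \emph{bad} if the outer-path edge $v_i v_{i+1}$ is subdivided in $F$. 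Deleting the at most $s$ bad points and cutting at the at most $s$ bad gaps partitions the sequence $1, 2, \ldots, m^*$ into at most $s+1$ consecutive intervals of total length at least $m^* - s$; by pigeonhole some such interval $[a,b]$ has length at least $(m^* - s)/(s+1) \geq g_{\ref{lem:non_subdivided_fan}}(k)$, and the definition $g_{\ref{lem:no_subdivision_of_big_fan}}(k) = g_{\ref{lem:non_subdivided_fan}}(k)(g_{\ref{lem:subdivided_fan}}(k)+1) + g_{\ref{lem:subdivided_fan}}(k)$ is precisely calibrated to deliver this inequality. On $[a,b]$ the boundary edges $v_0^* v_i$ for $i \in [a,b]$ and the outer-path edges $v_i v_{i+1}$ for $i \in [a,b-1]$ are all actual edges of $G$, so $\{v_0^*, v_a, v_{a+1}, \ldots, v_b\}$ together with these edges forms a $(b-a+1)$-fan subgraph of $G$. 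Since $G$ is $3$-connected and fan-reduced, Lemma~\ref{lem:non_subdivided_fan} then yields the required minor.

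\textbf{Main subtlety.} The only nonobvious step is the lower bound $m^* \geq g_{\ref{lem:no_subdivision_of_big_fan}}(k)$: it must be ruled out that extending $F_0$ to a maximal fan subdivision $F$ could collapse the underlying fan into a smaller one. This is exactly what the degree argument in the first paragraph handles, since the center of $F_0$ is too high-degree to become a non-center vertex of $F^*$. Once this is in hand, everything else is a routine pigeonhole together with two direct invocations of the preceding lemmas.
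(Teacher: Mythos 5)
Your proof is correct and follows essentially the same route as the paper: pass to a maximal fan subdivision, split on whether at least $g_{\ref{lem:subdivided_fan}}(k)$ edges are subdivided, and invoke Lemma~\ref{lem:subdivided_fan} or Lemma~\ref{lem:non_subdivided_fan} accordingly. Your degree argument for why the maximal fan subdivision still has at least $g_{\ref{lem:no_subdivision_of_big_fan}}(k)$ spokes, and the explicit pigeonhole locating an unsubdivided sub-fan, are details the paper leaves implicit, and both are sound.
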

    \begin{proof}
    Since $G$ contains a $g_{\ref{lem:no_subdivision_of_big_fan}}(k)$-fan subdivision, $G$ contains a maximal $m$-fan subdivision $F$ with $m\geq g_{\ref{lem:no_subdivision_of_big_fan}}(k)$.
    If at least $g_{\ref{lem:subdivided_fan}}(k)$ edges of the $m$-fan are subdivided in $F$, then, by Lemma~\ref{lem:subdivided_fan}, $G$ contains an $\kl_k, \ks_k$ or $\kf_k$ minor.
    Otherwise, $F$ contains an $m'$-fan as a subgraph with $m' \geq (g_{\ref{lem:no_subdivision_of_big_fan}}(k) - g_{\ref{lem:subdivided_fan}}(k)) / (g_{\ref{lem:subdivided_fan}}(k) + 1) = g_{\ref{lem:non_subdivided_fan}}(k)$, and by Lemma~\ref{lem:non_subdivided_fan}, $G$ contains an $\kl_k, \ks_k$ or $\kf_k$ minor.
    \end{proof}
    
    The next lemma is standard, we include the proof nevertheless for completeness. 
    
     \begin{lemma}
            \label{lem:fans_ladders}
            For all $k \in \NN$, let $g_{\ref{lem:fans_ladders}}(k) = k^{k^2+2}$. 
            If $G$ is a graph with a $g_{\ref{lem:fans_ladders}}(k)$-fan 
            minor, then $G$ contains a subdivision of a $k$-fan as a subgraph, or $G$ contains an $\kl_k$ minor. 
    \end{lemma}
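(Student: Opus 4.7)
The plan is to either concentrate the center of the $n$-fan model into a single vertex (yielding a $k$-fan subdivision) or spread it along a tree and extract a ladder via Erd\H{o}s--Szekeres. I would fix a model $(V_0, v_1, \dots, v_n)$ with $n = g_{\ref{lem:fans_ladders}}(k) = k^{k^2+2}$, connected $V_0$, and for each $i$ an attachment $u_i \in V_0$ adjacent to $v_i$. Making $V_0$ inclusion-minimal and taking $T$ to be a minimum Steiner tree in $G[V_0]$ on the distinct $u_i$'s ensures that every leaf of $T$ is some $u_i$. By pigeonhole, if some vertex plays the role of $u_i$ for $\geq k$ indices, then together with $k$ of the corresponding $v_i$'s and the matching subpath of the outer $v$-path it already gives a $k$-fan subdivision. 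So I may assume that the number $N$ of distinct $u_i$'s is at least $n/(k-1) \geq k^{k^2+1}$.

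Next, for each $v \in V(T)$ let $s(v)$ be the number of components of $T-v$ containing some $u_i$. If $s(v) \geq k$, I get $k$ internally disjoint $v$-to-$u_i$ paths in $T$ and extend them by the edges $u_iv_i$ to $k$ internally disjoint paths from $v$ to distinct $v_i$'s; combined with the ordered subpath of the outer $v$-path through these $v_i$'s, this yields a $k$-fan subdivision with center $v$. So I may assume $s(v) \leq k-1$ everywhere, which forces $T$ to have maximum degree at most $k-1$.

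For the remaining (ladder) case, let $T^{**}$ be the skeleton of $T$ obtained by suppressing all degree-$2$ vertices. Then $T^{**}$ has maximum degree $\leq k-1$, every non-leaf of degree $\geq 3$, and every leaf some $u_i$; each edge of $T^{**}$ corresponds to a path in $T$ possibly carrying many degree-$2$ terminals (i.e., $u_i$'s of degree $2$ in $T$). If some edge of $T^{**}$ carries at least $(k-1)^2+1$ such terminals, those $u_i$'s all lie on a single path of $T$, so Erd\H{o}s--Szekeres on their $i$-indices produces $k$ of them in monotone order; pairing these with the matching segments of the outer $v$-path (via the rungs $u_iv_i$) gives an $\kl_k$-minor directly. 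Otherwise, averaging over the at most $2L-3$ edges of $T^{**}$ forces the number of leaves $L$ of $T^{**}$ to satisfy $L \geq N/O(k^2)$, which (using max degree $\leq k-1$ and min internal degree $\geq 3$) makes the diameter of $T^{**}$ at least of order $\log_{k-1}L \geq \Omega(k^2)$. A longest path $P$ in $T^{**}$ then has $\Omega(k^2)$ internal vertices, each of degree $\geq 3$ in $T^{**}$ and so carrying at least one branch off $P$ containing a leaf-terminal; a second application of Erd\H{o}s--Szekeres picks $k$ of these branches with monotone $i$-indices, and each chosen branch plus the intervening piece of $P$ becomes one branch set of the second rail, matched to a segment of the outer $v$-path via the edge $u_iv_i$, producing the required $\kl_k$-minor.

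The main obstacle will be the bookkeeping in the second subcase: tracking the constants through the two pigeonhole reductions and the two invocations of Erd\H{o}s--Szekeres so that the bound $n = k^{k^2+2}$ is comfortably enough, and verifying that in the ``many leaves'' case the path segments of $T^{**}$ together with the chosen branches really do form pairwise disjoint connected branch sets with all the required adjacencies along $P$ and through the rungs $u_iv_i$.
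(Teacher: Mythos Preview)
Your approach is essentially the paper's: build a Steiner tree on the attachments of the outer vertices to the center, get a $k$-fan subdivision from any high-degree vertex, and otherwise use bounded degree to force a long path in the suppressed tree and finish with Erd\H{o}s--Szekeres. The paper's setup is cleaner in one respect: instead of taking $u_i \in V_0$, it takes the attachment vertex $x_i^0$ inside the branch set $X_i$ of the $i$-th outer vertex and lets $T$ be a tree in $G[X_0 \cup \{x_i^0 \mid i\}]$ whose leaves are \emph{exactly} the $x_i^0$. Since the $X_i$ are pairwise disjoint, these leaves are automatically distinct. This single choice eliminates your ``coinciding $u_i$'' case and the factor-$(k-1)$ loss, and it also removes your ``edge of $T^{**}$ carries many terminals'' subcase: after suppressing degree-$2$ vertices, the tree $T'$ still has all $m = k^{k^2+2}$ leaves, so a single depth bound plus one application of Erd\H{o}s--Szekeres suffices.

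One genuine sloppiness to fix: you treat $v_1,\dots,v_n$ as single vertices, but they are branch sets. The ``outer $v$-path'' you invoke is a path $P$ in $G$ threading through $X_1,\dots,X_n$, and your attachment $w_i \in X_i$ (your ``$v_i$'') need not lie on $P$. So the spoke $u_i w_i$ does not by itself land on the outer path; you must extend it by a path inside $X_i$ from $w_i$ to $P \cap X_i$. These extensions are disjoint (they live in different $X_i$'s) and disjoint from $X_0$, so the $k$-fan subdivision still goes through, but this step should be stated. The same remark applies to the rungs in your ladder constructions.
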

    \begin{proof}
    Let $G$ be a graph containing an $m$-fan $F$ as 
    minor with $m = g_{\ref{lem:fans_ladders}}(k)$. 
    Let  $v_0$ be the center of $F$ and  $v_1 \cdots  v_m$ be the outer path. Let $\{X_{i} \mid i \in \{0,1,\dots, m\}\}$  denote an $F$-model in $G$, with $X_{i}$ denoting the vertex image of $v_i$.  
    
    For every edge $v_iv_j$ of $F$ we choose vertices $x_i^j, x_j^i$ of $X_i, X_j$, respectively, such that $x_i^jx_j^i\in E(G)$. 
    Let $T$ be a subtree of $G[X_0\cup\{x_i^0\mid i\in [m]\}]$ such that the leaves of $T$ are exactly the vertices $x_i^0$ for $i\in [m]$. 
    If $T$ contains a vertex of degree at least $k$, then $G$ contains a subdivision of a $k$-fan. 
    Thus we may assume that $T$ has maximum degree less than $k$. 
    
    Now, suppress all degree-$2$ vertices in $T$, giving a tree $T'$. 
    Thus every non-leaf vertex of $T'$ has degree between $3$ and $k-1$ in $T'$. 
    In particular, $k \geq 4$.
    Choose an arbitrary non-leaf vertex $r$ of $T'$.  
    Since $T'$ has $m \geq (k-1)^{k^2+2}$ leaves and maximum degree at most $k-1$, it follows that there is a leaf of $T'$ at distance at least $\log_{k-1} |T'| - 1 \geq \log_{k-1}{(k-1)^{k^2+2}} - 1= k^2 +1$ from $r$ in $T'$.   
    
    Consider the path $P'$ of $T'$ from $r$ to that leaf, minus the leaf, and let $P$ denote the corresponding path of $T$.  
    By construction, there are $k^2$ vertex-disjoint   $V(P)$--$\{x_i^0 \mid i\in [m]\}$ paths in the graph $G[X_0\cup\{x_i^0\mid i\in [m]\}]$. 
    Applying Erd\H{o}s-Szekeres we then find an $\kl_k$ minor in $G$.    
    \end{proof}
    
    \begin{lemma}\label{lem:shortpath}
            For all $k \in \NN$, let $g_{\ref{lem:shortpath}}(k) = g_{\ref{lem:boundeddegladder}}(g_{\ref{lem:fans_ladders}}(g_{\ref{lem:no_subdivision_of_big_fan}}(g_{\ref{lem:ladder}}(k))))$.
            If $G$ is a $3$-connected, fan-reduced graph with no $\kall^k$ minor, then the maximum length of a path in $G$ is at most $g_{\ref{lem:shortpath}}(k)$. 
    \end{lemma}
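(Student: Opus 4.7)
The plan is to argue by contradiction: assume $G$ contains a path $P$ of length greater than $g_{\ref{lem:shortpath}}(k)$, and chain Lemmas~\ref{lem:ladder}, \ref{lem:no_subdivision_of_big_fan}, \ref{lem:fans_ladders}, and~\ref{lem:boundeddegladder} to produce a $\kall^k$ minor of $G$. Set
\[
k_1 = g_{\ref{lem:ladder}}(k), \qquad k_2 = g_{\ref{lem:no_subdivision_of_big_fan}}(k_1), \qquad k_3 = g_{\ref{lem:fans_ladders}}(k_2),
\]
so that $g_{\ref{lem:shortpath}}(k) = g_{\ref{lem:boundeddegladder}}(k_3)$. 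The first step is to pass to the minor $G'$ obtained from $G$ by contracting each connected component of $G - V(P)$ into a single vertex. In $G'$, the path $P$ is preserved and $V(G') \setminus V(P)$ is a stable set, so Lemma~\ref{lem:boundeddegladder} is applicable to $(G', P)$.

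Lemma~\ref{lem:boundeddegladder} offers two alternatives. Either $G'$ has a $k_3$-fan minor, or there are non-consecutive internal vertices $u, v$ of $P$ such that $\{u, v\}$ separates the $u$--$v$ subpath of $P$ from the remaining vertices of $P$ in $G'$. The second outcome would force $\{u, v\}$ to be a $2$-cutset of $G$ as well, because contracting connected subgraphs of $G - V(P)$ cannot create connectivity where none existed: any path in $G - \{u, v\}$ between the two putative sides projects to a walk in $G' - \{u, v\}$ between the same endpoints, contradicting the separation in $G'$. Since $u, v$ are internal and non-consecutive on $P$, both sides of this $2$-cut are nonempty, contradicting the $3$-connectivity of $G$. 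Hence $G'$, and therefore $G$, has a $k_3$-fan minor.

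By Lemma~\ref{lem:fans_ladders}, $G$ then contains either a subdivision of a $k_2$-fan as a subgraph, or an $\kl_{k_2}$ minor. In the first subcase, Lemma~\ref{lem:no_subdivision_of_big_fan}, applied to the $3$-connected and fan-reduced graph $G$, yields an $\ks_{k_1}$, $\kf_{k_1}$, or $\kl_{k_1}$ minor; in the second subcase we already have an $\kl_{k_2}$ minor, which contains an $\kl_{k_1}$ minor since $k_2 \geq k_1$. An $\ks_{k_1}$ or $\kf_{k_1}$ minor already gives a $\kall^k$ minor because $k_1 \geq k$; an $\kl_{k_1}$ minor, by Lemma~\ref{lem:ladder}, produces a $\kn_k$, $\kp_k$, or $\kf_k$ minor. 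In every branch $G$ contains a $\kall^k$ minor, contradicting the hypothesis on $G$.

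The one step requiring actual care is the descent of the $2$-cutset from $G'$ to $G$, which I expect to settle in one or two lines using the projection argument above. All remaining steps are a bookkeeping chain of the four preceding lemmas at carefully calibrated parameters, which is precisely what the definition of $g_{\ref{lem:shortpath}}(k)$ encodes.
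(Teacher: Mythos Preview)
Your proof is correct and follows essentially the same approach as the paper's. The only cosmetic difference is ordering: the paper first chains Lemmas~\ref{lem:fans_ladders}, \ref{lem:no_subdivision_of_big_fan}, and~\ref{lem:ladder} to conclude that $G$ has no $k_3$-fan minor, and then applies Lemma~\ref{lem:boundeddegladder} with its hypothesis satisfied, whereas you read Lemma~\ref{lem:boundeddegladder} contrapositively as a dichotomy and chain the fan lemmas afterwards. One small wording point: Lemma~\ref{lem:boundeddegladder} as stated does not ``offer two alternatives''---it has the no-$k$-fan hypothesis baked in---so what you are really doing is taking its contrapositive; the logic is fine, but you may want to phrase it that way.
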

    \begin{proof} 
        By Lemmas~\ref{lem:fans_ladders},~\ref{lem:no_subdivision_of_big_fan} and~\ref{lem:ladder}, we deduce that $G$ has no $m$-fan minor, where $m = g_{\ref{lem:fans_ladders}}(g_{\ref{lem:no_subdivision_of_big_fan}}(g_{\ref{lem:ladder}}(k)))$.
        Arguing by contradiction, suppose $G$ has a path $P$ of length more than $g_{\ref{lem:shortpath}}(k) = g_{\ref{lem:boundeddegladder}}(m)$.
        
        Let $C_1, \dots, C_p$ denote the components of $G-V(P)$. 
        Let $H$ be the graph obtained from $G$ by contracting each component $C_i$ into a vertex $c_i$. 
        Note that $H$ has no $m$-fan minor, since $H$ is a minor of $G$. 
        By Lemma~\ref{lem:boundeddegladder}, applied to the graph $H$ and path $P$, there exist two non-consecutive internal vertices $u,v$ of $P$ such that $\{u,v\}$ separates in $H$ the $uv$-subpath of $P$ from the other vertices of $P$. 
        However, the same remains true in $G$, by construction of $H$.
        Therefore, $\{u,v\}$ is a cutset of $G$, contradicting the fact that $G$ is $3$-connected.
    \end{proof}
    
     In the following we will use another reduction operation for $3$-connected graphs. Let $G$ be a $3$-connected graph and let $h\geq 3$ be a fixed integer. Let $T_1,\dots, T_\ell$ be an enumeration of all stable sets of $G$ satisfying the following conditions for each $i \in [\ell]$, 
     \begin{itemize}
         \item $|T_i|\geq h+1$, 
         \item there exists $S_i \subseteq V(G)$ with $|S_i|\leq h$ such that for all $v\in T_i$, the set of neighbors of $v$ in $G$ is exactly $S_i$, 
         \item 
         $T_i$ is inclusion-wise maximal with respect to the above two properties.  
     \end{itemize}
    Observe that by maximality, the sets $T_1,\dots, T_\ell$ are pairwise disjoint.
    Let $G'$ be the graph obtained from $G$ by removing all vertices in $T_i$ except $h+1$ of them, for each $i\in [\ell]$.  Clearly, $G'$ does not depend on which $h+1$ vertices remain in each $T_i$.    
    We call $G'$ the \emph{$h$-reduction} of $G$. Note that, since  $G$ is $3$-connected, $G'$ is also $3$-connected. 
    If $G'$ is the graph $G$ itself, that is, no vertex was removed in the process, then we say that $G$ is {\em $h$-reduced}. 
    
    \begin{lemma}\label{lem:reductionKstTau}
            Let $G$ be a $3$-connected graph, let $h\geq 3$, and let $G'$ be the $h$-reduction of $G$.
            Then $\tau(G') = \tau(G)$.
        \end{lemma}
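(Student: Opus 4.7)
The plan is to establish both inequalities $\tau(G') \leq \tau(G)$ and $\tau(G) \leq \tau(G')$ by a symmetric exchange argument applied independently to each twin set $T_i$, using the key numerical fact that $|T_i| \geq h+1 > h \geq |S_i|$.

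The main observation is that every edge of $G$ incident to a vertex of $T_i$ has its other endpoint in $S_i$, and the edges between $T_i$ and $S_i$ in $G$ form the complete bipartite graph $K_{|T_i|,|S_i|}$. In particular, any vertex cover of $G$ (or of $G'$, using the $h+1$ retained vertices of $T_i$) must contain either all of $T_i$ (resp.\ all retained vertices of $T_i$) or all of $S_i$, since omitting a vertex $s \in S_i$ forces every neighbor of $s$ in $T_i$ into the cover. Because the sets $T_1,\dots,T_\ell$ are pairwise disjoint by maximality, modifications performed at one $T_i$ do not interfere with those at another, so we can treat them one at a time.

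For $\tau(G') \leq \tau(G)$, I would start from a minimum vertex cover $C$ of $G$ and show that, after modification, we may assume $S_i \subseteq C$ and $C \cap T_i = \emptyset$ for every $i$. Indeed, if $S_i \not\subseteq C$ then the observation above forces $T_i \subseteq C$; replacing $T_i$ by $S_i \setminus C$ yields a cover of strictly smaller size (since $|T_i| \geq h+1 > h \geq |S_i|$), contradicting minimality. Once $S_i \subseteq C$, any vertex of $C \cap T_i$ is redundant and may be deleted without losing coverage. The resulting set $C$ lies in $V(G')$ and covers every edge of $G'$ (a subgraph of $G$), so $\tau(G') \leq |C| = \tau(G)$.

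For the reverse inequality I would apply the same exchange to a minimum vertex cover $C'$ of $G'$: the complete bipartite structure between the $h+1$ retained vertices of $T_i$ and $S_i$ is preserved in $G'$, so the identical argument shows that we may take $S_i \subseteq C'$. Then every edge of $G$ not already in $G'$ is incident to a removed vertex of some $T_i$ and hence has its other end in $S_i \subseteq C'$, so $C'$ is in fact a vertex cover of $G$, giving $\tau(G) \leq |C'| = \tau(G')$. The only subtle point, which should be verified cleanly, is that the neighborhood of a retained vertex of $T_i$ in $G'$ is still exactly $S_i$ (so the bipartite-$K$ structure survives the reduction); this holds because $S_i \subseteq V(G')$ and no new edges are introduced, which is where the assumption that $G$ is $3$-connected plays no role beyond guaranteeing that the $h$-reduction is well-defined.
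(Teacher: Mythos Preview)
Your proposal is correct and takes essentially the same approach as the paper: both establish $\tau(G)\le\tau(G')$ by showing that any minimum vertex cover of $G'$ must contain all of $S_i$ (via the exchange argument using $|T_i|\ge h+1>|S_i|$) and hence already covers the deleted edges. The only notable difference is that you work harder than necessary on the direction $\tau(G')\le\tau(G)$, which is immediate from the monotonicity of $\tau$ under taking subgraphs; on the other hand, you are more explicit than the paper in verifying that $S_i\subseteq V(G')$ (no vertex of $S_i$ can lie in any $T_j$ since $T_i\subseteq N(s)$ would force $|S_j|\ge|T_i|\ge h+1$), which the paper uses silently.
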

        \begin{proof}
            
            Since $G'$ is a subgraph of $G$, $\tau(G')\leq \tau(G)$. It remains to show that $\tau(G')\geq \tau(G)$.  
            
            Let $T_1,\dots, T_\ell$ and $S_1,\dots, S_\ell$ be as in the definition of $h$-reduction.
            Let $W$ be a minimum-size vertex cover of $G'$. We claim $\bigcup_{i\in [\ell]}S_i \subseteq  W$. By contradiction, suppose that there exists a vertex $w\in S_i\delete W$ for some $i\in [\ell]$. Then all edges incident to $w$ have to be covered with all $h+1$ vertices of $T_i$ remaining in $G'$. However, $S_i$ has at most $h$ vertices. Hence, replacing these $h+1$ vertices of $T_i$ with the at most $h$ vertices of $S_i$ in $W$ gives a smaller vertex cover, a contradiction.
            
            Now, we note that $W$ is also a vertex cover of $G$, implying that $\tau(G')\geq \tau(G)$. 
            To see this, observe that all edges of $G$ that are not in $G'$ are of the form $vw$ with $v\in T_i$ and $w \in S_i$, and every such edge $vw$ is covered by $w\in S_i \subseteq W$. 
        \end{proof}
         
        Let $G$ be a connected graph and let $T$ be a depth-first search (DFS) tree of $G$ from some vertex $r$ of $G$. 
        We see $T$ as being rooted at $r$, and define the usual notions of ancestors and descendants: $w$ is an {\em ancestor} of $v$ if $w$ is on the $r$--$v$ path in $T$, in which case we say that $v$ is a {\em descendant} of $w$. 
        Note that these relations are not strict: $v$ is both an ancestor and a descendant of itself. 
        By definition of DFS trees, all edges $vw$ of $G$ are such that either $v$ is a strict ancestor of $w$ in $T$  or $v$ is a strict descendant of $w$ in $T$.

        \begin{lemma}\label{lem:tworeduction}
		    For all $k, p \in \NN$, let $g_{\ref{lem:tworeduction}}(k,p)= 
		    ((p+1)2^p+kp^3)^{p+1}$.
		   Let $G$ be a $3$-connected graph such that the longest path in $G$ has length at most $p$, $G$ is $p$-reduced, and $G$ has no $\ks_k$ minor.
		    Then $|V(G)|\leq g_{\ref{lem:tworeduction}}(k,p)$.
		\end{lemma}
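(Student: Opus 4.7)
The plan is to fix a depth-first-search tree $T$ of $G$ rooted at an arbitrary vertex. Since every root-to-vertex path in $T$ is a path of $G$ and $G$ has no path of length more than $p$, the tree $T$ has depth at most $p$. Writing $M := (p+1)2^p + kp^3$, it then suffices to prove that every vertex of $T$ has at most $M$ children in $T$: summing over levels gives $|V(G)| \leq 1 + M + M^2 + \dots + M^p \leq M^{p+1} = g_{\ref{lem:tworeduction}}(k,p)$.

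Fix a vertex $v$ of $T$ with at least one child; then $\mathrm{depth}(v) \leq p-1$, so the ancestor set $A(v)$ of $v$ in $T$ (including $v$) satisfies $|A(v)| \leq p$. For each child $c$ of $v$, let $T_c$ denote the subtree of $T$ rooted at $c$. By the standard DFS property that every non-tree edge of $G$ is a back edge, the tree edge $vc$ together with the back edges leaving $T_c$ defines a signature $\sigma(c) \subseteq A(v)$ with $v \in \sigma(c)$; since $\sigma(c)$ is a cutset separating $T_c$ from the rest of $G$, $3$-connectivity forces $|\sigma(c)| \geq 3$. I will count the leaf children and non-leaf children of $v$ separately.

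For the leaves, $N_G(c) = \sigma(c) \subseteq A(v)$, and the fact that two DFS-siblings are never adjacent in $G$ implies that the leaf children of $v$ sharing any given signature $S$ form a stable set with common neighborhood $S \subseteq A(v)$ of size at most $p$. The $p$-reduction hypothesis then caps the number of such leaves at $p+1$ per signature, and there are at most $2^p$ possible signatures (subsets of $A(v)$ containing $v$), bounding the leaf children of $v$ by $(p+1)2^p$.

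The hard part will be bounding the number of non-leaf children by $kp^3$, using the absence of a $\ks_k$ minor. I plan to assign each non-leaf child $c$ a \emph{type} drawn from a universe of size at most $p^3$: two distinct strict ancestors $a, b \in A(v) \setminus \{v\}$ with back edges into $T_c$ (which exist since $|\sigma(c)| \geq 3$), together with a third parameter in $A(v)$ encoding a way to split $T_c$ into two connected subgraphs $X_c, Y_c$ each adjacent in $G$ to both $a$ and $b$. Any $k$ non-leaf children $c_1, \dots, c_k$ sharing a common type $(a, b, \cdot)$ then yield, via their splittings $(X_{c_j}, Y_{c_j})$, $k$ vertex-disjoint $K_4-e$ petals joining $a$ and $b$, which combined with $a$ and $b$ as poles form a $\ks_k$ minor of $G$ --- contradiction. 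The main obstacle in this step is to verify that the third type parameter always suffices to produce a valid split: this will require a careful tree-splitting argument inside $T_c$, using both $3$-connectivity of $G$ and the structure of the back edges out of $T_c$, to guarantee that each side of the cut really is adjacent to both $a$ and $b$. Once this is established, combining the two bounds yields the required $m \leq (p+1)2^p + kp^3 = M$ children of $v$.
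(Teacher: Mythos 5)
Your overall architecture matches the paper's proof: a DFS tree of depth at most $p$, a bound of $(p+1)2^p$ on leaf children via the $p$-reduced hypothesis (your argument here is exactly the paper's and is correct), and a bound of order $kp^3$ on non-leaf children via the absence of an $\ks_k$ minor, with the final count $\sum_{i\le p} M^i \le M^{p+1}$. The gap is in the non-leaf step, and it is precisely the point you flagged as "the main obstacle": the split you need does not exist in general. Take a non-leaf child $c$ of $v$ whose subtree is a single tree edge $cx$, with $N_G(c)=\{v,a,x\}$ and $N_G(x)=\{c,v,b\}$ for strict ancestors $a\ne b$ of $v$; this is consistent with DFS (all non-tree edges are back edges) and with $3$-connectivity. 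Then $\sigma(c)=\{v,a,b\}$, but the only partition of $T_c$ into two nonempty connected parts is $\{c\},\{x\}$, and neither part is adjacent to both $a$ and $b$; indeed no $K_4-e$ petal with poles exactly $\{a,b\}$ lives inside $T_c$. So "$k$ children of the same type $(a,b,\cdot)$" need not yield an $\ks_k$ minor with $a$ and $b$ as the two poles, and no choice of a third parameter in $A(v)$ can repair this.

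The paper's proof avoids this by not using two ancestor \emph{vertices} as poles. It builds a cycle $C$ through the topmost ancestor $w$ adjacent into $T_v$, down the tree path $P$ to $v$ (and possibly into one child's subtree), and for each non-leaf child $v_i$ constructs a $K_4$-model whose two "pole" images $P_i^1,P_i^2$ are connected arcs partitioning $C$ (or $C$ extended by part of a path $Q_i$), with one pole containing $v$ itself so that the tree edge $vv_i$ supplies one of the attachments. The pigeonhole is then over the \emph{positions} on $P$ of the attachment vertices $p_i^1,p_i^2,y_i$ (at most $(p-1)\binom{p-1}{2}$ types), which guarantees that $k$ petals admit a common partition of $C$ into two poles. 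In the example above this produces poles equal to the $a$--$b$ subpath of $P$ and the complementary arc of $C$ containing $v$ and $w$, with $\{c\}$ and $\{x\}$ as the two petal images. To complete your proof you would need to redefine your "type" and your poles along these lines; as written, the reduction of the non-leaf count to disjoint $K_4-e$ petals on two fixed ancestors is not salvageable.
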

		\begin{proof}
		Let $T$ be a DFS tree of $G$ rooted at some vertex $r$ of $G$.  First we claim that for every vertex $v$ of $G$, at most $(p+1)2^{p}$ children of $v$ in $T$ are leaves of $T$. Indeed, for each such leaf $w$, the neighborhood of $w$ in $G$ is a subset of the set $X$ of ancestors of $v$ in $T$. Since $G$ is $p$-reduced, at most $p+1$ of these leaves have the same neighborhood in $G$. Moreover, $|X| \leq p$, since $T$ has no path of length  more than $p$, implying that there are at most $2^{p}$ choices for the neighborhood of $w$. This implies the claim.

		Let 
		\[        
		d = (p+1)2^{p} + k(p-1){p-1 \choose 2} + 1.
		\]
		If $T$ has maximum degree  at most $d$, then since $T$ has at most $p+1$ levels,   
		\[
		|V(G)| = |V(T)| \leq \sum_{i=0}^p d^i = \frac{d^{p+1}-1}{d-1}  \leq d^{p+1} \leq g_{\ref{lem:tworeduction}}(k,p),  
		\]
		as desired. 
		Hence, it is enough to show that $T$ has maximum degree at most $d$. For each $x \in V(T)$, we let $T_x$ be the subtree of $T$ rooted at $x$.  Note that if $x$ has at least two children, then the set of ancestors $A$ of $x$ is a cutset of $G$. Since $G$ is $3$-connected, $|A| \geq 3$.   Partitioning the vertices of $T$ into levels according to their distances from the root, it follows that there is only one vertex on each of the first $3$ levels. 
		We argue by contradiction and suppose that there is a vertex $v$ of $T$ having at least $d$ children in $T$. 
		Since $d \geq 2$, the set $X$ of ancestors of $v$ is a cutset of $G$ with $|X| \geq 3$. 
        This implies that $v$ is at distance at least $2$ from the root $r$ of $T$.

		Let $w$ be the ancestor of $v$ closest to $r$ in $T$ that is adjacent in $G$ to at least one vertex in $T_v$. 
		Let $P$ be the $w$--$v$ path in $T$.  If $w$ has a neighbor in $G$ which is a strict descendant of $v$, we let $v_0$ denote a child of $v$ whose subtree $T_{v_0}$ contains a neighbor of $w$, and let $w_0$ denote such a neighbor.  
		Otherwise, we just let $v_0=w_0=v$. 
		Let $C$ denote the cycle of $G$ obtained by adding the edge $ww_0$ to the $w$--$w_0$ path of $T$. 

		Recall that at most $(p+1)2^{p}$ children of $v$ are leaves of $T$. 
		Enumerate the non-leaf children of $v$ that are distinct from $v_0$ as $v_1, \dots, v_q$; thus, $q \geq d - (p+1)2^{p} - 1=k(p-1){p-1 \choose 2}$. 

		Fix some index $i\in [q]$, and let $x_i$ denote a child of $v_i$ in $T$. 
		We will construct a special $K_4$-model in $G$ using the cycle $C$ and some vertices of the subtree $T_{v_i}$. 
		The four vertex images of this $K_4$-model are denoted $V_i, X'_i, P_i^1, P_i^2$. 
		We proceed with their definitions in the next few paragraphs.  

		First, observe that every edge out of $V(T_{x_i})$ in $G- v_i$ has its other end in $P$, by our choice of $w$. 
		Choose a vertex $x'_i$ in $V(T_{x_i})$ having a neighbor $p_i^2$ in $V(P)$, with $p_i^2$ as close to $v$ on $P$ as possible (thus possibly $p_i^2=v$). 

		Since $G$ is $3$-connected, there is an $\{x'_i\}$--$V(P)$ path $Q_i$ in the graph $G-\{v_i, p_i^2\}$. Let $p_i^1$ denote the end of $Q_i$ in $V(P)$.  
		Note that all vertices of $Q_i-p_i^1$ are in $V(T_{x_i})$. 
		Also, $p_i^1$ is a strict ancestor of $p_i^2$ by our choice of $p_i^2$. 
		
		For a walk $W$ and vertices $a, b$ of $W$, we write $aWb$ to denote the $a$--$b$ subwalk of $W$. If $W_1$ and $W_2$ are walks such that $W_1$ ends at the same vertex that $W_2$ starts, we let $W_1W_2$ denote the concatenation of $W_1$ and $W_2$.  

		Next, let $R_i$ be a $\{v_i\}$--$(V(P)\cup V(Q_i))$ path in the graph $G-\{v, x'_i\}$, and let $y_i$ denote its end distinct from $v_i$.  We choose $R_i$ so that $y_i$ is as close as possible to $V(P)$ in the graph $P\cup Q_i$.   Let $S_i$ denote the $v_i$--$x'_i$ path in $T$. If $s_i$ is the last vertex of $R_i$ contained in $S_i$, we replace $R_i$ by $S_is_iR_i$.            
		The definitions of the four vertex images $V_i, X'_i, P_i^1, P_i^2$ depend on whether $y_i \in V(P)$ or not. 
	
	\begin{figure}
	    \centering
	    \begin{tikzpicture}
	    \begin{scriptsize}
            \tikzstyle{vtx} = [circle,draw,thick,fill=black!5]
            
            \draw[black!6, fill= black!6] (0,1)--(-4., -0.8) --(-5, -3.8)--(6, -3.8)-- (4.5, -0.8)--(0,1);
            \draw[black!12, fill= black!12] (2,-1)--(1, -3.5)--(4, -3.5)--(2,-1);
            \draw[black!12, fill=black!12] (-1,-1)--(-1.8, -2.2)--(-0.7, -2.2)--(-1,-1);
            \draw[black!12, fill=black!12] (0,-1)--(-0.4, -2.2)--(0.4, -2.2)--(0,-1);
            \draw[black!12, fill=black!12] (4,-1)--(3.6, -2.2)--(4.4, -2.2)--(4,-1);
            
            \node[vtx] (c1) at (-3.5, -1){};
            \node[vtx] (cn) at (-2.5, -1){};
            \node[vtx] (v0) at (-1, -1){};
            \node[vtx] (v1) at (0, -1){};
            \node[vtx] (vi) at (2, -1){};
            \node[vtx] (vq) at (4, -1){};
            \node[vtx] (v) at (0, 1){};
            \node[vtx] (w) at (0, 5){};
            \node[vtx] (r) at (0, 6){};
            \node[vtx] (pi1) at (0, 4){};
            \node[vtx] (pi2) at (0, 2){};
            \node[vtx] (xi) at (2.5, -2){};
            \node[vtx] (xi') at (3, -3){};
            \node[vtx] (yi) at (0, 3){};
            \node[vtx] (w0) at (-1.3, -2.){};

			\node[left] at (-1.1, -1) {$v_0$};
			\node[right] at (0.1, -1) {$v_1$};
			\node[right] at (2.1, -1) {$v_i$};
			\node[right] at (4.1, -1) {$v_q$};
			\node[left] at (-0.1, 1) {$v$};
			\node[left] at (-0.1, 5) {$w$};
			\node[left] at (-0.1, 6) {$r$};
			\node[left] at (-0.1, 4) {$p_i^1$};
			\node[left] at (-0.1, 2.) {$p_i^2$};
			\node[right] at (2.5, -2.2) {$x_i$};
			\node[right] at (3.1, -3) {$x_i'$};
			\node[left] at (-0.1, 3) {$y_i$};
			\node[right] at (-1.1, -2.) {$w_0$};
			\node[] at (-3,-1){$\cdots$};
			\node[] at (1,-1){$\cdots$};
			\node[] at (3,-1){$\cdots$};
			\node[below] at (-3, -1.1){$\leq (p+1)2^p$ leaves};
            
            \draw[] (v)--(c1) (v)--(cn) (v)--(v0) (v)--(v1) (v)--(vi) (v)--(vq) (vi)--(xi) (v0)to[bend left](w0)  (r)--(w)--(pi1)--(yi)--(pi2)--(v); 
            
            \draw[] (xi) to[bend right] (xi'); 
            \draw[] (xi') to[bend right] (pi2);
            
            \draw[red, thick] (vi) to[out=260, in=80] (1.9, -1.8) to[out=260, in=40] (1.5, -2.6) to[out=100, in=310] (yi);
            \draw[fill=black!5] (1.5, -2.6) circle (2pt);
            \node[red] at (0.7, 2.5){$R_i$};
            
            \draw[green, thick] (xi') to[out=200, in=45] (2.2, -2.7) to[out=225, in=50] (2., -3.2) to[out=110, in=310] (pi1);
            \draw[fill=black!5] (2., -3.2) circle (2pt);
            \node[green] at (1.2, 2.5){$Q_i$};
            
            \draw[thick, blue,] (w)--(pi1)--(yi)--(pi2)--(v)--(v0);
            \draw[thick, blue,] (v0)to[bend left](w0); 
            \draw[thick, blue] (w0) to[bend left] (w);
            \node[left, blue] at (-1.5, 2.9){$C$};
            
            \draw[magenta, thick] (vi)--(xi); 
            \draw[magenta, thick] (xi) to[bend right] (xi'); 
            \node[right, magenta] at (2.3, -1.5){$S_i$};
            
	    \end{scriptsize}
	    \end{tikzpicture}
	    
	    \vspace{5mm}
	    \begin{tikzpicture}
	    \begin{scriptsize}
            \tikzstyle{vtx} = [circle,draw,thick,fill=black!5]
            
            \draw[black!6, fill= black!6] (0,1)--(-4., -0.8) --(-5, -3.8)--(6, -3.8)-- (4.5, -0.8)--(0,1);
            \draw[black!12, fill= black!12] (2,-1)--(1, -3.5)--(4, -3.5)--(2,-1);
            \draw[black!12, fill=black!12] (-1,-1)--(-1.8, -2.2)--(-0.7, -2.2)--(-1,-1);
            \draw[black!12, fill=black!12] (0,-1)--(-0.4, -2.2)--(0.4, -2.2)--(0,-1);
            \draw[black!12, fill=black!12] (4,-1)--(3.6, -2.2)--(4.4, -2.2)--(4,-1);
            
            \node[vtx] (c1) at (-2.5, -1){};
            \node[vtx] (cn) at (-3.5, -1){};
            \node[circle,draw,thick,fill=cyan] (v0) at (-1, -1){};
            \node[vtx] (v1) at (0, -1){};
            \node[circle,draw,thick,fill=olive] (vi) at (2, -1){};
            \node[vtx] (vq) at (4, -1){};
            \node[circle,draw,thick,fill=cyan] (v) at (0, 1){};
            \node[circle,draw,thick,fill=cyan] (w) at (0, 5){};
            \node[vtx] (r) at (0, 6){};
            \node[circle,draw,thick,fill=orange] (pi1) at (0, 4){};
            \node[circle,draw,thick,fill=cyan] (pi2) at (0, 2){};
            \node[circle,draw,thick,fill=purple] (xi) at (2.5, -2){};
            \node[circle,draw,thick,fill=purple] (xi') at (3, -3){};
            \node[circle,draw,thick,fill=orange] (yi) at (0, 3){};
            \node[circle,draw,thick,fill=cyan] (w0) at (-1.3, -2.){};

			\node[left] at (-1.1, -1) {$v_0$};
			\node[right] at (0.1, -1) {$v_1$};
			\node[right] at (2.1, -1) {$v_i$};
			\node[right] at (4.1, -1) {$v_q$};
			\node[left] at (-0.1, 1) {$v$};
			\node[left] at (-0.1, 5) {$w$};
			\node[left] at (-0.1, 6) {$r$};
			\node[left] at (-0.1, 4) {$p_i^1$};
			\node[left] at (-0.1, 2.) {$p_i^2$};
			\node[right] at (2.5, -2.2) {$x_i$};
			\node[right] at (3.1, -3) {$x_i'$};
			\node[left] at (-0.1, 3) {$y_i$};
			\node[right] at (-1.2, -2.) {$w_0$};
			\node[] at (-3,-1){$\cdots$};
			\node[] at (1,-1){$\cdots$};
			\node[] at (3,-1){$\cdots$};
			\node[below] at (-3, -1.1){$\leq (p+1)2^p$ leaves};
            
            \draw[] (v)--(c1) (v)--(cn) (v)--(v0) (v)--(v1) (v)--(vi) (v)--(vq) (vi)--(xi) (v0)to[bend left](w0)  (r)--(w)--(pi1)--(yi)--(pi2)--(v); 
            
            \draw[] (xi) to[bend right] (xi'); 
            \draw[] (xi') to[bend right] (pi2);
            
            \draw[] (1.5, -2.6) to[out=100, in=310] (yi);
            \draw[olive, thick] (vi) to[out=260, in=80] (1.9, -1.8) to[out=260, in=40] (1.5, -2.6);
            \draw[fill=olive] (1.5, -2.6) circle (2pt);
            \node[olive, above] at (2.1, -2.1){$V_i$};
            
            \draw[] (2., -3.2) to[out=110, in=310] (pi1);
            \draw[purple, thick] (xi') to[out=200, in=45] (2.2, -2.7) to[out=225, in=50] (2., -3.2); 
            \draw[fill=purple] (2., -3.2) circle (2pt);
            \draw[purple, thick] (xi) to[bend right] (xi'); 
            \node[right, purple] at (2.3, -3.3){$X_i'$};
            
            \draw[thick, cyan,] (w)--(pi1) (yi)--(pi2)--(v)--(v0);
            \draw[thick, cyan,] (v0)to[bend left](w0); 
            \draw[thick, cyan] (w0) to[bend left] (w);
            \node[left, cyan] at (-0.5, 4.5){$P_i^2$};
            
            \draw[thick, orange] (pi1)--(yi);
            \node[left, orange] at (-0.1, 3.5){$P_i^1$};
            
	    \end{scriptsize}
	    \end{tikzpicture}
	    \caption{The case $y_i\in V(P)$ of the proof of Lemma~\ref{lem:tworeduction}. 
	    \label{fig:tworeduction1}}
	\end{figure}
		
		First suppose that $y_i \in V(P)$.   We define 
	    $V_i = V(R_i) \delete \{y_i\}$ and 
			$X'_i = (V(S_i) \delete V(R_i)) \cup (V(Q_i) \delete \{p_i^1\})$. 
		Notice that there is an edge $e_i$ of $S_i$ with one end in $V_i$ and the other in $X'_i$. 
		The two sets $P_i^1, P_i^2$ will be a partition of the vertices of the cycle $C$, chosen as follows.  
		If $y_i$ is a strict ancestor of $p_i^2$, let $P_i^1$ be the vertices of the $p_i^1$--$y_i$ path of $T$, and let $P_i^2=V(C) \delete P_i^1$.
		If, on the other hand, $y_i$ is a descendant of $p_i^2$, let $P_i^2$ be the vertices of the $p_i^2$--$y_i$ path of $T$, and let $P_i^1= V(C) \delete P_i^2$.  This case is illustrated in Figure~\ref{fig:tworeduction1}.

		We now argue that the sets $V_i, X'_i, P_i^1, P_i^2$ do form a $K_4$-model in this case.  
		These sets are connected, there is an edge between $P_i^1$ and $P_i^2$ (because of the cycle $C$), there is an edge between $X'_i$ and $P_i^j$ for $j \in [2]$ (because $p_i^j \in P_i^j$), there is an edge between $V_i$ and $X'_i$ (namely, $e_i$), and finally there is an edge between $V_i$ and $P_i^j$ for $j \in [2]$ (because one of $v, y_i$ is in $P_i^1$ and the other is in $P_i^2$).  
		This concludes the case where $y_i \in V(P)$. 
    
	\begin{figure}
	    \centering
	    \begin{tikzpicture}
	    \begin{scriptsize}
            \tikzstyle{vtx} = [circle,draw,thick,fill=black!5]
            
            \draw[black!6, fill= black!6] (0,1)--(-4., -0.8) --(-5, -4.2)--(6, -4.2)-- (4.5, -0.8)--(0,1);
            \draw[black!12, fill= black!12] (2,-1)--(0.5, -4.)--(4.5, -4.)--(2,-1);
            \draw[black!12, fill=black!12] (-1,-1)--(-1.8, -2.2)--(-0.7, -2.2)--(-1,-1);
            \draw[black!12, fill=black!12] (0,-1)--(-0.4, -2.2)--(0.4, -2.2)--(0,-1);
            \draw[black!12, fill=black!12] (4,-1)--(3.6, -2.2)--(4.4, -2.2)--(4,-1);
            
            \node[vtx] (c1) at (-3.5, -1){};
            \node[vtx] (cn) at (-2.5, -1){};
            \node[vtx] (v0) at (-1, -1){};
            \node[vtx] (v1) at (0, -1){};
            \node[vtx] (vi) at (2, -1){};
            \node[vtx] (vq) at (4, -1){};
            \node[vtx] (v) at (0, 1){};
            \node[vtx] (w) at (0, 4){};
            \node[vtx] (r) at (0, 5){};
            \node[vtx] (pi1) at (0, 3){};
            \node[vtx] (pi2) at (0, 2){};
            \node[vtx] (xi) at (2.5, -2){};
            \node[vtx] (xi') at (3.2, -3){};
            \node[vtx] (yi) at (1.3, -3.8){};
            \node[vtx] (yi') at (2., -3.2){};
            \node[vtx] (w0) at (-1.3, -2.){};

			\node[left] at (-1.1, -1) {$v_0$};
			\node[right] at (0.1, -1) {$v_1$};
			\node[right] at (2.1, -1) {$v_i$};
			\node[right] at (4.1, -1) {$v_q$};
			\node[left] at (-0.1, 1) {$v$};
			\node[left] at (-0.1, 4) {$w$};
			\node[left] at (-0.1, 5) {$r$};
			\node[left] at (-0.1, 3) {$p_i^1$};
			\node[left] at (-0.1, 2) {$p_i^2$};
			\node[right] at (2.6, -2) {$x_i$};
			\node[right] at (3.3, -3) {$x_i'$};
			\node[right] at (1.4, -3.9) {$y_i$};
			\node[below] at (2.3, -2.7) {$y_i'$};
			\node[right] at (-1.2, -2.) {$w_0$};
			\node[] at (-3,-1){$\cdots$};
			\node[] at (1,-1){$\cdots$};
			\node[] at (3,-1){$\cdots$};
			\node[below] at (-3, -1.1){$\leq (p+1)2^p$ leaves};

            \draw[] (v)--(c1) (v)--(cn) (v)--(v0) (v)--(v1) (v)--(vi) (v)--(vq) (vi)--(xi) (v0)to[bend left](w0)  (r)--(w)--(pi1)--(pi2)--(v); 
            
            \draw[] (xi) to[bend right] (xi'); 
            \draw[] (xi') to[bend right] (pi2);
            
            \draw[red, thick] (vi) to[out=260, in=80] (1.9, -1.8) to[out=260, in=50] (1.5, -2.6) to[out=230, in=170] (yi);
            \node[red] at (1.4, -2.3){$R_i$};
            
            \draw[green, thick] (xi') to[out=200, in=345] (yi') (yi') to[in=50, out=250] (yi) (yi) to[out=110, in=310] (pi1);
            \node[green] at (1., 2.){$Q_i$};
            
            \draw[brown, thick] (vi) to[out=290, in=60] (yi');
            \node[brown] at (2., -2.5){$R_i'$};
            
            \draw[thick, blue,] (w)--(pi1)--(pi2)--(v)--(v0);
            \draw[thick, blue,] (v0)to[bend left](w0); 
            \draw[thick, blue] (w0) to[bend left] (w);
            \node[left, blue] at (-0.5, 3.5){$C$};
	    \end{scriptsize}
	    \end{tikzpicture}
	    
	    \vspace{5mm}
	    \begin{tikzpicture}
	    \begin{scriptsize}
            \tikzstyle{vtx} = [circle,draw,thick,fill=black!5]
            
            \draw[black!6, fill= black!6] (0,1)--(-4., -0.8) --(-5, -4.2)--(6, -4.2)-- (4.5, -0.8)--(0,1);
            \draw[black!12, fill= black!12] (2,-1)--(0.5, -4.)--(4.5, -4.)--(2,-1);
            \draw[black!12, fill=black!12] (-1,-1)--(-1.8, -2.2)--(-0.7, -2.2)--(-1,-1);
            \draw[black!12, fill=black!12] (0,-1)--(-0.4, -2.2)--(0.4, -2.2)--(0,-1);
            \draw[black!12, fill=black!12] (4,-1)--(3.6, -2.2)--(4.4, -2.2)--(4,-1);
            
            \node[vtx] (c1) at (-3.5, -1){};
            \node[vtx] (cn) at (-2.5, -1){};
            \node[circle,draw,thick,fill=cyan] (v0) at (-1, -1){};
            \node[vtx] (v1) at (0, -1){};
            \node[circle,draw,thick,fill=olive] (vi) at (2, -1){};
            \node[vtx] (vq) at (4, -1){};
            \node[circle,draw,thick,fill=cyan] (v) at (0, 1){};
            \node[circle,draw,thick,fill=cyan] (w) at (0, 4){};
            \node[vtx] (r) at (0, 5){};
            \node[circle,draw,thick,fill=orange] (pi1) at (0, 3){};
            \node[circle,draw,thick,fill=cyan] (pi2) at (0, 2){};
            \node[vtx] (xi) at (2.5, -2){};
            \node[circle,draw,thick,fill=purple] (xi') at (3, -3){};
            \node[circle,draw,thick,fill=orange] (yi) at (1.3, -3.8){};
            \node[circle,draw,thick,fill=purple] (yi') at (2, -3.2){};
            \node[circle,draw,thick,fill=cyan] (w0) at (-1.3, -2.){};

			\node[left] at (-1.1, -1) {$v_0$};
			\node[right] at (0.1, -1) {$v_1$};
			\node[right] at (2.1, -1) {$v_i$};
			\node[right] at (4.1, -1) {$v_q$};
			\node[left] at (-0.1, 1) {$v$};
			\node[left] at (-0.1, 4) {$w$};
			\node[left] at (-0.1, 5) {$r$};
			\node[left] at (-0.1, 3) {$p_i^1$};
			\node[left] at (-0.1, 2) {$p_i^2$};
			\node[right] at (2.6, -2) {$x_i$};
			\node[right] at (3.1, -3) {$x_i'$};
			\node[right] at (1.4, -3.9) {$y_i$};
			\node[below] at (2.3, -2.7) {$y_i'$};
			\node[right] at (-1.2, -2.) {$w_0$};
			\node[] at (-3,-1){$\cdots$};
			\node[] at (1,-1){$\cdots$};
			\node[] at (3,-1){$\cdots$};
			\node[below] at (-3, -1.1){$\leq (p+1)2^p$ leaves};

            \draw[] (v)--(c1) (v)--(cn) (v)--(v0) (v)--(v1) (v)--(vi) (v)--(vq) (vi)--(xi) (v0)to[bend left](w0)  (r)--(w)--(pi1)--(pi2)--(v); 
            
            \draw[] (xi) to[bend right] (xi'); 
            \draw[] (xi') to[bend right] (pi2);
            
            \draw[purple, thick] (xi') to[out=200, in=345] (yi') (yi') to[in=50, out=250] (yi);
            \node[purple] at (2.5, -3.5){$X_i$};
            
            \draw[olive, thick] (vi) to[out=260, in=80] (1.9, -1.8) to[out=260, in=50] (1.5, -2.6) to[out=230, in=170] (yi);
            \draw[olive, thick] (vi) to[out=290, in=60] (yi');
            \node[olive] at (2, -2.3){$V_i$};
            
            \draw[orange, thick] (yi) to[out=110, in=310] (pi1);
            \node[orange] at (1., 2.){$P_i^1$};
            
            \draw[thick, cyan,] (w)--(pi1)--(pi2)--(v)--(v0);
            \draw[thick, cyan,] (v0)to[bend left](w0); 
            \draw[thick, cyan] (w0) to[bend left] (w);
            \node[left, cyan] at (-0.5, 3.5){$P_i^2$};
            
	    \end{scriptsize}
	    \end{tikzpicture}
	    \caption{The case $y_i\in V(Q_i)$ of the proof of Lemma~\ref{lem:tworeduction}.
	    \label{fig:tworeduction2}}
	\end{figure}
	
		Next, suppose that $y_i \notin V(P)$. 
		In this case, $y_i$ is a vertex of $Q_i-p_i^1$. 
		Consider an $\{v_i\}$--$V(Q_i)$ path $R_i'$ in $G-\{v, y_i\}$. 
		Note that, by our choice of $R_i$, the path $R_i'$ avoids $V(P)$, and thus all its vertices are in $V(T_{v_i})$. 
		Furthermore, the end $y'_i$ of $R_i'$ distinct from $v_i$ must be in the subpath $x'_iQ_iy_i - \{y_i\}$, again by our choice of $R_i$. 

		Define
		\begin{align*}
			V_i &= (V(R_i) \delete \{y_i\}) \cup (V(R_i') \delete \{y'_i\})  \\            
			X'_i &= V(x'_iQ_iy_i) \delete \{y_i\} \\
		    P_i^1 &= V(y_iQ_ip_i^1) \\ 
		    P_i^2 &= V(C) \delete \{p_i^1\} 
		\end{align*}
		Using the previous observations, one can check that $V_i, X'_i, P_i^1, P_i^2$ form a $K_4$-model in this case as well.  This case is illustrated in Figure~\ref{fig:tworeduction2}.

		This ends the definitions of the vertex images $V_i, X'_i, P_i^1, P_i^2$. 
		Observe that, in all cases, the only vertices of these sets {\em not} in the subtree $T_{v_i}$ are the vertices of the cycle $C$.

		Now, there are at most ${p-1 \choose 2}$ choices for $p_i^1$ and $p_i^2$. 
		Furthermore, when $y_i \in V(P)$, there are at most $p-2$ choices for vertex $y_i$.         
		Seeing the possibility that $y_i \notin V(P)$ as another `choice', and using that $q \geq k(p-1){p-1 \choose 2}$, we conclude that there is a set $I$ of $k$ distinct indices $i\in [q]$ that have the same pair $(p_i^1, p_i^2)$, that agree on whether $y_i \in V(P)$, and furthermore that have the same vertex $y_i$ in case $y_i \in V(P)$. 
		Letting $P^j=\bigcup_{i\in I}P_i^j$ for $j \in [2]$, we then see that $P^1, P^2$ together with the sets $V_i, X'_i$ for $i\in I$ define an $\ks_k$-model in $G$, a contradiction.
		\end{proof}

    \begin{lemma}\label{lem:boundedTau}
        For all $k \in \NN$, let  $g_{\ref{lem:boundedTau}}(k)= g_{\ref{lem:tworeduction}}(k,g_{\ref{lem:shortpath}}(k))$.  If $G$ is a $3$-connected, fan-reduced graph having no $\kall^k$ minor, then $\tau(G)\leq g_{\ref{lem:boundedTau}}(k)$.
    \end{lemma}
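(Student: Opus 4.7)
The plan is to chain together three of the tools already developed, with the path-length bound from Lemma~\ref{lem:shortpath} serving as the bridge between the hypothesis and the vertex-count bound of Lemma~\ref{lem:tworeduction}.

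First, I would set $p := g_{\ref{lem:shortpath}}(k)$. Since $G$ is $3$-connected, fan-reduced, and has no $\kall^k$ minor, Lemma~\ref{lem:shortpath} gives that every path in $G$ has length at most $p$. Next, let $G'$ denote the $p$-reduction of $G$. By construction, $G'$ is $p$-reduced, and as noted in the discussion preceding Lemma~\ref{lem:reductionKstTau}, $G'$ inherits $3$-connectivity from $G$. Because $G'$ is a subgraph of $G$, every path in $G'$ still has length at most $p$, and $G'$ has no $\ks_k$ minor (as a minor of $G$, which lacks a $\kall^k$ minor, hence in particular a $\ks_k$ minor).

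The hypotheses of Lemma~\ref{lem:tworeduction} are therefore satisfied by $G'$ with parameter $p$, which yields
\[
|V(G')| \;\leq\; g_{\ref{lem:tworeduction}}(k,p) \;=\; g_{\ref{lem:tworeduction}}(k,\, g_{\ref{lem:shortpath}}(k)) \;=\; g_{\ref{lem:boundedTau}}(k).
\]
Trivially $\tau(G') \leq |V(G')|$, and by Lemma~\ref{lem:reductionKstTau}, $\tau(G) = \tau(G')$. Combining these gives $\tau(G) \leq g_{\ref{lem:boundedTau}}(k)$, as desired.

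There is no real obstacle here; the lemma is essentially a packaging step. The only things worth double-checking are the inheritance claims for $G'$ ($3$-connectivity, shortest-path bound, and the absence of an $\ks_k$ minor), all of which are immediate from the definition of $h$-reduction and the fact that $G'$ is a subgraph of $G$. Note that fan-reducedness of $G'$ is \emph{not} required, since Lemma~\ref{lem:tworeduction} only needs $p$-reducedness together with the path-length and $\ks_k$-minor hypotheses.
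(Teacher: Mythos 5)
Your proof is correct and follows essentially the same route as the paper: apply Lemma~\ref{lem:shortpath} to bound path lengths by $p = g_{\ref{lem:shortpath}}(k)$, pass to the $p$-reduction $G'$, invoke Lemma~\ref{lem:tworeduction} to bound $|V(G')|$, and transfer the bound back via $\tau(G) = \tau(G')$ from Lemma~\ref{lem:reductionKstTau}. The inheritance checks you flag (3-connectivity, the path bound, absence of an $\ks_k$ minor, and the non-necessity of fan-reducedness for $G'$) match the paper's argument exactly.
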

    \begin{proof}
        By Lemma~\ref{lem:shortpath}, the maximum length of a path in $G$ is at most $p = g_{\ref{lem:shortpath}}(k)$ since $G$ is $3$-connected, and does not have a $\kall^k$ minor. Let $G'$ be the $p$-reduction of $G$. Notice that $G'$ is $3$-connected, has no $\ks_k$ minor and the length of a longest path in $G'$ is bounded by $p$. Hence, by Lemma~\ref{lem:tworeduction}, $\tau(G')\leq |V(G')|\leq  g_{\ref{lem:tworeduction}}(k,p)$. Now, by Lemma~\ref{lem:reductionKstTau}, 
        \[
        \tau(G) = \tau(G') \leq  g_{\ref{lem:tworeduction}}(k,p) = g_{\ref{lem:tworeduction}}(k,g_{\ref{lem:shortpath}}(k)) = g_{\ref{lem:boundedTau}}(k). \qedhere
        \]
    \end{proof}
        
    \section{Finishing the proof}\label{sec:finish}

     Recall that to prove our main result, Theorem~\ref{thm:main}, it suffices to establish the existence of the functions $g_{\ref{lem:main3con}}$ and $g_{\ref{lem:mainthmaux}}$ from Lemma~\ref{lem:main2conn}.  We do this in Lemmas~\ref{lem:main3con} and~\ref{lem:mainthmaux} at the end of this section.  Before doing so, we require a few more lemmas. 
     The {\em wheel $W_n$}  is the graph obtained by adding a universal vertex to a cycle of length $n$.

    \begin{lemma}\label{lem:wheels}
        $f_\infty(W_n)\leq 4$, for all $n \geq 3$. 
    \end{lemma}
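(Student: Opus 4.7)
The plan is to exhibit a flat covering of size at most~$4$ for an arbitrary distance function $d$ on~$W_n$. Denote the center by~$c$, the rim vertices by $v_0,\dots,v_{n-1}$, set $a_i=d(cv_i)$, $c_i=d(v_iv_{i+1})$, and $L=\sum_i c_i$. By Lemma~\ref{lem:flatstar} applied at~$c$, the set $F_1=\{(c,v_i):0\le i\le n-1\}$ is a flat set of $(W_n,d)$ covering every spoke, so it suffices to cover the~$n$ rim edges by $3$ further flat sets.

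The key observation is the following: if $M$ is a matching of the rim cycle $C_n$ with $\sum_{i\in M}c_i\le L/2$, then $F(M)=\{(v_i,v_{i+1}):i\in M\}$ is a flat set of $(W_n,d)$. To check this I will verify that no directed cycle in $\ddir{W_n}{d}{F(M)}$ has negative weight. The clockwise rim cycle has weight $L-2\sum_{i\in M}c_i\ge 0$ by hypothesis; the counter-clockwise rim cycle trivially has weight $L>0$; and a cycle through $c$ of the form $c\to v_a\to\cdots\to v_b\to c$ has weight $a_a+a_b+\sum_{k\in[a,b-1]}c_k-2\sum_{k\in M\cap[a,b-1]}c_k$, which I will bound below by a telescoping application of the triangle inequalities $c_k\le a_k+a_{k+1}$ as in the analysis preceding Lemma~\ref{lem:outerplanarBound}.

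It remains to produce a proper $3$-edge-coloring $M_2,M_3,M_4$ of $C_n$ with $\sum_{i\in M_j}c_i\le L/2$ for every~$j$. Each $c_i\le L/2$ holds automatically by the cyclic triangle inequality $c_i\le\sum_{j\ne i}c_j$. For $n$ odd I will parametrise the ``one singleton'' family of $3$-colorings by the index~$k$ of the singleton edge $c_k$, and use a rotation/parity argument on the quantities $E_k=2A_k-L+c_k$ (where $A_k$ is the sum of one of the alternating matchings): the recurrence $E_{k+1}=-E_k+(c_{k+1}-c_k)$ together with $E_n=E_0$ forces, when $n$ is odd, that the signs of $E_k\mp c_k$ cannot all have the same pattern, so some~$k$ satisfies $|E_k|\le c_k$, which is precisely the balanced condition. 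For $n$ even, I will start from a $2$-coloring into antipodal perfect matchings and either use an antipodal $3$-coloring when one works directly, or split the heavier matching into two sub-matchings using the fact that each $c_i\le L/2$.

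Putting these together, $F_1,F(M_2),F(M_3),F(M_4)$ is a flat covering of $(W_n,d)$ of size at most~$4$, proving $f_\infty(W_n,d)\le 4$; since $d$ was arbitrary, $f_\infty(W_n)\le 4$. The main obstacle will be the existence of the balanced $3$-edge-coloring, especially in the even case, where the discrete subset-sum issues illustrated by small examples such as $W_4$ with one long rim edge must be handled by the proper choice of coloring template.
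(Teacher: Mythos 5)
Your proof hinges on the claim that if $M$ is a matching of the rim with $\sum_{i\in M}c_i\le L/2$, then the consistently oriented set $F(M)=\{(v_i,v_{i+1}):i\in M\}$ is flat in $(W_n,d)$. This claim is false, and the proposed telescoping via $c_k\le a_k+a_{k+1}$ cannot rescue it, because consecutive matching edges in a cycle through the center each ``spend'' the spoke lengths at both of their ends, and adjacent matching edges share no spokes to telescope against. Concretely, take $W_5$ with $d(cv_0)=d(cv_3)=1$, $d(cv_1)=d(cv_2)=d(cv_4)=10$, $d(v_1v_2)=1$, and all other rim edges of length $11$; one checks this is exactly the shortest-path metric, so it is a valid distance function. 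Here $L=45$ and the matching $M=\{v_0v_1,v_2v_3\}$ has weight $22\le L/2$, yet the directed cycle $c\to v_0\to v_1\to v_2\to v_3\to c$ in $\ddir{W_n}{d}{F(M)}$ has weight $1-11+1-11+1=-19<0$. (The set $\{v_0v_1,v_2v_3\}$ \emph{is} flattenable here, but only under the alternating orientation $\{(v_0,v_1),(v_3,v_2)\}$, which destroys the ``clockwise rim cycle has weight $L-2\sum_M c$'' computation that your balanced-coloring condition is built around.) So the reduction of the problem to finding a balanced proper $3$-edge-coloring of the rim is not valid as stated.

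There is a secondary, independent gap in the combinatorial half: even granting the key lemma, your even-case construction (take the lighter perfect matching as one class and split the heavier one in two) fails when the heavier matching consists of, say, three edges each of weight close to $L/3$ — no bipartition of it keeps both parts below $L/2$, so a different coloring template is forced, and you have not supplied one. For comparison, the paper's proof avoids explicit matchings altogether: it considers inclusion-minimal non-flattenable subsets of the rim, and either finds a negative cycle through the center that splits $W_n$ into two smaller wheels (each coverable by two flat sets that remain flat in $W_n$), or, when no such set exists, covers $E(W_n)$ by the complement of a minimal non-flattenable rim set, two pieces of that set, and the star at the center. You would need either that kind of structural argument or a correct replacement for your key lemma (with orientations chosen per matching edge) before the $3$-coloring strategy could be salvaged.
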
 
    \begin{proof}
    Let $v_0$ be the universal vertex of $W_n$ and $W_n-v_0 = C = v_1 \cdots v_nv_1$.  
    Let  $d$ be an arbitrary distance function on $W_n$.  Define $\mathcal{S}$ to be the set of inclusion-wise minimal subsets $S$ of $E(C)$ such that $S$ is not flattenable in $(W_n, d)$.  Let $d'$ be $d$ restricted to $E(C)$.
    Let $\mathcal{S}_1$ be the sets in $\mathcal{S}$ that are not flattenable in $(C, d')$, and let $\mathcal{S}_2 = \mathcal{S} \delete \mathcal{S}_1$.

    Fix $S \in \mathcal{S}_2$ and let $\vec S$ be an orientation of $S$ such that $\vec S$ is flat in $(C, d')$.  Let the length function of $\ddir{W_n}{d}{\vec S}$ be $l$, and $Z$ be a negative directed cycle in $\ddir{W_n}{d}{\vec S}$. Since $S$ is flattenable in $(C, d')$, $Z$ must use the vertex $v_0$. By renaming vertices, we may assume that $Z$ is of the form $v_0v_1\cdots v_kv_0$. Let $P = v_1 \cdots v_k$ and $Q = v_k \cdots v_nv_1$.  We abuse notation and regard $P,Q$, and $C$ as subsets of edges or arcs whenever convenient.   
    
    Since $\vec S$ is flat in $(C, d')$, $l(C) \geq 0$.  Combining this with $l(Z) < 0$ gives 
    \begin{equation} \label{eq:wheels1}
    d(v_0v_1)+d(v_0v_k) < l(Q) \leq d(Q) \text { and }  d(v_0v_1)+d(v_0v_k) < l(P) \leq d(P).
    \end{equation}

    Let $H_1$ and $H_2$ be the subgraphs of $W_n$ induced by $\{v_0, v_1,\dots, v_k\}$ and $\{v_0,v_k,\dots, v_n, v_1\}$, respectively.  Let $d_i$ be the restriction of $d$ to $H_i$.  
    Clearly, each $(H_i, d_i)$ can be covered by two flat sets $F_i^1, F_i^2$.  By \eqref{eq:wheels1},  every negative directed cycle $W$ in $\ddir{W_n}{d}{F_i^j}$ can be shortened to a negative directed cycle $W'$ in $\ddir{H_i}{d_i}{F_i^j}$ for all $i,j \in [2]$.  Therefore, $F_i^j$ is also flat in $(W_n, d)$ for all $i,j \in [2]$.
    Thus, $(W_n, d)$ has a flat cover of size $4$.
    
    We may therefore assume that $\mathcal{S}_2 = \emptyset$.  That is, every set in $\mathcal{S}$ is not flattenable in $(C, d')$. Let $U$ be the set of edges of $W_n$ incident to $v_0$. Note that $U$ is flattenable in $(W_n, d)$ by Lemma~\ref{lem:flatstar}.  If $\mathcal{S}_1 = \emptyset$, then $E(C)$ is flattenable in $(W_n, d)$, and so $E(W_n)$ is the union of two flattenable sets, $E(C)$ and $U$.  Therefore, we may assume $\mathcal{S}_1 \neq \emptyset$ and choose $T \in \mathcal{S}_1$.  Let $X \subseteq E(C)$.  Observe that if $\sum_{e \in X} d(e) \leq \frac{1}{2} d(C)$, then $X$ is flattenable in $(C, d')$.    
     It follows that for every $X \subseteq E(C)$, at least one of $X$ or $E(C) \delete X$ is flattenable in $(C, d')$.  
    Since $T$ is not flattenable in $(C, d')$, $E(C) \delete T$ is flattenable in $(C, d')$. Since $\mathcal{S}_2 = \emptyset$, $E(C) \delete T$ is flattenable in $(W_n, d)$.  By minimality, $T$ is the union of two flattenable sets $T_1$ and $T_2$ of $(W_n, d)$.  
    Thus, $E(W_n) = (E(C) \delete T) \cup T_1 \cup T_2 \cup U$, as required.  
    \end{proof}

We now generalize Lemma~\ref{lem:wheels}.  This generalization is analagous to Lemma~\ref{lem:gluength2} for $2$-connected treewidth-$2$ graphs.   
 
    \begin{lemma}\label{lem:wheel_glued}
        Let $H$ be a graph obtained by gluing  $2$-connected graphs $G_1, \dots, G_m$ on distinct edges of the wheel $W_n$, such that $H$ has no $\ks_k$ minor.
        Let $M = \max_{i\in [m]}{f_\infty(G_i)}$. 
        Then $f_\infty(H)\leq (k+7)M$.
    \end{lemma}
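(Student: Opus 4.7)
The plan has four steps. First, iteratively applying Lemma~\ref{lem:suppressDeg2}, I reduce to the case where $H$ has minimum degree at least $3$; this preserves both $f_\infty(H)$ and the hypothesis that $H$ has no $\ks_k$ minor.

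Second, I will show that at most $k-1$ spokes of $W_n$ are among the glued edges. Indeed, if $k$ distinct spokes $v_0 v_{i_1}, \ldots, v_0 v_{i_k}$ were glued, then in the subgraph of $W_n$ consisting of these spokes together with the outer cycle, contracting the outer cycle identifies $v_{i_1}, \ldots, v_{i_k}$ into a single vertex and produces $k$ parallel glued edges between $v_0$ and that vertex, i.e., a $k$-glumpkin minor in $W_n$ rooted at a glued spoke. By Lemma~\ref{lem:glumstar}, $H$ would then contain an $\ks_k$ minor, contradicting the hypothesis. Let $H_1$ denote the subgraph of $H$ obtained by removing the spoke gluings, so that $H_1 = W_n$ together with all cycle gluings. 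Then $H$ is obtained from $H_1$ by at most $k-1$ iterated $2$-sums along the glued spokes, each with a $2$-connected graph of $\ell_\infty$-dimension at most $M$; applying the bound~\eqref{eq:2-sum} of Lemma~\ref{lem:gluing} iteratively yields
\[
f_\infty(H) \leq f_\infty(H_1) + (k-1)(M-1).
\]

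Third, the crux is to establish $f_\infty(H_1) \leq 8M$. Fixing a distance function $d$ on $H_1$, Lemma~\ref{lem:wheels} produces a flat cover of $(W_n, d|_{W_n})$ of size at most $4$, and by Lemma~\ref{lem:flatset} each such flat set is also flat in $(H_1, d)$. The key technical step is to promote these $4$ flat sets into frames $(\Gamma_j, F_j)$ on $(W_n, d|_{W_n})$ such that every outer cycle edge of $W_n$ is compressible in at least one $\Gamma_j$. Given such frames, each cycle gluing's minimum flat cover of size at most $M$ can be merged into the appropriate wheel frame via Lemma~\ref{lem:special2sum}, in the spirit of the merging argument in the proof of Lemma~\ref{lem:outerplanarBound}. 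Each of the $4$ wheel frames then contributes at most $2M$ flat sets (a factor of $2$ accounts for the possible orientations of each compressible cycle edge), giving at most $8M$ flat sets for $(H_1, d)$.

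Combining the two bounds yields $f_\infty(H) \leq 8M + (k-1)(M-1) = (k+7)M - (k-1) \leq (k+7)M$, as required. The hardest part will be the third step: upgrading the flat sets supplied by Lemma~\ref{lem:wheels} into frames with the compressibility property needed on cycle edges. This will likely require revisiting the case analysis in the proof of Lemma~\ref{lem:wheels} and carefully tracking compressibility alongside flattenability so that the cycle gluings can be absorbed efficiently via Lemma~\ref{lem:special2sum}.
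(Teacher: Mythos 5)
Your outline matches the paper's proof in its skeleton: reduce to minimum degree $3$ via Lemma~\ref{lem:suppressDeg2}, bound the number of glued spokes by $k-1$ via the glumpkin/Lemma~\ref{lem:glumstar} argument, handle the cycle gluings with roughly $8M$ flattenable sets, and finish with \eqref{eq:2-sum}. However, the step you yourself flag as "the hardest part" -- upgrading the four flat sets from Lemma~\ref{lem:wheels} into frames in which every outer-cycle edge is compressible -- is exactly the technical content of the lemma, and you have not supplied it. As written, the "factor of $2$ for orientations" is an unsupported count, not an argument: nothing in your proposal establishes that any particular subset of a flat set $F_i$ is compressible in $F_i$, and compressibility is a genuinely stronger requirement than flatness (one must exhibit a potential for \emph{every} $\lambda\in[0,1]^\Gamma$, not just $\lambda=\mathbf 1$). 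So there is a real gap here.

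For the record, the paper closes this gap without revisiting the case analysis of Lemma~\ref{lem:wheels} at all. Let $F_0$ be the set of arcs incident to $v_0$. For each $i\in[4]$ one partitions $F_i\delete F_0$ into $\Gamma_i^+$ and $\Gamma_i^-$ according to a fixed orientation of the outer cycle (no arc $(v_{j+1},v_j)$ in $\Gamma_i^+$ and no arc $(v_j,v_{j+1})$ in $\Gamma_i^-$). Because the arcs of $\Gamma_i^{\pm}$ are pairwise consistently oriented along every directed cycle of $D(W_n)$, each pair $(\Gamma_i^{\pm},F_i)$ is a frame of $(W_n,d_W)$; this is the one-line observation that replaces your "revisit the case analysis" plan. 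These $8$ frames, plus the star at $v_0$ as a ninth flattenable set (Lemma~\ref{lem:flatstar}), absorb the cycle gluings via Lemma~\ref{lem:special2sum}, giving $f_\infty(H')\leq 1+8M$ for the graph $H'$ with only the cycle gluings; note the extra $+1$, which your count of $8M$ omits (it is harmless for the final bound, since $1+8M+(k-1)(M-1)\leq(k+7)M$, but you should carry it). If you supply the compressibility argument for the orientation-split sets, your proof becomes the paper's proof.
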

    \begin{proof}
        Let $W_n-v_0=C=v_1 \cdots v_n$. 
        We proceed by induction on $|V(H)|$.  By Lemma~\ref{lem:suppressDeg2}, we may assume that $H$ has minimum degree at least $3$.
        Let $E_0$ be the set of glued edges incident to $v_0$.  If $|E_0| \geq k$, then $W_n$ has a $k$-glumpkin minor.  By Lemma~\ref{lem:glumstar}, $H$ contains an $\ks_k$ minor, which is a contradiction.  Thus, $|E_0| \leq k-1$.  
        
        Let $d$ be an arbitrary distance function on $H$, and $d_W$ be the restriction of $d$ to $W_n$. By Lemma~\ref{lem:wheels}, $(W_n, d_W)$ has a flat cover of size $4$, say $F_1, F_2, F_3, F_4$. 
        Let $F_0$ be the set of arcs of $D(W_n)$ incident to $v_0$.  For each $i \in [4]$, let 
     $\Gamma_i^+, \Gamma_i^-$ be such that $\Gamma_i^+ \cup \Gamma_i^- = F_i\delete F_0$ and $(v_{j+1},v_{j}) \notin \Gamma_i^+$, $(v_{j},v_{j+1})\notin \Gamma_i^{-}$ for all $j \in \mathbb Z / n \mathbb Z$. Since every two arcs of $\Gamma_i^{\pm}$ are both forward or both backward arcs of every directed cycle of $D(W_n)$, $(\Gamma_i^{\pm}, F_i)$ is a frame of $(W_n, d_W)$ for all $i\in [4]$.  Let $H'$ be the graph obtained from $W_n$ by only gluing along glued edges belonging to $E(C)$.  
        By Lemma~\ref{lem:flatstar} and Lemma~\ref{lem:special2sum}, $f_\infty(H') \leq 1+8M$.   Since $|E_0| \leq k-1$, Lemma~\ref{lem:gluing} implies that 
        \[
        f_\infty(H) \leq f_\infty (H')+(k-1)(M-1) \leq 
        (k+7)M. \qedhere
        \] 
    \end{proof}
    
   We now apply our results about wheels to fan-reduced graphs. Recall that every graph can be obtained from its fan-reduction by replacing fan gadgets by fans.
    
        \begin{lemma}\label{lem:reductionFan} 
            Let $F$ be a reducible fan of a graph $G$, and let $G'$ be the $F$-reduction of $G$. Then $f_\infty(G)\leq f_\infty(G')+4$. 
        \end{lemma}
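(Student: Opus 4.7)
The plan is to realize $G$, after possibly adding a single chord, as a $3$-sum of a wheel with a minor of $G'$, and then invoke Lemma~\ref{lem:wheels} together with \eqref{eq:k-sum} of Lemma~\ref{lem:gluing}. Define $\tilde{G} := G + v_2 v_m$, adding the edge $v_2 v_m$ if it is not already present in $G$. In $\tilde{G}$, the set $K := \{v_0, v_2, v_m\}$ induces a triangle, since $v_0 v_2$ and $v_0 v_m$ are spokes of $F$. Let $B^+$ be the subgraph of $\tilde{G}$ induced by $\{v_0, v_2, v_3, \ldots, v_{m-1}, v_m\}$, and let $\tilde{H}$ be the subgraph of $\tilde{G}$ induced by $V(G) \setminus \{v_3, \ldots, v_{m-1}\}$. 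Because each $v_j$ with $j \in \{3, \ldots, m-1\}$ has degree exactly $3$ in $G$ with all three neighbors inside $\{v_0, v_{j-1}, v_{j+1}\} \subseteq V(B^+)$, no edge of $\tilde{G}$ joins $V(B^+) \setminus K$ to $V(\tilde{H}) \setminus K$, so $\tilde{G}$ is a $3$-sum of $B^+$ and $\tilde{H}$ along $K$.

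Next I would observe that $B^+$ is isomorphic to the wheel $W_{m-1}$: the hub is $v_0$, adjacent to every other vertex of $B^+$, and the outer cycle is $v_2 v_3 \cdots v_m v_2$, whose closing edge is the chord just added. Since $m \geq 5$, Lemma~\ref{lem:wheels} gives $f_\infty(B^+) \leq 4$. I would then check that $\tilde{H}$ is a minor of $G'$ by contracting the edge $v_2 u$ of $G'$: after identifying $v_2$ and $u$, the merged vertex has neighbor set $\{v_0, v_1, v_m\}$ together with the external neighbors of $v_2$ in $G$, matching precisely the neighborhood of $v_2$ in $\tilde{H}$, while all edges of $G'$ disjoint from $\{v_2, u\}$ remain unchanged. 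Hence $f_\infty(\tilde{H}) \leq f_\infty(G')$ by minor-monotonicity.

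Combining these bounds via \eqref{eq:k-sum} applied to the $3$-sum yields
\[
f_\infty(\tilde{G}) \leq f_\infty(B^+) + f_\infty(\tilde{H}) \leq 4 + f_\infty(G'),
\]
and since $G$ is a minor of $\tilde{G}$ (by deleting the chord $v_2 v_m$ if it was added), we conclude $f_\infty(G) \leq f_\infty(G') + 4$. The only delicate step is verifying the $3$-sum decomposition, which hinges on the reducibility hypothesis ensuring that every $v_j$ with $3 \leq j \leq m-1$ has all its neighbors inside $\{v_0, v_{j-1}, v_{j+1}\}$; the identification $G'/(v_2 u) \cong \tilde{H}$ and the final minor-monotonicity step are routine.
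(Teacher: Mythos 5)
Your proof is correct and follows essentially the same route as the paper's: both express $G$ plus the single chord closing the fan's outer cycle as a $3$-sum of a wheel with a graph whose $\ell_\infty$-dimension is at most $f_\infty(G')$, and then combine Lemma~\ref{lem:wheels} with the $k$-sum bound \eqref{eq:k-sum}. The only cosmetic difference is that the paper glues the wheel directly onto $G'$ along the triangle of the reduced fan and deletes the extra edge at the end, whereas you first contract $v_2u$ in $G'$ and then invoke minor-monotonicity.
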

        \begin{proof}
            Let $v_0$ be the center of $F$, and $v_1 \cdots v_k$ be its outer path.  When performing the $F$-reduction, we rename vertices such that $v_0$ is still the center and $v_1v_2v_{k-1}v_k$ is the outer path of the reduced fan.   Let $W_{k-2}$ be the wheel graph on $k-1$ vertices, where $v_0$ is the universal vertex, and $v_2v_3 \cdots v_{k-1}v_2$ is the outer cycle. Let $H$ be the graph obtained by performing the $3$-sum of $G'$ with $W_{k-2}$ along the clique $v_0v_2v_{k-1}$. Note that $G$ is obtained from $H$ by deleting the edge $v_2v_{k-1}$. Hence, $f_\infty(G)\leq f_\infty(H)$.   By Lemma~\ref{lem:wheels}, $f_\infty(W_{k-2}) \leq 4$. Therefore, applying Lemma~\ref{lem:gluing},
            \[f_\infty(G) \leq f_\infty(H) \leq f_\infty(G') + f_\infty(W_{k-2}) \leq f_\infty(G') + 4. \qedhere 
            \] 
        \end{proof}
        
        \begin{lemma}\label{lem:fanreplacement}
            Let $G$ be a graph, $G'$ be the fan-reduction of $G$, and $t$ be the number of reduced fans in $G'$.  Then, $t \leq \tau(G')$ and $f_\infty(G) \leq 5\tau(G')$.
        \end{lemma}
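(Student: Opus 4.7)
The plan is to prove the two claims separately and then combine them. For the first claim $t \leq \tau(G')$, I will exhibit a matching of size $t$ in $G'$. Recall that each reduced fan in $G'$ came from a maximal reducible fan $F$ in $G$ with center $v_0$ and outer path $v_1 v_2 \cdots v_m$, where $m \geq 5$, and after reduction the outer path becomes $v_1 v_2 v_{m-1} v_m$. In particular, $v_2 v_{m-1}$ is an edge of $G'$. The key point, noted in the paragraph defining fan-reductions, is that distinct maximal reducible fans of $G$ are almost vertex disjoint, sharing no internal vertex of their outer paths. Hence the vertices $v_2$ and $v_{m-1}$ belong to exactly one reduced fan in $G'$. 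Therefore the edges $\{v_2 v_{m-1}\}$, one per reduced fan, form a matching of size $t$ in $G'$, and any vertex cover of $G'$ must pick at least one endpoint from each, giving $t \leq \tau(G')$.

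For the second claim, I will iteratively apply Lemma~\ref{lem:reductionFan}. The fan-reduction of $G$ is obtained by performing one $F$-reduction for each of the $t$ maximal reducible fans of $G$. Doing these one at a time produces a sequence $G = G_0, G_1, \ldots, G_t = G'$, where each $G_{i+1}$ is obtained from $G_i$ by performing a single $F$-reduction. By Lemma~\ref{lem:reductionFan}, we have $f_\infty(G_i) \leq f_\infty(G_{i+1}) + 4$ for each $i$, and telescoping yields $f_\infty(G) \leq f_\infty(G') + 4t$. By Lemma~\ref{lem:vertexcover}, $f_\infty(G') \leq \tau(G')$, and by the first part $t \leq \tau(G')$. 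Combining gives
\[
f_\infty(G) \leq f_\infty(G') + 4t \leq \tau(G') + 4\tau(G') = 5 \tau(G'),
\]
as required.

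The only mildly subtle point is that the sequence of single $F$-reductions is legitimate, i.e.\ that the intermediate graphs $G_i$ still contain the remaining maximal reducible fans of $G$ as reducible fans. This is immediate from the definitions: contracting edges strictly inside the outer path of one maximal reducible fan $F$ neither touches any vertex of another maximal reducible fan $F'$ (since their internal outer-path vertices are disjoint and the contracted edges are internal to $F$'s outer path) nor changes the degrees of $F'$'s internal outer-path vertices, so $F'$ remains reducible. Thus the only nontrivial ingredient is the matching argument in Part 1; everything else is a routine combination of Lemmas~\ref{lem:vertexcover} and~\ref{lem:reductionFan}.
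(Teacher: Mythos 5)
Your proof is correct and follows essentially the same route as the paper's: the paper also argues that the edge between the two internal outer-path vertices of each reduced fan forces a distinct vertex of any cover (your matching phrasing of the same fact), and then telescopes Lemma~\ref{lem:reductionFan} over the $t$ reductions and combines with Lemma~\ref{lem:vertexcover}. Your extra remark that the intermediate graphs still contain the remaining maximal reducible fans (since the contracted vertices have degree exactly $3$ and all their neighbors lie inside their own fan) is a careful justification of a step the paper leaves implicit.
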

        \begin{proof}
            Suppose $F'$ is a reduced fan in $G'$, where $v_0$ is the center and $v_1 \cdots v_4$ is  the outer path.  Note that every vertex cover of $G'$ must use at least one of $v_2$ or $v_3$.  Since $\{v_2, v_3\}$ is disjoint from all other reduced fans, we conclude that $t \leq \tau(G')$. 
            For the second part, first observe that 
            $f_\infty(G')\leq \tau(G')$,  by Lemma~\ref{lem:vertexcover}. By repeatedly applying Lemma~\ref{lem:reductionFan} to each maximal reducible fan of $G$,
            \[
            f_\infty(G)\leq f_\infty(G') + 4t \leq 5\tau(G'). \qedhere
            \]
        \end{proof}

    \begin{lemma}\label{lem:main3con}
        For all $k \in \mathbb \NN$, let $g_{\ref{lem:main3con}}(k) = 5g_{\ref{lem:boundedTau}}(k)$.
        If $G$ is a $3$-connected graph with no $\kall^k$ minor, then $f_\infty(G)\leq g_{\ref{lem:main3con}}(k)$. 
    \end{lemma}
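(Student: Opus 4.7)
The plan is to reduce the problem to Lemma~\ref{lem:boundedTau} by applying fan-reduction and then invoking Lemma~\ref{lem:fanreplacement}. Let $G'$ denote the fan-reduction of $G$, obtained by performing $F$-reductions on all maximal reducible fans of $G$. By construction $G'$ is fan-reduced, and since $G'$ is a minor of $G$, it inherits the absence of any $\kall^k$ minor. The crux of the proof is to show that $G'$ is still $3$-connected, so that Lemma~\ref{lem:boundedTau} applies to it.

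For this I would argue inductively, i.e., reduce one maximal reducible fan at a time, and show that each such $F$-reduction preserves $3$-connectivity. So let $H$ be $3$-connected and let $F\subseteq H$ be a reducible fan with center $v_0$ and outer path $v_1\cdots v_m$; let $H'$ be its $F$-reduction and $v^*$ the vertex obtained by contracting the subpath $v_3\cdots v_{m-1}$. Since every internal outer-path vertex has degree exactly $3$ in $H$, the vertex $v^*$ has precisely three neighbors in $H'$: $v_0$, $v_2$, and $v_m$.

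Suppose for contradiction that $\{x,y\}$ is a $2$-cutset of $H'$. If $v^*\notin\{x,y\}$, then any walk in $H-\{x,y\}$ that passes through the fan interior must enter and leave through two vertices of $\{v_0,v_2,v_m\}$, so it can be ``shortcut'' in $H'-\{x,y\}$ via $v^*$; hence $\{x,y\}$ already disconnects $H$, a contradiction. If instead $v^*\in\{x,y\}$, say $x=v^*$, then some component of $H'-\{v^*,y\}$ contains at most one vertex of $\{v_0,v_2,v_m\}$ (otherwise each of the two sides of the cut would contain at least two of them, forcing all three into $\{y\}$, impossible); call such a ``lone'' vertex $z$. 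Then $\{z,y\}$ separates $H$, because any $H-\{z,y\}$ detour through the fan interior must enter and leave via the two vertices of $\{v_0,v_2,v_m\}\setminus\{z\}$, which lie on the same side of the cut $\{v^*,y\}$ in $H'$, so the corresponding endpoints were already in the same component of $H'-\{v^*,y\}$. Either way, $H$ admits a $2$-cutset, contradicting $3$-connectivity. Iterating this along the sequence of single-fan reductions shows that $G'$ is $3$-connected.

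With $3$-connectivity of $G'$ established, Lemma~\ref{lem:boundedTau} gives $\tau(G')\le g_{\ref{lem:boundedTau}}(k)$, and Lemma~\ref{lem:fanreplacement} then yields
\[
f_\infty(G)\le 5\tau(G')\le 5\,g_{\ref{lem:boundedTau}}(k)=g_{\ref{lem:main3con}}(k),
\]
as desired. The main (and essentially the only) obstacle is the $3$-connectivity verification above; everything else is a direct assembly of previously established lemmas.
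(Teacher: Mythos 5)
Your proof is correct and follows exactly the same route as the paper's: apply Lemma~\ref{lem:fanreplacement} to get $f_\infty(G)\le 5\tau(G')$ and Lemma~\ref{lem:boundedTau} to bound $\tau(G')$, the paper's own proof being precisely this two-step assembly. The paper silently assumes that the fan-reduction $G'$ remains $3$-connected (a hypothesis of Lemma~\ref{lem:boundedTau}), so your explicit verification is a welcome supplement rather than a deviation; the only loose end there is the subcase where the chosen component of $H'-\{v^*,y\}$ contains \emph{no} vertex of $\{v_0,v_2,v_m\}$ (so no ``lone'' $z$ exists), but that case is even easier since such a component then has all its neighbors in $\{y\}$, making $y$ a cutvertex of $H$.
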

    \begin{proof}
        Let $G'$ be the fan-reduction of $G$. By Lemmas~\ref{lem:fanreplacement} and~\ref{lem:boundedTau}, 
        \[
        f_\infty(G)\leq 5\tau(G') \leq 5g_{\ref{lem:boundedTau}}(k)= g_{\ref{lem:main3con}}(k). \qedhere
        \]
    \end{proof}
 
    \begin{lemma}\label{lem:mainthmaux}
        For all $k,M \in \NN$, let $g_{\ref{lem:mainthmaux}}(k,M)=(2k+11)M g_{\ref{lem:boundedTau}}(k)$.   
        Let $G$ be a $3$-connected graph and let $H$ be a graph obtained by gluing $2$-connected graphs $G_1, \dots, G_m$ on distinct edges of $G$ such that $H$ has no $\kall^k$ minor.   
        Let $M = \max_{i\in [m]}{f_\infty(G_i)}$. 
        Then $f_\infty(H)\leq g_{\ref{lem:mainthmaux}}(k, M)$. 
    \end{lemma}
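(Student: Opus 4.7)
The plan is to combine the fan-replacement scheme used in Lemma~\ref{lem:main3con} with the gluing bound from Lemma~\ref{lem:wheel_glued}. First I form the fan-reduction $G'$ of $G$. Since $G'$ is a minor of $H$, it has no $\kall^k$ minor, and being $3$-connected and fan-reduced it satisfies $\tau(G') \le g_{\ref{lem:boundedTau}}(k)$ by Lemma~\ref{lem:boundedTau}. By Lemma~\ref{lem:fanreplacement}, the number $t$ of maximal reducible fans of $G$ satisfies $t \le \tau(G')$.

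Next I control how many edges can be glued in any one place. If a vertex $v$ of $G'$ were incident to $k$ glued edges, then, using $3$-connectedness of $G'$, contracting $G'-v$ to a single vertex would produce a $k$-glumpkin minor of $G'$, hence of $G$; Lemma~\ref{lem:glumstar} would then give an $\ks_k$ minor of $H$, a contradiction. So at most $k-1$ glued edges meet any vertex of $G'$, and picking a vertex cover $X$ of $G'$ of size $\tau(G')$ shows that the total number of glued edges in $E(G)\cap E(G')$ is at most $(k-1)\tau(G')$. A similar argument applied along the outer path of a reducible fan shows that each fan $F_i$ contains at most $k-1$ glued spokes.

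I then decompose $H$ by mimicking Lemma~\ref{lem:reductionFan}. For each reducible fan $F_i$ let $W_i^\star$ be the wheel $W_{p_{F_i}-2}$ together with all $G_j$'s whose glued edge lies in $E(F_i)\setminus E(G')$, and let $H'$ be $G'$ together with the remaining $G_j$'s (those glued on $E(G)\cap E(G')$). Then $H$ is the minor of the iterated $3$-sum of $H'$ with each $W_i^\star$ along the triangle $v_0v_2v_{p_{F_i}-1}$ obtained by deleting the closing edges $v_2v_{p_{F_i}-1}$. Minor monotonicity and \eqref{eq:k-sum} give $f_\infty(H)\le f_\infty(H')+\sum_i f_\infty(W_i^\star)$. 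Iterating \eqref{eq:2-sum} over the at most $(k-1)\tau(G')$ gluings inside $H'$, starting from the bound $f_\infty(G')\le \tau(G')$ of Lemma~\ref{lem:vertexcover}, yields $f_\infty(H')\le \tau(G')(1+(k-1)(M-1))$. Lemma~\ref{lem:wheel_glued} applied to each $W_i^\star$ gives $f_\infty(W_i^\star)\le (k+7)M$. Using $t\le \tau(G')$ together with $k,M\ge 1$, the totals combine to $f_\infty(H)\le \tau(G')\bigl(1+(k-1)(M-1)+(k+7)M\bigr)\le (2k+11)M\,g_{\ref{lem:boundedTau}}(k)$.

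The main obstacle will be justifying Lemma~\ref{lem:wheel_glued} on $W_i^\star$: its hypothesis requires no $\ks_k$ minor, which a priori could arise from $k$-glumpkin configurations in the wheel that exploit the closing edge $v_2v_{p_{F_i}-1}$, an edge that lies in $W_i$ but is absent from $H$. My plan is to show that every such configuration can be rerouted through the outer path $v_2v_3\cdots v_{p_{F_i}-1}$ available inside $F_i\subseteq G$, so that a $k$-glumpkin in $W_i$ with glued edges produces a genuine $k$-glumpkin within $F_i$ and, by Lemma~\ref{lem:glumstar}, an $\ks_k$ minor of $H$, contradicting the hypothesis.
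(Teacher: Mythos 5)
Your proof follows essentially the same route as the paper's: fan-reduce $G$, bound the number of glued edges meeting a vertex cover of $G'$ by $(k-1)\tau(G')$ via the glumpkin/3-connectivity argument, split $H$ into the fan-reduced part and wheel pieces handled by Lemma~\ref{lem:wheel_glued}, and combine via Lemma~\ref{lem:gluing}; your arithmetic checks out and even yields a slightly better constant, and your explicit verification that the wheel pieces $W_i^\star$ inherit the no-$\ks_k$ hypothesis (by rerouting the closing edge through the rest of the $3$-connected graph) addresses a point the paper leaves implicit. The one step you omit is that Lemma~\ref{lem:glumstar} requires $H$ to have minimum degree at least $3$, so you should first suppress degree-$2$ vertices using Lemma~\ref{lem:suppressDeg2}, as the paper does by inducting on $|E(H)|$.
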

    \begin{proof}
        We proceed by induction on $|E(H)|$.  By Lemma~\ref{lem:suppressDeg2}, we may assume that $H$ has minimum degree at least $3$.  Let $\mathcal F$ be the set of maximal reducible fans in $G$.
        Let $G'$ be the fan-reduction of $G$ and let $\mathcal F'$ be the set of reduced fans in $G'$. If $F$ is a fan with center $v_0$ and outerpath $v_1 \cdots v_m$, we define $I(F)=V(F) \delete \{v_0, v_1, v_m\}$.  Let $X'$ be a vertex cover of $G'$ and set $X = X' \delete \bigcup_{F' \in \mathcal F'} I(F')$. 
        We regard $X$ as a subset of vertices of $G$.  
        Let $\Gamma$ be the set of glued edges of $G$ and $\Gamma_X$ be the set of edges of $\Gamma$ incident to a vertex in $X$. 
        
        If $|\Gamma_X| > (k-1) \tau(G')$, then there is a vertex  $x \in X$ incident to at least $k$ glued edges $xy_1, \dots, xy_k$. Since $G$ is $3$-connected, there is a tree in $G-x$ containing $\{y_1, \dots, y_k\}$.  
        Therefore, $G$ contains a $k$-glumpkin minor that is obtained by contracting the tree to a single vertex.  By Lemma~\ref{lem:glumstar}, $H$ contains an $\ks_k$ minor, which is a contradiction. Hence, $|\Gamma_X| \leq (k-1)\tau(G')$. 
        
        Let $F \in \mathcal F$ with center $v_0$ and outerpath $v_1 \cdots v_m$.  Let $F^+$ be the graph obtained from $F$ by adding the edge $v_1v_m$ (if it is not already present) and gluing all $G_i$ whose glued edge is contained in $E(F)$.

        Let $G^X$ be obtained from $G$ by gluing all $G_i$ whose glued edge belongs to $\Gamma_X$ and replacing each $F \in \mathcal F$ by a triangle, $\Delta_F$. 
        Let $H^+$ be obtained from $G^X$ by simultaneously taking the clique-sum of $F^+$ and $G^X$ along $\Delta_F$ for all $F \in \mathcal F$. Notice that $H$ is a subgraph of $H^+$. 
        
        By Lemma~\ref{lem:fanreplacement}, $f_\infty(G) \leq 5\tau(G')$.  Since $|\Gamma_X| \leq (k-1)\tau(G')$, by Lemma~\ref{lem:gluing} 
        \[
        f_\infty(G^X) \leq f_\infty(G)+(k-1)(M-1)\tau(G') \leq (k+4)M\tau(G').
        \]
        Since $G'$ is a $3$-connected fan-reduced graph not containing a $\kall^k$ minor, by Lemma~\ref{lem:boundedTau}, $\tau(G') \leq g_{\ref{lem:boundedTau}}(k)$. 
        By Lemma~\ref{lem:wheel_glued}, $f_\infty(F^+) \leq (k+7)M$, for all $F \in \mathcal F$. Finally,  $|\mathcal F| \leq \tau(G')$, by Lemma~\ref{lem:fanreplacement}.  Putting this altogether, 
        \begin{align*}
        f_\infty(H) &\leq f_\infty(H^+) \\
                    &\leq f_\infty(G^X) + (k+7)M\tau(G') \\
                    &\leq (k+4)M\tau(G') + (k+7)M \tau(G')\\
                    &= (2k+11)M \tau(G') \\
                    &\leq (2k+11)M g_{\ref{lem:boundedTau}}(k) \\
                    &= g_{\ref{lem:mainthmaux}}(k, M). \qedhere
        \end{align*}
        \end{proof}
 
\section*{Acknowledgements} 

We thank Monique Laurent and Antonios Varvitsiotis for helpful discussions regarding the material in Section 2. 
We also thank an anonymous referee for their helpful comments on an earlier version of the paper.

\bibliographystyle{abbrv}
\bibliography{ReferencesNew}
\include{ReferencesNew.bbl}

\end{document}